\providecommand{\U}[1]{\protect\rule{.1in}{.1in}}
\newtheorem{theorem}{Theorem}[section]
\newtheorem{corollary}[theorem]{Corollary}
\newtheorem{example}[theorem]{Example}
\newtheorem{lemma}[theorem]{Lemma}
\newtheorem{proposition}[theorem]{Proposition}
\newtheorem{remark}[theorem]{Remark}
\newenvironment{proof}[1][Proof]{\noindent\textbf{#1.} }{\ \rule{0.5em}{0.5em}}
\numberwithin{equation}{section}
\begin{document}

\begin{center}
\textbf{Cauchy problem for a fractional anisotropic parabolic equation in
anisotropic H\"{o}lder spaces.}

\bigskip

\textbf{Sergey Degtyarev}

\bigskip

\textbf{State Agency "Institute of applied mathematics and mechanics",}

\textbf{Donetsk, Ukraine }

\end{center}

\bigskip

\begin{abstract}
We consider a Cauchy problem for a fractional anisotropic parabolic equation
in anisotropic H\"{o}lder spaces. The equation generalizes the heat equation
to the case of fractional power of the Laplace operator and the power of this
operator can be different with respect to different groups of space variables.
The time derivative can be either fractional Caputo - Jrbashyan derivative or
usual derivative. Under some necessary conditions on the order of the time
derivative we show that the operator of the whole problem is an isomorphism of
appropriate anisotropic H\"{o}lder spaces. Under some another conditions we
prove unique solvability of the Cauchy problem in the same spaces.

\end{abstract}

\section{Introduction.}

\label{s1}

The present paper is devoted to correctness in anisotropic H\"{o}lder spaces
of a Cauchy problem for a fractional partial differential equation which
generalizes the classical heat equation. Let $N$ be the dimension of the space
$R^{N}$, $T>0$ is a given positive number, $R_{T}^{N}\equiv R^{N}\times(0,T)$.
Let further the set of coordinates of a point $x=(x_{1},x_{2},...,x_{N})$ is
split into $r$ groups of lengths $N_{i}$, $i=1,2,...,r$, $N_{1}+N_{2}%
+...N_{r}=N$. Let, besides, $\theta>0$ be an integer or a noninteger number
and $\sigma_{k}>0$, $k=1,2,...,r$. Denote
\begin{equation}
z_{1}=(x_{1},...,x_{N_{1}}),z_{2}=(x_{N_{1}+1},...,x_{N_{1}+N_{2}}%
),...,z_{r}=(x_{N_{1}+...+N_{r-1}+1},...,x_{N}).\label{1.0}%
\end{equation}
Consider the following Cauchy problem for the unknown function $u(x,t)$,
$(x,t)\in$ $R_{T}^{N}$,
\begin{equation}
D_{\ast t}^{\theta}u(x,t)+{\sum_{k=1}^{r}}(-\Delta_{z_{k}})^{\frac{\sigma_{k}%
}{2}}u(x,t)=f(x,t),\quad(x,t)\in R_{T}^{N},\label{1.1}%
\end{equation}%
\begin{equation}
\frac{\partial^{i}u}{\partial t^{i}}(x,0)=u_{i}(x),x\in R^{N},\quad%
%TCIMACRO{\QDATOPD{\{}{.}{i=0,1,...,[\theta],\quad\theta\ \text{is a
%noninteger},}{i=0,1,...,\theta-1,\quad\theta\ \text{is an integer},}}%
%BeginExpansion
\genfrac{\{}{.}{0pt}{0}{i=0,1,...,[\theta],\quad\theta\ \text{is a
noninteger},}{i=0,1,...,\theta-1,\quad\theta\ \text{is an integer},}%
%EndExpansion
\label{1.2}%
\end{equation}
where $[\theta]$ is the integer part of a number $\theta$. Let's explain the
above notations. Firstly, here $f(x,t)$ and $u_{0}(x)$ are some given
functions, defined in $\overline{R_{T}^{N}}$ and $R^{N}$ correspondingly.
Further, the symbol $D_{\ast t}^{\theta}$ (with the lower asterisk) means the
fractional Caputo - Jrbashyan derivative (often called just the Caputo
derivative) of a non-integer order $\theta>0$ with respect to $t$. Such
derivative of order $\theta\in(0,1)$ is defined by
\begin{equation}
D_{\ast t}^{\theta}g(t)=\frac{1}{\Gamma(1-\theta)}{\int\limits_{0}^{t}}%
\frac{g^{\prime}(\tau)d\tau}{(t-\tau)^{\theta}}=\frac{1}{\Gamma(1-\theta
)}\frac{d}{dt}{\int\limits_{0}^{t}}\frac{\left[  g(\tau)-g(0)\right]  d\tau
}{(t-\tau)^{\theta}},\quad t>0,\label{1.3}%
\end{equation}
and for the case of order $\theta\in(n-1,n)$%
\begin{equation}
D_{\ast t}^{\theta}g(t)=D_{\ast t}^{\theta-n+1}g^{(n-1)}(t).\label{1.4.0}%
\end{equation}
The second equality in \eqref{1.3} links the Caputo - Jrbashyan derivative to
the classical Riemann - Liouville fractional derivative $D_{t}^{\theta}$
(without the lower asterisk), and the last for $\theta\in(0,1)$ is defined by
\begin{equation}
D_{t}^{\theta}g(t)=\frac{1}{\Gamma(1-\theta)}\frac{d}{dt}{\int\limits_{0}^{t}%
}\frac{g(\tau)d\tau}{(t-\tau)^{\theta}},\quad t>0.\label{1.4.1}%
\end{equation}
Thus, for $\theta\in(0,1)$,
\begin{equation}
D_{\ast t}^{\theta}g(t)=D_{t}^{\theta}\left[  g(t)-g(0)\right]  .\label{1.4.2}%
\end{equation}
In general, for $\theta\in(n-1,n)$ ($n$ is a positive integer), the Riemann -
Liouville fractional derivative is defined by
\begin{equation}
D_{t}^{\theta}g(t)=\frac{1}{\Gamma(1-\theta)}\frac{d^{n}}{dt^{n}}%
{\int\limits_{0}^{t}}\frac{g(\tau)d\tau}{(t-\tau)^{\theta-n+1}},\quad
t>0,\label{1.4}%
\end{equation}
and the Caputo - Jrbashyan derivative can be expressed as
\begin{equation}
D_{\ast t}^{\theta}g(t)=\frac{1}{\Gamma(1-\theta)}{\int\limits_{0}^{t}}%
\frac{g^{(n)}(\tau)d\tau}{(t-\tau)^{\theta-n+1}}=\label{1.5}%
\end{equation}%
\[
=\frac{1}{\Gamma(1-\theta)}\frac{d}{dt}{\int\limits_{0}^{t}}\frac{\left[
g^{(n-1)}(\tau)-g^{(n-1)}(0)\right]  d\tau}{(t-\tau)^{\theta-n+1}}=
\]%
\[
=\frac{1}{\Gamma(1-\theta)}\frac{d^{n}}{dt^{n}}{\int\limits_{0}^{t}}%
\frac{\left[  g(\tau)-{\sum\limits_{k=0}^{n-1}}\frac{g^{(k)}(0)}{k!}\tau
^{k}\right]  d\tau}{(t-\tau)^{\theta-n+1}}=
\]%
\[
=D_{t}^{\theta}\left[  g(t)-{\sum\limits_{k=0}^{n-1}}\frac{t^{k}g^{(k)}%
(0)}{k!}\right]  ,\quad t>0.
\]
As for different definitions of some others fractional derivatives, we can
refer to, for example, \cite{Samko}, \cite{Kilbas}, \cite{Umarov}, Ch. 3. We
consider the above Cauchy problem with the Caputo - Jrbashyan derivative with
respect to time $t$ because it is well known that the Cauchy problem with
initial condition \eqref{1.2} in the case of the Riemann - Liouville
derivative in the equation is incorrect and it does not carry a proper
physical meaning. The correct statement for equations with the Riemann -
Liouville derivative must include some nonlocal condition instead of
\eqref{1.2} - see \cite{Samko}, \cite{Umarov}, Ch. 3. In the present paper we
consider a Cauchy problem with usual initial condition \eqref{1.2} and
therefore we use namely the Caputo - Jrbashyan derivative in equation \eqref{1.1}.

At last, the summands $(-\Delta_{z_{k}})^{\frac{\sigma_{k}}{2}}u(x,t)$ in
equation \eqref{1.1}, $k=1,...,r,$ are fractional powers of the Laplace
operators of powers $\sigma_{k}/2>0$ with respect to the corresponding group
of the space variables $z_{k}$. These fractional operators can be defined in
terms of the Fourier transform as follows. Let $z_{k}=(x_{j+1},x_{j+1}%
,...,x_{j+N_{k}})$ be such a group of the space variables and let the
corresponding group of "dual" (in the sense of the Fourier transform) group of
variables be $\zeta_{k}\equiv(\xi_{j+1},...,\xi_{j+N_{k}})$. Denote by
$F_{k}[u]$ the Fourier image of a function $u(x,t)$ with respect to the
variables $z_{k}$ that is
\[
F_{k}[u](x_{1},...,x_{j},\zeta_{k},x_{j+N_{k}+1},...,x_{N},t)\equiv
\]%
\begin{equation}
\equiv\frac{1}{(2\pi)^{\frac{N_{k}}{2}}}{\int\limits_{z_{k}\in R^{N_{k}}}%
}u(x_{1},...,x_{j},z_{k},x_{j+N_{k}+1},...,x_{N},t)e^{-i(z_{k},\zeta_{k}%
)}dz_{k}.\label{1.7}%
\end{equation}
Then by definition
\begin{equation}
(-\Delta_{z_{k}})^{\frac{\sigma_{k}}{2}}u(x,t)\equiv F_{k}^{-1}[|\zeta
_{k}|^{\sigma_{k}}F_{k}[u]],\label{1.8}%
\end{equation}
where $F_{k}^{-1}$ is the inverse transform to \eqref{1.7}. Besides, operator
$(-\Delta_{z_{k}})^{\frac{\sigma_{k}}{2}}$ from \eqref{1.8} can be defined for
$\sigma_{k}>0$ as follows (see, for example, \cite{Samko}, Ch.5,
\cite{Umarov}, Ch.3). Denote finite difference of a function $u(x,t)$ with
respect to a group of space variables $z_{k}$ with a step $\eta_{k}$ by
\[
\delta_{\eta_{k},z_{k}}u(x,t)=u(x_{1},...,x_{j},z_{k}+\eta_{k},x_{j+N_{k}%
+1},...,x_{N},t)-u(x,t)
\]
and denote the corresponding finite difference of an order $m>1$ by
\[
\delta_{\eta_{k},z_{k}}^{m}u(x,t)=\delta_{\eta_{k},z_{k}}\left(  \delta
_{\eta_{k},z_{k}}^{m-1}u(x,t)\right)  .
\]
Then
\begin{equation}
(-\Delta_{z_{k}})^{\frac{\sigma_{k}}{2}}u(x,t)=C_{N_{k},\sigma_{k},m}%
{\int\limits_{\eta_{k}\in R^{N_{k}}}}\frac{\delta_{\eta_{k},z_{k}}^{m}%
u(x,t)}{|\eta_{k}|^{N_{k}+\sigma_{k}}}d\eta_{k},\label{1.9}%
\end{equation}
where $m>\sigma_{k}$ is arbitrary, $C_{N_{k},\sigma_{k},m}$ is some constant,
which depends only on $m$, on the dimension $N_{k}$ of the group of the
variables $z_{k}$, and on $\sigma_{k}$.

The question of a possible domain of definition for the operators $D_{\ast
t}^{\theta}$ and $(-\Delta_{z_{k}})^{\frac{\sigma_{k}}{2}}$ will be discussed
a little bit below.

Problem \eqref{1.1}, \eqref{1.2} belongs to a class of mathematical models
with fractional differentiation. Similar models arises in very many
contemporary investigations in different branches of science and technology.
In particular, such models describe different processes in fractal media and
differential operators like in \eqref{1.1} serve as generators for definite
random processes, which are actively investigated at present. It is impossible
even outline here all different applications of the model under consideration
and for details we refer the reader to the monographs \cite{Samko},
\cite{Kilbas}, \cite{Umarov}, to the survey papers \cite{Gorenflo},
\cite{Metzler}, and to a plenty of other recent monographs, devoted to
investigations and applications of fractional models.

Note that we can observe nowadays almost explosive growth of investigations on
properties of mathematical models with fractional differential operators in
view of their great importance and numerous applications. At the same time
problems for equations of the kind \eqref{1.1}, that contains and a fractional
time derivative, and a fractional Laplace operator are investigated quite a
little. As for the investigations of correctness for such problems in classes
of smooth functions (especially up to the initial time moment), the author is
aware of just a few papers, devoted to such questions. Therefore we describe
shortly only the main known results related to the issues studied in this
paper. And we stress that we do not comment, for example, all the papers with
different results on fundamental solutions for more-less related equations, -
at least just because we do not use the methods of explicit fundamental solutions.

Nevertheless, we start with the paper \cite{0}, which contains the fundamental
solution for the "doubly fractional" equation (in our notations)
\[
D_{t}^{\theta}u(x,t)+(-\Delta_{x})^{\frac{\sigma}{2}}u(x,t)=0,\quad(x,t)\in
R^{N}\times(0,\infty)
\]
under the restrictions $0<\theta<2$, $0<\sigma\leq2$ on the orders of
fractional differentiation. It was shown that in dependence on relations
between $\theta$, $\sigma$, and the space dimension $N$, the fundamental
solution can be either positive or changing it's sign.

In the paper \cite{01} the maximum principle is proved for an initial problem
for a similar equation with nonzero right hand side.

A Cauchy problem for the equation with the usual first derivative in time
\[
\frac{\partial u}{\partial t}(x,t)+(-\Delta_{x})^{\frac{\sigma}{2}%
}u(x,t)=f(x,t),\quad(x,t)\in R^{N}\times(0,\infty)
\]
is considered in \cite{2}. The paper deals with two types of "heat"
potentials, which formally give in their sum a solution to the problem
according to the Duhamel formula. These potentials have as their kernel the
fundamental solution for the equation. One of them, a volume potential, is
associated with the right hand side $f(x,t)$ of the equation, and the another,
an initial potential, - with the initial datum $u_{0}(x)$. The paper gives for
these potentials estimates in the spaces $L_{p}([0,T],F_{c}^{a,b}(R^{N}))$ and
$L_{p}([0,T],B_{c}^{a,b}(R^{N}))$, where $F_{c}^{a,b}(R^{N})$ is the Triebel -
Lizorkin space, and $B_{c}^{a,b}(R^{N})$ is the Besov - Lipschitz space with
respect to the space variables. It is known that the Besov - Lipschitz spaces
in their particular case $B_{c}^{\infty,\infty}(R^{N})$ include the H\"{o}lder
spaces, and the paper contains estimates for the volume potential (with the
density $f(x,t)$) for this particular case and for $p=\infty$. Thus the paper
gives estimates of the volume potential in the space of functions with the
bounded in time norm in the space $C^{\sigma+\alpha}(R^{N})$ with respect to
$x$ under the condition of boundedness in time of the density $f(x,t)$ in the
space $C^{\alpha}(R^{N})$, $\alpha\in(0,1)$. However, the range of the spaces,
considered in the paper for the initial potential with the density $u_{0}(x)$,
does not contain estimates in the space $L_{\infty}([0,T],B_{\alpha}%
^{\infty,\infty}(R^{N}))=L_{\infty}([0,T],C^{\alpha}(R^{N}))$. That is the
classical smoothness of the initial potential (both with respect time and
space) is not considered.

An analogous Cauchy problem for a similar equation with $f(x,t)\equiv0$ was
investigated in  \cite{11}. In this paper the initial data can have a growth
at infinity with the restriction
\[
|u_{0}(x)|\leq C(1+|x|)^{\sigma-\varepsilon},\quad\varepsilon>0.
\]
For such initial data an existence and uniqueness of the classical solution
is, in particular, proved - in the sense that the solution itself and it's
derivatives, included in the equation, are continuous, and the solution
approaches it's initial datum in the classical sense.

The paper \cite{1} is devoted, in particular, to smoothness issues of the
solution to the Cauchy problem for the equation
\[
\frac{\partial u}{\partial t}(x,t)+\emph{L}u(x,t)=\emph{L}f(x,t),\quad(x,t)\in
R^{N}\times(0,\infty)
\]
with the usual derivative in time and with rather general nonlocal operator
$\emph{L}$ with respect to the space variables. This operator generalizes
space operator from equation \eqref{1.1} in the sense of \eqref{1.9} (but for
the same order of fractional differentiation in each space direction
$\sigma_{1}=...=\sigma_{r}=\sigma$). Note that the right hand side $\emph{L}
f(x,t)$ in the equation is a distribution since smoothness of the function
$f(x,t)$ from an anisotropic H\"{o}lder space is less than the order of
operator $\emph{L}$. Therefore the paper deals with a weak solution and the
initial data are supposed from some Lebesgue space. It is shown that this
solution $u(x,t)$ for $t>0$ inherits the smoothness properties of the function
$f(x,t)$, and thus $u(x,t)$ belongs to the same anisotropic H\"{o}lder space
as $f(x,t)$ with some estimate of H\"{o}lder seminorm of $u(x,t)$ over the
same seminorm of $f(x,t)$.

The papers \cite{3}, \cite{4}, \cite{7} are also devoted to studying of
equations of the form
\begin{equation}
\frac{\partial u}{\partial t}(x,t)+\emph{L}u(x,t)=f(x,t),\quad(x,t)\in
R^{N}\times(0,\infty) \label{1.10}%
\end{equation}
with some nonlocal operator $\emph{L}$, which generalizes the fractional
Laplace operator $(-\Delta_{x})^{\frac{\sigma}{2}}$ in the sense of definition \eqref{1.9}.

Moreover, the paper \cite{3} deals with a completely nonlinear operator
$\emph{L}$ and the equation has the form
\[
\frac{\partial u}{\partial t}(x,t)=\inf_{a\in\mathit{A}}(\emph{L}%
_{a}u(x,t)+f_{a}(x,t)),
\]
where a parameter $a$ runs through some index set $\mathit{A}$. For a solution
to this equation sharp local (inner) Schauder and some other estimates of
smoothness are obtained.

The paper \cite{4} is also devoted to studying of equation \eqref{1.10}. For
the Cauchy problem with zero initial condition existence and uniqueness of the
H\"{o}lder smooth solution is obtained. Moreover, under the assumption that
the right hand side $f(x,t)$ has a finite H\"{o}lder seminorm with respect to
the space variables $x$ there were proved sharp partial Schauder estimates
with respect to the same space variables $x$. Under the same assumption there
were also proved some interesting estimates of the smoothness with respect to
time $t$ for the solution.

The paper \cite{7} in it's turn contains sharp inner Schauder estimates for
equation \eqref{1.10} in some natural H\"{o}lder space and some interesting
boundary estimates.

The papers \cite{5}, \cite{6} deals with Cauchy problem \eqref{1.1},
\eqref{1.2} with a fractional derivative in time and with the usual Laplacian
as the space operator. Here some estimates of the solution in different
Sobolev spaces are obtained.

Further, the papers \cite{9} - \cite{12} are devoted to studying of abstract
parabolic equation with a fractional time derivative in Banach spaces.

Some other questions of regularity and qualitative behavior of solutions to
fractional equations of the kind were considered, in particular, in \cite{13}
- \cite{181}.

\begin{remark}
\label{R0} Let us stress that all the above regularity results and estimates
do not contain complete sharp coercive estimates in smooth functional classes
up to the initial time moment $t=0$. And question of qualified smoothness up
to the initial time moment with a corresponding sharp estimate in the case of
smooth initial data is still open. Therefore the goal of the present paper is
to find conditions for smoothness of possible solutions to \eqref{1.1},
\eqref{1.2} up to $t=0$ likewise it takes place for usual parabolic equations.
\end{remark}

Naturally, since the present paper is not a survey, it can not give even a
brief description of all huge amount of the existing results on regularity for
fractional parabolic equations. Therefore we confine ourselves to some known
to us existing investigations that are mostly adjacent (in our opinion !) to
the issues that are the main purpose of the paper.

Nevertheless, it is very important to refer here the mostly recent papers
\cite{182} and \cite{183}, which were published after the present paper was
prepared. These papers are closely related to the questions under our
consideration. They consider a fractional parabolic equation with a general
nonlocal space operator, which generalizes the fractional Laplacian. The
investigations in \cite{182} and \cite{183} are based on the methods of
operator semigroups and include properties of the corresponding semigroups. In
particular, along with other questions, the questions of the Schauder
estimates for the related equations are considered.

Besides, we would like to refer shortly one more paper \cite{184}, which also
was published after the present paper was prepared. Here the authors derive
long time $L_{p} - L_{q}$ decay estimates, in the full range $1 \leq p, q
\leq\infty$, for the time-dependent Fourier multipliers
\[
\widetilde{m}(t,\xi)=e^{\pm i|\xi|^{\sigma}t-|\xi|^{\theta}t},
\]
which correspond to the Cauchy problem for the homogeneous equation
\[
u_{tt}+(-\Delta)^{\sigma}u+(-\Delta)^{\frac{\theta}{2}}u_{t} = 0.
\]

The subsequent content of the paper is as follows. In the next section we
define standard anisotropic H\"{o}lder spaces and those of them that
corresponds to the anisotropy of equation \eqref{1.1}. These particular spaces
will be the working spaces for our considerations of problem \eqref{1.1},
\eqref{1.2}. The section is concluded by the formulation of the main results
of the paper in terms of the mentioned spaces.

In section \ref{s3} we formulate some (mainly known) results on operators of
fractional differentiation in isotropic H\"{o}lder spaces.

Sections \ref{sS} and \ref{sfi} are also auxiliary and they are devoted to the
actions of fractional differentiation on Schwartz and Lizorkin spaces and also
on their dual spaces of distributions.

In sections \ref{Hold} and \ref{sDOP} we study operators of fractional
differentiation in anisotropic H\"{o}lder spaces. The results of this section
show that the operator of problem \eqref{1.1}, \eqref{1.2} is a bounded linear
operator in the corresponding anisotropic spaces.

Below, to prove the existence of the bounded inverse operator to the operator
of problem \eqref{1.1}, \eqref{1.2}, we use some results on Fourier
multipliers in anisotropic H\"{o}lder spaces. Therefore we formulate these
results in section \ref{s4}.

The fact of existence (under proper conditions) of the mentioned inverse
operator to problem \eqref{1.1}, \eqref{1.2} is proved in the subsequent
sections \ref{s5} - \ref{s10}.

At that we fist consider separately in sections \ref{s5} and \ref{s6} the
cases of minimal fractional and integer orders of differentiation with respect
to time $\theta\in(0,1)$ and $\theta= 1$. Here the exponents of smoothness in
time are supposed to be $\theta+ \theta\alpha$ and $1 + \alpha$
correspondingly, where $\theta\alpha\in(0,1)$ and $\alpha\in(0,1)$.

Then, in sections \ref{s7} and \ref{s8} we show that the smoothness of the
solution to \eqref{1.1}, \eqref{1.2} rises in accordance with the rising of
the data of the problem. That is we in fact consider the case of arbitrary
high smoothness of the data.

To move to the problems with an arbitrary large order of time differentiation
$\theta> 1$, we consider next the question of constructing of functions from
anisotropic H\"{o}lder spaces with given initial functional values at $t=0$ up
to maximal possible order. The corresponding construction is described in
section \ref{s9}. This is necessary to reduce an initial problem to a problem
with zero initial data. We can not use for that the known results on this
subject since such results are absent for the case of an irrational anisotropy
of H\"{o}lder spaces.

At last, section \ref{s10} concludes the proofs of the main theorems
\ref{T2.1}, \ref{T2.2} and \ref{T2.3} below.

\section{Functional spaces and formulation of the main results.}

\label{s2}

In this paper we use some natural for equation \eqref{1.1} anisotropic
H\"{o}lder spaces of functions with different smoothness with respect to
different variables. Let $\overline{l}=(l_{1},l_{2},...,l_{N})$, where $l_{i}$
are arbitrary positive non-integer numbers. Denote by $C^{\overline{l}}%
(R^{N})$ the Banach space of functions $u(x)$, $x\in R^{N}$, with the finite
norm
\begin{equation}
\left\Vert u\right\Vert _{C^{\overline{l}}(R^{N})}\equiv|u|_{R^{N}%
}^{(\overline{l})}=|u|_{R^{N}}^{(0)}+{\sum\limits_{i=1}^{N}}\left\langle
u\right\rangle _{x_{i},R^{N}}^{(l_{i})},\label{2.1}%
\end{equation}%
\begin{equation}
|u|_{R^{N}}^{(0)}=\sup_{x\in R^{N}}|u(x)|,\label{2.2}%
\end{equation}%
\begin{equation}
\left\langle u\right\rangle _{x_{i},R^{N}}^{(l_{i})}=\sup_{x\in R^{N}%
,h>0}\frac{|D_{x_{i}}^{[l_{i}]}u(x_{1},x_{2},...,x_{i}+h,...,x_{N})-D_{x_{i}%
}^{[l_{i}]}u(x)|}{h^{l_{i}-[l_{i}]}}.\label{2.3}%
\end{equation}
Here $[l_{i}]$ the integer part of a number $l_{i}$, $D_{x_{i}}^{[l_{i}]}u(x)$
is the derivative of a function $u(x)$ of order $[l_{i}]$ with respect to a
variable $x_{i}$. Seminorm \eqref{2.3} can be equivalently defined as
(\cite{19} - \cite{21})
\begin{equation}
\left\langle u\right\rangle _{x_{i},R^{N}}^{(l_{i})}\simeq\sup_{x\in
R^{N},h>0}\frac{|\delta_{h,x_{i}}^{k}u(x)|}{h^{l_{i}}},\label{2.4}%
\end{equation}
where $\delta_{h,x_{i}}=u(x_{1},x_{2},...,x_{i}+h,...,x_{N})-u(x)$ represents
the difference of a function $u(x)$ with respect to a variable $x_{i}$ with a
step $h$, $\delta_{h,x_{i}}^{k}u(x)=\delta_{h,x_{i}}\left(  \delta_{h,x_{i}%
}^{k-1}u(x)\right)  =\left(  \delta_{h,x_{i}}\right)  ^{k}u(x)$ is the
difference of an arbitrary fixed order $k>l_{i}$. It is known (see, for
example, \cite{20}), that functions from the space $C^{\overline{l}}(R^{N})$
admit also some mixed derivatives up to a definite order depending on the set
of the exponents $l_{i}$. At that all the mixed and "pure" derivatives
$D_{x_{i}}^{[l_{i}]}$ have finite H\"{o}lder seminorms with some exponents
with respect to all the variables. Namely, let $\overline{k}=(k_{1}%
,k_{2},...,k_{N})$, $k_{i}\leq\lbrack l_{i}]$ and
\begin{equation}
\omega=1-{\sum\limits_{i=1}^{N}}\frac{k_{i}}{l_{i}}>0,\label{2.5}%
\end{equation}
and $\overline{d}=(d_{1},d_{2},...,d_{N})$, where $d_{i}=\omega l_{i}$. Then
\begin{equation}
D_{x}^{\overline{k}}u(x)\in C^{\overline{d}}(R^{N}),\quad\left\Vert
D_{x}^{\overline{k}}u(x)\right\Vert _{C^{\overline{d}}(R^{N})}\leq
C(N,\overline{l},\overline{k})\left\Vert u(x)\right\Vert _{C^{\overline{l}%
}(R^{N})}.\label{2.6}%
\end{equation}

In the present paper we are going to consider solutions to problem
\eqref{1.1}, \eqref{1.2}, that is functions $u(x,t)$ defined in the domain
$R_{T}^{N}\equiv R^{N}\times(0,T)$ (including the case $R_{\infty}^{N}%
=R^{N}\times(0,\infty)$). For such domains in the space $R^{N+1}$ all
definitions and properties in \eqref{2.1} - \eqref{2.6} (for the space $R^{N}%
$) are also valid with respect to all variables $(x,t)\in R^{N+1}$. In view of
the character of equation \eqref{1.1} and in view of our splitting of the
whole set of the space variables into $r$ groups $z_{k}$ of length $N_{k}$
(see definition \eqref{1.0}), we introduce now some additional notations. Let
$\alpha\in(0,1)$ be chosen in a way that numbers $\theta\alpha$ and
$\theta+\theta\alpha$ are positive non-integers, where $\theta$ is the order
of the derivative in $t$ from equation \eqref{1.1}. We suppose that functions
$u(x,t)$ under consideration have smoothness in $t$ of order $\theta
+\theta\alpha$ in the sense of definition \eqref{2.3} that is the value of
$\left\langle u\right\rangle _{t,\overline{R_{T}^{N}}}^{(\theta+\theta\alpha
)}$ is finite. We suppose also that for each group $z_{k}$ from \eqref{1.0}
the smoothness order of $u(x,t)$ with respect to each space variable from
$z_{k}$ is $\sigma_{k}(1+\alpha)$, where $\sigma_{k}\alpha$ and $\sigma
_{k}+\sigma_{k}\alpha$ are non-integers. That is for each space variable
$x_{i}$ inside $z_{k}$ the seminorm $\left\langle u\right\rangle
_{x_{i},\overline{R_{T}^{N}}}^{\sigma_{k}(1+\alpha)}$ is finite. Denote the
total H\"{o}lder seminorm with respect to the group $z_{k}$ by $\left\langle
u\right\rangle _{z_{k},\overline{R_{T}^{N}}}^{\sigma_{k}(1+\alpha)}$, that is
\begin{equation}
\left\langle u\right\rangle _{z_{k},\overline{R_{T}^{N}}}^{\sigma_{k}%
(1+\alpha)}\equiv{\sum\limits_{x_{i}\in z_{k}}}\left\langle u\right\rangle
_{x_{i},\overline{R_{T}^{N}}}^{\left(  \sigma_{k}(1+\alpha)\right)
}.\label{2.7}%
\end{equation}
Denote, besides, the set of the orders of fractional differentiation with
respect to different groups of space variables in equation \eqref{1.1} by
$\overline{\sigma}=(\sigma_{1},\sigma_{2},...,\sigma_{r})$ and the set of the
smoothness exponents with respect to different groups by $\overline{\sigma
}(1+\alpha)=(\sigma_{1}(1+\alpha),\sigma_{2}(1+\alpha),...,\sigma_{r}%
(1+\alpha))$. Denote, at last, the H\"{o}lder space of functions $u(x,t)$ with
described anisotropic smoothness by $C^{\overline{\sigma}(1+\alpha
),\theta+\theta\alpha}(\overline{R_{T}^{N}})$. That is $C^{\overline{\sigma
}(1+\alpha),\theta+\theta\alpha}(\overline{R_{T}^{N}})$ is the H\"{o}lder
space of bounded and continuous in the closed domain $\overline{R_{T}^{N}}$
functions with the finite over $\overline{R_{T}^{N}}$ norm
\[
\left\Vert u\right\Vert _{C^{\overline{\sigma}(1+\alpha),\theta+\theta\alpha
}(\overline{R_{T}^{N}})}\equiv
\]%
\begin{equation}
\equiv|u|_{\overline{R_{T}^{N}}}^{(\overline{\sigma}(1+\alpha),\theta
+\theta\alpha)}=|u|_{\overline{R_{T}^{N}}}^{(0)}+{\sum\limits_{k=1}^{r}%
}\left\langle u\right\rangle _{z_{k},\overline{R_{T}^{N}}}^{(\sigma
_{k}(1+\alpha))}+\left\langle u\right\rangle _{t,\overline{R_{T}^{N}}%
}^{(\theta+\theta\alpha)},\label{2.8}%
\end{equation}
where $\left\langle u\right\rangle _{z_{k},\overline{R_{T}^{N}}}^{(\sigma
_{k}(1+\alpha))}$ are defined in \eqref{2.7} and
\begin{equation}
|u|_{\overline{R_{T}^{N}}}^{(0)}=\sup_{(x,t)\in\overline{R_{T}^{N}}%
}|u(x,t)|.\label{2.9}%
\end{equation}
Besides, we denote by $C^{\overline{\sigma}(1+\alpha)}(R^{N})$ the space of
functions $u(x)$ with dependance only on the space variables, with $R^{N}$ as
the domain of their definition, and with defined above smoothness in the space
variables that is
\begin{equation}
\left\Vert u\right\Vert _{C^{\overline{\sigma}(1+\alpha)}(R^{N})}%
\equiv|u|_{R^{N}}^{(\overline{\sigma}(1+\alpha))}=|u|_{R^{N}}^{(0)}%
+{\sum\limits_{k=1}^{r}}\left\langle u\right\rangle _{z_{k},R^{N}}%
^{(\sigma_{k}(1+\alpha))}.\label{2.10}%
\end{equation}
At the same time, for the right hand side $f(x,t)$ of equation \eqref{1.1} we
use a H\"{o}lder space with a lower smoothness - according to the orders of
differentiation in \eqref{1.1} in $t$ and $x$. Namely, we use the space
$C^{\overline{\sigma}\alpha,\theta\alpha}(\overline{R_{T}^{N}})$ with the
norm
\begin{equation}
\left\Vert f\right\Vert _{C^{\overline{\sigma}\alpha,\theta\alpha}%
(\overline{R_{T}^{N}})}\equiv|f|_{\overline{R_{T}^{N}}}^{(\overline{\sigma
}\alpha,\theta\alpha)}=|f|_{\overline{R_{T}^{N}}}^{(0)}+{\sum\limits_{k=1}%
^{r}}\left\langle f\right\rangle _{z_{k},\overline{R_{T}^{N}}}^{(\sigma
_{k}\alpha)}+\left\langle f\right\rangle _{t,\overline{R_{T}^{N}}}%
^{(\theta\alpha)}.\label{2.12}%
\end{equation}

Note that all the above definitions of functional spaces are preserved and in
the case of bounded domains in $R^{N}$ and $R_{T}^{N}$.

In what follows we will use also the closed subspace of the space
$C^{\overline{\sigma}\alpha,\theta\alpha}(\overline{R_{T}^{N}})$ with elements
$f(x,t)$, that are identically equal to zero at $t=0$, $f(x,0)\equiv0$. We
denote this subspace by $\underline{C}^{\overline{\sigma} \alpha,\theta\alpha
}(\overline{R_{T}^{N}})$. And analogously we denote by $\underline
{C}^{\overline{\sigma} (1+\alpha),\theta+\theta\alpha}(\overline{R_{T}^{N}})$
and $\underline{C}^{\overline{\sigma}(1+\alpha),\theta;\overline{\sigma}%
\alpha}(\overline{R_{T}^{N}})$ (underlined) the closed subspaces of the
corresponding spaces consisting of functions that equal to zero at $t=0$
together with all their derivatives in $t$ up to the order $[\theta]$.

Turning now to the original problem \eqref{1.1}, \eqref{1.2}, we consider it
as a linear operator $\emph{L}$ in the introduced functional spaces. That is
for an integer $\theta=n$
\[
\emph{L:\,}C^{\overline{\sigma}(1+\alpha),\theta+\theta\alpha}(\overline
{R_{T}^{N}})\rightarrow
\]
\begin{equation}
\rightarrow C^{\overline{\sigma}\alpha,\theta\alpha}(\overline{R_{T}^{N}%
})\times C^{\overline{\sigma}(1+\alpha)}(R^{N})\times C^{\overline{\sigma
}(1+\alpha)-\frac{1}{n}\overline{\sigma}}\times...\times C^{\overline{\sigma
}(1+\alpha)-\frac{n-1}{n}\overline{\sigma}}, \label{2.14}%
\end{equation}
and for a non-integer $\theta$ (under some conditions)
\[
\emph{L:\,}C^{\overline{\sigma}(1+\alpha),\theta+\theta\alpha}(\overline
{R_{T}^{N}})\rightarrow
\]
\begin{equation}
\rightarrow C^{\overline{\sigma}\alpha,\theta\alpha}(\overline{R_{T}^{N}%
})\times C^{\overline{\sigma}(1+\alpha)}(R^{N})\times C^{\overline{\sigma
}(1+\alpha)-\frac{1}{\theta}\overline{\sigma}}(R^{N})\times...\times
C^{\overline{\sigma}(1+\alpha)-\frac{[\theta]}{\theta}\overline{\sigma}}.
\label{2.15}%
\end{equation}
This operator maps a function $u(x,t)$ from the space $C^{\overline{\sigma
}(1+\alpha),\theta+\theta\alpha}(\overline{R_{T}^{N}})$ to it's image under
applying the whole differential operator from \eqref{1.1} (that is the
corresponding function $f(x,t)$ from the space $C^{\overline{\sigma}%
\alpha,\theta\alpha}(\overline{R_{T}^{N}})$) and it's initial traces
$u_{0}(x)=u(x,0)$, $u_{1}(x)=u_{t}(x,0)$, ... , $u_{n-1}(x)=u_{t}%
^{(n-1)}(x,0)$ ( ...,$u_{[\theta]}(x)=u_{t}^{([\theta])}(x,0)$ ) up to the
order $n-1$ ( in the case of an integer $\theta=n$) or up to the order
$[\theta]$ in the case of a non-integer $\theta$. The correctness of this
definitions is stated by the following main theorems of the present papers.

\begin{theorem}
\label{T2.1} Let $\theta=n>0$ be an integer and not a number of the form
$n\neq4k+2$, $k=0,1,...$. Let further $\alpha$ be a positive non-integer such
that $n\alpha$ is a non-integer. Let, at last, $\sigma_{k}$, $k=1,...,r$, are
positive numbers such that $\sigma_{k}\alpha$ and $\sigma_{k}+\sigma_{k}%
\alpha$, $k=1,...,r$, are non-integers. Then the operator $\emph{L}$ is a
linear isomorphism from the space $C^{\overline{\sigma}(1+\alpha),n+n\alpha
}(\overline{R_{T}^{N}})$ to the space $C^{\overline{\sigma}\alpha,n\alpha
}(\overline{R_{T}^{N}})\times C^{\overline{\sigma}(1+\alpha)}(R^{N})\times
C^{\overline{\sigma}(1+\alpha)-\frac{1}{n}\overline{\sigma}}\times...\times
C^{\overline{\sigma}(1+\alpha)-\frac{n-1}{n}\overline{\sigma}}$ as it is
indicated in \eqref{2.14}.

If the function $f(x,t)$ in \eqref{1.1} is defined for all $t>0$ in the domain
$\overline{R_{\infty}^{N}}=R^{N}\times\lbrack0,\infty)$ and it's norm in the
space $C^{\overline{\sigma}\alpha,n\alpha}(\overline{R_{\infty}^{N}})$ is
finite (that is if $\left\vert f(x,t)\right\vert _{\overline{R_{\infty}^{N}}%
}^{(\overline{\sigma}\alpha,n\alpha)}<\infty$), then Cauchy problem
\eqref{1.1}, \eqref{1.2} has the unique solution $u(x,t)$, which belongs to
the space $C^{\overline{\sigma}(1+\alpha),n+n\alpha}(\overline{R_{\infty}^{N}%
})$ locally in time $t$ and the following estimates are valid
\begin{equation}
\left\langle u\right\rangle _{\overline{R_{\infty}^{N}}}^{(\overline{\sigma
}(1+\alpha),n+n\alpha)}\leq C(\overline{\sigma},\alpha)\left(  |f|_{\overline
{R_{\infty}^{N}}}^{(\overline{\sigma}\alpha,n\alpha)}+{\sum\limits_{i=0}%
^{n-1}}|u_{i}|_{R^{N}}^{(\overline{\sigma}(1+\alpha)-\frac{i}{n}%
\overline{\sigma})}\right)  ,\label{2.16}%
\end{equation}%
\begin{equation}
\left\vert u\right\vert _{\overline{R_{\widetilde{T}}^{N}}}^{(0)}\leq
C(\overline{\sigma},\alpha)\left(  |f|_{\overline{R_{\infty}^{N}}}%
^{(\overline{\sigma}\alpha,n\alpha)}+{\sum\limits_{i=0}^{n-1}}|u_{i}|_{R^{N}%
}^{(\overline{\sigma}(1+\alpha)-\frac{i}{n}\overline{\sigma})}\right)
(1+\widetilde{T}^{n+\alpha})+|u_{0}|_{R^{N}}^{(0)},\quad\widetilde{T}%
\leq\infty.\label{2.17}%
\end{equation}

\end{theorem}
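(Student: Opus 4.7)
The plan is to split the proof into boundedness of $L$, reduction to zero initial data, a Fourier-multiplier analysis of the homogeneous Cauchy problem, and finally the local-in-time statement. The boundedness of $L$ between the anisotropic spaces in \eqref{2.14} is already contained in the results of sections \ref{Hold} and \ref{sDOP}: the integer time derivative $\partial_t^n$ maps $C^{\overline{\sigma}(1+\alpha),n+n\alpha}$ into $C^{\overline{\sigma}\alpha,n\alpha}$, each fractional Laplacian $(-\Delta_{z_k})^{\sigma_k/2}$ drops smoothness in $z_k$ from $\sigma_k(1+\alpha)$ down to $\sigma_k\alpha$ while respecting the other anisotropic seminorms, and the trace maps $u\mapsto \partial_t^i u(\cdot,0)$ land in the spaces $C^{\overline{\sigma}(1+\alpha)-\frac{i}{n}\overline{\sigma}}(R^N)$ precisely by the embedding theorem \eqref{2.5}--\eqref{2.6} applied in the time variable. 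So the work is to build a bounded inverse.

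The first step is to reduce to vanishing initial data. Using the construction of section \ref{s9}, I build a function $v\in C^{\overline{\sigma}(1+\alpha),n+n\alpha}(\overline{R_T^N})$ whose initial traces $\partial_t^i v(\cdot,0)$, $i=0,\dots,n-1$, equal the prescribed $u_i$, with norm controlled by $\sum_i |u_i|^{(\overline{\sigma}(1+\alpha)-\frac{i}{n}\overline{\sigma})}_{R^N}$. Setting $w=u-v$ reduces \eqref{1.1}--\eqref{1.2} to $\partial_t^n w+\sum_k(-\Delta_{z_k})^{\sigma_k/2}w=\tilde f$ in $R_T^N$ with $\tilde f = f - Lv\in C^{\overline{\sigma}\alpha,n\alpha}$ and $\partial_t^i w(\cdot,0)\equiv 0$ for $i=0,\dots,n-1$. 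At this point it is convenient to further subtract from $\tilde f$ an appropriate polynomial-in-$t$ corrector so that the reduced right-hand side lies in the underlined subspace $\underline{C}^{\overline{\sigma}\alpha,n\alpha}$, extend it by zero for $t<0$, and work on the whole line.

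The core step is to invert the operator on the class of functions vanishing together with their first $n-1$ time derivatives at $t=0$. Taking the Fourier transform in $x$ and the Laplace transform in $t$ (or the full Fourier transform after extension by zero), the symbol of the full operator is
\begin{equation}
P(\tau,\zeta)=(i\tau)^n+\sum_{k=1}^r|\zeta_k|^{\sigma_k}.\label{symbol}
\end{equation}
Here the arithmetic assumption $n\neq 4k+2$ is precisely what ensures $(i\tau)^n$ is not real negative for real $\tau$, so $P(\tau,\zeta)\neq 0$ away from the origin and the multiplier $1/P$ is well defined. The formal solution is $w = M\tilde f$ with $M$ the Fourier multiplier of symbol $1/P$. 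The key point is to show that the operators $M$, $(i\tau)^n M$, and $|\zeta_k|^{\sigma_k}M$ (the ones producing the terms on the right of the equation as well as the needed derivatives of $w$) are bounded multipliers in the anisotropic Hölder spaces $\underline{C}^{\overline{\sigma}\alpha,n\alpha}\to\underline{C}^{\overline{\sigma}(1+\alpha),n+n\alpha}$ (and analogously for the individual seminorms). This is verified by checking the Mikhlin-type smoothness and homogeneity conditions of the anisotropic Fourier multiplier theorems recalled in section \ref{s4}: $P$ is weighted-homogeneous with weights $(\sigma_k,n)\leftrightarrow(\zeta_k,\tau)$, and away from the origin each derivative in $\zeta_k$ gains a factor $|\zeta_k|^{-1}$ and each derivative in $\tau$ a factor $|\tau|^{-1}$ in the right anisotropic sense, so the standard multiplier conditions hold for $1/P$ and for all relevant products with powers of the variables. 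Uniqueness of $w$ follows either from the explicit representation or from a direct Fourier-transform argument on the homogeneous equation with zero Cauchy data.

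Combining the construction of $v$ and the inversion of the reduced problem gives the bounded inverse of $L$, hence the isomorphism statement, together with estimate \eqref{2.16} on any finite cylinder. For the second part, since all constants in the multiplier estimates are independent of $T$, applying the isomorphism on an arbitrary finite interval $[0,\widetilde T]$ and using the elementary bound $|u|^{(0)}\leq |u_0|^{(0)}+\int_0^{\widetilde T}|\partial_t u|\,dt$ together with the embedding that controls $\partial_t u$ by the seminorm $\langle u\rangle^{(n+n\alpha)}_t$ up to a polynomial in $\widetilde T$ yields \eqref{2.17}. The main obstacle I expect is verifying the anisotropic multiplier estimates for $1/P$ uniformly up to $t=0$; this is what forces the preparatory machinery of sections \ref{sS}--\ref{s4} and the delicate extension arguments that ensure $\tilde f$ lies in the underlined subspace so that the zero-extension in $t$ preserves the Hölder class.
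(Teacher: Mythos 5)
Your overall architecture (boundedness via sections \ref{Hold}--\ref{sDOP}, reduction to zero initial data via section \ref{s9}, inversion by the anisotropic multiplier theorems, uniqueness by Fourier transform on the homogeneous problem) matches the paper's proof in section \ref{s10.1}, but there is one genuine gap at the pivotal step. You propose to ``subtract from $\tilde f$ an appropriate polynomial-in-$t$ corrector so that the reduced right-hand side lies in the underlined subspace $\underline{C}^{\overline{\sigma}\alpha,n\alpha}$'' and then extend by zero for $t<0$. This step cannot be carried out in the present setting, and no compatibility condition is available to help: Theorem \ref{T2.1} asserts an isomorphism onto the \emph{full} product space, so after the reduction of section \ref{s9} (which matches only the traces $\partial_t^i u(\cdot,0)$, $i=0,\dots,n-1$) the value $\tilde f(x,0)=\partial_t^n w(x,0)$ is genuinely nonzero. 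To remove it you would need a corrector $w_1$ in the solution class with zero initial traces and $Lw_1$ absorbing $\tilde f(x,0)$; the natural candidate $w_1=\tilde f(x,0)\,t^n\zeta(t)/n!$ has smoothness in $x$ only of order $\overline{\sigma}\alpha$ (since $\tilde f(\cdot,0)\in C^{\overline{\sigma}\alpha}(R^N)$), which is strictly below the orders $\sigma_k$ of the fractional Laplacians, so $(-\Delta_{z_k})^{\sigma_k/2}w_1$ is not defined within the H\"older scale and $w_1\notin C^{\overline{\sigma}(1+\alpha),n+n\alpha}$; constructing $w_1$ instead as a solution of a model Cauchy problem with data of subcritical smoothness is circular, since that is precisely the problem being solved. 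This is exactly the obstruction discussed in Remark \ref{R6.1}: the Solonnikov-type corrector works only when all $\sigma_k$ are rational, and its failure for irrational $\sigma_k$ is the reason the paper accepts a jump of the zero-extended right-hand side at $t=0$, mollifies with a kernel shifted into $t<0$, and invokes the \emph{partial}-H\"older multiplier Theorem \ref{T4.2} (not Theorem \ref{T4.1}) on the representations \eqref{10.1}--\eqref{10.2} to get uniform estimates of the $x$-seminorms (\eqref{15.551}--\eqref{15.555}), recovering the time seminorm on $\{t\geq0\}$ afterwards from the equation itself.

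Two further points follow from this. First, Theorem \ref{T4.2} imposes the smallness restriction $\alpha\leq\alpha_0(\overline{\sigma},\{N_k\})$, and in any case the multiplier framework of section \ref{s4} requires all H\"older exponents $\gamma\beta_i\in(0,1)$, so even on the underlined subspace large $n\alpha$ or $\sigma_k\alpha$ is out of reach by multipliers alone; your proposal omits the removal of these restrictions, which in the paper occupies sections \ref{s6.5} and \ref{s7} (the finite-difference functions $u_{a,h,k}$ of Proposition \ref{DopMushel}, the interpolation Theorem \ref{T3.4}, and the induction by differentiation in $t$). Second, a minor slip: the symbol $1/P$ is unbounded at the origin, so the operator $M$ itself is not a bounded multiplier; only the zero-homogeneous symbols $(i\xi_0)^nP^{-1}$ and $|\zeta_k|^{\sigma_k}P^{-1}$ are used, and the full norm of $u$ is then recovered through the inverse estimates \eqref{3.6}, \eqref{3.6.1} and integration in $t$, as in the derivation of \eqref{15.555}.
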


\begin{remark}
\label{R00} For a non-integer $\theta$ operator $\emph{L}$ not always is an
isomorphism of the indicated above spaces but only under some conditions (see
Theorem \ref{T2.3} below). For example, the function with no dependance on
$x$, namely $u(x,t)=t$, belongs in fact to $C^{\infty}(\overline{R_{T}^{N}})$
and the more $u(x,t)\in C^{\overline{\sigma}(1+\alpha),\theta+\theta\alpha
}(\overline{R_{T}^{N}})$ with arbitrary large exponents. Let $\theta\in(0,1)$
and $\alpha$ is sufficiently big so that $\theta\alpha>1$. The value of the
differential expression from \eqref{1.1} for this function is equal to
$f(x,t)=D_{\ast t}^{\theta} u=C(\theta)t^{1-\theta}$, so that the maximal
smoothness of $f(x,t)$ in $t$ up to $t=0$ is equal to $1-\theta$, which is
less than needed smoothness $\theta\alpha$, $1-\theta<\theta\alpha$. Likewise,
the function $u(x,t)=t^{\theta}$ for $\theta\in(0,1)$ is a solution to
\eqref{1.1}, \eqref{1.2} with the corresponding constant function $f \equiv
const$ and with zero initial condition. And this $u(x,t)$ has up to $t=0$ the
order of smoothness in $t$ exactly $\theta$ and not $\theta+ \theta\alpha$.
Sharp consideration of such cases are out of the scope of the present paper.
And our goal is to determine conditions for problem \eqref{1.1}, \eqref{1.2}
to behave likewise Cauchy problems for usual parabolic equations (comp.
\cite{Sol1}) - see Remark \ref{R0}.

However, the following theorem on solvability of problem \eqref{1.1},
\eqref{1.2} is valid.
\end{remark}

\begin{theorem}
\label{T2.2} Let $\theta>0$ be a non-integer, and let $\alpha>0$ be such that
$\theta\alpha$ and $\theta+\theta\alpha$ are non-integers. Let further
$\sigma_{k}$, $k=1,...,r$, be positive non-integers such that $\sigma
_{k}\alpha$ and $\sigma_{k}+\sigma_{k}\alpha$, $k=1,...,r$, are
non-integers. Let also be fulfilled the following compatibility condition
\begin{equation}
f(x,0)={\sum_{k=1}^{r}}(-\Delta_{z_{k}})^{\frac{\sigma_{k}}{2}}u_{0}(x),\quad
x\in R^{N}\label{1dop.1}%
\end{equation}
and for $\theta\alpha>1$ the condition
\[
\frac{d^{i}f(x,0)}{dt^{i}}\equiv0,\quad i=1,...,[\theta\alpha].
\]

If $\theta\in(0,2)$ then problem \eqref{1.1}, \eqref{1.2} has the unique
solution from the space $C^{\overline{\sigma}(1+\alpha),\theta+\theta\alpha
}(\overline{R_{T}^{N}})$ with the estimate
\begin{equation}
|u|_{\overline{R_{T}^{N}}}^{(\overline{\sigma}(1+\alpha),\theta+\theta\alpha
)}\leq C(\overline{\sigma},\theta,\alpha,T)\left(  |f|_{\overline{R_{T}^{N}}%
}^{(\overline{\sigma}\alpha,\theta\alpha)}+{\sum\limits_{i=0}^{[\theta]}%
}|u_{i}|_{R^{N}}^{(\overline{\sigma}(1+\alpha)-\frac{i}{\theta}\overline
{\sigma})}\right)  .\label{2.19}%
\end{equation}
In the case $T=\infty$
\begin{equation}
\left\langle u\right\rangle _{\overline{R_{\infty}^{N}}}^{(\overline{\sigma
}(1+\alpha),\theta+\theta\alpha)}\leq C(\overline{\sigma},\theta
,\alpha)\left(  |f|_{\overline{R_{\infty}^{N}}}^{(\overline{\sigma}%
\alpha,\theta\alpha)}+{\sum\limits_{i=0}^{[\theta]}}|u_{i}|_{R^{N}%
}^{(\overline{\sigma}(1+\alpha)-\frac{i}{\theta}\overline{\sigma})}\right)
,\label{2.20}%
\end{equation}%
\begin{equation}
\left\vert u\right\vert _{\overline{R_{\widetilde{T}}^{N}}}^{(0)}\leq
C(\overline{\sigma},\theta,\alpha)\left(  |f|_{\overline{R_{\infty}^{N}}%
}^{(\overline{\sigma}\alpha,\theta\alpha)}+{\sum\limits_{i=0}^{[\theta]}%
}|u_{i}|_{R^{N}}^{(\overline{\sigma}(1+\alpha)-\frac{i}{\theta}\overline
{\sigma})}\right)  \times\label{2.21}%
\end{equation}%
\[
\times\left(  1+\widetilde{T}^{\theta+\theta\alpha}\right)  +|u_{0}|_{R^{N}%
}^{(0)},\quad\widetilde{T}\leq T.
\]

If now $\theta>2$ then the previous statement including the corresponding
above estimates is still valid under the additional assumption $\{\theta
\}+\theta\alpha>1$.
\end{theorem}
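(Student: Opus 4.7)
The plan is to reduce the problem to one with zero initial data and then feed the reduced problem into the solvability theory built in Sections \ref{s5}--\ref{s8}. First I would invoke the construction of Section \ref{s9} to produce a function $v \in C^{\overline{\sigma}(1+\alpha),\theta+\theta\alpha}(\overline{R_T^N})$ whose initial traces $\partial_t^i v(\cdot,0) = u_i(\cdot)$, $i = 0,1,\ldots,[\theta]$, match the prescribed data, with its norm bounded by the natural sum $\sum_{i=0}^{[\theta]} |u_i|_{R^N}^{(\overline{\sigma}(1+\alpha)-\frac{i}{\theta}\overline{\sigma})}$. Setting $w := u - v$ transforms \eqref{1.1}, \eqref{1.2} into the equivalent problem
\begin{equation*}
D_{\ast t}^{\theta} w + \sum_{k=1}^{r}(-\Delta_{z_k})^{\sigma_k/2} w = \tilde f, \qquad \partial_t^i w(x,0)=0, \quad i=0,\ldots,[\theta],
\end{equation*}
where $\tilde f := f - D_{\ast t}^{\theta} v - \sum_{k=1}^{r}(-\Delta_{z_k})^{\sigma_k/2} v$. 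The continuity of the fractional operators on anisotropic H\"older scales (Sections \ref{Hold}, \ref{sDOP}) guarantees $\tilde f \in C^{\overline{\sigma}\alpha,\theta\alpha}(\overline{R_T^N})$ with a controlled norm. The compatibility condition \eqref{1dop.1}, together with the vanishing of $\partial_t^i f(x,0)$ assumed when $\theta\alpha > 1$, is tailored exactly so that all time derivatives of $\tilde f$ at $t=0$ that must vanish (i.e.\ for $i=0,\ldots,[\theta\alpha]$) actually vanish, placing $\tilde f$ in the subspace $\underline{C}^{\overline{\sigma}\alpha,\theta\alpha}(\overline{R_T^N})$.

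Next I would solve the reduced problem for $w$. Since $w$ and all relevant time derivatives vanish at $t=0$, the Caputo--Jrbashyan derivative of $w$ coincides with its Riemann--Liouville derivative, so one can take the Fourier transform in $x$ and convert the equation into a fractional ODE whose solution is an explicit Mittag-Leffler convolution of $\hat{\tilde f}$ with a kernel depending on $|\zeta_k|^{\sigma_k}$. For $\theta \in (0,1)$ this is precisely the isomorphism theorem of Section \ref{s5} and for $\theta = 1$ that of Section \ref{s6}; the bootstrap of smoothness from Sections \ref{s7}--\ref{s8} together with the Fourier-multiplier theorems of Section \ref{s4} then delivers the corresponding result for $\theta \in (1,2)$ and, under the extra hypothesis $\{\theta\}+\theta\alpha > 1$, for arbitrary $\theta > 2$. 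Combining the resulting estimate for $w$ with the Step-1 bound for $v$ produces \eqref{2.19}; the global-in-time estimates \eqref{2.20}--\eqref{2.21} follow by letting $T \to \infty$ and representing $u$ as $u_0 + \int_0^t \partial_\tau u\, d\tau$ to control $|u|_{\overline{R_{\widetilde{T}}^N}}^{(0)}$. Uniqueness is an immediate corollary of the a priori estimate applied to the difference of two solutions.

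I expect the main obstacle to be the last reduction. For $\theta > 2$ the lifting $v$ carries many initial traces, and $D_{\ast t}^{\theta}v$ is only H\"older-continuous in $t$ with exponent $\{\theta\}$ up to $t=0$; in order for $\tilde f$ to belong to $\underline{C}^{\overline{\sigma}\alpha,\theta\alpha}(\overline{R_T^N})$ with the correct number of vanishing time derivatives, one needs $\{\theta\}+\theta\alpha > 1$ --- this is precisely the extra assumption in the theorem. A careful tracking of the non-integer character of the exponents $\theta\alpha$, $\sigma_k\alpha$, $\sigma_k(1+\alpha)$ is also required throughout, since both the Fourier-multiplier estimates of Section \ref{s4} and the embedding \eqref{2.6} rely on their non-integrality to avoid borderline logarithmic losses.
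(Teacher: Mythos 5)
Your overall architecture --- lift the initial data by a function $v$ built in Section \ref{s9} (Theorem \ref{T9.2} for $\theta\in(0,2)$, Theorem \ref{T9.1} under \eqref{9.0} for $\theta>2$), subtract it, and solve the resulting zero-data problem --- is indeed the paper's route, but two of your steps as stated would fail. The most serious is uniqueness: it is \emph{not} ``an immediate corollary of the a priori estimate applied to the difference of two solutions.'' Estimate \eqref{2.19} is derived in the paper only for the solution constructed through the smoothing/limiting procedure; it is not established a priori for an arbitrary element of $C^{\overline{\sigma}(1+\alpha),\theta+\theta\alpha}(\overline{R_{T}^{N}})$ solving the problem, because its derivation rests on the Fourier representations \eqref{5.28}, \eqref{10.5}--\eqref{10.6}, and asserting that an arbitrary solution admits such a representation presupposes exactly the uniqueness you are trying to prove. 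The paper closes this circle by a separate, nontrivial argument (Lemma \ref{L5.2}, reused in Section \ref{s10}): an arbitrary solution of the homogeneous problem is first extended to all $t>0$ with class control (Proposition \ref{P5.2}, which needs the Taylor-polynomial extension of Lemma \ref{L5.1}, of degree $[\theta]$ when $\theta>1$), then extended by zero to $t<0$, viewed as an element of the Lizorkin space $\Phi^{\prime}(R^{N+1})$, and subjected to the Fourier transform there: the identity $\bigl[(i\xi_{0})^{\theta}+\sum_{k}|\zeta_{k}|^{\sigma_{k}}\bigr]\widehat{u}=0$ forces $\widehat{u}$ to be supported at the origin, so $u$ is a polynomial, hence $u\equiv0$ since it vanishes for $t<0$. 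Without some such argument your proposal proves existence only.

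Second, your mechanism for intermediate and large $\theta$ is off. Sections \ref{s7}--\ref{s8} bootstrap the smoothness exponent $\alpha$ at \emph{fixed} $\theta$ (this is where the hypothesis $\partial_{t}^{i}f(x,0)=0$, $i=1,\ldots,[\theta\alpha]$, is consumed); they do not raise the order $\theta$ of the time derivative, so the case $\theta\in(1,2)$ does not follow from the $\theta\in(0,1)$ and $\theta=1$ theorems by bootstrapping. Instead the paper reruns the whole Section \ref{s5} scheme for general noninteger $\theta$ directly, with the multipliers \eqref{10.5}, \eqref{10.6}, whose denominator $(i\xi_{0})^{\theta}+\sum_{k}|\zeta_{k}|^{\sigma_{k}}$ is nondegenerate on the annuli $B_{\nu}$ precisely because $\theta$ is noninteger, so that $(\pm i)^{\theta}$ has nonzero imaginary part (also, no Mittag-Leffler kernels are used anywhere --- the paper deliberately avoids explicit fundamental solutions). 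Relatedly, the hypothesis $\{\theta\}+\theta\alpha>1$ for $\theta>2$ does not arise from placing $\tilde f$ in $\underline{C}^{\overline{\sigma}\alpha,\theta\alpha}(\overline{R_{T}^{N}})$, as you suggest, but from the construction of the lifting itself: in the recursive chain \eqref{9.8}--\eqref{9.10} of Theorem \ref{T9.1} the least smooth datum $\psi_{n}$ lies in a space with H\"older exponent $\alpha(\theta,n)=\{\theta\}+\theta\alpha-1$, which must be positive for Theorem \ref{T7.1} to apply; for $\theta\in(0,2)$ Theorem \ref{T9.2} dispenses with this condition. You correctly located the obstacle at the lifting, but the reason you give is not the one that makes or breaks the proof.
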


\begin{remark}
\label{1dop.000} We stress that condition \eqref{1dop.1} is necessary for the
smoothness up to $t=0$ since the fractional derivative $D^{\theta}_{ \ast
t}u(x,t)$ of a function $u(x,t)$ from the space $C^{\overline{\sigma}%
(1+\alpha),\theta+\theta\alpha}(\overline{R_{T}^{N}})$ is always equal
identically to zero at $t=0$ - see Proposition \ref{P3.1.00} below.
\end{remark}

As a sufficient condition for the operator $\emph{L}$ in the case of
non-integer $\theta$ to be an isomorphism of the corresponding spaces is the
simple condition $\{\theta\}+\theta\alpha<1$. That is the following theorem is valid.

\begin{theorem}
\label{T2.3} Let $\theta>0$ be a non-integer and $\alpha> 0$ be such that
$\theta\alpha$ and $\theta+\theta\alpha$ are non-integers. Let further
$\sigma_{k}$, $k=1,...,r$ be positive numbers such that $\sigma_{k}\alpha$
and $\sigma_{k}+\sigma_{k}\alpha$, $k=1,...,r$, are nonintegers. If
$\{\theta\}+\theta\alpha<1$, then the operator $\emph{L}$ is an isomorphism of
the space $C^{\overline{\sigma}(1+\alpha),\theta+\theta\alpha}(\overline
{R_{T}^{N}})$ and the closed subspace of the space $C^{\overline{\sigma}%
\alpha,\theta\alpha}(\overline{R_{T}^{N}})\times C^{\overline{\sigma}%
(1+\alpha)}(R^{N})\times C^{\overline{\sigma}(1+\alpha)-\frac{1}{\theta
}\overline{\sigma}}(R^{N})\times...\times C^{\overline{\sigma}(1+\alpha
)-\frac{[\theta]}{\theta}\overline{\sigma}}$, which consists of the sets
$(f,u_{0},u_{1},...,u_{[\theta]})$ were $f$ and $u_{0}$ obey condition \eqref{1dop.1}.
\end{theorem}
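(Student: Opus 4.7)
The plan is to reduce the initial value problem to one with zero traces and then invoke the solvability results for zero data established in the preceding sections. First, using the lifting construction of section \ref{s9}, I would produce a function $v\in C^{\overline{\sigma}(1+\alpha),\theta+\theta\alpha}(\overline{R_{T}^{N}})$ with $\partial_{t}^{i}v(x,0)=u_{i}(x)$ for $i=0,1,\ldots,[\theta]$ and a norm bound in terms of the traces. Setting $w=u-v$ and
\[
\tilde{f}(x,t)=f(x,t)-D_{\ast t}^{\theta}v(x,t)-{\sum_{k=1}^{r}}(-\Delta_{z_{k}})^{\sigma_{k}/2}v(x,t),
\]
the problem for $w$ becomes the homogeneous-trace problem $D_{\ast t}^{\theta}w+\sum_{k}(-\Delta_{z_{k}})^{\sigma_{k}/2}w=\tilde{f}$ with $\partial_{t}^{i}w(x,0)=0$ for $i=0,\ldots,[\theta]$.

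The key verification is that $\tilde{f}\in\underline{C}^{\overline{\sigma}\alpha,\theta\alpha}(\overline{R_{T}^{N}})$. Boundedness of the relevant space and time operators (sections \ref{Hold}, \ref{sDOP}) places $\tilde{f}$ in the non-underlined space; at $t=0$, Proposition \ref{P3.1.00} yields $D_{\ast t}^{\theta}v(x,0)=0$, so $\tilde{f}(x,0)=f(x,0)-\sum_{k}(-\Delta_{z_{k}})^{\sigma_{k}/2}u_{0}(x)=0$ by the compatibility condition \eqref{1dop.1}. Because $\{\theta\}+\theta\alpha<1$ and $\{\theta\}>0$ together force $\theta\alpha<1$, we have $[\theta\alpha]=0$, so no additional vanishing of $\partial_{t}^{i}\tilde{f}(x,0)$ is required, and the inclusion $\tilde{f}\in\underline{C}^{\overline{\sigma}\alpha,\theta\alpha}(\overline{R_{T}^{N}})$ is complete.

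Next, I would solve the reduced homogeneous-trace problem. For $\theta\in(0,1)$ this is exactly the content of section \ref{s5}, whose coercive estimate uses the Fourier multiplier results of section \ref{s4}; the raising-smoothness arguments of sections \ref{s7}, \ref{s8} extend it to the full admissible range of $\alpha$ consistent with $\{\theta\}+\theta\alpha<1$. For $\theta\in(n-1,n)$ with $n\ge 2$, since all time traces of $w$ vanish up to order $[\theta]$, I would write $w$ as the $(n-1)$-fold Riemann--Liouville integral of an auxiliary unknown $W$, so that the equation for $W$ carries a Caputo derivative of order $\theta-(n-1)=\{\theta\}\in(0,1)$. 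The corresponding fractional exponent in $\alpha$ is $\alpha'=\theta\alpha/\{\theta\}$, and the crucial point is that $\{\theta\}+\{\theta\}\alpha'=\{\theta\}+\theta\alpha<1$, so the inherited problem for $W$ falls under the case already handled. Assembling, $u=v+w$ lies in $C^{\overline{\sigma}(1+\alpha),\theta+\theta\alpha}(\overline{R_{T}^{N}})$ with the required coercive bound; uniqueness under \eqref{1dop.1} has been proved in Theorem \ref{T2.2}, so $L$ is a bijective bounded linear map onto the prescribed subspace and hence an isomorphism.

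The main obstacle will be handling the hypothesis $\{\theta\}+\theta\alpha<1$ correctly, since it is \emph{opposite} to the extra hypothesis $\{\theta\}+\theta\alpha>1$ required in Theorem \ref{T2.2} for $\theta>2$, and it is precisely what defuses the obstruction in Remark \ref{R00}. Concretely, any Taylor-type term $t^{[\theta]+1}$ appearing implicitly in the lifting $v$ produces, after applying $D_{\ast t}^{\theta}$, a factor $t^{[\theta]+1-\theta}=t^{1-\{\theta\}}$; for this factor to belong to $C^{\theta\alpha}$ in time one needs $1-\{\theta\}>\theta\alpha$, which is exactly the standing hypothesis. Making this observation quantitative within the anisotropic lifting of section \ref{s9} and along the time-reduction for $\theta>1$ constitutes the technical heart of the argument.
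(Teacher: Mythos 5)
Your step 3 is correct and matches the paper's framework (Proposition \ref{P3.1.00} plus \eqref{1dop.1} give $\tilde f(x,0)\equiv 0$, and $\{\theta\}+\theta\alpha<1$ forces $[\theta\alpha]=0$), and for $\theta\in(0,2)$ your overall plan is essentially the paper's. But there are two genuine gaps for $\theta>2$, which is exactly the regime where Theorem \ref{T2.3} says more than Theorem \ref{T2.2}. The first is step 4. Writing $w=J^{n-1}W$ with $\theta\in(n-1,n)$ does give $D_{\ast t}^{\theta}w=D_{\ast t}^{\{\theta\}}W$, but the equation for $W$ is then $D_{\ast t}^{\{\theta\}}W+\sum_{k}(-\Delta_{z_{k}})^{\sigma_{k}/2}J^{n-1}W=\tilde f$: the space operator acts through the $(n-1)$-fold Volterra integral, so this is not the model problem of section \ref{s5}. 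Even setting the coupling aside, the anisotropy does not rescale: by \eqref{2.6} the derivative $W=w_{t}^{(n-1)}$ lies in $C^{\overline{\rho}(1+\alpha'),\{\theta\}(1+\alpha')}$ with $\overline{\rho}=\overline{\sigma}\{\theta\}/\theta$ and $\alpha'=\theta\alpha/\{\theta\}$, while the operator in the equation still has orders $\overline{\sigma}$, not $\overline{\rho}$; so "the inherited problem falls under the case already handled" is false. The paper never lowers the time order: after the zero-trace reduction it extends by zero to $\{t<0\}$ and runs the Fourier-multiplier machinery of sections \ref{s4}--\ref{s6} directly at order $\theta$, with multipliers as in \eqref{10.5}, \eqref{10.6}, admissible for every noninteger $\theta$ because $(i\xi_{0})^{\theta}$ has nonzero imaginary part, keeping the denominator away from zero on the annuli \eqref{4.9}. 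The hypothesis $\{\theta\}+\theta\alpha<1$ enters the paper's argument elsewhere: it is precisely what makes $D_{\ast t}^{\theta}$ bounded on the full (non-underlined) space via Proposition \ref{P3.8} — hence $\emph{L}$ bounded with no extra vanishing conditions on $f$ — and what makes the class-preserving zero extension possible; the paper then states Theorem \ref{T2.3} as a direct consequence of Propositions \ref{P3.6}, \ref{P3.8} and Theorem \ref{T5.1} together with this machinery.

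The second gap is your step 1 together with the uniqueness citation. The lifting of all $[\theta]+1$ traces is Theorem \ref{T9.1}, whose hypothesis is \eqref{9.0}, $\{\theta\}+\theta\alpha>1$ — the exact opposite of your standing assumption; Theorem \ref{T9.2} removes it only for $\theta\in(0,2)$, and Theorem \ref{T9.3} lifts only up to order $[\theta]-1$, leaving the top trace $u_{[\theta]}$ unhandled. Your closing heuristic does not defuse this: the computation $D_{\ast t}^{\theta}t^{[\theta]+1}=Ct^{1-\{\theta\}}\in C^{\theta\alpha}$ iff $\theta\alpha<1-\{\theta\}$ only shows that the target space is consistent, whereas the real obstruction, stated explicitly after Theorem \ref{T9.1}, is that the least smooth datum $\psi_{[\theta]}$ in the chain \eqref{9.8}--\eqref{9.10} lies in $C^{\frac{\overline{\sigma}}{\theta}(1+\alpha(\theta,[\theta]))}(R^{N})$ with $\alpha(\theta,[\theta])=\{\theta\}+\theta\alpha-1<0$ under your hypothesis, i.e., it is rougher than the order of the auxiliary operator, and the needed potential estimates are declared unknown. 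So for $\theta>2$ you cannot quote section \ref{s9} for the lifting; nor can you quote Theorem \ref{T2.2} for uniqueness, since for $\theta>2$ it assumes $\{\theta\}+\theta\alpha>1$. Uniqueness should instead be run through the extension Lemma \ref{L5.1} — whose hypothesis is exactly $\{\theta\}+\theta\beta<1$, so it is tailor-made for Theorem \ref{T2.3} — followed by the $\Phi^{\prime}$-Fourier argument of Lemma \ref{L5.2}.
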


\section{Some preliminaries.}

\label{s3}

This section is included for the completeness and it contains some known
properties of fractional differential operators of the present paper. All the
statements of this section are formulated in the form we need below.

Firstly, on the ground of Section 3, Ch.1, in \cite{Samko} (see, in
particular, Section 3.1, Corollary 1), directly from definitions \eqref{1.3},
\eqref{1.4.0}, \eqref{1.5} we infer the following proposition.

\begin{proposition}
\label{P3.1} The fractional differential operator $D_{\ast t}^{\theta}$ for
noninteger $\theta+\theta\alpha$ ($\theta\alpha\in(0,1)$) and $\{\theta
\}+\theta\alpha<1$ is a bounded linear operator from $C^{\theta+\theta\alpha
}([0,T])$ to $C^{\theta\alpha}([0,T])$. That is if $u(t)\in C^{\theta
+\theta\alpha}([0,T])$, then
\begin{equation}
|D_{\ast t}^{\theta}u(t)|_{[0,T]}^{(\theta\alpha)}\leq C(\alpha,\theta
,T)|u(t)|_{[0,T]}^{(\theta+\theta\alpha)}, \label{3.1}%
\end{equation}
where $|u(t)|_{[0,T]}^{(\lambda)}$ is the norm in the space $C^{\lambda
}([0,T])$, $\lambda>0$.

If now $u(t)\in C^{\theta+\theta\alpha}([0,T])$ and
\[
u^{(i)}(0)=0,\quad i=0,1,...,[\theta+\theta\alpha],
\]
then \eqref{3.1} is valid without the restriction $\{\theta\}+\theta\alpha<1$.
\end{proposition}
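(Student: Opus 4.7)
My plan is to reduce the assertion to a Hölder mapping property of the Riemann--Liouville derivative of order less than one via (1.4.0) and (1.4.2), and then to verify the latter through a Marchaud-type representation and direct estimates.

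First I would set $g(t) = u^{([\theta])}(t)$ and $v(t) = g(t) - g(0)$. By (1.4.0) we have $D_{\ast t}^{\theta} u = D_{\ast t}^{\{\theta\}} g$, and by (1.4.2), $D_{\ast t}^{\{\theta\}} g = D_{t}^{\{\theta\}} v$. Since $u \in C^{\theta+\theta\alpha}([0,T])$ with $\{\theta\}+\theta\alpha<1$, the standard embedding in Hölder spaces gives $g \in C^{\{\theta\}+\theta\alpha}([0,T])$ and hence $v \in C^{\{\theta\}+\theta\alpha}([0,T])$ with $v(0)=0$ and $\|v\|_{C^{\{\theta\}+\theta\alpha}} \leq C\|u\|_{C^{\theta+\theta\alpha}}$. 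The task thus reduces to proving that $D_{t}^{\{\theta\}}$ sends such $v$ boundedly into $C^{\theta\alpha}([0,T])$.

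Next I would invoke the Marchaud representation, valid since $v(0)=0$ and $v$ is Hölder of exponent strictly exceeding $\{\theta\}$,
\[
D_{t}^{\{\theta\}} v(t) = \frac{v(t)}{\Gamma(1-\{\theta\})\, t^{\{\theta\}}} + \frac{\{\theta\}}{\Gamma(1-\{\theta\})} \int_0^t \frac{v(t)-v(\tau)}{(t-\tau)^{1+\{\theta\}}}\, d\tau.
\]
The sup-norm bound $|D_{t}^{\{\theta\}} v(t)| \leq C\|v\|_{C^{\{\theta\}+\theta\alpha}} t^{\theta\alpha}$ is immediate from $|v(t)| \leq \|v\|_{C^{\{\theta\}+\theta\alpha}} t^{\{\theta\}+\theta\alpha}$ and the Hölder estimate $|v(t)-v(\tau)| \leq \|v\|_{C^{\{\theta\}+\theta\alpha}}(t-\tau)^{\{\theta\}+\theta\alpha}$. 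For the Hölder seminorm of order $\theta\alpha$, I would form $D_{t}^{\{\theta\}} v(t+h) - D_{t}^{\{\theta\}} v(t)$, group the two boundary terms, and split the integral over $(0,t+h)$ as $(0,t) \cup (t,t+h)$, bounding each piece by $C\|v\|_{C^{\{\theta\}+\theta\alpha}} h^{\theta\alpha}$; this is precisely the scheme carried out in Section~3.1, Corollary~1 of \cite{Samko}.

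For the second assertion (no restriction on $\{\theta\}+\theta\alpha$, all relevant initial derivatives vanishing), I would extend $u$ by zero to the negative half-line. The vanishing of $u^{(i)}(0)$ for $i \leq [\theta+\theta\alpha]$ guarantees that the extension remains in $C^{\theta+\theta\alpha}(\mathbb{R})$, and the last line of (1.5) gives $D_{\ast t}^{\theta} u = D_{t}^{\theta} u$ on $[0,T]$; the conclusion then follows from the classical mapping of $D^{\theta}$ between Hölder spaces on $\mathbb{R}$. The main obstacle throughout is the Hölder-seminorm estimate in the third paragraph, where one must handle the Marchaud integrals near the singularity $\tau = t$ for both $h \ll t$ and $h \gtrsim t$, and must combine the boundary term $v(t)/t^{\{\theta\}}$ with the integral term to exploit the cancellation ensured by $v(0)=0$.
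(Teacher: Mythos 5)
Your proof is correct and takes essentially the same route as the paper, which establishes Proposition \ref{P3.1} precisely by reducing through \eqref{1.4.0} and \eqref{1.4.2} to the order-$\{\theta\}$ Riemann--Liouville derivative applied to $u^{([\theta])}-u^{([\theta])}(0)$ and invoking Corollary 1 of Section 3.1 in \cite{Samko}; the Marchaud-type representation and the sup/seminorm estimates you sketch are exactly the content of that cited result. Your zero-extension treatment of the second assertion likewise coincides with the technique the paper itself employs later in Lemma \ref{L3.1} and Proposition \ref{P3.7}.
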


%=============================
Further, the Caputo - Jrbashyan derivative possesses the following property
(compare \cite{Kilbas}, Theorem 2.2).

\begin{proposition}
\label{P3.1.00} If a function $u(t)\in C^{\theta+ \theta\alpha}([0,T])$ with a
positive noninteger $\theta$, then it's derivative $D_{\ast t}^{\theta}u(t)$
vanishes at $t=0$ that is
\begin{equation}
D_{\ast t}^{\theta}u(t)|_{t=0}=0. \label{3.1.00}%
\end{equation}
In other words, if a function $u(t)$ has some higher smoothness, than a
fractional $\theta> 0$, then it's Caputo - Jrbashyan derivative of order
$\theta$ necessarily vanishes at $t=0$.
\end{proposition}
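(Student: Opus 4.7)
The plan is to reduce the claim to the base case $\theta\in(0,1)$ and then to handle that case via a Marchaud-type representation. With $n=[\theta]+1$, I would decompose $u = P + R$, where $P(t)=\sum_{k=0}^{n-1}u^{(k)}(0)\,t^{k}/k!$ is the Taylor polynomial of $u$ at $0$. Since the Caputo derivative annihilates any polynomial of degree less than $\theta$, we have $D_{\ast t}^{\theta}P\equiv 0$, while $R\in C^{\theta+\theta\alpha}([0,T])$ satisfies $R^{(k)}(0)=0$ for $k=0,\dots,n-1$. Applying identity \eqref{1.4.0} gives $D_{\ast t}^{\theta}R(t)=D_{\ast t}^{\{\theta\}}R^{(n-1)}(t)$. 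Setting $w:=R^{(n-1)}$, we have $w\in C^{\{\theta\}+\theta\alpha}([0,T])$ with $w(0)=0$, so the statement reduces to showing $\lim_{t\to 0^{+}}D_{\ast t}^{\{\theta\}}w(t)=0$ with $\{\theta\}\in(0,1)$.

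If $\{\theta\}+\theta\alpha>1$, then $w\in C^{1}([0,T])$ with $w(0)=0$, and integration by parts in the defining singular integral yields
\[
D_{\ast t}^{\{\theta\}}w(t)=\frac{1}{\Gamma(1-\{\theta\})}\int_{0}^{t}\frac{w'(\tau)}{(t-\tau)^{\{\theta\}}}\,d\tau,
\]
whose absolute value is bounded by $C\|w'\|_{\infty}\,t^{1-\{\theta\}}$ and therefore vanishes as $t\to 0^{+}$.

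If instead $\{\theta\}+\theta\alpha<1$, then $w$ is merely Hölder continuous, and I would establish the Marchaud-type representation
\[
\Gamma(1-\{\theta\})\,D_{\ast t}^{\{\theta\}}w(t)=\{\theta\}\int_{0}^{t}\frac{w(t)-w(\tau)}{(t-\tau)^{\{\theta\}+1}}\,d\tau+\frac{w(t)}{t^{\{\theta\}}}.
\]
The derivation proceeds by differentiating $\int_{0}^{t-\varepsilon}w(\tau)(t-\tau)^{-\{\theta\}}\,d\tau$ on a regularised domain, adding and subtracting $w(t)$ inside the integral to tame the non-integrable singularity at $\tau=t$, and then sending $\varepsilon\to 0^{+}$. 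The key input is that $|w(t-\varepsilon)-w(t)|=O(\varepsilon^{\{\theta\}+\theta\alpha})$ dominates the factor $\varepsilon^{-\{\theta\}}$ produced by differentiating at the moving endpoint, so the regularised boundary contributions cancel cleanly in the limit. Once the formula is available, $w(0)=0$ together with the Hölder estimate give $|w(t)/t^{\{\theta\}}|\leq C t^{\theta\alpha}$, while $|w(t)-w(\tau)|\leq C(t-\tau)^{\{\theta\}+\theta\alpha}$ makes the Marchaud integral bounded by $C\int_{0}^{t}(t-\tau)^{\theta\alpha-1}\,d\tau=C t^{\theta\alpha}/(\theta\alpha)$; both contributions tend to $0$ as $t\to 0^{+}$.

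The principal obstacle is the Hölder subcase $\{\theta\}+\theta\alpha<1$: rigorously justifying the Marchaud-type formula for a $w$ that is not classically differentiable, and in particular showing that the endpoint contribution in the $\varepsilon\to 0^{+}$ limit genuinely cancels rather than leaving a surviving $\varepsilon^{-\{\theta\}}$ singularity. Everything else — the Taylor-polynomial reduction, the use of \eqref{1.4.0}, and the integration-by-parts argument in the smooth subcase — is routine, and the assumption that $\theta+\theta\alpha$ (and hence $\{\theta\}+\theta\alpha$) avoids integer values ensures that no borderline case needs separate treatment.
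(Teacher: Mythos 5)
Your proof is correct, but in the hard regime it takes a genuinely different route from the paper's. The reduction to order $\{\theta\}\in(0,1)$ is the same in substance: the paper simply invokes the second equality in \eqref{1.5} (equivalently \eqref{1.4.0}), which already subtracts $g^{(n-1)}(0)$, so your explicit Taylor-polynomial splitting $u=P+R$ is a slightly more roundabout version of the same step; likewise your subcase $\{\theta\}+\theta\alpha>1$ (integration by parts and the bound $C\|w'\|_{\infty}t^{1-\{\theta\}}$) coincides with the paper's remark that for $\theta+\theta\alpha\geq1$ the integrand in the first equality of \eqref{1.5} is integrable. The divergence is in the subcase $\{\theta\}+\theta\alpha<1$: the paper mollifies, obtains smooth $u_{n}\to u$ in $C^{\theta+\beta}([0,T])$ with $\beta\in(0,\theta\alpha)$, notes that each $u_{n}$, having smoothness greater than one, satisfies \eqref{3.1.00}, and transfers the property to $u$ through the uniform convergence of $D_{\ast t}^{\theta}u_{n}$ supplied by estimate \eqref{3.1} of Proposition \ref{P3.1} (applicable precisely because $\{\theta\}+\theta\alpha<1$ here); you instead establish the interval Marchaud representation directly for the merely H\"{o}lder function $w$ and read off the quantitative bound $|D_{\ast t}^{\{\theta\}}w(t)|\leq Ct^{\theta\alpha}$. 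Your $\varepsilon$-regularised derivation is sound: differentiating $\int_{0}^{t-\varepsilon}w(\tau)(t-\tau)^{-\{\theta\}}\,d\tau$ and adding and subtracting $w(t)$ makes the moving-endpoint term $w(t-\varepsilon)\varepsilon^{-\{\theta\}}$ combine with the $-w(t)\varepsilon^{-\{\theta\}}$ from the compensating integral into $\bigl(w(t-\varepsilon)-w(t)\bigr)\varepsilon^{-\{\theta\}}=O(\varepsilon^{\theta\alpha})$, uniformly on $[\delta,T]$, which legitimises passing the derivative to the limit. What each approach buys: yours is self-contained and quantitative, yielding the decay rate $O(t^{\theta\alpha})$ rather than merely the vanishing of the limit, and it does not lean on the continuity estimate \eqref{3.1}; the paper's is shorter given the machinery already in place and avoids justifying a Marchaud formula for non-differentiable functions --- though it later proves a closely related Marchaud-type representation anyway (Lemma \ref{L3.1}, by Fourier methods, for functions with vanishing data at $t=0$), so your formula sits comfortably within the paper's toolkit.
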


We include a short proof for the completeness.

\begin{proof}
Note first that, in view of the second equality in representation \eqref{1.5},
we can without loss of generality assume $\theta\in(0,1)$.

Further, if $\theta+ \theta\alpha\geq1$, then property \eqref{3.1.00} follows
directly from the first equality in representation \eqref{1.5}, since in this
case the expression under the integral is integrable.

If now $\theta+ \theta\alpha< 1$, then after some smoothing of $u(t)$ (for
example, as it was done in Section \ref{s5.4} below) we obtain a sequence $\{
u_{n}(t) \}$ of the class $C^{\infty}([0,T])$ with
\begin{equation}
u_{n}(t) \rightarrow u(t) \ \ \text{in} \ \ C^{\theta+ \beta}([0,T]),
\ \ \beta\in(0, \theta\alpha).\label{3.1.01}%
\end{equation}
Since each member $u_{n}(t)$ of the sequence has the smoothness, which is
grater, than one, then all functions $u_{n}(t)$ possess property
\eqref{3.1.00}. Moreover, from \eqref{3.1} it follows, in particular, that the
derivatives $D_{\ast t}^{\theta}u_{n}(t)$ converge uniformly on $[0,T]$ (at
least for a subsequence) to the derivative $D_{\ast t}^{\theta}u(t)$ of the
original function. This means that the function $u(t)$ itself satisfies \eqref{3.1.00}.
\end{proof}

%============================

Besides, Example 3.9 in \cite{Umarov} contains an explicit solution to the
simplest Cauchy problem%

\[
D_{\ast t}^{\theta}u(t)=h(t),\quad t\in\lbrack0,T],\quad h(t)\in
C([0,T]),\quad n-1<\theta<n,
\]%
\[
u^{(k)}(0)=a_{k},\quad k=0,1,...,n-1,
\]
and the mentioned solution is expressed as
\[
u(t)=J^{\theta}h(t)+{\sum\limits_{k=0}^{n-1}}\frac{a_{k}}{k!}t^{k},
\]
where
\begin{equation}
J^{\theta}h(t)\equiv\frac{1}{\Gamma(\theta)}{\int\limits_{0}^{t}}%
(t-\tau)^{-1+\theta}h(\tau)d\tau.\label{3.2}%
\end{equation}

And from this, again on the base of Corollary 1 in Section 3, Ch.1 of
\cite{Samko} and on the base of Lemma 13.1 in the same monograph (with the
taking into account the Remark after this lemma), follows a statement, which
is the inverse to Proposition \ref{P3.1}.

\begin{proposition}
\label{P3.1.1} If $D_{\ast t}^{\theta}u(t)\in C^{\theta\alpha}([0,T])$,
$\theta\alpha\in(0,1)$ and $D_{\ast t}^{\theta}u(t)|_{t=0}=0$,
$u^{(k)}(0)=0$, $k=0,...,[\theta]$, then for a noninteger $\theta+\theta
\alpha$ the function $u(t)\in\underline{C}^{\theta+\theta\alpha}([0,T])$ and
\begin{equation}
|u(t)|_{[0,T]}^{(\theta+\theta\alpha)}\leq C(\alpha,\theta,T)|D_{\ast
t}^{\theta}u(t)|_{[0,T]}^{(\theta\alpha)}. \label{3.4}%
\end{equation}
And if $\theta+\theta\alpha>[\theta]+1$, then also
\begin{equation}
u^{([\theta]+1)}(0)=0. \label{3.4.0}%
\end{equation}

\end{proposition}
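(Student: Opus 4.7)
The plan is to invert the Caputo--Jrbashyan derivative by means of the Riemann--Liouville integral $J^{\theta}$ from \eqref{3.2}, and then to apply the fractional-integration smoothing result (Lemma 13.1 in \cite{Samko}) to read off the Hölder regularity. Concretely, set $h(t) \equiv D_{\ast t}^{\theta}u(t)$. Since the initial values $u^{(k)}(0)=0$ for $k=0,\dots,[\theta]$, the explicit formula recalled just before the proposition (Example 3.9 in \cite{Umarov}) gives the unique representation
\begin{equation*}
u(t) \;=\; J^{\theta}h(t) \;=\; \frac{1}{\Gamma(\theta)}\int_{0}^{t}(t-\tau)^{-1+\theta}h(\tau)\,d\tau,
\end{equation*}
where by hypothesis $h\in C^{\theta\alpha}([0,T])$ and $h(0)=0$.

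Next I would invoke Samko's Lemma 13.1 (together with the Remark following it): fractional integration of order $\theta$ raises the Hölder index by $\theta$ on the subspace of functions vanishing at $t=0$, with a matching norm estimate. This gives $J^{\theta}h\in C^{\theta+\theta\alpha}([0,T])$ and the bound \eqref{3.4}. The fact that $u$ lies in the underlined space $\underline{C}^{\theta+\theta\alpha}([0,T])$ follows from pointwise estimates of the form $|u^{(k)}(t)|\le C t^{\theta-k+\theta\alpha}$ for $k=0,\dots,[\theta]$, obtained by differentiating the Abel integral under the singular kernel (so that $u^{(k)}$ corresponds to $J^{\theta-k}h$ applied to $h$, with $h(0)=0$ absorbing the boundary contributions).

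For the additional assertion \eqref{3.4.0}, which is active only when $\theta+\theta\alpha>[\theta]+1$, the idea is to differentiate the representation one more time: $u^{([\theta]+1)}(t)$ is expressible as a fractional derivative of $h$ of order $[\theta]+1-\theta\in(0,1)$, and since $h(0)=0$ with $h\in C^{\theta\alpha}$ one gets a pointwise bound $|u^{([\theta]+1)}(t)|\le C t^{\theta\alpha-([\theta]+1-\theta)}=C t^{\theta+\theta\alpha-[\theta]-1}$, whose exponent is positive exactly in the regime in question, forcing the value at $t=0$ to vanish.

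The main technical obstacle is bookkeeping at the exact exponent $\theta+\theta\alpha$ when $\theta$ is noninteger: one must control the Abel integrals and their differences in a way that is sensitive to the integer part $[\theta+\theta\alpha]$ and to whether this integer part coincides with $[\theta]$ or exceeds it. The noninteger assumption on $\theta+\theta\alpha$ keeps us away from the borderline logarithmic cases, and Lemma 13.1 of \cite{Samko} is precisely tailored to absorb the singular kernel into a Hölder estimate; once its hypotheses are verified (namely $h\in C^{\theta\alpha}$ with $h(0)=0$), the remainder is a routine assembly of the regularity and vanishing claims.
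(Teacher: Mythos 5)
Your proposal follows essentially the same route as the paper, which derives Proposition \ref{P3.1.1} precisely by combining the representation $u=J^{\theta}h$ from Example 3.9 in \cite{Umarov} (valid here since $u^{(k)}(0)=0$, $k=0,\dots,[\theta]$) with the H\"older-smoothing property of fractional integration from \cite{Samko} (Corollary 1 of Section 3, Ch.\,1, and Lemma 13.1 together with the Remark after it), where the hypothesis $D_{\ast t}^{\theta}u(0)=0$ supplies the required vanishing $h(0)=0$. Your additional pointwise bounds $|u^{(k)}(t)|\le Ct^{\theta-k+\theta\alpha}$ and $|u^{([\theta]+1)}(t)|\le Ct^{\{\theta\}+\theta\alpha-1}$ for the underlined-class membership and for \eqref{3.4.0} are correct and consistent with what the paper's citations deliver.
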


We formulate now in the form we need below some known properties of the
fractional Laplace operator in $R^{M}$.

Consider the operator $(-\Delta)^{\frac{\sigma}{2}}$ with a noninteger
$\sigma>0$ on functions $u(x)$, $x\in R^{M}$, where $R^{M}$, $M=1,2,...$, is
Euclidian space. Representation \eqref{1.9} for this operator has the form
\begin{equation}
(-\Delta)^{\frac{\sigma}{2}}u(x)=C_{M,\sigma,m}{\int\limits_{\eta\in R^{M}}%
}\frac{\delta_{\eta,x}^{m}u(x)}{|\eta|^{M+\sigma}}d\eta,\quad m>\sigma
,\label{3.4.1}%
\end{equation}
where, remind, $\delta_{\eta,x}u(x)=u(x+\eta)-u(x)$, $\delta_{\eta,x}%
^{m}u(x)=\delta_{\eta,x}(\delta_{\eta,x}^{m-1}u(x))$ is the difference of
order $m$. Directly from this representation, on the ground of classical
estimates for elliptic potentials (see, for example, \cite{Gilbarg}) it
follows that the operator $(-\Delta)^{\frac{\sigma}{2}}$ is correctly defined
on the space $C^{\sigma+\beta}(R^{M})$, $\beta>0$, with some noninteger
$\beta$ and $\sigma+\beta$. And in addition
\begin{equation}
|(-\Delta)^{\frac{\sigma}{2}}u|_{R^{M}}^{(\beta)}\leq C(M,\sigma
,\beta)|u|_{R^{M}}^{(\sigma+\beta)}\label{3.5}%
\end{equation}
(it is convenient to choose $m>\sigma+\beta$ in \eqref{3.4.1}). Note here,
that the analogous estimate for the corresponding highest H\"{o}lder seminorms
of the functions $u(x)$ and $(-\Delta)^{\frac{\sigma}{2}}u(x)$ is, in fact, a
particular case of Theorem 1 in Section 5.2.3 of \cite{21}. This theorem is
proved for the general homogeneous Besov spaces $B_{\beta}^{a,b}(R^{M})$, and
in the particular case $B_{\beta}^{\infty,\infty}(R^{M})=C^{\beta}(R^{M})$ the
assertion of the theorem gives for the H\"{o}lder seminorms
\begin{equation}
\left\langle (-\Delta)^{\frac{\sigma}{2}}u\right\rangle _{R^{M}}^{(\beta)}\leq
C(M,\sigma,\beta)\left\langle u\right\rangle _{R^{M}}^{(\sigma+\beta
)}.\label{3.5.1}%
\end{equation}

Without touching on here the question about the image of $C^{\sigma+\beta
}(R^{M})$ under the action of the operator $(-\Delta)^{\frac{\sigma}{2}}$ (and
this image does coincide with the whole $C^{\beta}(R^{M})$ even in the case of
the classical Laplace operator), we formulate an assertion, which is an
inverse to inequality \eqref{3.5}. Namely, the following estimate is valid
(see \cite{22}, Theorem 1.2)
\begin{equation}
|u|_{R^{M}}^{(\sigma+\beta)}\leq C(M,\sigma,\beta)\left(  |(-\Delta
)^{\frac{\sigma}{2}}u|_{R^{M}}^{(\beta)}+|u|_{R^{M}}^{(0)}\right)  ,
\label{3.6}%
\end{equation}
and also for lonely the highest seminorm (\cite{21}, Section 5.2.3, Theorem
1)
\begin{equation}
\left\langle u\right\rangle _{R^{M}}^{(\sigma+\beta)}\leq C(M,\sigma
,\beta)\left\langle (-\Delta)^{\frac{\sigma}{2}}u\right\rangle _{R^{M}%
}^{(\beta)}. \label{3.6.1}%
\end{equation}

\section{Operators $(-\Delta)^{\frac{\sigma}{2}}$ on the Schwartz spaces
$S(R^{M})$ and $S^{\prime}(R^{M})$}

\label{sS}

Below we need some assertion about continuity of the operator $(-\Delta
)^{\frac{\sigma}{2}}$ on the Schwartz spaces $S(R^{M})$ and $S^{\prime}%
(R^{M})$. It is known that the operator $(-\Delta)^{\frac{\sigma}{2}}$ does
not map the space $S(R^{M})$ to itself. Therefore this operator is not defined
on the whole space $S^{\prime}(R^{M})$. However, it is possible to highlight
some subclasses of the space $S^{\prime}$, where the mentioned operator is
continuously defined $S^{\prime}\rightarrow S^{\prime}$. We confine ourself
only to a subclass we need below. To a pity, the author could not find in
literature some appropriate for us assertions or proofs. Therefore, for the
completeness, we give here some assertions needed and some simple proofs.

We start with the action of $(-\Delta)^{\frac{\sigma}{2}}$ on $S(R^{M})$.
Denote a usual partial derivative of a function $u(x)\in S(R^{M})$ by
\[
D_{x}^{\overline{\omega}}u(x)\equiv\frac{\partial^{\omega_{1}}\partial
^{\omega_{2}}...\partial^{\omega_{M}}u(x)}{\partial x_{1}^{\omega_{1}}\partial
x_{2}^{\omega_{2}}...\partial x_{M}^{\omega_{M}}},\quad\overline{\omega
}=(\omega_{1},\omega_{2},...,\omega_{M})
\]
and for a function $u(x)\in S(R^{M})$ denote it's (semi)norm
\begin{equation}
\left\langle u\right\rangle _{R^{M}}^{n,p}={\sum_{|\overline{\omega}|\leq n}%
}\sup_{x\in R^{M}}|D_{x}^{\overline{\omega}}u(x)(1+|x|)^{p}|,\quad
n=0,1,2,...,\quad p>0.\label{3.10}%
\end{equation}

\begin{lemma}
\label{LS.1} Let $u(x)\in S(R^{M})$. Then $(-\Delta)^{\frac{\sigma}{2}}u(x)\in
C^{\infty}(R^{M})$ and for any $\varepsilon>0$
\begin{equation}
\left\vert D_{x}^{\overline{\omega}}\left[  (-\Delta)^{\frac{\sigma}{2}%
}u(x)\right]  \right\vert \leq C(M,\sigma,\varepsilon)\left\langle
u\right\rangle _{R^{M}}^{m+|\overline{\omega}|,p(\varepsilon)}%
(1+|x|)^{-M-\sigma+\varepsilon}.\label{S.1}%
\end{equation}

\end{lemma}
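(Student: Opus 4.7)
The plan is as follows. Starting from representation \eqref{3.4.1} with a fixed integer $m > \sigma$, observe that $D_x^{\overline{\omega}}$ commutes with the finite-difference operator $\delta_{\eta,x}^m$, and that the estimates derived below justify differentiation under the integral sign by dominated convergence. This reduces \eqref{S.1} to proving the pointwise bound
\[
|(-\Delta)^{\sigma/2} v(x)| \leq C(M,\sigma,\varepsilon)\langle v\rangle_{R^M}^{m,p(\varepsilon)}(1+|x|)^{-M-\sigma+\varepsilon}
\]
applied to $v = D_x^{\overline{\omega}} u \in S(R^M)$. The $C^\infty$-regularity of $(-\Delta)^{\sigma/2}u$ is then automatic, since the same argument allows any partial derivative to be moved inside the integral.

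The estimate itself is obtained by the standard splitting $R^M = \{|\eta| \leq R\} \cup \{|\eta| > R\}$ with $R = (1+|x|)/(2m)$. On the small-$\eta$ set, use the Lagrange-type bound $|\delta_{\eta,x}^m v(x)| \leq C|\eta|^m \sup_{|y-x|\leq m|\eta|}|D^m v(y)|$; since $|y| \geq |x|/2$ throughout this region, Schwartz decay yields $|D^m v(y)| \leq C\langle v\rangle_{R^M}^{m,p}(1+|x|)^{-p}$. The remaining radial integral $\int_{|\eta|\leq R}|\eta|^{m-M-\sigma}d\eta$ converges near $\eta=0$ because $m > \sigma$, and is of order $R^{m-\sigma}\lesssim(1+|x|)^{m-\sigma}$. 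Hence this piece is at most $C\langle v\rangle_{R^M}^{m,p}(1+|x|)^{m-\sigma-p}$, which is dominated by $(1+|x|)^{-M-\sigma+\varepsilon}$ once $p=p(\varepsilon)$ is chosen large enough.

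On the far set $|\eta| > R$, expand $\delta_{\eta,x}^m v(x) = \sum_{k=0}^m (-1)^{m-k}\binom{m}{k} v(x+k\eta)$ and treat each summand separately. The $k=0$ contribution is $|v(x)|\int_{|\eta|>R}|\eta|^{-M-\sigma}d\eta \lesssim |v(x)|R^{-\sigma} \leq C\langle v\rangle_{R^M}^{0,q}(1+|x|)^{-q-\sigma}$. For $k\geq 1$, change variables $\zeta = x+k\eta$ to rewrite the integral as $k^{\sigma}\int_{|\zeta-x|\geq kR}|v(\zeta)||\zeta-x|^{-M-\sigma}d\zeta$, and split further into $|\zeta|\leq |x|/2$ (where $|\zeta-x|\geq |x|/2$ produces the factor $(1+|x|)^{-M-\sigma}$ against $\|v\|_{L^1}$, which is controlled by $\langle v\rangle^{0,q}$ for $q > M$) and $|\zeta|>|x|/2$ (where the Schwartz decay of $v$ gives an overall factor $\langle v\rangle^{0,q}(1+|x|)^{-q}$ that can be pulled outside the now-convergent tail integral). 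In each subcase one arrives at a bound of the form $C\langle v\rangle_{R^M}^{0,q(\varepsilon)}(1+|x|)^{-M-\sigma+\varepsilon}$.

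The principal obstacle is purely bookkeeping: selecting the splitting radius $R$, the smoothness order $m=[\sigma]+1$, and the Schwartz-decay exponents $p,q$ so that all four sub-estimates above collapse into a single bound with the prescribed decay $(1+|x|)^{-M-\sigma+\varepsilon}$, and producing an admissible final index $p(\varepsilon)$ that works uniformly for the seminorm on the right-hand side of \eqref{S.1}. Beyond this, every individual estimate is either the Lagrange bound on $m$-th order finite differences or a direct use of Schwartz decay.
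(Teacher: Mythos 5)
Your proposal is correct, and in the decisive region it takes a genuinely different route from the paper. The reduction is the same: both proofs commute $D_{x}^{\overline{\omega}}$ with $\delta_{\eta,x}^{m}$ in \eqref{3.4.1}, justify differentiation under the integral a posteriori, and treat the near zone $|\eta|\lesssim|x|/2m$ identically via the Lagrange bound plus Schwartz decay (the paper's $I_{1}$, estimate \eqref{3.13}). The divergence is in how the dangerous configuration $x+k\eta\approx0$ (i.e.\ $\eta\approx-x/k$, where the shifted argument carries no decay) is handled. The paper isolates it geometrically: it introduces the exceptional set $U_{\delta}(x)=\bigcup_{j}B_{|x|^{\delta}}(-x/j)$ from \eqref{S.2}, splits the middle annulus into the parts inside and outside $U_{\delta}(x)$, and bounds the inside piece $I_{2}$ crudely by $|U_{\delta}(x)|\cdot|x|^{-M-\sigma}\leq C(1+|x|)^{-M-\sigma+M\delta}$ (see \eqref{3.15}); this term is exactly where the $\varepsilon=M\delta$ loss in \eqref{S.1} originates. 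You instead expand the difference in the far zone, change variables $\zeta=x+k\eta$ in each summand, and play $\|v\|_{L^{1}}$ (controlled by $\left\langle v\right\rangle^{0,q}$, $q>M$) against the kernel bound $|\zeta-x|^{-M-\sigma}\leq C(1+|x|)^{-M-\sigma}$ on the set $|\zeta|\leq|x|/2$. This absorbs the near-origin mass of $v$ with no measure-theoretic device, needs only a two-region split instead of four, and actually yields the sharp rate $(1+|x|)^{-M-\sigma}$ with no $\varepsilon$ at all once $p$ is taken larger than $M+m-\sigma$ in the near zone --- strictly stronger than \eqref{S.1}. So your argument is both simpler and sharper; what the paper's $U_{\delta}$ decomposition buys is only that it avoids any change of variables and works with sup-bounds throughout.

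One small point of bookkeeping you should make explicit: the inequalities $|y|\geq|x|/2$ in the near zone and $(|x|/2)^{-M-\sigma}$ in the subcase $|\zeta|\leq|x|/2$ degenerate as $|x|\to0$, so the estimate as written is meaningful only for $|x|$ bounded away from zero; for $|x|$ in a fixed ball one bounds $D_{x}^{\overline{\omega}}\left[(-\Delta)^{\frac{\sigma}{2}}u\right]$ crudely (split at radius $1$, take $p=0$), where $(1+|x|)^{-M-\sigma+\varepsilon}$ is bounded below. The paper does the same thing implicitly by restricting its estimates to $|x|>(2m)^{1/(1-\delta)}$. This is a one-line fix, not a gap in the method.
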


\begin{proof}
Note that for an arbitrary $\eta\in R^{M}$
\begin{equation}
D_{x}^{\overline{\omega}}\left(  \delta_{\eta,x}^{m}u(x)\right)  =\delta
_{\eta,x}^{m}\left(  D_{x}^{\overline{\omega}}u(x)\right)  \label{3.9}%
\end{equation}
and consequently, on the ground of the mean value theorem, for any $x,\eta\in
R^{M}$
\begin{equation}
\left\vert D_{x}^{\overline{\omega}}\left(  \delta_{\eta,x}^{m}u(x)\right)
\right\vert =\left\vert \delta_{\eta,x}^{m}\left(  D_{x}^{\overline{\omega}%
}u(x)\right)  \right\vert \leq C(M,m)|D_{x}^{m+|\overline{\omega}%
|}u|_{B_{m|\eta|}(x)}^{(0)}|\eta|^{m},\label{3.8}%
\end{equation}
where $B_{m|\eta|}(x)$ is the centered in $x$ ball of radius $m|\eta|$,
$\left\vert \overline{\omega}\right\vert =\omega_{1}+\omega_{2}+...+\omega
_{M}$,
\[
|D_{x}^{m+|\overline{\omega}|}u|_{B_{m|\eta|}(x)}^{(0)}\equiv{\sum
\limits_{|\rho|=m+|\overline{\omega}|}}\left\vert D_{x}^{\overline{\rho}%
}u(x)\right\vert _{B_{m|\eta|}(x)}^{(0)},\quad\overline{\rho}=(\rho_{1}%
,\rho_{2},...,\rho_{M}).
\]
Note also that for $|\eta|\leq|x|/2m$ and for any $p>0$ the value of
$|D_{x}^{m+|\overline{\omega}|}u|_{B_{m|\eta|}(x)}^{(0)}$ in \eqref{3.8} can
be estimated as follows
\begin{equation}
|D_{x}^{m+|\overline{\omega}|}u|_{B_{m|\eta|}(x)}^{(0)}\leq
C(M,m,p)\left\langle u\right\rangle _{R^{M}}^{m+|\overline{\omega}%
|,p}(1+|x|)^{-p},\quad|\eta|\leq|x|/2m.\label{3.11}%
\end{equation}
Denote, besides, for $\delta\in(0,1/10)$ and for $x\in R^{M}$
\begin{equation}
U_{\delta}(x)\equiv{\bigcup\limits_{j=1}^{m}}B_{|x|^{\delta}}(-\frac{x}%
{j}),\label{S.2}%
\end{equation}
where $B_{|x|^{\delta}}(-x/j)$ is the the centered in $-x/j$ ball of radius
$|x|^{\delta}$. Considering $x$ with $|x|>(2m)^{1/(1-\delta)}$, we see that on
the set $U_{\delta}(x)$ we have
\begin{equation}
\eta\in U_{\delta}(x)\Rightarrow|\eta|>\frac{|x|}{2m}.\label{S.3}%
\end{equation}
Consider now a derivative $D_{x}^{\overline{\omega}}$ of $\ (-\Delta
)^{\frac{\sigma}{2}}u(x)$. Use representation \eqref{3.4.1} and split the
integral over $R^{M}$ into the four domains
\begin{equation}
D_{x}^{\overline{\omega}}\left[  (-\Delta)^{\frac{\sigma}{2}}u(x)\right]
=C_{M,\sigma,m}{\int\limits_{|\eta|\leq\frac{1}{2m}|x|}}\frac{\delta_{\eta
,x}^{m}\left(  D_{x}^{\overline{\omega}}u(x)\right)  }{|\eta|^{M+\sigma}}%
d\eta+\label{3.12}%
\end{equation}%
\[
+C_{M,\sigma,m}{\int\limits_{\left\{  \frac{1}{2m}|x|<|\eta|\leq2m|x|\right\}
\cap U_{\delta}(x)}}\frac{\delta_{\eta,x}^{m}\left(  D_{x}^{\overline{\omega}%
}u(x)\right)  }{|\eta|^{M+\sigma}}d\eta+
\]%
\[
+C_{M,\sigma,m}{\int\limits_{\left\{  \frac{1}{2m}|x|<|\eta|\leq2m|x|\right\}
\backslash U_{\delta}(x)}}\frac{\delta_{\eta,x}^{m}\left(  D_{x}%
^{\overline{\omega}}u(x)\right)  }{|\eta|^{M+\sigma}}d\eta+
\]%
\[
+C_{M,\sigma,m}{\int\limits_{2m|x|<|\eta|}}\frac{\delta_{\eta,x}^{m}\left(
D_{x}^{\overline{\omega}}u(x)\right)  }{|\eta|^{M+\sigma}}d\eta\equiv
I_{1}+I_{2}+I_{3}+I_{4}.
\]
Taking advantage of \eqref{3.8}, \eqref{3.10}, estimate the integral $I_{1}$
as follows, bearing in mind that $m>\sigma$ and that on the domain of
integration $|\eta|\leq\frac{1}{2m}|x|$,
\[
\left\vert I_{1}\right\vert \leq C_{M,\sigma,m,k}\left\langle u\right\rangle
_{R^{M}}^{m+|\overline{\omega}|,p}{\int\limits_{|\eta|\leq\frac{1}{2m}|x|}%
}\frac{|\eta|^{m}(1+|x|)^{-p}}{|\eta|^{M+\sigma}}d\eta\leq
\]%
\[
\leq C_{M,\sigma,m,k}\left\langle u\right\rangle _{R^{M}}^{m+|\overline
{\omega}|,p}(1+|x|)^{-p}{\int\limits_{|\eta|\leq\frac{1}{2m}|x|}}\frac
{1}{|\eta|^{M-(m-\sigma)}}d\eta\leq
\]%
\begin{equation}
\leq C_{M,\sigma,m,k}\left\langle u\right\rangle _{R^{M}}^{m+|\overline
{\omega}|,p}(1+|x|)^{-p+m-\sigma},\label{3.13}%
\end{equation}
where $p>m-\sigma$.

Note now that
\begin{equation}
\delta_{\eta}^{m}\left(  D_{x}^{\overline{\omega}}u(x)\right)  ={\sum
\limits_{j=0}^{m}}C(i,m)D_{x}^{\overline{\omega}}u(x+j\eta).\label{S.4}%
\end{equation}
Taking into account \eqref{S.3} and the fact that the total measure of
$U_{\delta}(x)$ does not exceed $\left\vert U_{\delta}(x)\right\vert \leq
C|x|^{M\delta}$, we have for the integral $I_{2}$ in \eqref{3.12}
\[
\left\vert I_{2}\right\vert \leq C_{M,\sigma,m}\left\langle u\right\rangle
_{R^{M}}^{|\overline{\omega}|,0}{\int\limits_{U_{\delta}(x)}}\frac{1}%
{|\eta|^{M+\sigma}}d\eta\leq
\]%
\[
\leq C_{M,\sigma,m}\left\langle u\right\rangle _{R^{M}}^{|\overline{\omega
}|,0}{\int\limits_{U_{\delta}(x)}}\frac{1}{|x|^{M+\sigma}}d\eta,
\]
which gives under the condition $|x|>(2m)^{1/(1-\delta)}$ the estimate
\begin{equation}
\left\vert I_{2}\right\vert \leq C_{M,\sigma,m}\left\langle u\right\rangle
_{R^{M}}^{|\overline{\omega}|,0}|x|^{M\delta}|x|^{-M-\sigma}\leq
C_{M,\sigma,m}\left\langle u\right\rangle _{R^{M}}^{|\overline{\omega}%
|,0}(1+|x|)^{-M-\sigma+M\delta}.\label{3.15}%
\end{equation}

To estimate $I_{3}$ make again use of \eqref{S.4} and note that on the domain
of integration all the arguments of all the functions in \eqref{S.4} satisfy
the condition $\left\vert x+j\eta\right\vert \geq\left\vert x\right\vert
^{\delta}/m$ and thus, with the help of \eqref{3.11},
\begin{equation}
\left\vert I_{3}\right\vert \leq C_{M,\sigma,m,p}\left\langle u\right\rangle
_{R^{M}}^{|\overline{\omega}|,p}\left(  1+\left\vert x\right\vert \right)
^{-p\delta}{\int\limits_{\frac{1}{2m}|x|<|\eta|\leq2m|x|}}\frac{1}%
{|\eta|^{M+\sigma}}d\eta\leq\label{S.5}%
\end{equation}%
\[
\leq C_{M,\sigma,m,p}\left\langle u\right\rangle _{R^{M}}^{|\overline{\omega
}|,p}\left(  1+\left\vert x\right\vert \right)  ^{-p\delta}|x|^{-M-\sigma
}|x|^{M}\leq C_{M,\sigma,m,p}\left\langle u\right\rangle _{R^{M}}%
^{|\overline{\omega}|,p}\left(  1+\left\vert x\right\vert \right)
^{-p\delta-\sigma}.
\]

Turning at last to $I_{4}$, we avail ourselves of the fact that on the domain
of integration in \eqref{S.4}
\[
\left\vert x+j\eta\right\vert \geq%
%TCIMACRO{\QDATOPD{\{}{.}{|x|,\quad j=0,}{|\eta|/2m,\quad j>0}}%
%BeginExpansion
\genfrac{\{}{.}{0pt}{0}{|x|,\quad j=0,}{|\eta|/2m,\quad j>0}%
%EndExpansion
\]
and therefore
\[
\left\vert \delta_{\eta}^{m}\left(  D_{x}^{\overline{\omega}}u(x)\right)
\right\vert \leq C_{m,p}\left\langle u\right\rangle _{R^{M}}^{|\overline
{\omega}|,p}\left(  1+\left\vert x\right\vert \right)  ^{-p}+C_{m,p}%
\left\langle u\right\rangle _{R^{M}}^{|\overline{\omega}|,p}\left(
1+\left\vert \eta\right\vert \right)  ^{-p}.
\]
Consequently,
\[
\left\vert I_{4}\right\vert \leq C_{M,\sigma,m,p}\left\langle u\right\rangle
_{R^{M}}^{|\overline{\omega}|,p}(1+|x|)^{-p}{\int\limits_{|\eta|>2m|x|}}%
\frac{1}{|\eta|^{M+\sigma}}d\eta+
\]%
\begin{equation}
+C_{M,\sigma,m,p}\left\langle u\right\rangle _{R^{M}}^{|\overline{\omega}%
|,p}{\int\limits_{|\eta|>2m|x|}}\frac{(1+|\eta|)^{-p}}{|\eta|^{M+\sigma}}%
d\eta\leq C_{M,\sigma,m,k}\left\langle u\right\rangle _{R^{M}}^{|\overline
{\omega}|,k}(1+|x|)^{-p-\sigma}\label{S.6}%
\end{equation}

The above estimates for the integrals $I_{1}$ - $I_{2}$ show that, first, we
can differentiate under the sign of the integral in \eqref{3.4.1} and for
$u(x)\in S(R^{M})$ the function $(-\Delta)^{\frac{\sigma}{2}}u(x)$ belongs to
the class $C^{\infty}(R^{M})$. And the second, from \eqref{3.13},
\eqref{3.15}, \eqref{S.5} and \eqref{S.6} it follows that for
$|x|>(2m)^{1/(1-\delta)}$ we have
\[
\left\vert D_{x}^{\overline{\omega}}\left[  (-\Delta)^{\frac{\sigma}{2}%
}u(x)\right]  \right\vert \leq C_{M,\sigma,m,p} \left\langle u\right\rangle
_{R^{M}}^{m+|\overline{\omega}|,p} \times
\]%
\[
\times\left[  (1+|x|)^{-p+m-\sigma}+(1+|x|)^{-M-\sigma+M\delta}+\left(
1+\left\vert x\right\vert \right)  ^{-p\delta-\sigma}+(1+|x|)^{-p-\sigma
}\right]  .
\]
First choosing here sufficiently small $\delta$, $M\delta=\varepsilon$, and
then choosing $p$ sufficiently large, $p=M/\delta+m$, we arrive at the lemma statement.
\end{proof}

The proved lemma permits to define the operator $(-\Delta)^{\frac{\sigma}{2}}$
on a subclass $S_{\sigma}^{\prime}(R^{M})\subset S^{\prime}(R^{M})$ of the
class $S^{\prime}(R^{M})$, where
\begin{equation}
S_{\sigma}^{\prime}(R^{M})=\left\{  u(x)\in S^{\prime}(R^{M}):|u(x)|\leq
C(1+|x|)^{b},\quad C>0,\quad b<\sigma\right\} . \label{S.7}%
\end{equation}

\begin{lemma}
\label{LS.2} Operator $(-\Delta)^{\frac{\sigma}{2}}$ is correctly defined on
$S_{\sigma}^{\prime}(R^{M})\subset S^{\prime}(R^{M})$ in the usual sense of
the space $S^{\prime}(R^{M})$.
\end{lemma}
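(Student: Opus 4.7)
The plan is to define $(-\Delta)^{\sigma/2} u$ for $u \in S_{\sigma}^{\prime}(R^{M})$ through the standard duality with the Schwartz class, namely
\[
\langle (-\Delta)^{\sigma/2} u, \varphi \rangle := \int_{R^{M}} u(x)\, (-\Delta)^{\sigma/2} \varphi(x)\, dx, \qquad \varphi \in S(R^{M}).
\]
For this to produce a bona fide element of $S^{\prime}(R^{M})$, I need to verify two things: (i) the integral on the right converges absolutely for every Schwartz test function $\varphi$, and (ii) the resulting linear functional is continuous in the standard topology of $S(R^{M})$.

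Both points will follow directly from Lemma \ref{LS.1}. Applying that lemma with $\overline{\omega}=0$, I obtain, for any $\varepsilon > 0$,
\[
|(-\Delta)^{\sigma/2} \varphi(x)| \leq C(M,\sigma,\varepsilon)\, \langle \varphi \rangle_{R^{M}}^{m, p(\varepsilon)}\, (1+|x|)^{-M-\sigma+\varepsilon}.
\]
On the other hand, the defining condition of $S_{\sigma}^{\prime}(R^{M})$ in \eqref{S.7} is $|u(x)| \leq C(1+|x|)^{b}$ for some $b < \sigma$. I would then choose $\varepsilon \in (0, \sigma - b)$, so that
\[
|u(x)\,(-\Delta)^{\sigma/2}\varphi(x)| \leq C\, \langle \varphi \rangle_{R^{M}}^{m, p(\varepsilon)}\,(1+|x|)^{b-M-\sigma+\varepsilon},
\]
and the exponent $b-M-\sigma+\varepsilon$ is strictly less than $-M$. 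This yields absolute integrability over $R^{M}$, settling (i). Integrating the same pointwise bound gives
\[
|\langle (-\Delta)^{\sigma/2} u, \varphi \rangle| \leq C(M,\sigma,b,u)\, \langle \varphi \rangle_{R^{M}}^{m, p(\varepsilon)},
\]
which is exactly the seminorm estimate needed for (ii), since the seminorms in \eqref{3.10} generate the topology of $S(R^{M})$. Linearity in $\varphi$ is immediate from the definition.

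There is no substantive obstacle here: the whole argument is driven by matching the polynomial growth exponent $b$ of $u$ against the polynomial decay exponent $-M-\sigma+\varepsilon$ of $(-\Delta)^{\sigma/2}\varphi$ supplied by Lemma \ref{LS.1}. The strict inequality $b<\sigma$ built into the definition of $S_{\sigma}^{\prime}(R^{M})$ leaves exactly enough room to pick $\varepsilon$ small enough that the product is integrable on $R^{M}$, and this compatibility between growth and decay is the single quantitative point that the proof needs to check.
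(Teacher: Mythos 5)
Your proposal is correct and follows essentially the same route as the paper: the paper defines $(-\Delta)^{\frac{\sigma}{2}}u$ by duality, $\left\langle (-\Delta)^{\frac{\sigma}{2}}u,\varphi\right\rangle \equiv\left\langle u,(-\Delta)^{\frac{\sigma}{2}}\varphi\right\rangle$, and notes that this is well defined by Lemma \ref{LS.1} together with the growth bound $|u(x)|\leq C(1+|x|)^{b}$, $b<\sigma$, from \eqref{S.7}. You have merely spelled out the quantitative step the paper leaves implicit — taking $\overline{\omega}=0$ in \eqref{S.1}, choosing $\varepsilon\in(0,\sigma-b)$ so the integrand decays like $(1+|x|)^{b-M-\sigma+\varepsilon}$ with exponent below $-M$, and reading off continuity from the seminorm $\left\langle \varphi\right\rangle _{R^{M}}^{m,p(\varepsilon)}$ — which is exactly the intended argument.
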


This statement follows directly from Lemma \ref{LS.1}, since for $u(x)\in
S_{\sigma}^{\prime}(R^{M})$ and $\varphi(x)\in S(R^{M})$ in the sense of
duality
\[
\left\langle (-\Delta)^{\frac{\sigma}{2}}u(x),\varphi(x)\right\rangle
\equiv\left\langle u(x),(-\Delta)^{\frac{\sigma}{2}}\varphi(x)\right\rangle ,
\]
where the last operation is correctly defined due to Lemma \ref{LS.1} and the
definition of $S_{\sigma}^{\prime}(R^{M})$. As an another simple consequence
of Lemma \ref{LS.1} we have the following assertion, which we present here
without a proof.

\begin{lemma}
\label{LS.3.1} Let for a sequence of measurable locally bounded functions
$\{u_{n}(x)\}\subset S_{\sigma}^{\prime}(R^{M})$ there exist such independent
on $n$ constants $C>0$ and $b<\sigma$ that
\[
|u_{n}(x)|\leq C(1+|x|)^{b},\quad n=1,2,...
\]
Let also there exits such a function $u(x)$, that $u_{n}(x)$ converges to
$u(x)$ on each ball $B_{R}=\{x\in R^{M}:|x|\leq R\}$. Then
\[
u(x)\in S_{\sigma}^{\prime}(R^{M}); \quad u_{n}(x)\rightarrow_{S^{\prime}
(R^{M})}u(x),
\]
\begin{equation}
(-\Delta)^{\frac{\sigma}{2}}u_{n}(x)\rightarrow_{S^{\prime}(R^{M})}%
(-\Delta)^{\frac{\sigma}{2}}u(x),\quad n\rightarrow\infty. \label{S.7.1}%
\end{equation}

\end{lemma}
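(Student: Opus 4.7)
The plan is to verify three statements in order: $u\in S_{\sigma}^{\prime}(R^{M})$, then $u_{n}\to u$ in $S^{\prime}(R^{M})$, and finally $(-\Delta)^{\sigma/2}u_{n}\to(-\Delta)^{\sigma/2}u$ in $S^{\prime}(R^{M})$. The first assertion is immediate: pointwise convergence on every ball $B_{R}$, together with the uniform majorant $|u_{n}(x)|\leq C(1+|x|)^{b}$, forces $|u(x)|\leq C(1+|x|)^{b}$ everywhere; since the same exponent $b<\sigma$ survives in the limit, $u$ belongs to $S_{\sigma}^{\prime}(R^{M})$ by the very definition \eqref{S.7}. Measurability of $u$ comes for free as a pointwise limit of measurable functions.

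For the $S^{\prime}$-convergence $u_{n}\to u$, I fix $\varphi\in S(R^{M})$ and run a standard $\varepsilon/3$ argument on $\langle u_{n}-u,\varphi\rangle=\int_{R^{M}}(u_{n}-u)\varphi\,dx$. Because $\varphi$ decays faster than any polynomial, I choose $N>b+M+1$ so that $|\varphi(x)|\leq C_{N}(1+|x|)^{-N}$; the tail $\int_{|x|>R}|u_{n}-u|\,|\varphi|\,dx$ is then bounded by $2CC_{N}\int_{|x|>R}(1+|x|)^{b-N}dx$, which is smaller than $\varepsilon/3$ uniformly in $n$ once $R$ is large. On the ball $B_{R}$, the integrand is dominated by $C(1+R)^{b}|\varphi(x)|$ and the sequence converges pointwise, so Lebesgue dominated convergence closes this step.

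For the convergence of the fractional Laplacians, I unfold the duality
\[
\langle(-\Delta)^{\sigma/2}(u_{n}-u),\varphi\rangle=\langle u_{n}-u,\psi\rangle,\qquad \psi(x)\equiv(-\Delta)^{\sigma/2}\varphi(x),
\]
and repeat the same splitting. The function $\psi$ is no longer Schwartz-class, but Lemma \ref{LS.1} supplies the pointwise bound $|\psi(x)|\leq C(1+|x|)^{-M-\sigma+\varepsilon}$ for any preassigned $\varepsilon>0$. Combined with $|u_{n}-u|\leq 2C(1+|x|)^{b}$, this yields the global majorant $C(1+|x|)^{b-M-\sigma+\varepsilon}$. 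Choosing $\varepsilon<\sigma-b$ — permissible precisely because the defining inequality $b<\sigma$ of $S_{\sigma}^{\prime}$ holds — makes this majorant integrable on $R^{M}$; the tail is again small uniformly in $n$ and dominated convergence on $B_{R}$ finishes the argument.

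The delicate point, and the reason Lemma \ref{LS.1} is needed at all, is that transposing $(-\Delta)^{\sigma/2}$ onto the test function produces a $\psi$ that decays only polynomially rather than rapidly. The decay rate provided by Lemma \ref{LS.1} is essentially sharp, and the whole scheme survives only thanks to the strict inequality $b<\sigma$ built into the definition of $S_{\sigma}^{\prime}(R^{M})$, which buys just enough room in the exponent $b-M-\sigma+\varepsilon$ to render the integrand absolutely integrable. Once that balance is set, the rest is bookkeeping.
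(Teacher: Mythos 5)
Your proof is correct, and it is exactly the argument the paper has in mind: Lemma \ref{LS.3.1} is stated there without proof, explicitly as a simple consequence of Lemma \ref{LS.1}, and your route — transposing $(-\Delta)^{\frac{\sigma}{2}}$ onto the test function via the duality of Lemma \ref{LS.2}, invoking the decay estimate \eqref{S.1} with $\varepsilon<\sigma-b$ (which is available precisely because $b<\sigma$ in the definition \eqref{S.7}), and closing with a uniform tail estimate plus dominated convergence on balls — is precisely that consequence. No gaps; the argument is complete as written.
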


Define now an anisotropic analog of the class $S_{\sigma}^{\prime}(R^{M})$,
which is related to the studying of the operator
\[
\emph{M}u\equiv{\sum_{k=1}^{r}}(-\Delta_{z_{k}})^{\frac{\sigma_{k}}{2}}u
\]
that is to the space operator in equation \eqref{1.1}. Namely, we denote
\[
S_{\overline{\sigma}}^{\prime}(R^{N})\equiv
\]%
\begin{equation}
\left\{  u(x)\in S^{\prime}(R^{N}):|u(x)|\leq C{\prod\limits_{k=1}^{r}%
}(1+|z_{k}|)^{b_{k}},C>0,b_{k}<\sigma_{k},k=\overline{1,r}\right\}
,\label{S.7.2}%
\end{equation}
where $z_{k}$ are defined in \eqref{1.0}. Directly from lemmas \ref{LS.1} -
\ref{LS.3.1} we infer the following assertion.

\begin{proposition}
\label{PS.1} Let for a sequence of measurable locally bounded functions
$\{u_{n}(x)\}\subset S_{\overline{\sigma}}^{\prime}(R^{N})$ there exist such
independent on $n$ constants $C>0$ and $b_{k}<\sigma_{k}$, $k=1,...,r$ that
\[
|u_{n}(x)|\leq C{\prod\limits_{k=1}^{r}}(1+|z_{k}|)^{b_{k}},\quad n=1,2,...
\]
If there exists such a function $u(x)$ that $u_{n}(x)$ uniformly converges to
$u(x)$ on each ball $B_{R}=\{x\in R^{N}:|x|\leq R\}$, then
\[
u(x)\in S_{\overline{\sigma}}^{\prime}(R^{N});\quad u_{n}(x)\rightarrow
_{S^{\prime}(R^{N})}u(x),
\]%
\begin{equation}
{\sum_{k=1}^{r}}(-\Delta_{z_{k}})^{\frac{\sigma_{k}}{2}}u_{n}(x)\rightarrow
_{S^{\prime}(R^{N})}{\sum_{k=1}^{r}}(-\Delta_{z_{k}})^{\frac{\sigma_{k}}{2}%
}u(x),\quad n\rightarrow\infty.\label{S.7.3}%
\end{equation}

\end{proposition}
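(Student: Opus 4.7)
The plan is to adapt the proof of Lemma \ref{LS.3.1} to the anisotropic setting dictated by the weight $\prod_{k=1}^{r}(1+|z_{k}|)^{b_{k}}$ in \eqref{S.7.2} and to the finite sum of operators $(-\Delta_{z_{k}})^{\sigma_{k}/2}$. First I would check that $u\in S_{\overline{\sigma}}^{\prime}(R^{N})$: uniform convergence on every ball $B_{R}$ forces pointwise convergence everywhere, so the bound $|u_{n}(x)|\leq C\prod_{k=1}^{r}(1+|z_{k}|)^{b_{k}}$ passes to the limit, and $u$ is measurable as a pointwise limit of measurable functions.

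Next, for the convergence $u_{n}\to u$ in $S^{\prime}(R^{N})$, I split any test pairing with $\varphi\in S(R^{N})$ into the part over $B_{R}$, which tends to $0$ by the uniform convergence on that ball, and the part over $R^{N}\setminus B_{R}$. On the complement, the uniform polynomial bound on $u_{n}-u$ is dominated by the Schwartz decay of $\varphi$, so the tail contribution is bounded uniformly in $n$ by a quantity that tends to $0$ as $R\to\infty$. Combining the two estimates yields the claim.

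The crucial step is \eqref{S.7.3}. By linearity I treat each summand separately and use the definition
\[
\langle(-\Delta_{z_{k}})^{\sigma_{k}/2}u_{n},\varphi\rangle=\langle u_{n},(-\Delta_{z_{k}})^{\sigma_{k}/2}\varphi\rangle,\qquad\varphi\in S(R^{N}).
\]
The key ingredient is an anisotropic analog of Lemma \ref{LS.1}: regarding $\varphi$ as a Schwartz function of $z_{k}\in R^{N_{k}}$ with the other groups $z_{j}$, $j\neq k$, acting as parameters (in which $\varphi$ retains Schwartz decay, uniformly in $z_{k}$), the same four-region splitting as in the proof of Lemma \ref{LS.1} gives, for every $\varepsilon>0$ and every choice of exponents $p_{j}>0$,
\[
|(-\Delta_{z_{k}})^{\sigma_{k}/2}\varphi(x)|\leq C_{\varepsilon,p}(1+|z_{k}|)^{-N_{k}-\sigma_{k}+\varepsilon}\prod_{j\neq k}(1+|z_{j}|)^{-p_{j}}.
\]
Choosing $\varepsilon<\sigma_{k}-b_{k}$ and $p_{j}>N_{j}+b_{j}$ dominates $(u_{n}-u)(-\Delta_{z_{k}})^{\sigma_{k}/2}\varphi$ by a fixed integrable function, so the same ball-versus-complement split used above, combined with uniform convergence on each $B_{R}$, yields \eqref{S.7.3}.

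The main obstacle is the anisotropic pointwise bound on $(-\Delta_{z_{k}})^{\sigma_{k}/2}\varphi$; its derivation is essentially the computation from Lemma \ref{LS.1} performed in the group $z_{k}$, with the additional bookkeeping that the constants arising from the seminorms $\langle\cdot\rangle^{n,p}$ of $\varphi$ in $z_{k}$ depend on the parameters $z_{j}$, $j\neq k$, only through Schwartz-type weights $\prod_{j\neq k}(1+|z_{j}|)^{-p_{j}}$. This is routine since $\varphi\in S(R^{N})$ has arbitrary joint polynomial decay in all variables; once the estimate is in place, the rest of the proof is a standard combination of uniform convergence on compacta with dominated convergence on the complement.
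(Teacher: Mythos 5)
Your proposal is correct and follows exactly the route the paper intends: the paper derives Proposition \ref{PS.1} ``directly from lemmas \ref{LS.1}--\ref{LS.3.1}'', i.e.\ via the duality definition $\left\langle (-\Delta_{z_{k}})^{\frac{\sigma_{k}}{2}}u_{n},\varphi\right\rangle =\left\langle u_{n},(-\Delta_{z_{k}})^{\frac{\sigma_{k}}{2}}\varphi\right\rangle$, the Lemma \ref{LS.1}-type pointwise decay estimate applied groupwise in $z_{k}$ with the remaining groups as parameters, and domination together with uniform convergence on balls. You have merely written out the anisotropic bookkeeping (the bound $|(-\Delta_{z_{k}})^{\sigma_{k}/2}\varphi(x)|\leq C(1+|z_{k}|)^{-N_{k}-\sigma_{k}+\varepsilon}\prod_{j\neq k}(1+|z_{j}|)^{-p_{j}}$ with $\varepsilon<\sigma_{k}-b_{k}$, $p_{j}>N_{j}+b_{j}$) that the paper leaves implicit, and this is done correctly.
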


\section{Operators $D_{\ast t}^{\theta}$ and $(-\Delta)^{\frac{\sigma}{2}}$ on
the Lizorkin spaces $\Phi(R^{M})$ and $\Phi^{\prime}(R^{M})$}

\label{sfi}

In this section we present, for the completeness, some known facts about the
acting of fractional differential operators on the Lizorkin spaces $\Phi
(R^{M})$ and $\Phi^{\prime}(R^{M})$. These spaces are a subspace and a
quotient space correspondingly of the spaces $S(R^{M})$ and $S^{\prime}%
(R^{M})$. They permit to generalize known properties of usual differential
operators on the spaces $S(R^{M})$ and $S^{\prime}(R^{M})$ to fractional
differential operators. We present here briefly some necessary for us
definitions and facts according to the corresponding presentation in
\cite{Samko}, section 8.2, where the reader can find more detailed information.

Define first the space $\Psi(R^{M})$, This space is the closed subspace of
$S(R^{M})$, consisting of function from $S(R^{M})$ that vanish at $x=0$
together with all their derivatives. That is
\begin{equation}
\Psi(R^{M})\equiv\left\{  \psi(x)\in S(R^{M}):\quad D_{x}^{\overline{\omega}%
}\psi(0)=0,\quad|\overline{\omega}|=0,1,2,...\right\}  . \label{fi.1}%
\end{equation}

The Lizorkin space $\Phi(R^{M})$ is the closed subspace of $S(R^{M})$,
consisting of functions from $S(R^{M})$ with the Fourier images from the space
$\Psi(R^{M})$. That is
\begin{equation}
\Phi(R^{M})\equiv\left\{  \varphi(x)\in S(R^{M}):\quad\widehat{\varphi}%
(\xi)\in\Psi(R^{M})\right\}  .\label{fi.2}%
\end{equation}
Besides, the space $\Phi(R^{M})$ can be equivalently defined as the closed
subspace of $S(R^{M})$, consisting of function that are orthogonal to all
polynomials. That is
\[
\varphi(x)\in\Phi(R^{M})\Leftrightarrow{\int\limits_{R^{M}}}\varphi
(x)P(x)dx=0,\forall P(x),
\]
where $P(x)$ is an arbitrary polynomial.

Since $\Psi(R^{M})$ and $\Phi(R^{M})$ closed subspaces of $S(R^{M})$, their
topologies are induced by the topology of $S(R^{M})$.

The dual to the space $\Psi(R^{M})$ is denoted by $\Psi^{\prime}(R^{M})$ and
it is the quotient space of $S^{\prime}(R^{M})$ along the closed subspace of
$S^{\prime}(R^{M})$, consisting of distributions with the supports at $x=0$.
It is well known that such distributions are exactly those that are finite
linear combinations of the Dirac function $\delta(x)$ with the support at
$x=0$ and it's derivatives.

At the same time, the dual to $\Phi(R^{M})$ space $\Phi^{\prime}(R^{M})$, is
the quotient space of $S^{\prime}(R^{M})$ along the space of polynomials
$P(x)$ (which is a closed subspace of $S^{\prime}(R^{M})$). That is the
elements of $\Phi^{\prime}(R^{M})$ are exactly the equivalence classes of the
distributions from $S^{\prime}(R^{M})$ modulo polynomials.

The topologies in the spaces $\Psi^{\prime}(R^{M})$ and $\Phi^{\prime}(R^{M})$
are the corresponding quotient topologies.

Fractional differential operators $D_{\ast t}^{\theta}$ and $(-\Delta
)^{\frac{\sigma}{2}}$ are defined and continuous from the space $\Phi(R^{M})$
to itself and from $\Phi^{\prime}(R^{M})$ to itself. For the operator
$(-\Delta)^{\frac{\sigma}{2}}$ this fact follows directly from the definition
of the space $\Phi(R^{M})$ and from definition \eqref{1.8}. The same is also
valid for the operator $D_{\ast t}^{\theta}$, when we consider it on functions
that vanish at $t=0$ together with all their derivatives with respect to $t$
up to the order $[\theta]$. At that one needs to take into account relation
\eqref{4.27} and Remark \ref{R4.1} below in section \ref{s4}.

According to the definitions of $\Psi(R^{M})$ and $\Phi(R^{M})$, the Fourier
transform is a linear homeomorphism from $\Phi(R^{M})$ to $\Psi(R^{M})$ and,
correspondingly, from $\Phi^{\prime}(R^{M})$ to $\Psi^{\prime}(R^{M})$.

Moreover, for the distributions from $\Phi^{\prime}(R^{N+1})$ all formulas of
the Fourier transform for fractional derivatives are preserved. That is if
$u(x,t)\in\Phi^{\prime}(R^{N+1})$, then, analogously to \eqref{1.8},
\begin{equation}
\widehat{(-\Delta_{z_{k}})^{\frac{\sigma_{k}}{2}}u(x,t)}(\xi,\xi_{0}%
)=|\eta_{k}|^{\sigma_{k}}\widehat{u}(\xi,\xi_{0}). \label{fi.3}%
\end{equation}
And if $u(x,t)\in\Phi^{\prime}(R^{N+1})$ and it's support along with the
supports of all it's derivatives in $t$ up to the order $[\theta]$ are
included in the set $\{t\geq0\}$, then
\begin{equation}
\widehat{D_{\ast t}^{\theta}u(x,t)}(\xi,\xi_{0})=(i\xi_{0})^{[\theta]}%
(i\xi_{0})^{\{\theta\}}\widehat{u}(\xi,\xi_{0}). \label{fi.4}%
\end{equation}

We are going to make use of these circumstances below at the proof of the
uniqueness for problem \eqref{1.1}, \eqref{1.2}.

\section{Operators $D_{\ast t}^{\theta}$ and $(-\Delta)^{\frac{\sigma}{2}}$ on
H\"{o}lder spaces $C^{\overline{\sigma}(1+\alpha),\theta+\theta\alpha
}(\overline{R_{T}^{N}})$ and $C^{\overline{\sigma}(1+\alpha)}(R^{N})$}

\label{Hold}

Turning to the H\"{o}lder spaces, introduced in \eqref{2.8} and \eqref{2.10},
we note that these spaces are defined in terms of the corresponding
coordinate-wise smoothness. Therefore on the grounds of \eqref{2.4},
\eqref{3.5} and \eqref{3.6} the following assertion can be obtained.

\begin{proposition}
\label{P3.5} The operator $(-\Delta_{z_{k}})^{\frac{\sigma_{k}}{2}}$,
$k=1,...,r$ is a bounded linear operator from $C^{\overline{\sigma}(1+\alpha
)}(R^{N})$ to $C^{\overline{\sigma}\alpha}(R^{N})$. That is for $u(x)\in
C^{\overline{\sigma}(1+\alpha)}(R^{N})$%
\begin{equation}
|(-\Delta_{z_{k}})^{\frac{\sigma_{k}}{2}}u|_{R^{N}}^{(\overline{\sigma}%
\alpha)}\leq C(N,N_{k},\overline{\sigma},\alpha)|u|_{R^{N}}^{\left(
\overline{\sigma}(1+\alpha)\right)  } \label{3.18}%
\end{equation}
and for the H\"{o}lder seminorm of the function $(-\Delta_{z_{k}}
)^{\frac{\sigma_{k}}{2}}u$ with respect to another group of variables $z_{l}$,
$l \neq k$, the following interpolation inequality is valid with an arbitrary
$\varepsilon>0$
\begin{equation}
\left\langle (-\Delta_{z_{k}})^{\frac{\sigma_{k}}{2}}u\right\rangle
_{z_{l},R^{N}}^{(\sigma_{l}\alpha)}\leq C\varepsilon^{\alpha}\left\langle
u\right\rangle _{z_{k},R^{N}}^{(\sigma_{k}+\sigma_{k}\alpha)}+\frac
{C}{\varepsilon}\left\langle u\right\rangle _{z_{l},R^{N}}^{(\sigma_{l}%
+\sigma_{l}\alpha)}. \label{3.18.1}%
\end{equation}

\end{proposition}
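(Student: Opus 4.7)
The plan is to reduce the anisotropic estimate \eqref{3.18} to the isotropic bound \eqref{3.5} by slicing along the group $z_k$, and to handle the seminorms in the other groups $z_l$, $l\neq k$, by a direct calculation with the pointwise representation \eqref{3.4.1} that exploits the commutativity of $(-\Delta_{z_k})^{\sigma_k/2}$ with any difference in the variables outside $z_k$. For the parts of $|(-\Delta_{z_k})^{\sigma_k/2} u|_{R^N}^{(\overline{\sigma}\alpha)}$ coming from the supremum and from the $z_k$-direction, I would fix all variables outside $z_k$ and view $u$ as a function of $z_k\in R^{N_k}$, which lies slice-wise in the isotropic space $C^{\sigma_k(1+\alpha)}(R^{N_k})$ with norm bounded by $|u|_{R^N}^{(\overline{\sigma}(1+\alpha))}$. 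Estimate \eqref{3.5} with $\beta=\sigma_k\alpha$ then gives the slice-wise bound $|(-\Delta_{z_k})^{\sigma_k/2} u|_{R^{N_k}}^{(\sigma_k\alpha)}\leq C|u|_{R^{N_k}}^{(\sigma_k(1+\alpha))}$, and taking the supremum over the remaining variables handles these components of the target norm.

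The main work is the $z_l$-seminorm for $l\neq k$. Using the equivalent difference form \eqref{2.4} of the seminorm together with the representation \eqref{3.4.1} and commuting the difference through the absolutely convergent integral,
\begin{equation*}
\delta_{h,x_i}^{m_l}(-\Delta_{z_k})^{\sigma_k/2} u(x)=C_{N_k,\sigma_k,m}\int_{R^{N_k}}\frac{\delta_{\eta,z_k}^{m}\delta_{h,x_i}^{m_l}u(x)}{|\eta|^{N_k+\sigma_k}}\,d\eta,\qquad x_i\in z_l.
\end{equation*}
For $m>\sigma_k(1+\alpha)$ and $m_l>\sigma_l(1+\alpha)$ the integrand admits the two independent bounds $|\delta_{\eta,z_k}^{m}\delta_{h,x_i}^{m_l}u|\leq C|\eta|^{\sigma_k(1+\alpha)}A$ (by viewing $\delta_{h,x_i}^{m_l}u$ as a function of $z_k$ alone and invoking \eqref{2.4}) and $|\delta_{\eta,z_k}^{m}\delta_{h,x_i}^{m_l}u|\leq Ch^{\sigma_l(1+\alpha)}B$ (by the symmetric viewpoint in $x_i$), where $A:=\langle u\rangle_{z_k,R^N}^{(\sigma_k(1+\alpha))}$ and $B:=\langle u\rangle_{z_l,R^N}^{(\sigma_l(1+\alpha))}$. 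Splitting the integral at the radius $R_0$ determined by $R_0^{\sigma_k(1+\alpha)}A=h^{\sigma_l(1+\alpha)}B$ and using the first bound for $|\eta|\leq R_0$ and the second for $|\eta|>R_0$, a direct computation (relying only on $\sigma_k(1+\alpha)-\sigma_k=\sigma_k\alpha$ and the definition of $R_0$) shows that both pieces equal $Ch^{\sigma_l\alpha}A^{1/(1+\alpha)}B^{\alpha/(1+\alpha)}$.

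Dividing by $h^{\sigma_l\alpha}$, taking the supremum over $h$ and $x$, and summing over $x_i\in z_l$ gives $\langle(-\Delta_{z_k})^{\sigma_k/2}u\rangle_{z_l,R^N}^{(\sigma_l\alpha)}\leq CA^{1/(1+\alpha)}B^{\alpha/(1+\alpha)}$. The crude bound $A^{1/(1+\alpha)}B^{\alpha/(1+\alpha)}\leq A+B$ combined with the first step yields \eqref{3.18}, while Young's inequality $ab\leq \varepsilon^p a^p/p+\varepsilon^{-q}b^q/q$ applied with $p=1+\alpha$, $q=(1+\alpha)/\alpha$, $a=A^{1/(1+\alpha)}$, $b=B^{\alpha/(1+\alpha)}$, followed by the rescaling $\varepsilon\mapsto\varepsilon^{\alpha/(1+\alpha)}$, turns the same quantity into $C\varepsilon^{\alpha}A+C\varepsilon^{-1}B$, which is \eqref{3.18.1}. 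The delicate point is that the two exponents $\sigma_k(1+\alpha)$ and $\sigma_l(1+\alpha)$ have to match correctly for both halves of the integration split to produce the same power $h^{\sigma_l\alpha}$; this matching is exactly what forces the anisotropy factor $(1+\alpha)$ to be the \emph{same} across all groups of space variables.
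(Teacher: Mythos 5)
Your proposal is correct and follows essentially the same route as the paper's proof: slice-wise application of the isotropic bound \eqref{3.5} for the $z_k$-components, and for $l\neq k$ the representation \eqref{3.4.1} with the $z_l$-difference carried inside the integral, the two bounds on the mixed double difference via \eqref{2.4}, and a splitting of the $\eta$-integral at a radius scaling like $|h|^{\sigma_l/\sigma_k}$. The only (cosmetic) difference is that you balance the two bounds at the optimal radius and recover \eqref{3.18.1} from the resulting multiplicative inequality by Young's inequality, whereas the paper keeps a free parameter $\varepsilon$ in the split radius and obtains the additive form \eqref{3.18.1} directly after the relabeling $\varepsilon^{\sigma_k}\rightarrow\varepsilon$ --- the two endgames are equivalent.
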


\begin{proof}
Firstly, as for the smoothness with respect to the group $z_{k}$, from
\eqref{3.5} it follows that uniformly in the other coordinate groups
\begin{equation}
|(-\Delta_{z_{k}})^{\frac{\sigma_{k}}{2}}u|_{R^{N}}^{(0)}+\left\langle
(-\Delta_{z_{k}})^{\frac{\sigma_{k}}{2}}u\right\rangle _{z_{k},R^{N}}%
^{(\sigma_{k}\alpha)}\leq C(N_{k},\sigma_{k},\alpha)|u|_{R^{N}}^{\left(
\overline{\sigma}(1+\alpha)\right)  }.\label{3.19}%
\end{equation}
To show the estimates of the H\"{o}lder seminorms for the function
$(-\Delta_{z_{k}})^{\frac{\sigma_{k}}{2}}u(x)$ with respect to other groups
$z_{l}$, $l\neq k$, we again use representation \eqref{3.4.1}%
\begin{equation}
(-\Delta_{z_{k}})^{\frac{\sigma_{k}}{2}}u(x)=C_{N_{k},\sigma_{k},m}%
{\int\limits_{R^{N_{k}}}}\frac{\delta_{\eta,z_{k}}^{m}u(x)}{|\eta
|^{N_{k}+\sigma_{k}}}d\eta,\quad m>\sigma_{k}+\sigma_{k}\alpha.\label{3.20}%
\end{equation}
To estimate the seminorm $\left\langle (-\Delta_{z_{k}})^{\frac{\sigma_{k}}%
{2}}u\right\rangle _{z_{l},R^{N}}^{(\sigma_{l}\alpha)}$ according to
\eqref{2.4} we carry the difference with respect to $z_{l}$ inside the
integral in \eqref{3.20},
\[
\left\langle (-\Delta_{z_{k}})^{\frac{\sigma_{k}}{2}}u\right\rangle
_{z_{l},R^{N}}^{(\sigma_{l}\alpha)}=
\]%
\begin{equation}
=\sup_{x\in R^{N},h\in R^{N_{l}},h\neq0}\frac{C_{N_{k},\sigma_{k},m}%
}{|h|^{\sigma_{l}\alpha}}{\int\limits_{R^{N_{k}}}}\frac{\delta_{h,z_{l}}%
^{p}\delta_{\eta,z_{k}}^{m}u(x)}{|\eta|^{N_{k}+\sigma_{k}}}d\eta,\quad
p>\sigma_{l}+\sigma_{l}\alpha.\label{3.21}%
\end{equation}
Estimate the expression under the $\sup$-sign in \eqref{3.21} by splitting the
integral into the two parts
\[
{\int\limits_{R^{N_{k}}}}\frac{\delta_{h,z_{l}}^{m}\delta_{\eta,z_{k}}%
^{m}u(x)}{|\eta|^{N_{k}+\sigma_{k}}|h|^{\sigma_{l}\alpha}}d\eta={\int
\limits_{|\eta|\leq\varepsilon|h|^{\frac{\sigma_{l}}{\sigma_{k}}}}}%
\frac{\delta_{h,z_{l}}^{m}\delta_{\eta,z_{k}}^{m}u(x)}{|\eta|^{N_{k}%
+\sigma_{k}}|h|^{\sigma_{l}\alpha}}d\eta+
\]%
\[
+{\int\limits_{|\eta|\geq\varepsilon|h|^{\frac{\sigma_{l}}{\sigma_{k}}}}}%
\frac{\delta_{h,z_{l}}^{m}\delta_{\eta,z_{k}}^{m}u(x)}{|\eta|^{N_{k}%
+\sigma_{k}}|h|^{\sigma_{l}\alpha}}d\eta\equiv I_{1}+I_{2},
\]
where $\varepsilon>0$ is arbitrary but fixed. Note that according to
\eqref{2.4} and according to the definition of the finite differences
\[
|\delta_{h,z_{l}}^{m}\delta_{\eta,z_{k}}^{m}u(x)|\leq C\left\langle
u\right\rangle _{z_{l},R^{N}}^{(\sigma_{l}+\sigma_{l}\alpha)}|h|^{\sigma
_{l}+\sigma_{l}\alpha},
\]%
\begin{equation}
|\delta_{h,z_{l}}^{m}\delta_{\eta,z_{k}}^{m}u(x)|\leq C\left\langle
u\right\rangle _{z_{k},R^{N}}^{(\sigma_{k}+\sigma_{k}\alpha)}|\eta
|^{\sigma_{k}+\sigma_{k}\alpha}.\label{3.22}%
\end{equation}
Making use of the second of these inequalities, we estimate the integral
$I_{1}$ as follows
\[
|I_{1}|\leq C\left\langle u\right\rangle _{z_{k},R^{N}}^{(\sigma_{k}%
+\sigma_{k}\alpha)}{\int\limits_{|\eta|\leq\varepsilon|h|^{\frac{\sigma_{l}%
}{\sigma_{k}}}}}\frac{|\eta|^{\sigma_{k}+\sigma_{k}\alpha}}{|\eta
|^{N_{k}+\sigma_{k}}|h|^{\sigma_{l}\alpha}}d\eta=
\]%
\[
=\frac{C\left\langle u\right\rangle _{z_{k},R^{N}}^{(\sigma_{k}+\sigma
_{k}\alpha)}}{|h|^{\sigma_{l}\alpha}}{\int\limits_{|\eta|\leq\varepsilon
|h|^{\frac{\sigma_{l}}{\sigma_{k}}}}}\frac{d\eta}{|\eta|^{N_{k}-\sigma
_{k}\alpha}}=
\]%
\begin{equation}
=C\left\langle u\right\rangle _{z_{k},R^{N}}^{(\sigma_{k}+\sigma_{k}\alpha
)}|h|^{-\sigma_{l}\alpha}\left(  \varepsilon|h|^{\frac{\sigma_{l}}{\sigma_{k}%
}}\right)  ^{\sigma_{k}\alpha}=C\varepsilon^{\sigma_{k}\alpha}\left\langle
u\right\rangle _{z_{k},R^{N}}^{(\sigma_{k}+\sigma_{k}\alpha)}.\label{3.23}%
\end{equation}
Analogously, taking advantage of the first inequality in \eqref{3.22}, we have
for the integral $I_{2}$,
\[
|I_{2}|\leq C\left\langle u\right\rangle _{z_{l},R^{N}}^{(\sigma_{l}%
+\sigma_{l}\alpha)}{\int\limits_{|\eta|\geq\varepsilon|h|^{\frac{\sigma_{l}%
}{\sigma_{k}}}}}\frac{|h|^{\sigma_{l}+\sigma_{l}\alpha}}{|\eta|^{N_{k}%
+\sigma_{k}}|h|^{\sigma_{l}\alpha}}d\eta=
\]%
\[
=C\left\langle u\right\rangle _{z_{l},R^{N}}^{(\sigma_{l}+\sigma_{l}\alpha
)}|h|^{\sigma_{l}}{\int\limits_{|\eta|\geq\varepsilon|h|^{\frac{\sigma_{l}%
}{\sigma_{k}}}}}\frac{d\eta}{|\eta|^{N_{k}+\sigma_{k}}}=
\]%
\begin{equation}
=C\left\langle u\right\rangle _{z_{l},R^{N}}^{(\sigma_{l}+\sigma_{l}\alpha
)}|h|^{\sigma_{l}}\left(  \varepsilon|h|^{\frac{\sigma_{l}}{\sigma_{k}}%
}\right)  ^{-\sigma_{k}}=\frac{C}{\varepsilon^{\sigma_{k}}}\left\langle
u\right\rangle _{z_{l},R^{N}}^{(\sigma_{l}+\sigma_{l}\alpha)}.\label{3.24}%
\end{equation}
Estimate \eqref{3.18.1} follows now from \eqref{3.21}, \eqref{3.23} and
\eqref{3.24} after the change of variables $\varepsilon^{\sigma_{k}%
}\rightarrow\varepsilon$. And this, together with \eqref{3.19}, completes the proof.
\end{proof}

We formulate now some more general assertion as a theorem.

\begin{theorem}
\label{Tdop1} Let us be given a number $\rho_{k}\in(0,\sigma_{k}+\sigma
_{k}\alpha)$ for some $k\in\{1,2,...,r\}$ so that it can be expressed as
$\rho_{k}=(1-\omega)\sigma_{k}(1+\alpha)$, $\omega\in(0,1)$. Denote
\begin{equation}
\overline{\rho}\equiv(1-\omega)\overline{\sigma}(1+\alpha). \label{3dop1}%
\end{equation}
The operator $(-\Delta_{z_{k} })^{\frac{\rho_{k}}{2}}$, $k=1,...,r$, is a
bounded linear operator from $C^{\overline{\sigma}(1+\alpha)}(R^{N})$ to
$C^{\omega\overline{\sigma}(1+\alpha)}(R^{N})$ that is for $u(x)\in
C^{\overline{\sigma}(1+\alpha)}(R^{N})$
\begin{equation}
|(-\Delta_{z_{k}})^{\frac{\rho_{k}}{2}}u|_{R^{N}}^{(\omega\overline{\sigma
}(1+\alpha))}\leq C(N,N_{k},\omega,\overline{\sigma},\alpha)|u|_{R^{N}%
}^{\left(  \overline{\sigma}(1+\alpha)\right)  }. \label{3dop2}%
\end{equation}
Moreover, for the H\"{o}lder seminorm of the function $(-\Delta_{z_{k}}
)^{\frac{\rho_{k}}{2}}u$ with respect to some another group of the variables
$z_{l}$, $l \neq k$, we have the following interpolation inequality with an
arbitrary $\varepsilon>0$
\begin{equation}
\left\langle (-\Delta_{z_{k}})^{\frac{\rho_{k}}{2}}u\right\rangle
_{z_{l},R^{N}}^{(\omega\sigma_{l}(1+\alpha))}\leq C\varepsilon^{\frac{\omega
}{1-\omega}}\left\langle u\right\rangle _{z_{k},R^{N}}^{(\sigma_{k}+\sigma
_{k}\alpha)}+\frac{C}{\varepsilon}\left\langle u\right\rangle _{z_{l},R^{N}%
}^{(\sigma_{l}+\sigma_{l}\alpha)}. \label{3dop3}%
\end{equation}

\end{theorem}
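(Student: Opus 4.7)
The proof follows the pattern of Proposition \ref{P3.5}, adapted to the new parameters. The essential arithmetic identity is
\[
\rho_{k}+\omega\sigma_{k}(1+\alpha)=(1-\omega)\sigma_{k}(1+\alpha)+\omega\sigma_{k}(1+\alpha)=\sigma_{k}(1+\alpha),
\]
so applying the isotropic gain-estimate \eqref{3.5} (together with its seminorm version \eqref{3.5.1}) in the $z_{k}$-variable alone, with fractional order $\rho_{k}$ and resulting H\"older exponent $\omega\sigma_{k}(1+\alpha)$, extracts exactly the $z_{k}$-smoothness available in $u$, uniformly in the other coordinate groups. This yields both the sup-norm and the $z_{k}$-H\"older-seminorm contributions to \eqref{3dop2}:
\[
|(-\Delta_{z_{k}})^{\rho_{k}/2}u|_{R^{N}}^{(0)}+\left\langle (-\Delta_{z_{k}})^{\rho_{k}/2}u\right\rangle _{z_{k},R^{N}}^{(\omega\sigma_{k}(1+\alpha))}\le C|u|_{R^{N}}^{(\overline{\sigma}(1+\alpha))}.
\]

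For the cross-group seminorm \eqref{3dop3} I would repeat the two-region splitting used in the proof of Proposition \ref{P3.5}. Use representation \eqref{3.4.1} with an integer $m>\sigma_{k}(1+\alpha)$ (so in particular $m>\rho_{k}$), carry a $z_{l}$-difference $\delta_{h,z_{l}}^{p}$ of order $p>\omega\sigma_{l}(1+\alpha)$ under the integral sign, and split the $\eta$-integration at the threshold $|\eta|=\delta|h|^{\sigma_{l}/\sigma_{k}}$ with a free parameter $\delta>0$. On the inner region $\{|\eta|\le\delta|h|^{\sigma_{l}/\sigma_{k}}\}$ apply the $\eta$-bound $|\delta_{h,z_{l}}^{p}\delta_{\eta,z_{k}}^{m}u|\le C\langle u\rangle_{z_{k},R^{N}}^{(\sigma_{k}(1+\alpha))}|\eta|^{\sigma_{k}(1+\alpha)}$, and on the outer region apply the $h$-bound $|\delta_{h,z_{l}}^{p}\delta_{\eta,z_{k}}^{m}u|\le C\langle u\rangle_{z_{l},R^{N}}^{(\sigma_{l}(1+\alpha))}|h|^{\sigma_{l}(1+\alpha)}$ (these are the anisotropic analogues of \eqref{3.22} but with $1+\alpha$ in place of $\alpha$). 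After dividing by $|h|^{\omega\sigma_{l}(1+\alpha)}$ and carrying out the elementary radial integrations, the inner piece contributes $C\delta^{\omega\sigma_{k}(1+\alpha)}\langle u\rangle_{z_{k},R^{N}}^{(\sigma_{k}(1+\alpha))}$ and the outer piece contributes $C\delta^{-(1-\omega)\sigma_{k}(1+\alpha)}\langle u\rangle_{z_{l},R^{N}}^{(\sigma_{l}(1+\alpha))}$. Setting $\varepsilon=\delta^{(1-\omega)\sigma_{k}(1+\alpha)}$ converts the first exponent into $\varepsilon^{\omega/(1-\omega)}$ and the second into $\varepsilon^{-1}$, which is exactly \eqref{3dop3}.

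The main obstacle is purely bookkeeping: one must verify that all the $|h|$-powers cancel cleanly and that the final exponent of $\varepsilon$ in the first term comes out to the claimed $\omega/(1-\omega)$. This cancellation is driven by the anisotropic scaling relation $(\sigma_{l}/\sigma_{k})\cdot\sigma_{k}(1+\alpha)=\sigma_{l}(1+\alpha)$, which is precisely the scaling principle already used in \eqref{3.23}, \eqref{3.24}. No genuinely new analytic ideas beyond those of Proposition \ref{P3.5} are needed; Theorem \ref{Tdop1} is its direct generalization in which the fractional order $\sigma_{k}$ is replaced by an arbitrary intermediate order $\rho_{k}\in(0,\sigma_{k}(1+\alpha))$, and the two proofs run in parallel.
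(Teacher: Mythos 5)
Your argument is correct in substance and all the exponent arithmetic checks out: the inner region yields $\delta^{\omega\sigma_{k}(1+\alpha)}$, the outer region yields $\delta^{-\rho_{k}}=\delta^{-(1-\omega)\sigma_{k}(1+\alpha)}$ with the $|h|$-powers cancelling via $(\sigma_{l}/\sigma_{k})\rho_{k}=(1-\omega)\sigma_{l}(1+\alpha)$, and the substitution $\varepsilon=\delta^{(1-\omega)\sigma_{k}(1+\alpha)}$ produces exactly \eqref{3dop3}. The paper, however, does not redo this computation: its proof of Theorem \ref{Tdop1} is a one-line reparametrization, observing that $\overline{\sigma}(1+\alpha)=\overline{\rho}(1+\beta)$ with $\beta\equiv\omega/(1-\omega)$, so that $C^{\overline{\sigma}(1+\alpha)}(R^{N})=C^{\overline{\rho}(1+\beta)}(R^{N})$ and the statement is Proposition \ref{P3.5} applied with $\overline{\rho}$ and $\beta$ in place of $\overline{\sigma}$ and $\alpha$ (indeed $\overline{\rho}\beta=\omega\overline{\sigma}(1+\alpha)$ and $\varepsilon^{\beta}=\varepsilon^{\omega/(1-\omega)}$). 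What the shortcut buys is economy and the automatic inheritance of the sup-norm and same-group estimates; what your unrolled version buys is an explicit check that the threshold $|\eta|=\delta|h|^{\sigma_{l}/\sigma_{k}}$ and the scaling relation still work at the intermediate order $\rho_{k}$, which is reassuring but not logically necessary. One small slip to fix: you take the $z_{l}$-difference of order $p>\omega\sigma_{l}(1+\alpha)$, but the outer-region bound $|\delta_{h,z_{l}}^{p}\delta_{\eta,z_{k}}^{m}u|\leq C\left\langle u\right\rangle _{z_{l},R^{N}}^{(\sigma_{l}(1+\alpha))}|h|^{\sigma_{l}(1+\alpha)}$ requires $p>\sigma_{l}(1+\alpha)$ (compare the choice $p>\sigma_{l}+\sigma_{l}\alpha$ in \eqref{3.21}); with $\omega\sigma_{l}(1+\alpha)<p\leq\sigma_{l}(1+\alpha)$ a difference of order $p$ cannot in general be bounded by $|h|$ to a power exceeding $p$, so the outer estimate would fail. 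Since the seminorm equivalence \eqref{2.4} holds for any fixed order exceeding the exponent, taking the larger $p$ costs nothing and repairs the argument.
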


\begin{proof}
The theorem follows directly from the previous proposition. It is enough to
note that
\[
\overline{\sigma}(1+\alpha)=\overline{\rho}(1+\beta),\quad\beta\equiv\frac
{1}{1-\omega}-1=\frac{\omega}{1-\omega}>0,
\]
and thus $C^{\overline{\sigma}(1+\alpha)}(R^{N})=C^{\overline{\rho}(1+\beta
)}(R^{N})$. Consequently, the proof follows from Proposition \ref{P3.5}.
\end{proof}

Move now to considering of the space $C^{\overline{\sigma}(1+\alpha
),\theta+\theta\alpha}(R_{T}^{N})$ of the functions with the independent
variable $t$ besides the independent variables $x\in R^{N}$.

\begin{proposition}
\label{P3.6} The operator $(-\Delta_{z_{k}})^{\frac{\sigma_{k}}{2}}$,
$k=1,...,r$, is a bounded linear operator from $C^{\overline{\sigma}%
(1+\alpha),\theta+\theta\alpha}(\overline{R_{T}^{N}})$ to $C^{\overline
{\sigma}\alpha,\theta\alpha}(\overline{R_{T}^{N}})$ that is for $u(x,t)\in
C^{\overline{\sigma}(1+\alpha),\theta+\theta\alpha}(R_{T}^{N})$
\begin{equation}
|(-\Delta_{z_{k}})^{\frac{\sigma_{k}}{2}}u|_{\overline{R_{T}^{N}}}%
^{(\overline{\sigma}\alpha,\theta\alpha)}\leq C(N,N_{k},\overline{\sigma
},\theta,\alpha)|u|_{\overline{R_{T}^{N}}}^{\left(  \overline{\sigma}%
(1+\alpha),\theta+\theta\alpha\right)  }.\label{3.28}%
\end{equation}
Moreover, for the H\"{o}der seminorm of $(-\Delta_{z_{k}} )^{\frac{\sigma_{k}%
}{2}}u$ with respect to some another group of the variables $z_{l}$, $l \neq
k$, we have the following interpolation inequality with an arbitrary
$\varepsilon>0$
\begin{equation}
\left\langle (-\Delta_{z_{k}})^{\frac{\sigma_{k}}{2}}u\right\rangle
_{z_{l},\overline{R_{T}^{N}}}^{(\sigma_{l}\alpha)}\leq C\varepsilon^{\alpha
}\left\langle u\right\rangle _{z_{k},\overline{R_{T}^{N}}}^{(\sigma_{k}%
+\sigma_{k}\alpha)}+\frac{C}{\varepsilon}\left\langle u\right\rangle
_{z_{l},\overline{R_{T}^{N}}}^{(\sigma_{l}+\sigma_{l}\alpha)}, \label{3.29}%
\end{equation}
and for the H\"{o}der seminorm of $(-\Delta_{z_{k}} )^{\frac{\sigma_{k}}{2}}u$
with respect to $t$ we have
\begin{equation}
\left\langle (-\Delta_{z_{k}})^{\frac{\sigma_{k}}{2}}u\right\rangle
_{t,\overline{R_{T}^{N}}}^{(\theta\alpha)}\leq C\varepsilon^{\alpha
}\left\langle u\right\rangle _{z_{k},\overline{R_{T}^{N}}}^{(\sigma_{k}%
+\sigma_{k}\alpha)}+\frac{C}{\varepsilon}\left\langle u\right\rangle
_{t,\overline{R_{T}^{N}}}^{(\theta+\theta\alpha)}. \label{3.30}%
\end{equation}

\end{proposition}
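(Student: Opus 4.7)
The plan is to reduce the proposition to the already-established spatial results of Proposition \ref{P3.5} wherever possible, and to handle the genuinely new estimate \eqref{3.30} by an almost verbatim repetition of the splitting argument used for \eqref{3.18.1}, with the time step $h$ now playing the role previously played by the spatial step in the group $z_l$.

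First I would observe that all three constituents of the norm $|\cdot|_{\overline{R_T^N}}^{(\overline{\sigma}\alpha,\theta\alpha)}$ except the time seminorm are controlled pointwise in $t$. Since $(-\Delta_{z_k})^{\sigma_k/2}$ acts only on $z_k$ and is independent of $t$, freezing $t\in[0,T]$ and applying \eqref{3.5} in the group $z_k$ (uniformly in the remaining $x$-variables) gives the $|\cdot|^{(0)}$ and $\langle\cdot\rangle_{z_k}^{(\sigma_k\alpha)}$ parts of \eqref{3.28}, while the $\langle\cdot\rangle_{z_l}^{(\sigma_l\alpha)}$ parts for $l\ne k$ (and hence \eqref{3.29}) follow from Proposition \ref{P3.5} applied for each fixed $t$ and then taking the supremum in $t$; the finite difference in $z_l$ commutes with the representation integral in $z_k$, so the proof of \eqref{3.18.1} transcribes directly.

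The only genuinely new work is \eqref{3.30}. Here I would start from representation \eqref{3.20} for the group $z_k$, and for a time step $h>0$ apply the finite difference $\delta_{h,t}^p$ with $p>\theta+\theta\alpha$, carrying it under the integral sign (legitimate because $(-\Delta_{z_k})^{\sigma_k/2}$ is independent of $t$ and $\delta_{h,t}^p$ commutes with $\delta_{\eta,z_k}^m$). This yields
\[
\frac{\delta_{h,t}^p[(-\Delta_{z_k})^{\sigma_k/2}u](x,t)}{|h|^{\theta\alpha}} = \frac{C_{N_k,\sigma_k,m}}{|h|^{\theta\alpha}}\int_{R^{N_k}} \frac{\delta_{h,t}^p \delta_{\eta,z_k}^m u(x,t)}{|\eta|^{N_k+\sigma_k}}\,d\eta.
\]
I would then split the integral at $|\eta|=\varepsilon|h|^{\theta/\sigma_k}$ and use the two bounds
\[
|\delta_{h,t}^p\delta_{\eta,z_k}^m u|\le C\langle u\rangle_{z_k}^{(\sigma_k+\sigma_k\alpha)}|\eta|^{\sigma_k+\sigma_k\alpha},\qquad |\delta_{h,t}^p\delta_{\eta,z_k}^m u|\le C\langle u\rangle_t^{(\theta+\theta\alpha)}|h|^{\theta+\theta\alpha},
\]
exactly as in \eqref{3.22}. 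The inner piece produces $C\varepsilon^{\sigma_k\alpha}\langle u\rangle_{z_k}^{(\sigma_k+\sigma_k\alpha)}$ and the outer piece produces $C\varepsilon^{-\sigma_k}\langle u\rangle_t^{(\theta+\theta\alpha)}$, after which the substitution $\varepsilon^{\sigma_k}\to\varepsilon$ brings the estimate into the form stated in \eqref{3.30}. Finally, gathering the four pieces produces \eqref{3.28}.

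I do not expect any real obstacle; the only point requiring small care is the exponent bookkeeping in the balancing choice $|\eta|=\varepsilon|h|^{\theta/\sigma_k}$ (verifying that the powers of $|h|$ cancel to give a pure $\varepsilon^{\sigma_k\alpha}$ versus $\varepsilon^{-\sigma_k}$ split) and the admissibility condition on $h$ ensuring that all time points $t+jh$, $0\le j\le p$, remain in $[0,T]$ — which is handled by the standard convention adopted for seminorms such as \eqref{2.4} restricted to the closed strip $\overline{R_T^N}$.
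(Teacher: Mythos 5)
Your proposal is correct and is essentially the paper's own argument: the paper proves Proposition \ref{P3.6} by noting that the proofs of \eqref{3.29} and \eqref{3.30} are identical to the proof of \eqref{3.18.1} in Proposition \ref{P3.5}, which is exactly your transcription with the split at $|\eta|=\varepsilon|h|^{\theta/\sigma_k}$ and the pair of bounds analogous to \eqref{3.22}. Your exponent bookkeeping checks out ($(|h|^{\theta/\sigma_k})^{\sigma_k\alpha}=|h|^{\theta\alpha}$ in the inner piece and $|h|^{\theta}(|h|^{\theta/\sigma_k})^{-\sigma_k}=1$ in the outer piece, followed by $\varepsilon^{\sigma_k}\to\varepsilon$), so nothing further is needed.
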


The proof of this proposition is identical to the proof of Proposition
\ref{P3.5} because the proofs of inequalities \eqref{3.29} and \eqref{3.30}
are identical to the proof of \eqref{3.18.1}.

As a direct consequence of this proposition we have the following more general
assertion, whose proof replicates the proof of Theorem \ref{Tdop1}.

\begin{theorem}
\label{T3dop2} Let us be given a number $\rho_{k}\in(0,\sigma_{k}+\sigma
_{k}\alpha)$ for some $k\in\{1,2,...,r\}$ so that $\rho_{k}=(1-\omega
)\sigma_{k}(1+\alpha)$, $\omega\in(0,1)$. Denote
\begin{equation}
\overline{\rho}\equiv(1-\omega)\overline{\sigma}(1+\alpha). \label{3dop4}%
\end{equation}
The operator $(-\Delta_{z_{k} })^{\frac{\rho_{k}}{2}}$, $k=1,...,r$, is a
linear bounded operator from $C^{\overline{\sigma}(1+\alpha),\theta
+\theta\alpha}(\overline{R_{T}^{N}})$ to $C^{\omega\overline{\sigma}%
(1+\alpha),\omega\theta(1+\alpha)}(\overline{R_{T}^{N}})$ that is for
$u(x,t)\in C^{\overline{\sigma}(1+\alpha),\theta+\theta\alpha}(R_{T}^{N})$%
\begin{equation}
|(-\Delta_{z_{k}})^{\frac{\rho_{k}}{2}}u|_{\overline{R_{T}^{N}}}%
^{(\omega\overline{\sigma}(1+\alpha),\omega\theta(1+\alpha))}\leq
C(N,N_{k},\omega,\overline{\sigma},\theta,\alpha)|u|_{\overline{R_{T}^{N}}%
}^{\left(  \overline{\sigma}(1+\alpha),\theta+\theta\alpha\right)  }.
\label{3dop5}%
\end{equation}
Moreover for the H\"{o}lder seminorm of $(-\Delta_{z_{k}} )^{\frac{\rho_{k}%
}{2}}u$ with respect to some another group op the variables $z_{l}$, $l \neq
k$, the following interpolation inequality with an arbitrary $\varepsilon>0$
is valid
\begin{equation}
\left\langle (-\Delta_{z_{k}})^{\frac{\rho_{k}}{2}}u\right\rangle
_{z_{l},\overline{R_{T}^{N}}}^{(\omega\sigma_{l}(1+\alpha))}\leq
C\varepsilon^{\frac{\omega}{1-\omega}}\left\langle u\right\rangle
_{z_{k},\overline{R_{T}^{N}}}^{(\sigma_{k}+\sigma_{k}\alpha)}+\frac
{C}{\varepsilon}\left\langle u\right\rangle _{z_{l},\overline{R_{T}^{N}}%
}^{(\sigma_{l}+\sigma_{l}\alpha)}. \label{3dop6}%
\end{equation}
And for the H\"{o}lder seminorm of $(-\Delta_{z_{k}} )^{\frac{\rho_{k}}{2}}u$
with respect to $t$ the following inequality is valid
\begin{equation}
\left\langle (-\Delta_{z_{k}})^{\frac{\sigma_{k}}{2}}u\right\rangle
_{t,\overline{R_{T}^{N}}}^{(\omega\theta(1+\alpha))}\leq C\varepsilon
^{\frac{\omega}{1-\omega}}\left\langle u\right\rangle _{z_{k},\overline
{R_{T}^{N}}}^{(\sigma_{k}+\sigma_{k}\alpha)}+\frac{C}{\varepsilon}\left\langle
u\right\rangle _{t,\overline{R_{T}^{N}}}^{(\theta+\theta\alpha)}.
\label{3dop7}%
\end{equation}

\end{theorem}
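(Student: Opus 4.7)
The plan is to deduce Theorem \ref{T3dop2} directly from Proposition \ref{P3.6} by the same reparametrization trick used in the proof of Theorem \ref{Tdop1}. The key algebraic observation is that if we set $\beta\equiv\omega/(1-\omega)>0$, then $1+\beta=1/(1-\omega)$, so that $\overline{\rho}(1+\beta)=(1-\omega)\overline{\sigma}(1+\alpha)\cdot\frac{1}{1-\omega}=\overline{\sigma}(1+\alpha)$. Setting $\theta^{\ast}\equiv(1-\omega)\theta(1+\alpha)$ gives in parallel $\theta^{\ast}(1+\beta)=\theta(1+\alpha)$, so the hypothesis $u\in C^{\overline{\sigma}(1+\alpha),\theta+\theta\alpha}(\overline{R_T^N})$ is literally the same as $u\in C^{\overline{\rho}(1+\beta),\theta^{\ast}+\theta^{\ast}\beta}(\overline{R_T^N})$ (same norm, same seminorms).

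Next I would apply Proposition \ref{P3.6} with the triple $(\overline{\rho},\theta^{\ast},\beta)$ playing the role of $(\overline{\sigma},\theta,\alpha)$. The $k$-th component of $\overline{\rho}$ is exactly $\rho_k=(1-\omega)\sigma_k(1+\alpha)$, so the operator considered is indeed $(-\Delta_{z_k})^{\rho_k/2}$. The proposition then maps $u$ into the space $C^{\overline{\rho}\beta,\theta^{\ast}\beta}(\overline{R_T^N})$, which equals $C^{\omega\overline{\sigma}(1+\alpha),\omega\theta(1+\alpha)}(\overline{R_T^N})$ by the identities $\overline{\rho}\beta=(1-\omega)\overline{\sigma}(1+\alpha)\cdot\frac{\omega}{1-\omega}=\omega\overline{\sigma}(1+\alpha)$ and $\theta^{\ast}\beta=\omega\theta(1+\alpha)$. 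This yields \eqref{3dop5} immediately, with constants depending only on $N,N_k,\omega,\overline{\sigma},\theta,\alpha$.

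For the interpolation inequalities \eqref{3dop6} and \eqref{3dop7} I would just rewrite \eqref{3.29} and \eqref{3.30} under the same substitution. Under $\sigma\mapsto\rho$, $\alpha\mapsto\beta$, the left-hand seminorm exponents become $\rho_l\beta=\omega\sigma_l(1+\alpha)$ and $\theta^{\ast}\beta=\omega\theta(1+\alpha)$, matching \eqref{3dop6}--\eqref{3dop7}, while the right-hand exponents $\rho_k(1+\beta)=\sigma_k+\sigma_k\alpha$, $\rho_l(1+\beta)=\sigma_l+\sigma_l\alpha$, and $\theta^{\ast}(1+\beta)=\theta+\theta\alpha$ are preserved. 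The factor $\varepsilon^{\alpha}$ on the right-hand side of \eqref{3.29}--\eqref{3.30} becomes $\varepsilon^{\beta}=\varepsilon^{\omega/(1-\omega)}$, which is exactly the exponent appearing in \eqref{3dop6}--\eqref{3dop7}.

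Since this is a pure reparametrization, there is no genuine obstacle: no new calculation is required beyond the arithmetic verification above. The only thing worth checking is that the non-integrality hypotheses needed by Proposition \ref{P3.6} (namely that $\sigma_k+\sigma_k\alpha$, $\sigma_k\alpha$, $\theta+\theta\alpha$, $\theta\alpha$ be non-integers) are already imposed on the ambient space in Theorem \ref{T3dop2}, so the reparametrized input satisfies whatever hypothesis is needed; the intermediate exponents involving $\omega$ are allowed to be anything in $(0,1)$ because they correspond to seminorms that either coincide with already-admissible ones or are evaluated with a $\sup_{h>0}$ definition that is insensitive to integrality.
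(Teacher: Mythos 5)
Your proposal is correct and is exactly the paper's own argument: the paper proves Theorem \ref{T3dop2} by declaring that its proof replicates that of Theorem \ref{Tdop1}, i.e., the reparametrization $\beta=\omega/(1-\omega)$, so that $\overline{\rho}(1+\beta)=\overline{\sigma}(1+\alpha)$ and $\theta^{\ast}(1+\beta)=\theta(1+\alpha)$ (compare the identical substitution in the proof of Theorem \ref{T3.2}), after which Proposition \ref{P3.6} applied to the triple $(\overline{\rho},\theta^{\ast},\beta)$ yields \eqref{3dop5}--\eqref{3dop7} with $\varepsilon^{\beta}=\varepsilon^{\omega/(1-\omega)}$. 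Your arithmetic verification of the exponents, including the interpolation inequalities, matches the intended derivation, and your closing remark on non-integrality is no more of an issue for you than it is for the paper, which passes over it in the same way.
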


In what follows we will need three more useful assertions.

\begin{lemma}
\label{L3.2} Let $l_{1}>0$, $l_{2}>0$ and let $\overline{\Omega}\subset
R^{N_{1}+N_{2}}$ be a domain in $R^{N_{1}+N_{2}}$ with the boundary
$\partial\Omega$ of the class $C^{\max\{l_{1},l_{2}\}}$. Let, further, a
function $u(z_{1},z_{2})\in C_{z_{1},z_{2}}^{l_{1},l_{2}}(\overline{\Omega})$,
$(z_{1},z_{2})\in\overline{\Omega}$, $z_{1}\in R^{N_{1}}$, $z_{2}\in R^{N_{2}%
}$. Then for any integers $m>l_{1}$, $k>l_{2}$, for any positive $a$ and $b$
with
\begin{equation}
\frac{a}{l_{1}}+\frac{b}{l_{2}}=1, \label{3.33}%
\end{equation}
and for any $\varepsilon>0$ the following estimate is valid
\[
\left\vert \delta_{\tau,z_{1}}^{m}\delta_{h,z_{2}}^{k}u(z_{1},z_{2}%
)\right\vert \leq C(\overline{\Omega},l_{1},l_{2}) \times
\]
\begin{equation}
\times\left[  \varepsilon^{l_{1}-a}\left(  \left\langle u\right\rangle
_{z_{1},\overline{\Omega}}^{(l_{1})}+|u|_{\overline{\Omega}}^{(0)}\right)
+\varepsilon^{-a}\left(  \left\langle u\right\rangle _{z_{2},\overline{\Omega
}}^{(l_{2})}+|u|_{\overline{\Omega}}^{(0)}\right)  \right]  \left\vert
\tau\right\vert ^{a}\left\vert h\right\vert ^{b}, \label{3.34}%
\end{equation}
where $\tau$ and $h$ are such that the arguments of the function $\delta
_{\tau,z_{1}} ^{m}\delta_{h,z_{2}}^{k}u(z_{1},z_{2})$ stay inside $\Omega$.

If now the domain $\Omega$ is the space $R^{N_{1}+N_{2}}$ or a half-space, or
a domain of the form
\begin{equation}
\Omega=\{x\in R^{N_{1}+N_{2}}:x_{i_{1}}>0,...,x_{i_{k}}>0\}, \label{3.34.1}%
\end{equation}
then the terms $|u|_{\overline{\Omega}}^{(0)}$ in \eqref{3.34} can be omitted.
\end{lemma}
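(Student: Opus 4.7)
The plan is to establish two ``pure-direction'' upper bounds on the mixed finite difference $\delta^{m}_{\tau,z_1}\delta^{k}_{h,z_2}u$ and then interpolate between them by a geometric-mean step followed by a weighted Young inequality whose free parameter is tuned to produce the stated powers of $\varepsilon$.

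First I would observe that $\delta^m_{\tau,z_1}$ and $\delta^k_{h,z_2}$ commute. Applying the equivalent seminorm definition \eqref{2.4} to $\delta^k_{h,z_2}u$ regarded as a function of $z_1$, together with the trivial bound $\langle \delta^k_{h,z_2}u\rangle_{z_1,\overline\Omega}^{(l_1)}\le C\langle u\rangle_{z_1,\overline\Omega}^{(l_1)}$ and the hypothesis $m>l_1$, I obtain
$$|\delta^{m}_{\tau,z_1}\delta^{k}_{h,z_2}u(z_1,z_2)|\le C\langle u\rangle_{z_1,\overline\Omega}^{(l_1)}|\tau|^{l_1};$$
symmetrically, using $k>l_2$,
$$|\delta^{m}_{\tau,z_1}\delta^{k}_{h,z_2}u(z_1,z_2)|\le C\langle u\rangle_{z_2,\overline\Omega}^{(l_2)}|h|^{l_2}.$$
Since $a/l_1+b/l_2=1$ by \eqref{3.33}, writing $|X|=|X|^{a/l_1}|X|^{b/l_2}$ for $X=\delta^m_{\tau,z_1}\delta^k_{h,z_2}u$ and inserting these two bounds gives
$$|\delta^{m}_{\tau,z_1}\delta^{k}_{h,z_2}u|\le C\bigl(\langle u\rangle_{z_1,\overline\Omega}^{(l_1)}\bigr)^{a/l_1}\bigl(\langle u\rangle_{z_2,\overline\Omega}^{(l_2)}\bigr)^{b/l_2}|\tau|^a|h|^b.$$

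Next I would apply the weighted Young inequality $X^pY^q\le pcX+qc^{-p/q}Y$ (valid for $p+q=1$, $c>0$) with $p=a/l_1$, $q=b/l_2$, and $c=\varepsilon^{l_1-a}$. The only arithmetic check is the identity $(l_1-a)(p/q)=a$, which follows from $a/l_1+b/l_2=1$ (equivalently $al_2+bl_1=l_1l_2$), giving $l_1-a=l_1b/l_2$ and hence $(l_1-a)\cdot al_2/(bl_1)=a$. This produces exactly the coefficients $\varepsilon^{l_1-a}$ and $\varepsilon^{-a}$ in front of the two H\"older seminorms asserted in \eqref{3.34}.

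Finally I would address the presence of the $|u|^{(0)}$ terms and the last assertion of the lemma. When $\Omega$ is $R^{N_1+N_2}$, a half-space, or a sector of the form \eqref{3.34.1}, the translates needed to form the mixed difference can always (by choosing signs of $\tau$ and $h$ compatibly with the geometry) be kept in $\overline\Omega$, so the pure-direction estimates above hold without any additive $L^\infty$ correction and the $|u|^{(0)}$ terms may be dropped. For a general $\Omega$ with $\partial\Omega\in C^{\max\{l_1,l_2\}}$, the equivalence of the seminorm definitions \eqref{2.3} and \eqref{2.4} near the boundary itself requires an additive $|u|^{(0)}$ contribution (handled via a H\"older-preserving extension across $\partial\Omega$, which is where the boundary regularity is used); propagating this correction through the geometric-mean and Young steps produces the additive $|u|^{(0)}$ terms in \eqref{3.34}. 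This control of boundary effects in the general case is the main technical obstacle; once it is granted, the interpolation argument itself is essentially routine.
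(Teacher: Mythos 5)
Your proposal is correct and follows essentially the same route as the paper's proof: both rest on the two pure-direction bounds $|\delta^{m}_{\tau,z_1}\delta^{k}_{h,z_2}u|\leq C\left\langle u\right\rangle _{z_{1},\overline{\Omega}}^{(l_{1})}|\tau|^{l_{1}}$ and $|\delta^{m}_{\tau,z_1}\delta^{k}_{h,z_2}u|\leq C\left\langle u\right\rangle _{z_{2},\overline{\Omega}}^{(l_{2})}|h|^{l_{2}}$ obtained via \eqref{2.4} after a H\"{o}lder-preserving extension to $R^{N_{1}+N_{2}}$, which is exactly where the additive $|u|_{\overline{\Omega}}^{(0)}$ terms enter for a general domain and are avoidable for domains of the form \eqref{3.34.1}. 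The only divergence is the final combination step --- the paper splits into the two cases $|\tau|\leq\varepsilon|h|^{l_{2}/l_{1}}$ and $|\tau|\geq\varepsilon|h|^{l_{2}/l_{1}}$ and uses one pure bound in each, while you take the geometric mean of the two bounds and apply a weighted Young inequality --- and your exponent bookkeeping is right, since $al_{2}+bl_{1}=l_{1}l_{2}$ gives $l_{1}-a=bl_{1}/l_{2}$, so the two mechanisms are equivalent and yield the same powers $\varepsilon^{l_{1}-a}$ and $\varepsilon^{-a}$.
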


\begin{proof}
Extend the function $u(z_{1},z_{2})$ from the domain $\overline{\Omega}$ to
the whole space $R^{N_{1}+N_{2}}$ with the preserving (up to a multiple
constant) it's norm in $C_{z_{1},z_{2}}^{l_{1},l_{2}}(\overline{\Omega})$ to a
finite function (see., for example, \cite{Sol1}, \cite{LadParab}). That is
\[
|u|_{R^{N_{1}+N_{2}}}^{(l_{1},l_{2})}\leq C(\overline{\Omega},l_{1}%
,l_{2})|u|_{\overline{\Omega}}^{(l_{1},l_{2})},
\]
but note that the corresponding seminorms satisfy (in general)%

\[
\left\langle u\right\rangle _{z_{i},R^{N_{1}+N_{2}}}^{(l_{i})}\leq C \left(
\left\langle u\right\rangle _{z_{i},\overline{\Omega}}^{(l_{i})}%
+|u|_{\overline{\Omega}}^{(0)}\right) .
\]
However, for a domain $\Omega$ of the form \eqref{3.34.1} the last inequality
is valid without the term $|u|_{\overline{\Omega}}^{(0)}$ and this term does
not plays a role in the further reasonings. Overall, such an extension permits
to suppose that $\overline{\Omega}$ coincides with the whole space
$R^{N_{1}+N_{2}}$.

Consider the two possible cases of relations between $\left\vert
\tau\right\vert $ and $\left\vert h\right\vert $. Let first $\left\vert
\tau\right\vert \leq\varepsilon\left\vert h\right\vert ^{l_{2}/l_{1}}$. Then,
making use of \eqref{2.4}, we have
\[
\left\vert \delta_{\tau,z_{1}}^{m}\delta_{h,z_{2}}^{k}u(z_{1},z_{2}%
)\right\vert \leq C (m,k)\left\langle u\right\rangle _{z_{1},\overline{\Omega
}}^{(l_{1})}\left\vert \tau\right\vert ^{l_{1}}=C (m,k)\left\langle
u\right\rangle _{z_{1},\overline{\Omega}}^{(l_{1})}\left\vert \tau\right\vert
^{a}\left\vert \tau\right\vert ^{l_{1}-a}\leq
\]%
\begin{equation}
\leq C (m,k)\left\langle u\right\rangle _{z_{1},\overline{\Omega}}^{(l_{1}%
)}\left\vert \tau\right\vert ^{a}\left(  \varepsilon\left\vert h\right\vert
^{\frac{l_{2}}{l_{1}}}\right)  ^{l_{1}-a}=C (m,k)\varepsilon^{l_{1}%
-a}\left\langle u\right\rangle _{z_{1},\overline{\Omega}}^{(l_{1})}\left\vert
\tau\right\vert ^{a}\left\vert h\right\vert ^{b}, \label{3.35}%
\end{equation}
since $b=l_{2}-al_{2}/l_{1}$ in view of \eqref{3.33}.

Let now $\left\vert \tau\right\vert \geq\varepsilon\left\vert h\right\vert
^{l_{2}/l_{1}}$ that is $\left\vert h\right\vert \leq\varepsilon^{-l_{1}%
/l_{2}}\left\vert \tau\right\vert ^{l_{1}/l_{2}}$. Then we have, analogously
to the previous case,
\[
\left\vert \delta_{\tau,z_{1}}^{m}\delta_{h,z_{2}}^{k}u(z_{1},z_{2}%
)\right\vert \leq C (m,k)\left\langle u\right\rangle _{z_{2},\overline{\Omega
}}^{(l_{2})}\left\vert h\right\vert ^{l_{2}}=C (m,k)\left\langle
u\right\rangle _{z_{2},\overline{\Omega}}^{(l_{2})}\left\vert h\right\vert
^{b}\left\vert h\right\vert ^{l_{2}-b}\leq
\]%
\begin{equation}
\leq C (m,k)\left\langle u\right\rangle _{z_{2},\overline{\Omega}}^{(l_{2}%
)}\left\vert h\right\vert ^{b}\left(  \varepsilon^{-l_{1}/l_{2}}\left\vert
\tau\right\vert ^{\frac{l_{1}}{l_{2}}}\right)  ^{l_{2}-b}=C (m,k)\varepsilon
^{-a}\left\langle u\right\rangle _{z_{1},\overline{\Omega}}^{(l_{1}%
)}\left\vert \tau\right\vert ^{a}\left\vert h\right\vert ^{b}, \label{3.36}%
\end{equation}
since $a=l_{1}-bl_{1}/l_{2}$ in view of \eqref{3.33}.

The lemma follows now from \eqref{3.35} and \eqref{3.36}.
\end{proof}

The proved lemma is valid also in the case when the function under
consideration depends only on a single variable $z$ that is when $z_{1}=z_{2}%
$. The proof in this case is a simple replication of the proof of Lemma
\ref{L3.2} therefore we give the following assertion without a proof.

\begin{corollary}
\label{C3.1} Let $l>0$ and let $\overline{\Omega}\subset R^{N}$ be a domain in
$R^{N}$ with the boundary $\partial\Omega$ of the class $C^{l}$. Let us also
be given a function $u(z)\in C^{l} (\overline{\Omega})$. Let, finally, $m>l$
and $k>l$ be arbitrary integers and let positive numbers $a$ and $b$ be such
that
\begin{equation}
\frac{a}{l}+\frac{b}{l}=1\Leftrightarrow a+b=l . \label{3.36.1}%
\end{equation}
Then
\begin{equation}
\left\vert \delta_{\tau,z}^{m}\delta_{h,z}^{k}u(z)\right\vert \leq
C(\overline{\Omega},l)\left(  \left\langle u\right\rangle _{z,\overline
{\Omega}}^{(l)}+|u|_{\overline{\Omega}}^{(0)}\right)  \left\vert
\tau\right\vert ^{a}\left\vert h\right\vert ^{b}, \label{3.36.2}%
\end{equation}
where $\tau$ and $h$ are taken in the way that the arguments of $\delta
_{\tau,z}^{m}\delta_{h,z}^{k}u(z)$ stay in the domain $\Omega$.

If now the the domain $\Omega$ coincides with $R^{N}$ or with a half-space, or
with a domain of the form
\begin{equation}
\Omega=\{z\in R^{N}:z_{i_{1}}>0,...,z_{i_{k}}>0\}, \label{3.36.3}%
\end{equation}
then the terms $|u|_{\overline{\Omega}}^{(0)}$ in \eqref{3.36.2} can be omitted
\end{corollary}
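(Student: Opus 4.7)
The plan is to imitate the proof of Lemma \ref{L3.2}, with the simplification that both differences now act in the same variable $z$, so the interpolation condition \eqref{3.36.1} reduces to the single constraint $a+b=l$ rather than a coupling of two independent smoothness indices.

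First I would extend $u$ from $\overline{\Omega}$ to the whole of $R^{N}$ while preserving the $C^{l}$-norm up to a multiplicative constant; this reduces the problem to $\Omega = R^{N}$. For the special domains listed in \eqref{3.36.3} (and for $R^{N}$ or a half-space) the extension can be arranged so that only the highest Hölder seminorm is controlled, which is precisely why the auxiliary term $|u|_{\overline{\Omega}}^{(0)}$ can be dropped in those cases.

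Next I would exploit the commutativity of the one-variable finite differences: since $\delta_{\tau,z}$ and $\delta_{h,z}$ commute as linear operators acting on functions of $z$, one can rewrite $\delta_{\tau,z}^{m}\delta_{h,z}^{k}u(z)$ either as $\delta_{h,z}^{k}[\delta_{\tau,z}^{m}u](z)$ or as $\delta_{\tau,z}^{m}[\delta_{h,z}^{k}u](z)$. Expanding the outer difference as a finite linear combination of translates of the inner one, and invoking the equivalent seminorm definition \eqref{2.4} with $m>l$ (respectively $k>l$), yields the two independent bounds
\[
|\delta_{\tau,z}^{m}\delta_{h,z}^{k}u(z)| \le C(m,k)\langle u\rangle_{z,R^{N}}^{(l)}|\tau|^{l},
\]
\[
|\delta_{\tau,z}^{m}\delta_{h,z}^{k}u(z)| \le C(m,k)\langle u\rangle_{z,R^{N}}^{(l)}|h|^{l}.
\]

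Finally I would split into the two cases $|\tau|\le|h|$ and $|\tau|\ge|h|$. In the first case I would factor $|\tau|^{l}=|\tau|^{a}|\tau|^{b}$ and use $|\tau|^{b}\le|h|^{b}$ to convert the first bound into the desired form $C\langle u\rangle^{(l)}|\tau|^{a}|h|^{b}$; in the second case I would factor $|h|^{l}=|h|^{a}|h|^{b}$ and use $|h|^{a}\le|\tau|^{a}$ to do the same with the second bound. Combining with the extension step produces \eqref{3.36.2}. There is essentially no serious obstacle here: the whole argument reduces to one-dimensional interpolation together with the commutativity of differences in the single variable $z$, which is a genuine simplification of the calculus employed in Lemma \ref{L3.2} and explains why the $\varepsilon$ balance parameter of that lemma disappears in the corollary.
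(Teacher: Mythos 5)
Your proposal is correct and coincides in essence with the paper's argument: the paper states that the corollary follows by a simple replication of the proof of Lemma \ref{L3.2} with $z_{1}=z_{2}$, and your proof is exactly that replication — extension to $R^{N}$ (with the seminorm-only control on the special domains \eqref{3.36.3}), the two one-sided bounds via \eqref{2.4} with differences of order $m>l$ and $k>l$, and the case split $|\tau|\le|h|$ versus $|\tau|\ge|h|$, which is the paper's split $|\tau|\le\varepsilon|h|^{l_{2}/l_{1}}$ specialized to $l_{1}=l_{2}=l$ and $\varepsilon=1$. Your observation that the $\varepsilon$ balance parameter becomes unnecessary because both seminorms coincide is accurate and consistent with the corollary's statement.
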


From Lemma \ref{L3.2} and Corollary \ref{C3.1} one can infer a useful
assertion, which generalizes, in a sense, some analogous assertion for
H\"{o}lder spaces from \cite{Mushel}. We mean the assertion number 5 in
section 6, \cite{Mushel}, for the one-dimensional case.

\begin{proposition}
\label{DopMushel} Let a function $u(x)\in C\overline{^{l} }(\overline{\Omega
})$ in the sense of definitions \eqref{2.1} - \eqref{2.4}, $\overline
{l}=(l_{1},...,l_{N})$. Let, further, a domain $\Omega$ be the whole space
$R^{N}$, a half-space or a domain of the form \eqref{3.36.3}. Let also $k$ be
a fixed index, $k\in\{1,...,N\}$ and let $h>0$. Let, finally, $a\in(0,l_{k})$
be such that $l_{k}-a$ is a noninteger. Consider for an integer $m>l_{k}$ the
function
\begin{equation}
u_{a,h,k}(x)\equiv\frac{\delta_{h,x_{k}}^{m}u(x)}{h^{a}}. \label{3.36.4}%
\end{equation}

The function $u_{a,h,k}(x)$ belongs to the space $C^{\overline{l}\omega_{a}%
}(\overline{\Omega})$,
\begin{equation}
\omega_{a}\equiv(1-a/l_{k}),\quad\overline{l}\omega_{a}=(l_{1}\omega
_{a},...,l_{N}\omega_{a}), \label{3.36.5}%
\end{equation}
and uniformly in $h$
\begin{equation}
\left\Vert u_{a,h,k}(x)\right\Vert _{C^{\overline{l}\omega_{a}}(\overline
{\Omega})}\equiv|u_{a,h,k}(x)|_{\overline{\Omega}}^{(\overline{l}\omega_{a}%
)}\leq C|u(x)|_{\overline{\Omega}}^{(\overline{l})} \label{3.36.6}%
\end{equation}
with a constant $C$, which does not depend on $h$. In particular, for an
arbitrary $\varepsilon>0$
\begin{equation}
\left\langle u_{a,h,k}(x)\right\rangle _{x_{i},\overline{\Omega}}^{(\omega
_{a}l_{i})}\leq%
%TCIMACRO{\QDATOPD{\{}{.}{C(l_{i},l_{k},m)\left(  \varepsilon^{l_{i}%
%-a}\left\langle u\right\rangle _{x_{i},\overline{\Omega}}^{(l_{i}%
%)}+\varepsilon^{-a}\left\langle u\right\rangle _{x_{k},\overline{\Omega}%
%}^{(l_{k})}\right)  ,\quad i\neq k,}{C(l_{k},m)\left\langle u\right\rangle
%_{x_{k},\overline{\Omega}}^{(l_{k})},\quad i=k,} }%
%BeginExpansion
\genfrac{\{}{.}{0pt}{0}{C(l_{i},l_{k},m)\left(  \varepsilon^{l_{i}%
-a}\left\langle u\right\rangle _{x_{i},\overline{\Omega}}^{(l_{i}%
)}+\varepsilon^{-a}\left\langle u\right\rangle _{x_{k},\overline{\Omega}%
}^{(l_{k})}\right)  ,\quad i\neq k,}{C(l_{k},m)\left\langle u\right\rangle
_{x_{k},\overline{\Omega}}^{(l_{k})},\quad i=k,}
%EndExpansion
\label{3.36.7}%
\end{equation}%
\begin{equation}
\left\vert u_{a,h,k}(x)\right\vert _{\overline{\Omega}}^{(0)}\leq C(m)\left(
\varepsilon^{l_{k}-a}\left\langle u\right\rangle _{x_{k},\overline{\Omega}%
}^{(l_{k})}+\varepsilon^{-a}\left\vert u\right\vert _{\overline{\Omega}}%
^{(0)}\right)  . \label{3.36.8}%
\end{equation}

\end{proposition}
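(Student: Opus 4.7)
The plan is to reduce every claim to an application of Lemma \ref{L3.2} and Corollary \ref{C3.1}, exploiting that finite differences in different coordinates commute:
\[
\delta^{m_{i}}_{\tau,x_{i}}u_{a,h,k}(x) = h^{-a}\,\delta^{m_{i}}_{\tau,x_{i}}\delta^{m}_{h,x_{k}}u(x).
\]
Thus every seminorm of $u_{a,h,k}$ in direction $x_{i}$ is an estimate of a mixed finite difference of $u$ divided by $h^{a}$. The restriction on $\Omega$ (the whole space, a half-space, or a domain of the form \eqref{3.36.3}) will let me drop the $|u|^{(0)}_{\overline{\Omega}}$ contributions that appear in those two auxiliary estimates.

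For the sup bound \eqref{3.36.8} I would use two independent a-priori bounds on $|\delta^{m}_{h,x_{k}}u|$: the H\"older estimate $|\delta^{m}_{h,x_{k}}u|\leq C\langle u\rangle^{(l_{k})}_{x_{k}}h^{l_{k}}$ (valid since $m>l_{k}$) and the trivial bound $|\delta^{m}_{h,x_{k}}u|\leq C|u|^{(0)}_{\overline{\Omega}}$. After dividing by $h^{a}$, a dichotomy---use the first estimate when $h\leq\varepsilon$ and the second when $h\geq\varepsilon$---yields \eqref{3.36.8} immediately.

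For the seminorm \eqref{3.36.7} I would separate the same-direction case $i=k$ from the transverse case $i\neq k$. When $i=k$, I apply Corollary \ref{C3.1} with $l=l_{k}$ and the balanced exponents $a'=l_{k}-a$, $b'=a$ (so that $a'+b'=l_{k}$); this bounds $|\delta^{m'}_{\tau,x_{k}}\delta^{m}_{h,x_{k}}u|$ by $C\langle u\rangle^{(l_{k})}_{x_{k}}|\tau|^{l_{k}-a}h^{a}$, and dividing by $h^{a}|\tau|^{\omega_{a}l_{k}}$ delivers the $i=k$ clause. When $i\neq k$, I apply Lemma \ref{L3.2} with $z_{1}=x_{i}$, $z_{2}=x_{k}$, $l_{1}=l_{i}$, $l_{2}=l_{k}$ and the balanced exponents $a'=\omega_{a}l_{i}$, $b'=a$ (which satisfy $a'/l_{i}+b'/l_{k}=1$). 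With the lemma's free parameter $\tilde\varepsilon$ this gives
\[
|\delta^{m_{i}}_{\tau,x_{i}}\delta^{m}_{h,x_{k}}u| \leq C\bigl[\tilde\varepsilon^{\,l_{i}-a'}\langle u\rangle^{(l_{i})}_{x_{i}} + \tilde\varepsilon^{-a'}\langle u\rangle^{(l_{k})}_{x_{k}}\bigr]|\tau|^{a'}h^{a},
\]
and dividing by $h^{a}|\tau|^{\omega_{a}l_{i}}$ yields a bound uniform in $\tau$ and $h$ of Young's type in $\tilde\varepsilon$; a suitable rescaling of $\tilde\varepsilon$ puts it into the form \eqref{3.36.7}. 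The full-norm estimate \eqref{3.36.6} is then the sum of these individual seminorm bounds together with \eqref{3.36.8}.

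The main obstacle is the exponent bookkeeping. The choice $b'=a$ is forced because it is what cancels the factor $h^{-a}$ in $u_{a,h,k}$; the relation $a'/l_{i}+b'/l_{k}=1$ then forces $a'=\omega_{a}l_{i}$, which is precisely the target H\"older exponent in the $x_{i}$ direction. The hypothesis that $l_{k}-a$ be noninteger ensures the output exponents $\omega_{a}l_{i}$ are admissible in the definition of the target H\"older space, and the integer $m_{i}$ can always be taken larger than $l_{i}$ (and hence larger than $\omega_{a}l_{i}$, since $\omega_{a}<1$).
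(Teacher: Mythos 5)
Your proposal follows the paper's own route: the paper proves \eqref{3.36.8} by precisely your dichotomy between $h\leq\varepsilon$ and $h>\varepsilon$, and disposes of \eqref{3.36.7} with a one-line appeal to Lemma \ref{L3.2} and Corollary \ref{C3.1}, which you have merely made explicit --- your exponent choices $a'=l_{k}-a$, $b'=a$ for $i=k$ and $a'=\omega_{a}l_{i}$, $b'=a$ for $i\neq k$ are exactly the admissible ones, since $a'/l_{i}+b'/l_{k}=\omega_{a}+a/l_{k}=1$.

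One step, however, does not survive scrutiny, although the fault lies in the printed statement rather than in your argument. For $i\neq k$ your application of Lemma \ref{L3.2} yields, for every $\tilde\varepsilon>0$,
\[
\left\langle u_{a,h,k}\right\rangle _{x_{i},\overline{\Omega}}^{(\omega_{a}l_{i})}\leq C\left(  \tilde\varepsilon^{\,l_{i}a/l_{k}}\left\langle u\right\rangle _{x_{i},\overline{\Omega}}^{(l_{i})}+\tilde\varepsilon^{-\omega_{a}l_{i}}\left\langle u\right\rangle _{x_{k},\overline{\Omega}}^{(l_{k})}\right)  ,
\]
and a substitution $\tilde\varepsilon=\varepsilon^{c}$ can normalize only one of the two exponents at a time. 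Optimizing over the parameter, your family is equivalent to the product bound with weights $(\omega_{a},\,a/l_{k})$ on the two seminorms, whereas the printed right-hand side of \eqref{3.36.7} for $i\neq k$ is equivalent to the product bound with weights $(a/l_{i},\,(l_{i}-a)/l_{i})$; these coincide only when $l_{i}l_{k}=a(l_{i}+l_{k})$, so no rescaling converts one family into the other. A scaling check ($x_{i}\rightarrow\mu x_{i}$, $x_{k}\rightarrow\nu x_{k}$, under which the left-hand side scales like $\mu^{\omega_{a}l_{i}}\nu^{a}$) shows that the only scale-consistent product bound is the one with weights $(\omega_{a},\,a/l_{k})$ that you obtained; the printed form of \eqref{3.36.7}, taken literally, is therefore a misprint in which the roles of the two directions are transposed in the $\varepsilon$-weights, and the corrected form --- which your computation delivers directly if you apply Lemma \ref{L3.2} with the roles of $x_{i}$ and $x_{k}$ interchanged, i.e.\ with the $|h|$-exponent equal to $a$ and the $|\tau|$-exponent equal to $\omega_{a}l_{i}$ --- reads $\varepsilon^{l_{k}-a}\left\langle u\right\rangle _{x_{k},\overline{\Omega}}^{(l_{k})}+\varepsilon^{-a}\left\langle u\right\rangle _{x_{i},\overline{\Omega}}^{(l_{i})}$. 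None of this affects \eqref{3.36.6} or the downstream uses of the proposition, where only the $h$-uniform boundedness is invoked; so your proof is sound once the claim that "a suitable rescaling of $\tilde\varepsilon$ puts it into the form \eqref{3.36.7}" is replaced by the corrected exponents.
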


\begin{proof}
Estimates \eqref{3.36.7} are a direct consequence of Lemma \ref{L3.2} and
Corollary \ref{C3.1} with the taking into account the properties of H\"{o}lder
seminorms in \eqref{2.4}. To obtain estimate \eqref{3.36.8} we first suppose
that $h\leq\varepsilon$. Then
\[
\left\vert u_{a,h,k}(x)\right\vert _{\overline{\Omega}}^{(0)}=\left\vert
\frac{\delta_{h,x_{k}}^{m}u(x)}{h^{a}}\right\vert _{\overline{\Omega}}%
^{(0)}=\left\vert \frac{\delta_{h,x_{k}}^{m}u(x)}{h^{l_{k}}}\right\vert
_{\overline{\Omega}}^{(0)}h^{l_{k}-a}\leq\varepsilon^{l_{k}-a}\left\langle
u\right\rangle _{x_{k},\overline{\Omega}}^{(l_{k})}.
\]
If now $h>\varepsilon$, then
\[
\left\vert u_{a,h,k}(x)\right\vert _{\overline{\Omega}}^{(0)}=\left\vert
\frac{\delta_{h,x_{k}}^{m}u(x)}{h^{a}}\right\vert _{\overline{\Omega}}%
^{(0)}\leq\varepsilon^{-a}\left\vert \delta_{h,x_{k}}^{m}u(x)\right\vert
_{\overline{\Omega}}^{(0)}\leq C(m)\varepsilon^{-a}\left\vert u\right\vert
_{\overline{\Omega}}^{(0)},
\]
which completes the proof of the proposition.
\end{proof}

Consider, further, the acting of the Caputo - Jrbashyan derivative $D_{\ast
t}^{\theta}$ in the introduced anisotropic H\"{o}lder spaces.

\begin{proposition}
\label{P3.8} Let nonintegers $\theta>0$ and $\alpha>0$ be such that
\begin{equation}
\lbrack\theta+\theta\alpha]=[\theta],\text{ that is }\theta\alpha
<1-\{\theta\}. \label{3.37}%
\end{equation}
Then the operator of the Caputo - Jrbashyan derivative $D_{\ast t}^{\theta}$
is a bounded linear operator from $C^{\overline{\sigma}(1+\alpha
),\theta+\theta\alpha}(\overline{R_{T}^{N}})$ to $C^{\overline{\sigma}%
\alpha,\theta\alpha}(\overline{R_{T}^{N}})$ that is for a function $u(x,t)\in
C^{\overline{\sigma}(1+\alpha),\theta+\theta\alpha}(\overline{R_{T}^{N}})$ the
following estimate is valid
\begin{equation}
|D_{\ast t}^{\theta}u|_{\overline{R_{T}^{N}}}^{(\overline{\sigma}\alpha
,\theta\alpha)}\leq C(N,\overline{\sigma},\theta,\alpha)|u|_{\overline
{R_{T}^{N}}}^{\left(  \overline{\sigma}(1+\alpha),\theta+\theta\alpha\right)
}. \label{3.38}%
\end{equation}

\end{proposition}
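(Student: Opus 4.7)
The plan is to estimate the three ingredients of $|D_{\ast t}^\theta u|^{(\overline{\sigma}\alpha,\theta\alpha)}$ separately: the sup norm, the time seminorm $\langle\cdot\rangle_t^{(\theta\alpha)}$, and each spatial seminorm $\langle\cdot\rangle_{z_k}^{(\sigma_k\alpha)}$.

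For the sup norm and time seminorm I would apply Proposition \ref{P3.1} pointwise in $x\in R^N$: the hypothesis $\theta\alpha<1-\{\theta\}$ coincides with $\{\theta\}+\theta\alpha<1$, so the proposition yields $|D_{\ast t}^\theta u(x,\cdot)|_{[0,T]}^{(\theta\alpha)}\le C|u(x,\cdot)|_{[0,T]}^{(\theta+\theta\alpha)}$ uniformly in $x$, and taking sup over $x$ controls both $|D_{\ast t}^\theta u|^{(0)}$ and $\langle D_{\ast t}^\theta u\rangle_t^{(\theta\alpha)}$ by $\|u\|_{C^{\overline{\sigma}(1+\alpha),\theta+\theta\alpha}}$.

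The main work is the spatial seminorms. For each $x_i\in z_k$ and each integer $m>\sigma_k(1+\alpha)$ I must prove $|\delta_{h,x_i}^m D_{\ast t}^\theta u(x,t)|\le Ch^{\sigma_k\alpha}\|u\|$ uniformly in $(x,t,h)$. First reduce to order in $(0,1)$ via \eqref{1.4.0}: setting $\psi:=\partial_t^{[\theta]}u$, we have $D_{\ast t}^\theta u=D_{\ast t}^{\{\theta\}}\psi$, and by the mixed-derivative property \eqref{2.5}--\eqref{2.6}, $\psi\in C^{\omega\overline{\sigma}(1+\alpha),\{\theta\}+\theta\alpha}(\overline{R_T^N})$ with $\omega:=(\{\theta\}+\theta\alpha)/(\theta+\theta\alpha)$; note $\{\theta\}+\theta\alpha<1$ and $\sigma_k\alpha\le\omega\sigma_k(1+\alpha)$ since $\{\theta\}\ge 0$. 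Using commutativity of $\delta_{h,x_i}^m$ with $D_{\ast t}^{\{\theta\}}$ together with Proposition \ref{P3.1.00} (which gives vanishing at $t=0$ of the Caputo derivative of $\delta_{h,x_i}^m\psi(x,\cdot)\in C^{\{\theta\}+\theta\alpha}([0,T])$), I rewrite
\[
\delta_{h,x_i}^m D_{\ast t}^\theta u(x,t)=D_t^{\{\theta\}}G(x,t),\qquad G(x,t):=\delta_{h,x_i}^m\bigl[\psi(x,t)-\psi(x,0)\bigr],
\]
with $G(x,0)=0$, and apply the standard Marchaud representation
\[
D_t^{\{\theta\}}G(x,t)=\frac{\{\theta\}}{\Gamma(1-\{\theta\})}\int_0^t\frac{G(x,t)-G(x,\tau)}{(t-\tau)^{1+\{\theta\}}}\,d\tau+\frac{G(x,t)}{\Gamma(1-\{\theta\})\,t^{\{\theta\}}}.
\]

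The integrand and boundary term are then controlled by the alternative bound $|G(x,t)-G(x,\tau)|\le C\min\bigl(h^{\omega\sigma_k(1+\alpha)},\,(t-\tau)^{\{\theta\}+\theta\alpha}\bigr)\|u\|$ coming from the spatial and temporal H\"older estimates on $\psi$. Splitting the integral at the scale $\rho:=h^{\sigma_k/\theta}$, the near part $[t-\rho,t]$ uses the temporal bound and integrates to $\rho^{\theta\alpha}=h^{\sigma_k\alpha}$, while the far part $[0,t-\rho]$ uses the spatial bound and integrates to $h^{\omega\sigma_k(1+\alpha)}\rho^{-\{\theta\}}=h^{\sigma_k\alpha}$; the boundary term is treated by the same dichotomy at $t=\rho$. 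The main obstacle is the arithmetic that makes both contributions produce the same power $h^{\sigma_k\alpha}$: this is the identity $\omega\sigma_k(1+\alpha)\theta\alpha/(\{\theta\}+\theta\alpha)=\sigma_k\alpha$, which is built into the definition of $\omega$ and which forces the choice $\rho=h^{\sigma_k/\theta}$.
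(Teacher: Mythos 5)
Your proof is correct and follows essentially the same route as the paper's: the sup norm and time seminorm via Proposition \ref{P3.1} applied pointwise in $x$, and for the spatial seminorms the same boundary-term-plus-Marchaud decomposition of $D_{\ast t}^{\theta}u$ in terms of $\psi=\partial_t^{[\theta]}u$ (the paper obtains it by integrating \eqref{1.5} by parts), with the same mixed-smoothness input from \eqref{2.6} and the same splitting scale $|h|^{\sigma_k/\theta}$. The only cosmetic difference is that the paper handles the boundary term $I_{1}$ by invoking Lemma \ref{L3.2} with $a=\{\theta\}$, $b=\sigma_k\alpha$ --- whose proof is exactly the two-case dichotomy you carry out by hand --- and that your appeal to Proposition \ref{P3.1.00} is unnecessary (relation \eqref{1.4.2} already justifies the representation).
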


\begin{proof}
Firstly, from \eqref{3.1} it follows that
\begin{equation}
|D_{\ast t}^{\theta}u|_{\overline{R_{T}^{N}}}^{(0)}+\left\langle D_{\ast
t}^{\theta}u\right\rangle _{t,\overline{R_{T}^{N}}}^{(\theta\alpha)}\leq
C(\alpha,\theta,T)|u|_{\overline{R_{T}^{N}}}^{(\overline{\sigma}%
(1+\alpha),\theta+\theta\alpha)}.\label{3.39.1}%
\end{equation}
Further, let $p-1<\theta<p$ with some positive integer $p$. Represent the
fractional derivative $D_{\ast t}^{\theta}u$ in the form (see \eqref{1.5},
$\theta-p+1=\{\theta\}$, $p-1=[\theta]$) by applying integration by parts
\[
D_{\ast t}^{\theta}u(x,t)=
\]%
\[
=C(\theta){\int\limits_{0}^{t}}\frac{u_{\tau}^{(p)}(x,\tau)d\tau}%
{(t-\tau)^{\{\theta\}}}=C(\theta){\int\limits_{0}^{t}}\frac{\left[  u_{\tau
}^{(p-1)}(x,\tau)-u_{t}^{(p-1)}(x,t)\right]  _{\tau}^{\prime}d\tau}%
{(t-\tau)^{\{\theta\}}}=
\]%
\[
=C(\theta)\frac{u_{t}^{([\theta])}(x,t)-u_{\tau}^{([\theta])}(x,0)}%
{(t-0)^{\{\theta\}}}+
\]%
\begin{equation}
+C(\theta){\int\limits_{0}^{t}}\frac{\left[  u_{\tau}^{([\theta])}%
(x,\tau)-u_{t}^{([\theta])}(x,t)\right]  d\tau}{(t-\tau)^{1+\{\theta\}}}\equiv
I_{1}+I_{2}.\label{3.40}%
\end{equation}
Note further that on the ground of \eqref{2.6} $u_{t}^{([\theta])}(x,t)\in
C^{\overline{\sigma}(1+\alpha)(1-\frac{[\theta]}{\theta+\theta\alpha
}),\{\theta\}+\theta\alpha}(\overline{R_{T}^{N}})$ and
\begin{equation}
\left\langle u_{t}^{([\theta])}\right\rangle _{t,\overline{R_{T}^{N}}%
}^{(\{\theta\}+\theta\alpha)}+{\sum\limits_{k=1}^{r}}\left\langle
u_{t}^{([\theta])}\right\rangle _{z_{k},\overline{R_{T}^{N}}}^{(\sigma
_{k}(1+\alpha)(1-\frac{[\theta]}{\theta+\theta\alpha}))}\leq C\left\Vert
u\right\Vert _{C^{\overline{\sigma}(1+\alpha),\theta+\theta\alpha}%
(\overline{R_{T}^{N}})}.\label{3.41}%
\end{equation}
Making use of the last inequality we estimate H\"{o}lder seminorms of the
expressions $I_{1}$ and $I_{2}$ with respect to some group of the space
variables $z_{n}$. Consider first the ratio $I_{1}$. For this we make use of
Lemma \ref{L3.2} as applied to $u_{t}^{([\theta])}(x,t)$ and to the variables
$t$ and $z_{n}$. Since $\{\theta\}+\theta\alpha<1$, we can take in
\eqref{3.34} the order of the difference in $t$ to be one, $m=1$. And we fix
some integer $k>\sigma_{n}(1+\alpha)(1-\frac{[\theta]}{\theta+\theta\alpha})$
as the order of the difference in the space variables $z_{n}$. To apply the
mentioned lemma we choose the exponent $a$ as $a=\{\theta\}$ and then the
exponent $b$ is defined from the relation
\[
\frac{\{\theta\}}{\{\theta\}+\theta\alpha}+\frac{b}{\sigma_{n}(1+\alpha
)(1-\frac{[\theta]}{\theta+\theta\alpha})}=1,
\]
that is
\[
b=\sigma_{n}(1+\alpha)\frac{\{\theta\}+\theta\alpha}{\theta+\theta\alpha}%
\frac{\theta\alpha}{\{\theta\}+\theta\alpha}=\sigma_{n}\alpha.
\]
Thus we have on the ground of \eqref{3.34} ($h\in R^{N_{n}}$)
\[
\left\vert \delta_{h,z_{n}}^{k}\left[  u_{t}^{([\theta])}(x,t)-u_{\tau
}^{([\theta])}(x,0)\right]  \right\vert \leq
\]%
\[
\leq C\left(  \left\langle u_{t}^{([\theta])}\right\rangle _{t,\overline
{R_{T}^{N}}}^{(\{\theta\}+\theta\alpha)}+\left\langle u_{t}^{([\theta
])}\right\rangle _{z_{n},\overline{R_{T}^{N}}}^{(\sigma_{n}(1+\alpha
)(1-\frac{[\theta]}{\theta+\theta\alpha}))}\right)  t^{\{\theta\}}%
|h|^{\sigma_{n}\alpha}%
\]
or, dividing both parts by $t^{\{\theta\}}$,
\[
\left\vert \delta_{h,z_{n}}^{k}I_{1}\right\vert \leq C\left(  \left\langle
u_{t}^{([\theta])}\right\rangle _{t,\overline{R_{T}^{N}}}^{(\{\theta
\}+\theta\alpha)}+\left\langle u_{t}^{([\theta])}\right\rangle _{z_{n}%
,\overline{R_{T}^{N}}}^{(\sigma_{n}(1+\alpha)(1-\frac{[\theta]}{\theta
+\theta\alpha}))}\right)  |h|^{\sigma_{n}\alpha}%
\]
that is
\begin{equation}
\left\langle I_{1}\right\rangle _{z_{n},\overline{R_{T}^{N}}}^{(\sigma
_{n}\alpha)}\leq C\left(  \left\langle u_{t}^{([\theta])}\right\rangle
_{t,\overline{R_{T}^{N}}}^{(\{\theta\}+\theta\alpha)}+\left\langle
u_{t}^{([\theta])}\right\rangle _{z_{n},\overline{R_{T}^{N}}}^{(\sigma
_{n}(1+\alpha)(1-\frac{[\theta]}{\theta+\theta\alpha}))}\right)  .\label{3.42}%
\end{equation}

Estimate now the seminorm of the integral $I_{2}$ in \eqref{3.40}. Let
 $h\in R^{N_{n}}$ be fixed. Suppose first
that $t>|h|^{\sigma_{n}/\theta}$ and decompose the integral in $I_{2}$ into
two parts as follows
\[
I_{2}={\int\limits_{t-|h|^{\sigma_{n}/\theta}}^{t}}\frac{\left[  u_{\tau
}^{([\theta])}(x,\tau)-u_{t}^{([\theta])}(x,t)\right]  d\tau}{(t-\tau
)^{1+\{\theta\}}}+
\]%
\[
+{\int\limits_{0}^{t-|h|^{\sigma_{n}/\theta}}}\frac{\left[  u_{\tau}%
^{([\theta])}(x,\tau)-u_{t}^{([\theta])}(x,t)\right]  d\tau}{(t-\tau
)^{1+\{\theta\}}}\equiv J_{1}+J_{2}.
\]
Estimate separately the finite differences $\delta_{h,z_{n}}^{k}J_{1}$ and
$\delta_{h,z_{n}}^{k}J_{2}$, $k>\sigma_{n}(1+\alpha)(1-\frac{[\theta]}%
{\theta+\theta\alpha})$. On the ground of the H\"{o}lder property of
$u_{t}^{([\theta])}(x,t)$ in $t$ we have for $J_{1}$
\[
|\delta_{h,z_{n}}^{k}J_{1}|\leq C\left\langle u_{t}^{([\theta])}\right\rangle
_{t,\overline{R_{T}^{N}}}^{(\{\theta\}+\theta\alpha)}{\int
\limits_{t-|h|^{\sigma_{n}/\theta}}^{t}}\frac{(t-\tau)^{\{\theta
\}+\theta\alpha}d\tau}{(t-\tau)^{1+\{\theta\}}}=
\]%
\begin{equation}
=C\left\langle u_{t}^{([\theta])}\right\rangle _{t,\overline{R_{T}^{N}}%
}^{(\{\theta\}+\theta\alpha)}\left(  |h|^{\sigma_{n}/\theta}\right)
^{\theta\alpha}=C\left\langle u_{t}^{([\theta])}\right\rangle _{t,\overline
{R_{T}^{N}}}^{(\{\theta\}+\theta\alpha)}|h|^{\sigma_{n}\alpha}.\label{3.43}%
\end{equation}
Use now the H\"{o}lder property of $u_{t}^{([\theta])}(x,t)$ in $z_{n}$ to
obtain
\[
|\delta_{h,z_{n}}^{k}J_{2}|\leq C\left\langle u_{t}^{([\theta])}\right\rangle
_{z_{n},\overline{R_{T}^{N}}}^{(\sigma_{n}(1+\alpha)(1-\frac{[\theta]}%
{\theta+\theta\alpha}))}|h|^{\sigma_{n}(1+\alpha)(1-\frac{[\theta]}%
{\theta+\theta\alpha})}{\int\limits_{0}^{t-|h|^{\sigma_{n}/\theta}}}%
\frac{d\tau}{(t-\tau)^{1+\{\theta\}}}=
\]%
\[
=C\left\langle u_{t}^{([\theta])}\right\rangle _{z_{n},\overline{R_{T}^{N}}%
}^{(\sigma_{n}(1+\alpha)(1-\frac{[\theta]}{\theta+\theta\alpha}))}%
|h|^{\sigma_{n}(1+\alpha)(1-\frac{[\theta]}{\theta+\theta\alpha})}\left(
|h|^{\frac{\sigma_{n}}{\theta}}\right)  ^{-\{\theta\}}=
\]%
\begin{equation}
=C\left\langle u_{t}^{([\theta])}\right\rangle _{z_{n},\overline{R_{T}^{N}}%
}^{(\sigma_{n}(1+\alpha)(1-\frac{[\theta]}{\theta+\theta\alpha}))}%
|h|^{\sigma_{n}\alpha}.\label{3.44}%
\end{equation}
If now $t\leq|h|^{\sigma_{n}/\theta}$, then analogously to the estimate for
$\delta_{h,z_{n}}^{k}J_{1}$ we have for $\delta_{h,z_{n}}^{k}I_{2}$
\begin{equation}
|\delta_{h,z_{n}}^{k}I_{2}|\leq C\left\langle u_{t}^{([\theta])}\right\rangle
_{t,\overline{R_{T}^{N}}}^{(\{\theta\}+\theta\alpha)}{\int\limits_{0}%
^{|h|^{\sigma_{n}/\theta}}}\frac{(t-\tau)^{\{\theta\}+\theta\alpha}d\tau
}{(t-\tau)^{1+\{\theta\}}}=C\left\langle u_{t}^{([\theta])}\right\rangle
_{t,\overline{R_{T}^{N}}}^{(\{\theta\}+\theta\alpha)}|h|^{\sigma_{n}\alpha
}.\label{3.45}%
\end{equation}
Collecting estimates \eqref{3.43} - \eqref{3.45}, we see that
\begin{equation}
\left\langle I_{2}\right\rangle _{z_{n},\overline{R_{T}^{N}}}^{(\sigma
_{n}\alpha)}\leq C\left(  \left\langle u_{t}^{([\theta])}\right\rangle
_{t,\overline{R_{T}^{N}}}^{(\{\theta\}+\theta\alpha)}+\left\langle
u_{t}^{([\theta])}\right\rangle _{z_{n},\overline{R_{T}^{N}}}^{(\sigma
_{n}(1+\alpha)(1-\frac{[\theta]}{\theta+\theta\alpha}))}\right)  .\label{3.46}%
\end{equation}

The assertion of the proposition and estimate \eqref{3.38} follow now from
\eqref{3.39.1}, \eqref{3.41}, \eqref{3.42} and \eqref{3.46}.
\end{proof}

We have also some more general assertion.

\begin{theorem}
\label{T3.2} Let $\{\theta\}+\theta\alpha<1$ and let a noninteger $\mu
\in(0,\theta+\theta\alpha)$ so that it can be represented as $\mu
=(1-\omega)\theta(1+\alpha)$, $\omega\in(0,1)$.

The operator $D_{\ast t}^{\mu}$ is a bonded linear operator from
$C^{\overline{\sigma}(1+\alpha),\theta+\theta\alpha}(\overline{R_{T}^{N}})$ to
$C^{\omega\overline{\sigma}(1+\alpha),\omega\theta(1+\alpha)}(\overline
{R_{T}^{N}})$ that is for $u(x,t)\in C^{\overline{\sigma}(1+\alpha
),\theta+\theta\alpha}(R_{T}^{N})$
\begin{equation}
|D_{\ast t}^{\mu}u|_{\overline{R_{T}^{N}}}^{(\omega\overline{\sigma}%
(1+\alpha),\omega\theta(1+\alpha))}\leq C(N,\omega,\overline{\sigma}%
,\theta,\alpha)|u|_{\overline{R_{T}^{N}}}^{\left(  \overline{\sigma}%
(1+\alpha),\theta+\theta\alpha\right)  }. \label{3.54}%
\end{equation}

\end{theorem}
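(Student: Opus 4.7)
The plan is to prove Theorem \ref{T3.2} by reducing to Proposition \ref{P3.8} via a reparametrization of the smoothness indices, in the same spirit as the deduction of Theorem \ref{T3dop2} from Proposition \ref{P3.6}. I set $\beta\equiv \omega/(1-\omega)$ and $\overline{\rho}\equiv (1-\omega)\overline{\sigma}(1+\alpha)$; these are chosen so that
\[
\mu(1+\beta)=\theta(1+\alpha)=\theta+\theta\alpha,\ \overline{\rho}(1+\beta)=\overline{\sigma}(1+\alpha),\ \mu\beta=\omega\theta(1+\alpha),\ \overline{\rho}\beta=\omega\overline{\sigma}(1+\alpha),
\]
so that the source space $C^{\overline{\sigma}(1+\alpha),\theta+\theta\alpha}(\overline{R_{T}^{N}})$ becomes $C^{\overline{\rho}(1+\beta),\mu+\mu\beta}(\overline{R_{T}^{N}})$ and the target space $C^{\omega\overline{\sigma}(1+\alpha),\omega\theta(1+\alpha)}(\overline{R_{T}^{N}})$ becomes $C^{\overline{\rho}\beta,\mu\beta}(\overline{R_{T}^{N}})$.

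Under these identifications, the statement of Theorem \ref{T3.2} coincides word-for-word with Proposition \ref{P3.8} after the substitutions $(\theta,\alpha,\overline{\sigma})\mapsto (\mu,\beta,\overline{\rho})$, and the estimate \eqref{3.54} is exactly \eqref{3.38} under the same substitution. The nonintegrality requirements needed to invoke Proposition \ref{P3.8} are either explicit hypotheses of Theorem \ref{T3.2} (for $\mu$ itself and for $\mu+\mu\beta=\theta+\theta\alpha$) or can be arranged by the choice of $\omega$ (for $\mu\beta=\omega\theta(1+\alpha)$).

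The one nontrivial point is to verify that the compatibility assumption $\{\theta\}+\theta\alpha<1$ of Proposition \ref{P3.8} still holds under the relabelling, i.e.\ that $\{\mu\}+\mu\beta<1$. A short computation gives $\{\mu\}+\mu\beta=\mu(1+\beta)-[\mu]=(\theta+\theta\alpha)-[\mu]$, so this condition is equivalent to $[\mu]\geq[\theta]$, and it is automatic whenever $\mu\in([\theta],\theta+\theta\alpha)$ thanks to the standing hypothesis $\{\theta\}+\theta\alpha<1$. For the remaining values $\mu\in(0,[\theta])$ the pure reparametrization no longer suffices and I would instead imitate the proof of Proposition \ref{P3.8} directly: start from the analogue of the representation \eqref{3.40} for $D_{\ast t}^\mu u$, integrating by parts $[\mu]+1$ times using the time-smoothness of $u$ guaranteed by $u\in C^{\overline{\sigma}(1+\alpha),\theta+\theta\alpha}(\overline{R_{T}^{N}})$, and then estimate the resulting boundary term and the singular integral by the same splitting-in-$\tau$ argument as in Proposition \ref{P3.8}, but with the exponents now adjusted to the pair $(\mu,\beta)$. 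I expect this lower-order case to be the main obstacle; once it is handled, the rest is bookkeeping.
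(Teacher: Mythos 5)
For $\mu\in([\theta],\theta+\theta\alpha)$, i.e.\ $[\mu]=[\theta]$, your reduction is exactly the paper's own proof: the paper sets $\beta=\omega/(1-\omega)$, $\overline{\rho}=(1-\omega)\overline{\sigma}(1+\alpha)$, identifies $C^{\overline{\sigma}(1+\alpha),\theta+\theta\alpha}(\overline{R_{T}^{N}})=C^{\overline{\rho}(1+\beta),\mu+\mu\beta}(\overline{R_{T}^{N}})$ and invokes Proposition \ref{P3.8}, just as you do. Your computation $\{\mu\}+\mu\beta=\mu(1+\beta)-[\mu]=\theta+\theta\alpha-[\mu]$ is correct, and it is a verification the paper's one-line proof does not actually perform; in the range $[\mu]=[\theta]$ the hypothesis of Proposition \ref{P3.8} does transfer, and the nonintegrality of $\mu\beta=\{\theta\}+\theta\alpha-\{\mu\}\in(0,1)$ is then automatic (note that $\omega$ is pinned down by $\mu=(1-\omega)\theta(1+\alpha)$, so nothing can be ``arranged by the choice of $\omega$'').

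The genuine gap is your fallback for $\mu\in(0,[\theta])$ (a range that is nonempty only when $\theta>1$): no imitation of the proof of Proposition \ref{P3.8} can close it, because estimate \eqref{3.54} is simply false there. Take $m=[\mu]\leq[\theta]-1$ and $u(x,t)=t^{m+1}$, which lies in $C^{\overline{\sigma}(1+\alpha),\theta+\theta\alpha}(\overline{R_{T}^{N}})$ since $m+1\leq[\theta]$ and differences of order $k>\theta+\theta\alpha$ annihilate it. Then
\[
D_{\ast t}^{\mu}u=\frac{(m+1)!}{\Gamma(2-\{\mu\})}\,t^{1-\{\mu\}},
\]
whose H\"{o}lder exponent in $t$ up to $t=0$ is exactly $1-\{\mu\}$, while the claimed exponent is $\omega\theta(1+\alpha)=\theta+\theta\alpha-\mu\geq 1+\{\theta\}+\theta\alpha-\{\mu\}>1-\{\mu\}$. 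The obstruction is already one-dimensional and is precisely the phenomenon of Remark \ref{R00}; in your plan it materializes in the boundary term produced by the integrations by parts (the analogue of $I_{1}$ in \eqref{3.40}), namely $C\left[u_{t}^{([\mu])}(x,t)-u_{t}^{([\mu])}(x,0)\right]t^{-\{\mu\}}$, which for the above $u$ equals $Ct^{1-\{\mu\}}$ identically, so no splitting-in-$\tau$ argument can assign it smoothness $\mu\beta>1-\{\mu\}$. The correct repair is either to restrict the statement to $[\mu]=[\theta]$, or to impose vanishing initial data $u_{t}^{(k)}(x,0)=0$, $k=0,\dots,[\theta+\theta\alpha]$, in the spirit of Proposition \ref{P3.7}. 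Your diagnosis of where the reparametrization breaks is sharper than the paper's own argument, which silently skips the check that $\{\mu\}+\mu\beta<1$ — though every later application of Theorem \ref{T3.2} in the paper (e.g.\ in the proof of Lemma \ref{L5.1}, where all time exponents are below one) falls in the safe range $[\mu]=[\theta]=0$.
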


\begin{proof}
The theorem is a direct consequence of the previous Proposition. It is enough
to note that
\[
\theta(1+\alpha)=\mu(1+\beta),\quad\overline{\sigma}(1+\alpha)=\overline{\rho
}(1+\beta),\quad\beta\equiv\frac{1}{1-\omega}-1=\frac{\omega}{1-\omega}>0,
\]
where
\[
\beta\equiv\frac{1}{1-\omega}-1=\frac{\omega}{1-\omega}>0,\quad\rho_{k}%
\equiv(1-\omega)\sigma_{k}(1+\alpha).
\]
Thus $C^{\overline{\sigma}(1+\alpha),\theta+\theta\alpha}(\overline{R_{T}^{N}%
})=C^{\overline{\rho}(1+\beta),\mu+\mu\beta}(\overline{R_{T}^{N}})$.
Consequently, the assertion of the theorem is obtained by the application of
Proposition \ref{P3.8}.
\end{proof}

It was shown in the example after Theorem \ref{T2.1} that the restriction
$\{\theta\}+\theta\alpha<1$ can not be omitted in general. However this
restriction is a possible sufficient condition for the assertions of
Proposition \ref{P3.8} and Theorem \ref{T3.2} to be valid. As a conclusion of
this section we present here some another sufficient condition. It is that all
the permitted by a class derivatives in $t$ vanish at $t=0$.

But first we prove an auxiliary lemma.

\begin{lemma}
\label{L3.1} Let a function $f(t)\in C^{\theta+\beta}([0,\infty))$, where
$\theta$ and $\theta+\beta$ are nonintegers and let at $t=0$ the function
$f(t)$ satisfy the condition
\begin{equation}
f_{t}^{(k)}(0)=0,\quad k=0,...,[\theta].\label{3.24.1}%
\end{equation}
Then it's fractional Caputo - Jrbashyan derivative $D_{\ast t}^{\theta}f(t)$,
which was defined in \eqref{1.3}, \eqref{1.5}, coincides with the Marshaud
derivative(see \cite{Samko}, section 5.4) that is it has also the
representation
\begin{equation}
D_{\ast t}^{\theta}f(t)=C(\theta,m){\int\limits_{0}^{\infty}}\frac
{\delta_{-\tau,t}^{m}f(t)}{\tau^{1+\theta}}d\tau\equiv C(\theta,m)T_{\theta
,m}(f).\label{3.25}%
\end{equation}
Here $\delta_{-\tau,t}f(t)=f(t)-f(t-\tau)$, $\delta_{-\tau,t}^{n}%
f(t)=$\ $\delta_{-\tau,t}\left(  \delta_{-\tau,t}^{m-1}f(t)\right)  $ is a
backward difference of an arbitrary but fixed order $m>\theta$ and the
function $f(t)$ is extended by identical zero in the domain $t<0$.
\end{lemma}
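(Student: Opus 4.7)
The plan is to reduce the identity to a classical one on the whole real line by extending $f$ by zero to $t<0$, which, thanks to the vanishing conditions \eqref{3.24.1}, produces a function on $\mathbb{R}$ with enough regularity for the Samko-Marchaud theory (\cite{Samko}, §5.4) to apply. Set $\tilde f(t) := f(t)$ for $t\ge 0$ and $\tilde f(t) := 0$ for $t<0$. Because $f^{(k)}(0)=0$ for $k=0,\dots,[\theta]$ and $f\in C^{\theta+\beta}([0,\infty))$, the extension belongs to $C^{[\theta]}(\mathbb{R})$ with $\tilde f^{([\theta])}$ bounded on $\mathbb{R}$ and locally Hölder continuous of order at least $\{\theta\}+\beta$ near $t=0$ (inherited from $f^{([\theta])}(0)=0$ plus the Hölder continuity of $f^{([\theta])}$).

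Next I would verify that the Marchaud-type integral in \eqref{3.25} converges absolutely for every $t\ge 0$. Near $\tau=0$ the standard estimate gives $|\delta^m_{-\tau,t}\tilde f(t)| \le C\,\tau^{\min(m,\,\theta+\beta)}$, so the integrand is bounded by $C\tau^{\theta+\beta-1-\theta}=C\tau^{\beta-1}$, which is integrable since $\beta>0$. For large $\tau$ the boundedness of $\tilde f$ controls $|\delta^m_{-\tau,t}\tilde f(t)|$ and the tail $\tau^{-1-\theta}$ is integrable since $\theta>0$. Hence $T_{\theta,m}(f)$ is well defined and independent of $m>\theta$ (by the usual subtraction-of-successive-differences argument, or by absorbing the dependence into the constant $C(\theta,m)$).

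The central step is to identify $C(\theta,m)\,T_{\theta,m}(f)$ with $D_{\ast t}^{\theta}f$. I would invoke the classical identity from \cite{Samko}, §5.4: for a function on $\mathbb{R}$ in the appropriate class, the Marchaud integral coincides with the Liouville left-sided fractional derivative
\begin{equation*}
\frac{1}{\Gamma(n-\theta)}\frac{d^{n}}{dt^{n}}\int_{-\infty}^{t}\frac{\tilde f(\tau)\,d\tau}{(t-\tau)^{\theta-n+1}},\qquad n=[\theta]+1.
\end{equation*}
Since $\tilde f\equiv 0$ on $(-\infty,0)$, this reduces for $t>0$ to the Riemann-Liouville derivative $D_t^{\theta}f$ as in \eqref{1.4}. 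Finally, the link \eqref{1.5} between the Riemann-Liouville and Caputo-Jrbashyan derivatives shows that these two coincide precisely when the Taylor polynomial $\sum_{k=0}^{n-1}\frac{t^{k}g^{(k)}(0)}{k!}$ vanishes identically — which is exactly guaranteed by \eqref{3.24.1}. Chaining these equalities yields \eqref{3.25}.

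The main obstacle is checking that the extension $\tilde f$ falls into the precise class of functions for which the Marchaud-Liouville identity in \cite{Samko} is proved, and tracing the normalizing constant $C(\theta,m)$ consistently through the Liouville and Caputo representations; the smoothness verification at the junction $t=0$ is the technical core, but once the Hölder regularity of $\tilde f^{([\theta])}$ across the origin is established via \eqref{3.24.1}, both the convergence of the Marchaud integral and the applicability of the equivalence become routine.
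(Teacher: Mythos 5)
Your architecture --- extend by zero, then chain Marchaud $=$ Liouville $=$ Riemann--Liouville $=$ Caputo, using \eqref{3.24.1} to kill the Taylor polynomial in \eqref{1.5} --- is a genuinely different route from the paper, which never passes through the Liouville derivative at all: the paper compares the Fourier transforms of the two sides of \eqref{3.25} (adapting the scheme of Samko's Lemma 25.3 for Riesz differentiation), evaluates $\int_{0}^{\infty}(1-e^{-i\tau\xi})^{m}\tau^{-1-\theta}\,d\tau$ by the substitution $z=i\tau\xi$ and a rotation of the contour from the imaginary to the real axis, obtaining $(i\xi)^{[\theta]}(i\xi)^{\{\theta\}}/C(m,\theta)$ with $1/C(m,\theta)=\int_{0}^{\infty}(1-e^{-x})^{m}x^{-1-\theta}\,dx$, proves the identity first for compactly supported functions of class $C^{[\theta]+1}$, and then reaches the general case by mollification and cutting off. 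The problem with your version is that its pivotal step is an appeal to a theorem that is not available in the form you need. Samko, Section 5.4, treats the Marchaud derivative of order in $(0,1)$ with first-order differences, and the coincidence theorems with the Liouville derivative there and in the higher-order sections are established within $L_{p}$-type classes (functions representable as fractional integrals of $L_{p}$ densities), not for a bounded, non-decaying H\"older function supported on a half-line, with differences of an arbitrary order $m>\theta$ and an arbitrary noninteger $\theta>0$. That identification, including the value of the normalizing constant $C(\theta,m)$, is precisely the mathematical content of Lemma \ref{L3.1}, which is why the paper proves it from scratch; labelling it ``routine'' leaves the core of the proof missing. Moreover, even granting the identity for smooth compactly supported functions, your argument would still require the density/limit passage to $\tilde f$ --- i.e., exactly the mollification-and-cutoff step with which the paper concludes, and which your proposal omits.

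Separately, your regularity claim at the junction is false as stated when $\{\theta\}+\beta>1$: conditions \eqref{3.24.1} vanish only up to order $[\theta]$, so $f^{([\theta]+1)}(0)$ need not vanish, and the zero extension of $f^{([\theta])}$ is then merely Lipschitz across $t=0$, not H\"older of order $\{\theta\}+\beta$. The paper makes exactly the needed correction: the extension belongs to $C^{\theta+\beta'}$ for every $\beta'<1-\{\theta\}$, and one works with $\gamma=\min\{\beta,\beta'\}$ as in estimate \eqref{3.25.1}. Your convergence analysis of the Marchaud integral survives this repair, since $|\delta^{m}_{-\tau,t}\tilde f(t)|\leq C\tau^{\theta+\gamma}$ near $\tau=0$ still gives an integrable bound $C\tau^{\gamma-1}$; so this slip is repairable, but as written the claimed exponent at the origin is wrong and should be corrected alongside filling the main gap above.
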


\begin{proof}
The proof reproduces the schema of reasonings from \cite{Samko}, section 25.4,
Lemma 25.3 for the fractional Riess differentiation as it is applied in
\cite{Samko} to obtain representation \eqref{1.9}. Namely, we compare the
Fourier transforms of the left and of the right hand sides of \eqref{3.25}.

Note that zero extension of the function $f(t)$ in the domain $t<0$ belongs to
the space $C^{\theta+\beta}$ in the case $\{\theta\}+\beta<1$ ($\{\theta\}$ is
the fractional part of $\theta$) and in the case $\{\theta\}+\beta>1$ this
extension belongs to the space $C^{\theta+\beta^{\prime}}$ with an arbitrary
$\beta^{\prime}<1-\{\theta\}$. Because of this the following estimate is
valid
\begin{equation}
|\delta_{-\tau,t}^{m}f(t)|\leq%
%TCIMACRO{\QDATOPD{\{}{.}{C(f)\tau^{\theta+\gamma},\quad t<1}{C(f),\quad
%t>1,}}%
%BeginExpansion
\genfrac{\{}{.}{0pt}{0}{C(f)\tau^{\theta+\gamma},\quad t<1}{C(f),\quad t>1,}%
%EndExpansion
,\quad\gamma=\min\{\beta,\beta^{\prime}\}, \label{3.25.1}%
\end{equation}
where we preserve the same notation $f(t)$ for the extension.

Suppose first that the function $f(t)$ is more smooth in the sense that it
possess the continuous derivative of order $n=[\theta]+1$ ($n-1<\theta<n$ ).
Besides, we first suppose that $f(t)$ is integrable and even finite (for
simplicity) for $t\rightarrow+\infty$. Then the Fourier transform of the
derivative $D_{\ast t}^{\theta}f(t)$ from \eqref{1.5} is calculated in, for
example, \cite{Samko}, section 7 and it is equal to (see Lemma \ref{L4.2}
below and Remark \ref{R4.1} after it)
\begin{equation}
F(D_{\ast t}^{\theta}f)(\xi)\equiv\widehat{D_{\ast t}^{\theta}f}(\xi
)=(i\xi)^{[\theta]}(i\xi)^{\{\theta\}}\widehat{f}(\xi). \label{3.26}%
\end{equation}
Here $\widehat{f}(\xi)$ the Fourier transform of $f(t)$ and $(i\xi
)^{\{\theta\}}$ is the following analytic extension of the function
$z^{\{\theta\}}$ from the positive real axis to the right half-plane
$\operatorname{Re}z>0$
\begin{equation}
(i\xi)^{\{\theta\}}=|\xi|^{\{\theta\}}e^{i\theta\frac{\pi}{2}sign\xi}
\label{3.27}%
\end{equation}

Calculate now the Fourier image of the integral $T_{\theta,m}(f)(t)$ in the
right hand side of \eqref{3.25}. We have (comp. \cite{Samko}, section 25.4,
Lemma 25.3)
\begin{equation}
\widehat{T_{\theta,m}(f)}(\xi)={\int\limits_{0}^{\infty}}\frac{\widehat
{\delta_{-\tau,t}^{m}f(t)}}{\tau^{1+\theta}}d\tau=\widehat{f}(\xi
){\int\limits_{0}^{\infty}}\frac{(1-e^{-i\tau\xi})^{m}}{\tau^{1+[\theta
]+\{\theta\}}}d\tau.\label{3.27.1}%
\end{equation}
Make in the last integral the change of variables $z=i\tau\xi$. After such
change the last integral for $\xi>0$ is transformed into the integral along
positive imaginary axis in the positive direction that is
\begin{equation}
\widehat{T_{\theta,m}(f)}(\xi)=(i\xi)^{[\theta]}(i\xi)^{\{\theta\}}\widehat
{f}(\xi){\int\limits_{0i}^{\infty i}}\frac{(1-e^{-z})^{m}}{z^{1+[\theta
]+\{\theta\}}}dz.\label{3.27.2}%
\end{equation}
Consider for $R>0$ the integral of the last integrand along the circular
contour
\[
C_{R}^{+}=[0i,Ri]+[R\cdot e^{i\varphi},\varphi\in(\pi/2,0)]-[0,R]
\]
This contour consists of the interval of the imaginary axis $[0i,Ri]$, of the
clock-wise oriented quarter of circle $[R\cdot e^{i\varphi},\varphi\in
(\pi/2,0)]$, and of the negatively oriented interval $[0,R]$ of the real axis.
Since the integrand is analytic and continuous in the right half-plane, the
integral along this contour $C_{R}^{+}$ is equal to zero. Besides,
\[
\left\vert {\int\limits_{[R\cdot e^{i\varphi},\varphi\in(\pi/2,0)]}}%
\frac{(1-e^{-z})^{m}}{z^{1+[\theta]+\{\theta\}}}dz\right\vert \leq\frac{2^{m}%
}{R^{1+[\theta]+\{\theta\}}}\frac{\pi}{2}R\rightarrow0,\quad R\rightarrow
\infty.
\]
Consequently, letting $R\rightarrow\infty$, we obtain un view of the
directions of integration
\[
{\int\limits_{0i}^{\infty i}}\frac{(1-e^{-z})^{m}}{z^{1+[\theta]+\{\theta\}}%
}dz={\int\limits_{0}^{\infty}}\frac{(1-e^{-x})^{m}}{x^{1+\theta}}%
dx\equiv1/C(m,\theta)>0.
\]
If now $\xi<0$, then the last integral in \eqref{3.27.1} after the change
$z=i\tau\xi$ goes to an analogous to \eqref{3.27.2} integral, but along the
negatively oriented imaginary axis. The analogous considerations for this case
give the same result that is
\[
{\int\limits_{0i}^{-\infty i}}\frac{(1-e^{-z})^{m}}{z^{1+[\theta]+\{\theta\}}%
}dz={\int\limits_{0}^{\infty}}\frac{(1-e^{-x})^{m}}{x^{1+\theta}}%
dx\equiv1/C(m,\theta)>0
\]
with the same constant $C(m,\theta)$. Thus for any sign of $\xi$ we get
\begin{equation}
\widehat{T_{\theta,m}(f)}(\xi)=C(m,\theta)(i\xi)^{[\theta]}(i\xi)^{\{\theta
\}}\widehat{f}(\xi).\label{3.27.3}%
\end{equation}
From \eqref{3.26} and \eqref{3.27.3} it follows that $\widehat{D_{\ast
t}^{\theta}f}(\xi)=C(m,\theta)\widehat{T_{\theta,m}(f)}(\xi)$, $\xi\in
(-\infty,\infty)$, which proves assertion \eqref{3.25} of the lemma for finite
functions $f(t)$ of the class $C^{[\theta]+1}([0,\infty))$.

General case is obtained now by a mollifying and by a cutting off the function
$f(t)$ with a subsequent limiting process in the final equality \eqref{3.25},
which does not depend on extra smoothness.
\end{proof}

On the ground of Lemma \ref{L3.1}, by replication of the proof of Proposition
\ref{P3.6}, we obtain the following proposition.

\begin{proposition}
\label{P3.7} For a function $u(x,t)\in C^{\overline{\sigma}(1+\alpha
),\theta+\theta\alpha}(\overline{R_{T}^{N}})$ with
\[
u_{t}^{(k)}(x,0)=0,\quad k=0,...,[\theta+\theta\alpha]
\]
the following estimate is valid
\begin{equation}
|D_{\ast t}^{\theta}u|_{\overline{R_{T}^{N}}}^{(\overline{\sigma}\alpha
,\theta\alpha)}\leq C(N,\overline{\sigma},\theta,\alpha)|u|_{\overline
{R_{T}^{N}}}^{\left(  \overline{\sigma}(1+\alpha),\theta+\theta\alpha\right)
}. \label{3.31}%
\end{equation}

Moreover, for the H\"{o}lder seminorm of the derivative $D_{\ast t}^{\theta}u$
with respect to a group $z_{k}$ of the space variables we have the following
interpolation inequality with an arbitrary $\varepsilon>0$
\begin{equation}
\left\langle D_{\ast t}^{\theta}u\right\rangle _{z_{k},\overline{R_{T}^{N}}%
}^{(\theta\alpha)}\leq C\varepsilon^{\alpha}\left\langle u\right\rangle
_{t,\overline{R_{T}^{N}}}^{(\theta+\theta\alpha)}+\frac{C}{\varepsilon
}\left\langle u\right\rangle _{z_{k},\overline{R_{T}^{N}}}^{(\sigma_{k}%
+\sigma_{k}\alpha)}. \label{3.32}%
\end{equation}

\end{proposition}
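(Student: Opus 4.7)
The plan is to exploit the Marchaud representation. The hypothesis $u_{t}^{(k)}(x,0)=0$ for $k=0,\ldots,[\theta+\theta\alpha]$ is (more than) enough to apply Lemma \ref{L3.1} pointwise in $x$ to the slice $u(x,\cdot)$, yielding
\[
D_{\ast t}^{\theta}u(x,t)=C(\theta,m)\int_{0}^{\infty}\frac{\delta_{-\tau,t}^{m}u(x,t)}{\tau^{1+\theta}}\,d\tau,
\]
where $u$ is extended by zero for $t<0$ and $m>\theta+\theta\alpha$ is arbitrary but fixed. This representation mirrors exactly the representation \eqref{3.4.1} for $(-\Delta_{z_{k}})^{\sigma_{k}/2}$ used in Proposition \ref{P3.6}, with $\tau$ in the role of $|\eta|$ and $1+\theta$ in the role of $N_{k}+\sigma_{k}$, so the argument of that proposition can be transported almost verbatim.

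First I would dispose of the sup-norm and the H\"older seminorm in $t$: Proposition \ref{P3.1}, applied pointwise in $x$, gives \eqref{3.1} without the restriction $\{\theta\}+\theta\alpha<1$ precisely because of the vanishing hypothesis, so supremum over $x$ yields
\[
|D_{\ast t}^{\theta}u|_{\overline{R_{T}^{N}}}^{(0)}+\langle D_{\ast t}^{\theta}u\rangle_{t,\overline{R_{T}^{N}}}^{(\theta\alpha)}\leq C\,|u|_{\overline{R_{T}^{N}}}^{(\overline{\sigma}(1+\alpha),\theta+\theta\alpha)}.
\]

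Next, for the H\"older seminorm in a space group $z_{n}$, I would fix $h\in R^{N_{n}}$ and an integer $p>\sigma_{n}+\sigma_{n}\alpha$, commute $\delta_{h,z_{n}}^{p}$ with $\delta_{-\tau,t}^{m}$ and the $\tau$-integral, and split the integral at $\tau_{0}=\varepsilon^{1/\theta}|h|^{\sigma_{n}/\theta}$. On $\{\tau\leq\tau_{0}\}$ I use the bound $|\delta_{-\tau,t}^{m}\delta_{h,z_{n}}^{p}u|\leq C\langle u\rangle_{t,\overline{R_{T}^{N}}}^{(\theta+\theta\alpha)}\tau^{\theta+\theta\alpha}$, which after integrating $\tau^{\theta\alpha-1}$ produces a contribution $C\varepsilon^{\alpha}\langle u\rangle_{t,\overline{R_{T}^{N}}}^{(\theta+\theta\alpha)}|h|^{\sigma_{n}\alpha}$. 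On $\{\tau>\tau_{0}\}$ I use $|\delta_{-\tau,t}^{m}\delta_{h,z_{n}}^{p}u|\leq C\langle u\rangle_{z_{n},\overline{R_{T}^{N}}}^{(\sigma_{n}+\sigma_{n}\alpha)}|h|^{\sigma_{n}+\sigma_{n}\alpha}$, which after integrating $\tau^{-1-\theta}$ produces $(C/\varepsilon)\langle u\rangle_{z_{n},\overline{R_{T}^{N}}}^{(\sigma_{n}+\sigma_{n}\alpha)}|h|^{\sigma_{n}\alpha}$. Dividing by $|h|^{\sigma_{n}\alpha}$ and taking the supremum in $(x,t,h)$ gives precisely the interpolation inequality \eqref{3.32}; setting $\varepsilon=1$ (and summing over $n$) gives the $\langle\cdot\rangle_{z_{n}}^{(\sigma_{n}\alpha)}$ summands needed for \eqref{3.31}.

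The principal subtlety I anticipate is justifying that the backward-difference estimate $|\delta_{-\tau,t}^{m}u(x,t)|\leq C\langle u\rangle_{t}^{(\theta+\theta\alpha)}\tau^{\theta+\theta\alpha}$ holds uniformly in $t$, even when $\tau>t$ so that arguments cross the zero extension at $t=0$. This is exactly where the strengthened hypothesis $u_{t}^{(k)}(x,0)=0$ for \emph{all} $k\leq[\theta+\theta\alpha]$ becomes essential: a Taylor expansion at $t=0$ gives $|u(x,t)|\leq C\langle u\rangle_{t}^{(\theta+\theta\alpha)}t^{\theta+\theta\alpha}$ pointwise in $x$, so the zero-extended function lies in $C^{\theta+\theta\alpha}(\mathbb{R})$ with seminorm controlled by the original one, and the difference bound survives across the origin. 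Convergence of the Marchaud integral at $\tau=\infty$ is harmless thanks to the weight $\tau^{-1-\theta}$ and the boundedness of $u$. Once these points are in place, the argument parallels that of Proposition \ref{P3.6} line by line and no further estimates beyond those already established are needed.
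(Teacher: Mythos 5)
Your proposal is correct and follows essentially the same route as the paper's own (sketched) proof: zero extension through $t=0$ justified by the vanishing of the derivatives up to order $[\theta+\theta\alpha]$, the Marchaud representation of Lemma \ref{L3.1}, the splitting of the $\tau$-integral at $\tau\sim\varepsilon^{1/\theta}|h|^{\sigma_{k}/\theta}$ exactly as the paper indicates, and Proposition \ref{P3.1} for the smoothness in $t$. The only detail you gloss over is the preliminary extension of $u$ to a finite-in-$t$ function for $t>T$ (needed to invoke Lemma \ref{L3.1}, which is stated for functions on $[0,\infty)$); since both the Caputo derivative and the Marchaud integral at a time $t\leq T$ depend only on values at earlier times, this is a harmless formality that does not affect your estimates.
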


Presenting this Proposition without a proof, we nevertheless note the
following. First of all, to make use of Lemma \ref{L3.1} and representation
\eqref{3.25}, we need to extend the function $u(x,t)$ to a finite in $t$
function from the domain $\overline{R_{T}^{N}}$ to the domain $t>T$ that is to
whole subspace $\overline{R_{\infty}^{N}}=R^{N}\times\lbrack0,\infty)$. This
can be done with a controlled preservation of the norm in $C^{\overline
{\sigma}(1+\alpha),\theta+\theta\alpha}(\overline{R_{T}^{N}})$ in the way,
which is described in, for example, \cite{Sol1}, \cite{LadParab}. In the
second place, extended further by zero in the domain $t<0$ function $u(x,t)$
preserves it's norm in $C^{\overline{\sigma}(1+\alpha),\theta+\theta\alpha
}(R^{N}\times R^{1})$. It is precisely this place, where we need all the
derivatives of $u(x,t)$ in $t$ up to the order $[\theta+\theta\alpha]$
(possibly bigger than $[\theta]$ as it is required in Lemma \ref{L3.1}) to
vanish at $t=0$. After this the proof of the above Proposition coincides with
the proof of Proposition \ref{P3.5} with the making use of the representation
from Lemma \ref{L3.1} for $D_{\ast t}^{\theta}u$ (for obtaining the smoothness
in the space variables) and the proof of Proposition \ref{P3.1} (the
smoothness in $t$). The only difference is that the interval of integration
$(0,\infty)$ in representation \eqref{3.25} for $D_{\ast t}^{\theta}u$ is
naturally split not into a centered at zero ball and it's outer part (as in
the proof of Proposition \ref{P3.5}), but into the intervals $(0,\varepsilon
|h|^{\frac{\sigma_{k}}{\theta}})$ and $(\varepsilon|h|^{\frac{\sigma_{k}%
}{\theta}},\infty)$.

\section{Some additional technical assertions on the properties of fractional
differentiation in anisotropic H\"{o}lder spaces.}

\label{sDOP}

In the present section we prove several useful for applications assertions of
the interpolation type on the properties of fractional differentiation in
anisotropic H\"{o}lder spaces.

\begin{lemma}
\label{L3.3} Let a function $u(x,t)$ is defined in $\overline{R_{T}^{N}}$ and
possesses bounded in $\overline{R_{T}^{N}}$ fractional derivative $D_{\ast
t}^{\mu}u(x,t)$ of a noninteger order $\mu>0$. Let also this derivative
possesses the smoothness of a noninteger order $\beta>0$ with respect to a
group $z_{k}\in R^{N_{k}}$ of the space variables that is
\begin{equation}
\left\langle D_{\ast t}^{\mu}u\right\rangle _{z_{k},\overline{R_{T}^{N}}%
}^{(\beta)}=\sup_{(x,t)\in\overline{R_{T}^{N}},\overline{h}\in R^{N_{k}%
},\overline{h}\neq0}\frac{|\delta_{\overline{h},z_{k}}^{m}D_{\ast t}^{\mu
}u(x,t)|}{|\overline{h}|^{\beta}}<\infty,\quad m>\beta. \label{3.54.1}%
\end{equation}
Here $\delta_{\overline{h},z_{k}}^{m}u(x,t)$ is the finite difference of order
$m$ from the function $D_{\ast t}^{\mu}u(x,t)$ in the variables $z_{k}$ with
the step $\overline{h}\in R^{N_{k}}$.

Then
\begin{equation}
\left\vert \delta_{\tau,t}^{p}\delta_{\overline{h},z_{k}}^{m}u(x,t)\right\vert
\leq C\left\langle D_{\ast t}^{\mu}u\right\rangle _{z_{k},\overline{R_{T}^{N}%
}}^{(\beta)}\tau^{\mu}|\overline{h}|^{\beta},\quad p\geq\lbrack\mu]+1, m >
\beta, \label{3.55}%
\end{equation}
where $\delta_{\tau,t}^{p}v(x,t)$ is the finite difference in $t$ of step
$\tau>0$ from the function $v(x,t)$.

If a function $u(x,t)$ possesses bounded in $\overline{R_{T}^{N}}$ derivative
$u_{t}^{(n)}(x,t)$ in $t$ of an integer order $n>0$, which possesses the
smoothness in a space group $z_{k}\in R^{N_{k}}$ of a noninteger order
$\beta>0$ that is
\begin{equation}
\left\langle u_{t}^{(n)}\right\rangle _{z_{k},\overline{R_{T}^{N}}}^{(\beta
)}=\sup_{(x,t)\in\overline{R_{T}^{N}},\overline{h}\in R^{N_{k}},\overline
{h}\neq0}\frac{|\delta_{\overline{h},z_{k}}^{m}u_{t}^{(n)}(x,t)|}%
{|\overline{h}|^{\beta}}<\infty,\quad m>\beta, \label{3.55.1}%
\end{equation}
then also
\begin{equation}
\left\vert \delta_{\tau,t}^{p}\delta_{\overline{h},z_{k}}^{m}u(x,t)\right\vert
\leq C\left\langle u_{t}^{(n)}\right\rangle _{z_{k},\overline{R_{T}^{N}}%
}^{(\beta)}\tau^{n}|\overline{h}|^{\beta},\quad m>\beta,\,p\geq n.
\label{3.55.2}%
\end{equation}

\end{lemma}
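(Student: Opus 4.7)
My plan is to reduce both estimates to one-dimensional smoothness-in-$t$ statements via commutation with the spatial finite difference $\delta_{\overline{h},z_k}^{m}$. First I set $v(x,t)\equiv\delta_{\overline{h},z_k}^{m}u(x,t)$. Since $D_{\ast t}^{\mu}$ acts only in $t$, it commutes with $\delta_{\overline{h},z_k}^{m}$, so $D_{\ast t}^{\mu}v(x,t)=\delta_{\overline{h},z_k}^{m}D_{\ast t}^{\mu}u(x,t)$ and hypothesis \eqref{3.54.1} yields the pointwise bound $|D_{\ast t}^{\mu}v(x,t)|\le M|\overline{h}|^{\beta}$ with $M=\left\langle D_{\ast t}^{\mu}u\right\rangle_{z_k,\overline{R_T^N}}^{(\beta)}$. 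Estimate \eqref{3.55} will then follow from the one-variable claim: if a function $w(t)$ on $[0,T]$ satisfies $\sup_{t}|D_{\ast t}^{\mu}w(t)|\le A$, then for every integer $p\ge[\mu]+1$,
\[
|\delta_{\tau,t}^{p}w(t)|\le C(\mu,p,T)\,A\,\tau^{\mu}.
\]

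To prove this one-variable fact I will invoke the Caputo inversion formula (Example 3.9 of \cite{Umarov}, quoted above just before Proposition \ref{P3.1.1}):
\[
w(t)=J^{\mu}\bigl[D_{\ast t}^{\mu}w\bigr](t)+\sum_{j=0}^{[\mu]}\frac{w^{(j)}(0)}{j!}\,t^{j},
\]
with $J^{\mu}$ as in \eqref{3.2}. Since $p\ge[\mu]+1$ exceeds the degree $[\mu]$ of the polynomial summand, $\delta_{\tau,t}^{p}$ annihilates it, leaving $\delta_{\tau,t}^{p}w(t)=\delta_{\tau,t}^{p}J^{\mu}[D_{\ast t}^{\mu}w](t)$. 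It will then remain to establish the classical smoothing property of $J^{\mu}$: for any bounded $g$ on $[0,T]$ and any integer $p>\mu$,
\[
|\delta_{\tau,t}^{p}J^{\mu}[g](t)|\le C(\mu,p,T)\,\|g\|_{\infty}\,\tau^{\mu}.
\]
A convenient route is to factor $J^{\mu}=J^{\{\mu\}}\circ J^{[\mu]}$: the inner $J^{[\mu]}$ is ordinary $[\mu]$-fold integration and produces a function with $[\mu]$ uniformly bounded classical derivatives, while $J^{\{\mu\}}$ of a bounded function is $\{\mu\}$-Hölder continuous by the standard Riemann--Liouville Hölder bound (with $\{\mu\}\in(0,1)$). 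Writing $\delta_{\tau,t}^{p}=\delta_{\tau,t}^{p-[\mu]}\circ\delta_{\tau,t}^{[\mu]}$ and using that each of the first $[\mu]$ one-step differences gains a factor $\tau$ against a bounded derivative, one reduces to the case $\mu\in(0,1)$, $p=1$, which is handled by splitting the convolution integral into $(t-\tau,t)$ and $(0,t-\tau)$ in the usual way.

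The second assertion \eqref{3.55.2} will follow by the same commutation: $v_t^{(n)}(x,t)=\delta_{\overline{h},z_k}^{m}u_t^{(n)}(x,t)$, so $|v_t^{(n)}(x,t)|\le\langle u_t^{(n)}\rangle_{z_k,\overline{R_T^N}}^{(\beta)}|\overline{h}|^{\beta}$; the classical representation of a $p$-th finite difference as an iterated integral of the $n$-th derivative (valid for $p\ge n$) immediately gives $|\delta_{\tau,t}^{p}v(x,t)|\le C\|v_t^{(n)}\|_{\infty}\tau^{n}$. The hard part is the fractional smoothing estimate for $\mu>1$: the kernel $(t-s)^{\mu-1}$ is then smoother than integrable, so one must carefully cancel its leading polynomial contribution under the $p$-fold difference. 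This is exactly why the threshold $p\ge[\mu]+1$, which matches the degree of the polynomial part of the Caputo inversion formula, appears in the statement.
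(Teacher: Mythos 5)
Your proposal is correct and follows essentially the same route as the paper's proof: commuting $D_{\ast t}^{\mu}$ with the spatial difference, invoking the inversion $u=J^{\mu}D_{\ast t}^{\mu}u$ modulo a polynomial of degree $[\mu]$ (the paper subtracts the Taylor polynomial via \eqref{3.56}; you equivalently note that $\delta_{\tau,t}^{p}$ with $p\geq[\mu]+1$ annihilates it), and proving the core smoothing bound $\left\vert \delta_{\tau,t}^{p}J^{\mu}g\right\vert \leq C\left\Vert g\right\Vert _{\infty}\tau^{\mu}$ by the same near/far splitting of the convolution integral as in the paper's decomposition into $I_{1},I_{2},I_{3}$ in \eqref{3.62}. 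The only organizational difference is your treatment of $\mu>1$ through the factorization $J^{\mu}=J^{\{\mu\}}\circ J^{[\mu]}$ combined with $\delta_{\tau,t}^{p}=\delta_{\tau,t}^{p-[\mu]}\circ\delta_{\tau,t}^{[\mu]}$, which is a mirror image of the paper's equivalent reduction via \eqref{3.57} and the integral representation \eqref{3.64.1} of $\delta_{\tau,t}^{[\mu]}$ in terms of $u_{t}^{([\mu])}$.
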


\begin{proof}
First of all, the case of an integer order $n>0$ follows directly from the
mean value theorem and from condition \eqref{3.55.1} - see formulas
\eqref{3.64.1}, \eqref{3.65} below. Therefore we consider only the case of a
noninteger derivative.

Further, we can assume without loss of generality that the function $u(x,t)$
possesses the property
\begin{equation}
\frac{\partial^{k}u(x,0)}{\partial t^{k}}\equiv0,\quad k=0,...,[\mu
].\label{3.56}%
\end{equation}
In the opposite case we can replace $u(x,t)$ with the function
\[
\overline{u}(x,t)\equiv u(x,t)-{\sum\limits_{k=0}^{[\mu]}}\frac{t^{k}}%
{k!}\frac{\partial^{k}u(x,0)}{\partial t^{k}}%
\]
since $\delta_{\tau,t}^{[\mu]+1}\delta_{\overline{h},z_{k}}^{m}u(x,t)=\delta
_{\tau,t}^{[\mu]+1}\delta_{\overline{h},z_{k}}^{m}\overline{u}(x,t)$ and
$D_{\ast t}^{\mu}u=D_{\ast t}^{\mu}\overline{u}$. But for a function $u(x,t)$
with \eqref{3.56} the following representation is valid (see \cite{Umarov},
section 3.5)
\begin{equation}
D_{\ast t}^{\mu}u(x,t)=D_{\ast t}^{\{\mu\}}\left[  u_{t}^{([\mu])}%
(x,t)\right]  ,\label{3.57}%
\end{equation}%
\[
u(x,t)=J_{\{\mu\}}\left\{  D_{\ast t}^{\{\mu\}}\left[  u_{t}^{([\mu
])}(x,t)\right]  \right\}  .
\]
Here $J_{\mu}$ is the operator of fractional integration of order $\mu$ that
is
\begin{equation}
u(x,t)=\frac{1}{\Gamma(\mu)}{\int\limits_{0}^{t}}(t-\tau)^{-1+\mu}D_{\ast\tau
}^{\mu}u(x,\tau)d\tau=\label{3.58}%
\end{equation}%
\[
=\frac{1}{\Gamma(\{\mu\})}{\int\limits_{0}^{t}}(t-\tau)^{-1+\{\mu\}}%
D_{\ast\tau}^{\mu}u_{\tau}^{([\mu])}(x,\tau)d\tau.
\]
Note also that we can assume $p=[\mu]+1$.

Let first $\mu\in(0,1)$ that is $\mu=\{\mu\}$, $[\mu]=0$. Then with $m>\beta$
\begin{equation}
\delta_{\tau,t}\delta_{\overline{h},z_{k}}^{m}u(x,t)=\frac{1}{\Gamma(\mu
)}\delta_{\tau,t}\left\{  {\int\limits_{0}^{t}}(t-\omega)^{-1+\mu}\left[
\delta_{\overline{h},z_{k}}^{m}D_{\ast\omega}^{\mu}u(x,\omega)\right]
d\omega\right\}  .\label{3.59}%
\end{equation}
Denote
\begin{equation}
v(x,t)=\frac{1}{\Gamma(\mu)}\left[  \delta_{\overline{h},z_{k}}^{m}D_{\ast
\tau}^{\mu}u(x,t)\right]  ,\label{3.60}%
\end{equation}
and remark that according to \eqref{3.54.1},
\begin{equation}
\left\vert v(x,t)\right\vert \leq C(\mu)\left\langle D_{\ast t}^{\mu
}u\right\rangle _{z_{k},\overline{R_{T}^{N}}}^{(\beta)}|\overline{h}|^{\beta
}.\label{3.61}%
\end{equation}
Assuming that $t>2\tau$, represent the difference $\delta_{\tau,t}%
\delta_{\overline{h},z_{k}}^{m}u(x,t)$ in the form
\[
\delta_{\tau,t}\delta_{\overline{h},z_{k}}^{m}u(x,t)={\int\limits_{t-2\tau
}^{t+\tau}}(t+\tau-\omega)^{-1+\mu}v(x,\omega)d\omega-
\]%
\[
-{\int\limits_{t-2\tau}^{t}}(t-\omega)^{-1+\mu}v(x,\omega)d\omega+
\]%
\begin{equation}
+{\int\limits_{0}^{t-2\tau}}\left[  (t+\tau-\omega)^{-1+\mu}-(t-\omega
)^{-1+\mu}\right]  v(x,\omega)d\omega\equiv I_{1}+I_{2}+I_{3}.\label{3.62}%
\end{equation}
Taking into account \eqref{3.61} we have for $I_{1}$
\[
|I_{1}|\leq C(\mu)\left\langle D_{\ast t}^{\mu}u\right\rangle _{z_{k}%
,\overline{R_{T}^{N}}}^{(\beta)}|\overline{h}|^{\beta}{\int\limits_{t-2\tau
}^{t+\tau}}(t+\tau-\omega)^{-1+\mu}d\omega=
\]%
\[
=C(\mu)\left\langle D_{\ast t}^{\mu}u\right\rangle _{z_{k},\overline{R_{T}%
^{N}}}^{(\beta)}|\overline{h}|^{\beta}\tau^{\mu}.
\]
The integral $I_{2}$ is estimated analogously, which gives
\begin{equation}
|I_{1}|+|I_{2}|\leq C(\mu) \left\langle D_{\ast t}^{\mu}u\right\rangle
_{z_{k},\overline{R_{T}^{N}}}^{(\beta)}|\overline{h}|^{\beta}\tau^{\mu
}.\label{3.63}%
\end{equation}
To estimate $I_{3}$ we make use of the mean value theorem
\[
I_{3}=C(\mu)\tau{\int\limits_{0}^{t-2\tau}}(t+\varkappa(t-\omega)\tau
-\omega)^{-2+\mu}v(x,\omega)d\omega,\quad\varkappa(t-\omega)\in(0,1),
\]
and note that since $\omega<t-2\tau$ and consequently $\tau<(t-\omega)/2$,
then
\[
|t-\omega|\leq|t+\varkappa(t-\omega)\tau-\omega|\leq\frac{3}{2}|t-\omega|.
\]
Therefore
\[
|I_{2}|\leq C \left\langle D_{\ast t}^{\mu}u\right\rangle _{z_{k}%
,\overline{R_{T}^{N}}}^{(\beta)}|\overline{h}|^{\beta}\tau{\int\limits_{0}%
^{t-2\tau}}(t-\omega)^{-2+\mu}d\omega\leq
\]%
\begin{equation}
\leq C \left\langle D_{\ast t}^{\mu}u\right\rangle _{z_{k},\overline
{R_{T}^{N}}}^{(\beta)}|\overline{h}|^{\beta}\tau{\int\limits_{-\infty
}^{t-2\tau}}(t-\omega)^{-2+\mu}d\omega= C \left\langle D_{\ast t}^{\mu
}u\right\rangle _{z_{k},\overline{R_{T}^{N}}}^{(\beta)}|\overline{h}|^{\beta
}\tau^{\mu}.\label{3.64}%
\end{equation}
If now $t<2\tau$, then we can represent the difference in the form
$\delta_{\tau,t}\delta_{\overline{h},z_{k}}^{m}u(x,t)=\delta_{\overline
{h},z_{k}}^{m}u(x,t+\tau)-\delta_{\overline{h},z_{k}}^{m}u(x,t)$, estimate
modulo each term of the difference separately analogously to the estimates of
the integrals $I_{1}$ and $I_{2}$ in \eqref{3.63}, and, taking into account
\eqref{3.63}, \eqref{3.64}, arrive at estimate \eqref{3.55} in the case
$\mu\in(0,1)$.

Let now $\mu>1$. We make use of the integral mean value theorem to represent
the difference in $t$ of order $[\mu]$ as follows
\[
\delta_{\tau,t}^{[\mu]}u(x,t)=\tau^{\lbrack\mu]}{\int\limits_{0}^{1}}d\xi
_{1}...{\int\limits_{0}^{1}}d\xi_{\lbrack\mu]}u_{t}^{[\mu]}(x,t+\xi_{1}%
\tau+...\xi_{1}\tau)=
\]%
\begin{equation}
=\tau^{\lbrack\mu]}{\int\limits_{P_{[\mu]}}}u_{t}^{[\mu]}\left(  x,t+\tau
{\sum\limits_{i=1}^{[\mu]}}\xi_{i}\right)  d\xi,\label{3.64.1}%
\end{equation}
where $\xi=(\xi_{1},...,\xi_{\lbrack\mu]})\in P_{[\mu]}=\{\xi:0<\xi_{i}<1\}$.
Then the double difference $\delta_{\tau,t}^{[\mu]+1}\delta_{\overline
{h},z_{k}}^{m}u(x,t)$ is equal to
\begin{equation}
\delta_{\tau,t}^{[\mu]+1}\delta_{\overline{h},z_{k}}^{m}u(x,t)=\delta_{\tau
,t}\delta_{\overline{h},z_{k}}^{m}u(x,t)=\tau^{\lbrack\mu]}{\int
\limits_{P_{[\mu]}}}\delta_{\tau,t}\delta_{\overline{h},z_{k}}^{m}u_{t}%
^{[\mu]}\left(  x,t+\tau{\sum\limits_{i=1}^{[\mu]}}\xi_{i}\right)
d\xi.\label{3.65}%
\end{equation}
At the same time, according to the conditions of the lemma, the function
$u(x,t)$ possesses the fractional derivative $D_{\ast t}^{\mu}u(x,t)$, which
satisfies \eqref{3.54.1}. Therefore at a fixed $\tau$ and $\xi$ the function
$v(x,t)=u_{t}^{[\mu]}\left(  x,t+\tau{\sum\limits_{i=1}^{[\mu]}}\xi
_{i}\right)  $ admits, by virtue of \eqref{3.57}, the fractional derivative
$D_{\ast t}^{\{\mu\}}v(x,t)$ with \eqref{3.54.1}. Therefore by what was proved
above
\[
\left\vert \delta_{\tau,t}^{[\mu]+1}\delta_{\overline{h},z_{k}}^{m}%
u(x,t)\right\vert \leq\tau^{\lbrack\mu]}{\int\limits_{P_{[\mu]}}}\left\vert
\delta_{\tau,t}\delta_{\overline{h},z_{k}}^{m}u_{t}^{[\mu]}\left(
x,t+\tau{\sum\limits_{i=1}^{[\mu]}}\xi_{i}\right)  \right\vert d\xi\leq
\]%
\[
\leq C\tau^{\lbrack\mu]}\left\langle D_{\ast t}^{\mu}u\right\rangle
_{z_{k},\overline{R_{T}^{N}}}^{(\beta)}|\overline{h}|^{\beta}\tau^{\{\mu
\}}=C\left\langle D_{\ast t}^{\mu}u\right\rangle _{z_{k},\overline{R_{T}^{N}}%
}^{(\beta)}|\overline{h}|^{\beta}\tau^{\mu},
\]
which finishes the proof of the lemma.
\end{proof}

\begin{theorem}
\label{T3.3} Let a function $u(x,t)$ is defined in $\overline{R_{T}^{N}}$ and
possesses bounded in $\overline{R_{T}^{N}}$ derivative $D_{t}^{\theta}u(x,t)$
of an integer or a noninteger order $\theta>0$ and in the case of a noninteger
order we mean the Caputo - Jrbashyan derivative $D_{*t}^{\theta}u(x,t)$.
Suppose that the derivative $D_{t}^{\theta}u(x,t)$ possesses the smoothness of
a noninteger order $\beta>0$ with respect to a spacial group $z_{k}\in
R^{N_{k}}$ that is
\begin{equation}
\left\langle D_{t}^{\theta}u\right\rangle _{z_{k},\overline{R_{T}^{N}}%
}^{(\beta)}=\sup_{(x,t)\in\overline{R_{T}^{N}},\overline{h}\in R^{N_{k}%
},\overline{h}\neq0}\frac{|\delta_{\overline{h},z_{k}}^{m}D_{\ast t}^{\theta
}u(x,t)|}{|\overline{h}|^{\beta}}<\infty,\quad m>\beta. \label{3.66}%
\end{equation}
Besides, suppose that the function $u(x,t)$ itself possesses the smoothness of
a noninteger order $\gamma>\max\{1,\beta\}$ with respect to the same variables
$z_{k}\in R^{N_{k}}$ that is
\begin{equation}
\left\langle u\right\rangle _{z_{k},\overline{R_{T}^{N}}}^{(\gamma)}%
=\sup_{(x,t)\in\overline{R_{T}^{N}},\overline{h}\in R^{N_{k}},\overline{h}%
\neq0}\frac{|\delta_{\overline{h},z_{k}}^{m}u(x,t)|}{|\overline{h}|^{\gamma}%
}<\infty,\quad m>\gamma. \label{3.67}%
\end{equation}

Then a derivative of $u(x,t)$ of an integer order $\beta<n<\gamma$ with
respect to $z_{k}\in R^{N_{k}}$
\[
D_{z_{k}}^{\overline{\rho}}u(x,t)=\frac{\partial^{\rho_{1}}...\partial
^{\rho_{N_{k}}}u(x,t)}{\partial x_{i_{1}}^{\rho_{1}}...\partial x_{i_{N_{k}}%
}^{\rho_{N_{k}}}},\,\overline{\rho}=(\rho_{1},...,\rho_{N_{k}}),\,\rho
=\rho_{1}+...+\rho_{N_{k}}=n,
\]
possesses the smoothness with respect to $t$ of order $\theta(\gamma
-n)/(\gamma-\beta)$ that is
\[
\left\langle D_{z_{k}}^{\overline{\rho}}u\right\rangle _{t,\overline{R_{T}%
^{N}}}^{(\theta(\gamma-n)/(\gamma-\beta))}=\sup_{(x,t)\in\overline{R_{T}^{N}%
},\tau>0}\frac{|\delta_{\tau,t}^{m}D_{z_{k}}^{\overline{\rho}}u(x,t)|}%
{\tau^{\theta(\gamma-n)/(\gamma-\beta)}}<\infty,\quad m>\theta(\gamma
-n)/(\gamma-\beta).
\]
Moreover,
\begin{equation}
\left\langle D_{z_{k}}^{\overline{\rho}}u\right\rangle _{t,\overline{R_{T}%
^{N}}}^{(\theta(\gamma-n)/(\gamma-\beta))}\leq C(\theta,\beta,\gamma,n)\left(
\varepsilon\left\langle u\right\rangle _{z_{k},\overline{R_{T}^{N}}}%
^{(\gamma)}+\frac{1}{\varepsilon^{c}}\left\langle D_{t}^{\theta}u\right\rangle
_{z_{k},\overline{R_{T}^{N}}}^{(\beta)}\right)  , \label{3.68}%
\end{equation}
where $\varepsilon>0$ is arbitrary.

Note that in the case $n<\beta$ the derivative $D_{z_{k}}^{\overline{\rho}%
}u(x,t)$ just admits the derivative $D_{t}^{\theta}D_{z_{k}}^{\overline{\rho}%
}u(x,t)$ in $t$ as it follows from the first condition of the theorem.
\end{theorem}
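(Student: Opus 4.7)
The plan is to reduce Theorem \ref{T3.3} to an interpolation-type pointwise bound on the function $v(x,t) := \delta_{\tau,t}^{p}u(x,t)$, where $p = [\theta]+1$ in the noninteger case of $\theta$ and $p = \theta$ in the integer case. Since the integer-order operator $D_{z_k}^{\bar{\rho}}$ commutes with the time difference, one has $\delta_{\tau,t}^{p}D_{z_k}^{\bar\rho}u = D_{z_k}^{\bar\rho}v$, and the theorem will follow once I prove, uniformly in $(x,t,\tau)$,
\[
\bigl|D_{z_k}^{\bar\rho}v(x,t)\bigr| \leq C\,\tau^{\theta(\gamma-n)/(\gamma-\beta)}\bigl(\varepsilon\langle u\rangle_{z_k,\overline{R_T^N}}^{(\gamma)} + \varepsilon^{-c}\langle D_t^{\theta}u\rangle_{z_k,\overline{R_T^N}}^{(\beta)}\bigr).
\]

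First I would combine two distinct bounds on $|\delta_{\bar h,z_k}^{m}v|$ valid for every fixed $m > \gamma$. On the one hand, Lemma \ref{L3.3} applied with $\mu = \theta$ gives
\[
|\delta_{\tau,t}^{p}\delta_{\bar h,z_k}^{m}u| \leq C\,\langle D_t^{\theta}u\rangle_{z_k,\overline{R_T^N}}^{(\beta)}\,\tau^{\theta}|\bar h|^{\beta};
\]
on the other, since $\delta_{\tau,t}^{p}$ commutes with $\delta_{\bar h,z_k}^{m}$, hypothesis \eqref{3.67} yields the trivial bound
\[
|\delta_{\tau,t}^{p}\delta_{\bar h,z_k}^{m}u| \leq 2^{p}|\delta_{\bar h,z_k}^{m}u| \leq C\,\langle u\rangle_{z_k,\overline{R_T^N}}^{(\gamma)}\,|\bar h|^{\gamma}.
\]
Writing $A := C\tau^{\theta}\langle D_t^{\theta}u\rangle_{z_k,\overline{R_T^N}}^{(\beta)}$ and $B := C\langle u\rangle_{z_k,\overline{R_T^N}}^{(\gamma)}$, the function $v$, viewed as a function of the group $z_k$ with all other variables frozen, satisfies pointwise $|\delta_{\bar h,z_k}^{m}v| \leq \min\{A|\bar h|^{\beta},\,B|\bar h|^{\gamma}\}$.

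The core step will then be the interpolation estimate
\[
|D_{z_k}^{\bar\rho}v|_{\overline{R_T^N}}^{(0)} \leq C\,A^{(\gamma-n)/(\gamma-\beta)}\,B^{(n-\beta)/(\gamma-\beta)}, \qquad n = |\bar\rho| \in (\beta,\gamma).
\]
To obtain it I would represent the integer-order spatial derivative in the hypersingular form $D_{z_k}^{\bar\rho}v(y) = \int_{R^{N_k}}K_{\bar\rho}(\bar h)\,\delta_{\bar h,z_k}^{m}v(y)\,d\bar h$ with a kernel $K_{\bar\rho}$ positively homogeneous of degree $-(N_k + n)$, and repeat the splitting strategy used in the proof of Proposition \ref{P3.5}: on $\{|\bar h|\leq R_{\ast}\}$ use the bound $B|\bar h|^{\gamma}$ (integrable because $\gamma > n$) and on $\{|\bar h|\geq R_{\ast}\}$ the bound $A|\bar h|^{\beta}$ (integrable because $\beta < n$), optimising at the balance scale $R_{\ast} = (A/B)^{1/(\gamma-\beta)}$. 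Young's inequality $A^{a}B^{1-a} \leq \varepsilon B + \varepsilon^{-(1-a)/a}A$ with $a = (\gamma-n)/(\gamma-\beta)$ then converts the geometric mean into the stated form \eqref{3.68} with $c = (n-\beta)/(\gamma-n)$.

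The main obstacle will be justifying the hypersingular representation for an integer-order operator $D_{z_k}^{\bar\rho}$ in the ambient anisotropic H\"older class: for integer $n$ such a representation recovers $D_{z_k}^{\bar\rho}$ only modulo polynomials in $z_k$ of degree below $n$, which are however annihilated by $D_{z_k}^{\bar\rho}$. A safe technical route is to mollify $u$ in the space variables so that all integer derivatives exist classically, verify the representation on the mollified functions using the Lizorkin-space framework of Section \ref{sfi}, apply the two-scale interpolation uniformly in the mollifier parameter, and then pass to the limit using the uniform control provided by $A$ and $B$. In the noninteger-$\theta$ case one may additionally assume without loss of generality the vanishing condition \eqref{3.56}, which ensures a clean application of Lemma \ref{L3.3}.
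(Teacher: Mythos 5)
Your overall mechanism coincides with the paper's: the two inputs are exactly Lemma \ref{L3.3} (giving the bound $C\langle D_{t}^{\theta}u\rangle^{(\beta)}_{z_{k}}\tau^{\theta}|\overline{h}|^{\beta}$) and hypothesis \eqref{3.67} (giving $C\langle u\rangle^{(\gamma)}_{z_{k}}|\overline{h}|^{\gamma}$), balanced at the scale $|\overline{h}|\sim\tau^{\theta/(\gamma-\beta)}$ --- precisely the paper's choice $a=\theta/(\gamma-\beta)$ --- and your Young-inequality bookkeeping, including $c=(n-\beta)/(\gamma-n)$, reproduces \eqref{3.68}. Where you diverge is the linchpin. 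The paper never invokes a hypersingular representation of $D_{z_{k}}^{\overline{\rho}}$: it proves the case $n=1$, $\beta\in(0,1)$ by the elementary finite-difference identity \eqref{3.69} (solving the $m$-th difference identity for the unshifted term $u_{x_{1}}$ and averaging over the window $(0,\varepsilon\tau^{a})$), and then reaches general $n$ via \eqref{2.6} and general $\beta>1$ by first replacing $u$ with its derivative of order $[\beta]$.

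The genuine gap is your representation $D_{z_{k}}^{\overline{\rho}}v=\int_{R^{N_{k}}}K_{\overline{\rho}}(\overline{h})\,\delta_{\overline{h},z_{k}}^{m}v\,d\overline{h}$ with $K_{\overline{\rho}}$ homogeneous of degree $-(N_{k}+n)$: you neither construct the kernel nor prove the identity, and for integer $n$ and mixed $\overline{\rho}$ this is exactly where such representations are delicate. The symbol of such an operator has the form $|\zeta|^{n}\int_{S^{N_{k}-1}}\Omega(\omega)\,\lambda_{m,n}(\omega\cdot\zeta/|\zeta|)\,d\omega$, and one must show that the polynomial symbol $(i\zeta)^{\overline{\rho}}$ lies in the range of this map; at integer orders the relevant normalizing integrals degenerate for symmetric characteristics (this is one reason representation \eqref{1.9} is used for noninteger orders), so at least a one-sided, complex-valued kernel is required, and no construction is offered for mixed derivatives. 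Your mitigation also has a hole: after mollifying, equality in the Lizorkin space $\Phi^{\prime}$ only gives $Tv_{\epsilon}-D_{z_{k}}^{\overline{\rho}}v_{\epsilon}=P$ for \emph{some} polynomial $P$; the "modulo polynomials" ambiguity attaches to the \emph{outputs} of the two operators, not to the input $v$, so your remark that the discrepancy is annihilated by $D_{z_{k}}^{\overline{\rho}}$ does not apply. Boundedness of both sides forces $P$ to be constant but not zero, and killing the constant needs a further argument (for Schwartz inputs both sides are tempered distributions with equal Fourier transforms, so there the identity is exact; for merely H\"older $v$ an approximation argument must be written out). Both defects are repairable --- e.g., iterate the one-dimensional one-sided representation coordinatewise (the contour computation in the proof of Lemma \ref{L3.1} survives integer order, since $m>n$ makes the integrand analytic at $z=0$), or simply substitute the paper's identity \eqref{3.69} --- but as written the central step of your argument is unproven.
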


\begin{proof}
Let first $\beta\in(0,1)$. We prove now the assertion of the theorem for $n=1$
that is for the derivative of the first order with respect to a single spacial
variable $x_{l}$ from the group $z_{k}$. For that we use the schema of
reasonings from \cite{20}, section 3, Lemma 2, when obtaining formula
\eqref{2.6} there. Let for the simplicity of notations $l=1$ that is we
consider the derivative $u_{x_{1}}(x,t)$. Consider the following
representation for the finite difference from $u_{x_{1}}(x,t)$ of order
$m>\gamma$ with a step $\xi>0$ with respect to $x_{1}$
\[
\delta_{\xi,x_{1}}^{m}u_{x_{1}}(x,t)={\sum\limits_{j=0}^{m}}(-1)^{m-j}%
C_{m}^{j}u_{x_{1}}(x_{1}+j\xi,x_{2},...x_{N},t).
\]
Expressing from this the term without a shift $u_{x_{1}}(x,t)$ over the others
terms of the identity and integrating over $\xi$ in the range from zero to
$\varepsilon\tau^{a}$, $\tau>0$, $\varepsilon>0$, $a=\theta/(\gamma-\beta)$,
we obtain
\[
u_{x_{1}}(x,t)=\frac{1}{\varepsilon\tau^{a}}{\sum\limits_{j=1}^{m}}%
(-1)^{j+1}\frac{C_{m}^{j}}{j}{\int\limits_{0}^{\varepsilon\tau^{a}}}u_{\xi
}(x_{1}+j\xi,x_{2},...x_{N},t)d\xi+
\]%
\[
+\frac{(-1)^{m}}{\varepsilon\tau^{a}}{\int\limits_{0}^{\varepsilon\tau^{a}}%
}\delta_{\xi,x_{1}}^{m}u_{x_{1}}(x,t)d\xi=
\]%
\[
={\sum\limits_{j=1}^{m}}\frac{1}{\varepsilon\tau^{a}}B_{m,j}\left[
u(x_{1}+j\varepsilon\tau^{a},x_{2},...x_{N},t)-u(x_{1},x_{2},...x_{N}%
,t)\right]  +
\]%
\begin{equation}
+\frac{(-1)^{m}}{\varepsilon\tau^{a}}{\int\limits_{0}^{\varepsilon\tau^{a}}%
}\delta_{\xi,x_{1}}^{m}u_{x_{1}}(x,t)d\xi\equiv{\sum\limits_{j=1}^{m}}%
I_{j}+I_{0}.\label{3.69}%
\end{equation}
To estimate the smoothness of $u_{x_{1}}(x,t)$ in $t$ we estimate a finite
difference along this variable of a sufficiently high order $\delta_{\tau
,t}^{p}u_{x_{1}}(x,t)$, $p>\theta$. And for this we estimate the finite
differences of the terms $I_{j}$ and $I_{0}$ in \eqref{3.69}. For the terms
$I_{j}$ we have
\[
\left\vert \delta_{\tau,t}^{p}I_{j}\right\vert =\frac{C}{\varepsilon\tau^{a}%
}\left\vert \delta_{\tau,t}^{p}\delta_{j\varepsilon\tau^{a},x_{1}%
}u(x,t)\right\vert \leq\frac{C}{\varepsilon\tau^{a}}\left\langle D_{t}%
^{\theta}u\right\rangle _{z_{k},\overline{R_{T}^{N}}}^{(\beta)}\tau^{\theta
}(\varepsilon\tau^{a})^{\beta},
\]
where we made use of Lemma \ref{L3.3}. Thus, since $a=\theta/(\gamma-\beta)$,
\begin{equation}
\left\vert \delta_{\tau,t}^{p}I_{j}\right\vert \leq\frac{C}{\varepsilon
^{1-\beta}}\left\langle D_{t}^{\theta}u\right\rangle _{z_{k},\overline
{R_{T}^{N}}}^{(\beta)}\tau^{\theta(\gamma-1)/(\gamma-\beta)}.\label{3.70}%
\end{equation}
Further, since for each fixed $t>0$ the function $u_{x_{1}}(x,t)$ has the
order of smoothness $\gamma-1$ in $x_{1}$, then
\[
|I_{0}|\leq\frac{C}{\varepsilon\tau^{a}}\left\langle u_{x_{1}}\right\rangle
_{x_{1},\overline{R_{T}^{N}}}^{(\gamma-1)}{\int\limits_{0}^{\varepsilon
\tau^{a}}}\xi^{\gamma-1}d\xi\leq\frac{C}{\varepsilon\tau^{a}}\left\langle
u\right\rangle _{z_{k},\overline{R_{T}^{N}}}^{(\gamma)}(\varepsilon\tau
^{a})^{\gamma}=
\]%
\[
=C\varepsilon^{\gamma-1}\left\langle u\right\rangle _{z_{k},\overline
{R_{T}^{N}}}^{(\gamma)}\tau^{\theta(\gamma-1)/(\gamma-\beta)}.
\]
Consequently,
\begin{equation}
\left\vert \delta_{\tau,t}^{p}I_{0}\right\vert \leq C(p)\varepsilon^{\gamma
-1}\left\langle u\right\rangle _{z_{k},\overline{R_{T}^{N}}}^{(\gamma)}%
\tau^{\theta(\gamma-1)/(\gamma-\beta)}.\label{3.71}%
\end{equation}
The assertion of the theorem for $n=1$ under the condition $\beta\in(0,1)$
follows now from \eqref{3.70} and \eqref{3.71} with taking into account
\eqref{3.69}. Thus, the derivatives of the first order in $x_{i}$ from $z_{k}$
belongs, under fixed spacial variables from other groups, to the space
$C_{z_{k},\,\quad t}^{\gamma-1,\theta(\gamma-1)/(\gamma-\beta)}(R_{T}^{N_{k}%
})$ and estimate \eqref{3.68} is valid for $n=1$. But since the derivatives in
$z_{k}$ of order $n>1$ are the derivatives of order $n-1$ of the first
derivatives, then estimate \eqref{3.68} for $n>1$ follows now directly from
\eqref{2.6}. Consequently, the theorem is proved for $\beta\in(0,1)$.

If $\beta>1$, then it is enough instead of the function $u(x,t)$ itself to
consider it's derivative $D_{z_{k}}^{\overline{\rho}}u(x,t)$ of order
$\rho=[\beta]$ and denote this derivative by $v(x,t)=D_{z_{k}}^{\rho}u(x,t)$.
This function fully meets the conditions of the theorem with the same $\theta
$, with $\{\beta\}\in(0,1)$ instead of $\beta$, and with $\gamma-[\beta]$
instead of $\gamma$. The application of what was proved above to this function
$v(x,t)$ with the subsequent recalculation of the exponents of smoothness
leads to the proof of the theorem in the general case.
\end{proof}

The analogous assertion is valid and in the case, when instead of the usual
derivatives in a spacial group $z_{k}$ one considers the fractional Laplace
operator with respect to $z_{k}$.

\begin{theorem}
\label{T3.4} Let a function $u(x,t)$ be defined in $\overline{R_{T}^{N}}$ and
let it's derivative $D_{t}^{\theta}u(x,t)$ of order $\theta>0$ be bounded in
$\overline{R_{T}^{N}}$, where $\theta>0$ can be either an integer or a
noninteger (in the case of a noninteger $\theta>0$ we mean the Caputo -
Jrbashyan derivative). Suppose that the derivative $D_{t}^{\theta}u(x,t)$
possesses the smoothness in $z_{k}\in R^{N_{k}}$ of a noninteger order
$\beta>0$ in the sense of \eqref{3.66}. Suppose also that the function
$u(x,t)$ itself possesses the smoothness in $z_{k}\in R^{N_{k}}$ of a
noninteger order $\gamma>\beta$ in the sense of \eqref{3.67}. Then the
fractional Laplace operator of this function $(-\Delta_{z_{k}} )^{\frac{\mu
}{2}}u(x,t)$ in $z_{k}\in R^{N_{k}}$ of order $\mu\in(\beta,\gamma)$ possesses
the smoothness in $t$ of order $\theta(\gamma-\mu)/(\gamma-\beta)$ that is
\[
\left\langle (-\Delta_{z_{k}})^{^{\frac{\mu}{2}}}u\right\rangle _{t,\overline
{R_{T}^{N}}}^{(\theta(\gamma-\mu)/(\gamma-\beta))}=\sup_{(x,t)\in
\overline{R_{T}^{N}},\tau>0}\frac{|\delta_{\tau,t}^{m}(-\Delta_{z_{k}%
})^{^{\frac{\mu}{2}}}u(x,t)|}{\tau^{\theta(\gamma-\mu)/(\gamma-\beta)}}%
<\infty, m>\frac{\theta(\gamma-\mu)}{(\gamma-\beta)}.
\]
Moreover,
\[
\left\langle (-\Delta_{z_{k}})^{^{\frac{\mu}{2}}}u\right\rangle _{t,\overline
{R_{T}^{N}}}^{(\theta(\gamma-\mu)/(\gamma-\beta))} \leq
\]
\begin{equation}
\leq C(\theta,\beta,\gamma,\mu)\left(  \varepsilon^{\gamma-\mu}\left\langle
u\right\rangle _{z_{k},\overline{R_{T}^{N}}}^{(\gamma)}+\frac{1}%
{\varepsilon^{\left(  \mu-\beta\right)  }}\left\langle D_{t}^{\theta
}u\right\rangle _{z_{k},\overline{R_{T}^{N}}}^{(\beta)}\right)  , \label{3.72}%
\end{equation}
where $\varepsilon>0$ is arbitrary.

Note that for $\mu<\beta$ the Laplace operator $(-\Delta_{z_{k}}
)^{^{\frac{\mu}{2}}}u(x,t)$ just has the derivative $D_{t}^{\theta}%
(-\Delta_{z_{k}})^{\mu}u(x,t)$, as it follows from the first assumption of the theorem.
\end{theorem}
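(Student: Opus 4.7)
The plan is to mimic the proof of Theorem \ref{T3.3}, replacing the integral representation of an integer-order derivative by the hypersingular representation \eqref{3.4.1} of the fractional Laplacian. Fix $m > \gamma$ and an integer $p$ with $p > \theta(\gamma-\mu)/(\gamma-\beta)$ (and also $p \geq [\theta]+1$ so that Lemma \ref{L3.3} applies). Starting from
\[
(-\Delta_{z_k})^{\mu/2} u(x,t) = C_{N_k,\mu,m}\int_{R^{N_k}} \frac{\delta_{\eta,z_k}^m u(x,t)}{|\eta|^{N_k+\mu}}\,d\eta,
\]
I would carry the $t$-difference $\delta_{\tau,t}^p$ inside the integral (this is justified because, by the two estimates below, the integrand after differencing is dominated by an integrable bound uniformly in $\tau$), obtaining
\[
\delta_{\tau,t}^p(-\Delta_{z_k})^{\mu/2}u(x,t) = C_{N_k,\mu,m}\int_{R^{N_k}} \frac{\delta_{\tau,t}^p\,\delta_{\eta,z_k}^m u(x,t)}{|\eta|^{N_k+\mu}}\,d\eta.
\]

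Next I would split the domain of integration by $|\eta| \lessgtr \varepsilon \tau^{a}$ with the calibration $a = \theta/(\gamma-\beta)$, exactly as in \eqref{3.69}. On the inner region $|\eta|\leq \varepsilon\tau^{a}$ I use only the spatial smoothness $\gamma$ of $u$, which gives $|\delta_{\tau,t}^p\,\delta_{\eta,z_k}^m u|\leq C\langle u\rangle_{z_k}^{(\gamma)}|\eta|^{\gamma}$, so that (since $\gamma>\mu$)
\[
\left|\int_{|\eta|\leq \varepsilon\tau^a}\!\!\frac{\delta_{\tau,t}^p\,\delta_{\eta,z_k}^m u}{|\eta|^{N_k+\mu}}d\eta\right| \leq C\langle u\rangle_{z_k}^{(\gamma)}(\varepsilon\tau^a)^{\gamma-\mu} = C\varepsilon^{\gamma-\mu}\langle u\rangle_{z_k}^{(\gamma)}\tau^{\theta(\gamma-\mu)/(\gamma-\beta)}.
\]
On the outer region $|\eta|>\varepsilon\tau^{a}$ I invoke Lemma \ref{L3.3} applied to the function $v(x,t):=\delta_{\eta,z_k}^m u(x,t)$ (whose $D_t^\theta$ inherits the H\"older bound in $z_k$ from \eqref{3.66}): this yields $|\delta_{\tau,t}^p\,\delta_{\eta,z_k}^m u|\leq C\langle D_t^\theta u\rangle_{z_k}^{(\beta)}\tau^{\theta}|\eta|^{\beta}$, and integrating (since $\mu>\beta$) gives
\[
C\langle D_t^\theta u\rangle_{z_k}^{(\beta)}\tau^{\theta}(\varepsilon\tau^a)^{\beta-\mu} = C\varepsilon^{-(\mu-\beta)}\langle D_t^\theta u\rangle_{z_k}^{(\beta)}\tau^{\theta(\gamma-\mu)/(\gamma-\beta)},
\]
where the exponent of $\tau$ collapses thanks to the choice $a=\theta/(\gamma-\beta)$.

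Adding the two bounds and dividing by $\tau^{\theta(\gamma-\mu)/(\gamma-\beta)}$ produces \eqref{3.72}. The main technical obstacle, as I see it, is the justification that the singular integral may be differenced in $t$ under the integral sign in the non-integer $\theta$ case: for this one needs an integrable majorant uniform in $\tau$, and the cleanest way is to apply Lemma \ref{L3.3} to the function $\eta \mapsto \delta_{\eta,z_k}^m u(x,\cdot)$ to obtain the bound $|\delta_{\tau,t}^p\,\delta_{\eta,z_k}^m u|\leq C\min\{|\eta|^{\gamma},\tau^\theta|\eta|^\beta\}$, which is integrable against $|\eta|^{-N_k-\mu}$ precisely because $\beta<\mu<\gamma$. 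Once this interchange is in place, the rest is a bookkeeping of exponents identical to Theorem \ref{T3.3}. The case $\beta\geq 1$ reduces to the case $\{\beta\}\in(0,1)$ by peeling off an integer-order spatial derivative and applying \eqref{2.6}, exactly as at the end of the proof of Theorem \ref{T3.3}.
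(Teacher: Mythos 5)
Your proposal is correct and follows essentially the same route as the paper: the hypersingular representation \eqref{3.4.1} with $m>\gamma$, carrying $\delta_{\tau,t}^{p}$ inside the integral, the split at $|\eta|=\varepsilon\tau^{a}$ with $a=\theta/(\gamma-\beta)$, the spatial seminorm \eqref{3.67} on the inner region and Lemma \ref{L3.3} on the outer region, with identical exponent bookkeeping. Your explicit integrable majorant $C\min\{|\eta|^{\gamma},\tau^{\theta}|\eta|^{\beta}\}$ justifying the interchange (and the absolute convergence of the differenced integral, since $\beta<\mu<\gamma$) is left implicit in the paper, and your final reduction for $\beta\geq1$ is not needed here since Lemma \ref{L3.3} is stated for arbitrary noninteger $\beta>0$.
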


\begin{proof}
We use representation \eqref{1.9}
\begin{equation}
(-\Delta_{z_{k}})^{^{\frac{\mu}{2}}}u(x,t)=C_{N_{k},\mu,m}{\int\limits_{\eta
_{k}\in R^{N_{k}}}}\frac{\delta_{\eta_{k},z_{k}}^{m}u(x,t)}{|\eta_{k}%
|^{N_{k}+\mu}}d\eta_{k},\label{3.73}%
\end{equation}
where we choose $m>\gamma$. Consider, as in the previous theorem, the
difference in $t$ of an order $p>\theta$ with a step $\tau$
\begin{equation}
D(\tau)\equiv\delta_{\tau,t}^{p}(-\Delta_{z_{k}})^{^{\frac{\mu}{2}}%
}u(x,t)=C_{N_{k},\mu,m}{\int\limits_{\eta_{k}\in R^{N_{k}}}}\frac{\delta
_{\tau,t}^{p}\delta_{\eta_{k},z_{k}}^{m}u(x,t)}{|\eta_{k}|^{N_{k}+\mu}}%
d\eta_{k}.\label{3.74}%
\end{equation}
Split the integral in \eqref{3.74} into the two ones
\begin{equation}
D(\tau)=C_{N_{k},\mu,m}{\int\limits_{\left\vert \eta_{k}\right\vert
\leq\varepsilon\tau^{a}}}\frac{\delta_{\tau,t}^{p}\delta_{\eta_{k},z_{k}}%
^{m}u(x,t)}{|\eta_{k}|^{N_{k}+\mu}}d\eta_{k}+\label{3.75}%
\end{equation}%
\[
+C_{N_{k},\mu,m}\int\limits_{\left\vert \eta_{k}\right\vert >\varepsilon
\tau^{a}}\frac{\delta_{\tau,t}^{p}\delta_{\eta_{k},z_{k}}^{m}u(x,t)}{|\eta
_{k}|^{N_{k}+\mu}}d\eta_{k}\equiv I_{1}+I_{2},
\]
where $\varepsilon>0$, $a=\theta/(\gamma-\beta)$. Bearing in mind
\eqref{3.67}, we can obtain for $I_{1}$ the estimate
\[
\left\vert I_{1}\right\vert \leq C_{N_{k},\mu,m}\left\langle u\right\rangle
_{z_{k},\overline{R_{T}^{N}}}^{(\gamma)}{\int\limits_{\left\vert \eta
_{k}\right\vert \leq\varepsilon\tau^{a}}}\frac{|\eta_{k}|^{\gamma}}{|\eta
_{k}|^{N_{k}+\mu}}d\eta_{k}=
\]%
\begin{equation}
=C_{N_{k},\mu,m}\left\langle u\right\rangle _{z_{k},\overline{R_{T}^{N}}%
}^{(\gamma)}\left(  \varepsilon\tau^{a}\right)  ^{\gamma-\mu}=C_{N_{k},\mu
,m}\varepsilon^{\gamma-\mu}\left\langle u\right\rangle _{z_{k},\overline
{R_{T}^{N}}}^{(\gamma)}\tau^{\theta(\gamma-\mu)/(\gamma-\beta)}.\label{3.76}%
\end{equation}
To estimate $I_{2}$ we make use of \eqref{3.66} together with Lemma
\ref{L3.3}, which gives
\[
\left\vert I_{2}\right\vert \leq C_{N_{k},\mu,m}\left\langle D_{t}^{\theta
}u\right\rangle _{z_{k},\overline{R_{T}^{N}}}^{(\beta)}\tau^{\theta}%
{\int\limits_{\left\vert \eta_{k}\right\vert >\varepsilon\tau^{a}}}\frac
{|\eta_{k}|^{\beta}}{|\eta_{k}|^{N_{k}+\mu}}d\eta_{k}=
\]%
\[
=C_{N_{k},\mu,m}\left\langle D_{t}^{\theta}u\right\rangle _{z_{k}%
,\overline{R_{T}^{N}}}^{(\beta)}\tau^{\theta}\left(  \varepsilon\tau
^{a}\right)  ^{-\left(  \mu-\beta\right)  }=
\]%
\[
=C_{N_{k},\mu,m}\left\langle D_{t}^{\theta}u\right\rangle _{z_{k}%
,\overline{R_{T}^{N}}}^{(\beta)}\varepsilon^{-\left(  \mu-\beta\right)  }%
\tau^{\theta(\gamma-\mu)/(\gamma-\beta)}.
\]
The assertion of the theorem follows now from \eqref{3.74} - \eqref{3.76} in
view of \eqref{2.4}.
\end{proof}

\section{Theorems on Fourier multipliers in H\"{o}lder spaces}

\label{s4}

In this section we present some theorems from \cite{LadMult} and \cite{23} on
the Fourier multipliers in H\"{o}lder spaces. We consider the multipliers,
that act either in spaces with finite H\"{o}lder seminorm with respect to all
independent variables or in spaces with finite H\"{o}lder seminorm with
respect to a part of independent variables. These theorems will be applied
further for the proofs of theorems \ref{T2.1} - \ref{T2.3}.

We follow \cite{23} to give some necessary definitions.

Let $K>0$ be an integer and let
\begin{equation}
\gamma\in(0,1),\quad\beta=(\beta_{1},...,\beta_{K}),\quad\beta_{1}%
=1,\quad\beta_{i}\in(0,1],i=2,...,K.\label{4.1}%
\end{equation}
Consider the H\"{o}lder space $C^{\gamma\beta}(R^{K})$ with the norm
\begin{equation}
\left\Vert u\right\Vert _{C^{\gamma\beta}(R^{K})}\equiv\left\vert u\right\vert
_{R^{K}}^{(\gamma\beta)}\equiv|u|_{R^{K}}^{(0)}+{\sum\limits_{i=1}^{K}%
}\left\langle u\right\rangle _{x_{i},R^{K}}^{(\gamma\beta_{i})}.\label{4.2}%
\end{equation}
Along with this space we consider the more narrow space $H^{\gamma\beta}%
(R^{K})$ with the norm
\begin{equation}
\left\Vert u\right\Vert _{H^{\gamma\beta}(R^{K})}\equiv\left\Vert u\right\Vert
_{L_{2}(R^{K})}+{\sum\limits_{i=1}^{K}}\left\langle u\right\rangle
_{x_{i},R^{K}}^{(\gamma\beta_{i})},\label{4.3}%
\end{equation}
and it was shown in \cite{LadMult} that
\begin{equation}
\left\vert u\right\vert _{R^{K}}^{(\gamma\beta)}\leq C (\gamma
,\beta)\left\Vert u\right\Vert _{H^{\gamma\beta}(R^{K})}.\label{4.4}%
\end{equation}
Let a measurable and bounded function $\widetilde{m}(\xi)$, $\xi\in R^{K}$, be
defined in $R^{K}$. Define the operator $M:H^{\gamma\beta}(R^{K})\rightarrow
L_{2}(R^{K})$ as follows
\begin{equation}
Mu\equiv F^{-1}\left[  \widetilde{m}(\xi)F(u)(\xi)\right]  \equiv
F^{-1}\left[  \widetilde{m}(\xi)\widetilde{u}(\xi)\right]  .\label{4.5}%
\end{equation}
Here $F(u)(\xi)\equiv\widetilde{u}(\xi)$ is the Fourier transform of $u(x)$
extended on the space $L_{2}(R^{K})$, $F^{-1}$ is the inverse Fourier
transform. Since $u(x)\in H^{\gamma\beta}(R^{K})\subset L_{2}(R^{K})$, and the
function $\widetilde{m}(\xi)$ is bounded, the operator $M$ is correctly
defined. We call the function $\widetilde{m}(\xi)$ a Fourier multiplier.

Let the whole set of the variables $(\xi_{1},...,\xi_{K})=\xi$ be split into
$r$ subsets of length $K_{i}$, $i=1,...,r$, $K=K_{1}+...+K_{r}$ so that
\begin{equation}
\xi=(y_{1},...,y_{r}),\quad y_{1}=(\xi_{1},...,\xi_{K_{1}}),...,y_{r}%
=(\xi_{K_{1}+...K_{r-1}+1},...,\xi_{K}).\label{4.6}%
\end{equation}
Let, further, $\omega_{i}$, $i=1,...,r$ be multi-indexes each of length
$K_{i}$
\begin{equation}
\omega_{1}=(\omega_{1,1},...,\omega_{1,K_{1}}),...,\omega_{r}=(\omega
_{r,1},...,\omega_{r,K_{r}}),\quad\omega_{i,j}\in\mathbf{N\cup\{0\}.}%
\label{4.7}%
\end{equation}
Denote by $D_{y_{i}}^{\omega_{i}}\widetilde{u}(\xi)$ the derivative of the
function $\widetilde{u}(\xi)$ in the group $y_{i}$ of order $|\omega
_{i}|=\omega_{i,1}+...+\omega_{i,K_{i}}$ that is $D_{y_{i}}^{\omega_{i}%
}\widetilde{u}(\xi)=D_{\xi_{j_{1}}}^{\omega_{i,1}}...D_{\xi_{j_{N_{i}}}%
}^{\omega_{i,N_{i}}}\widetilde{u}(\xi)$. Let also $p\in(1,2]$ and positive
numbers $s_{i}$, $i=1,...,r$, satisfy the conditions
\begin{equation}
s_{i}>\frac{N_{i}}{p},\quad i=1,...,r.\label{4.8}%
\end{equation}
Denote for $\nu>0$
\begin{equation}
B_{\nu}=\{\xi\in R^{K}:\nu\leq|\xi|\leq\nu^{-1}\}.\label{4.9}%
\end{equation}
Suppose that for some $\nu>0$ the function $\widetilde{m}(\xi)$ satisfies with
a certain $\mu>0$ and uniformly in $\lambda>0$ the condition
\begin{equation}
{\sum\limits_{|\omega_{i}|\leq s_{i}}}\left\Vert D_{y_{1}}^{\omega_{1}%
}D_{y_{2}}^{\omega_{2}}...D_{y_{r}}^{\omega_{r}}\widetilde{m}(\lambda
^{\frac{1}{\beta_{1}}}\xi_{1},...,\lambda^{\frac{1}{\beta_{K}}}\xi
_{K})\right\Vert _{L_{p}(B_{\nu})}\leq\mu,\label{4.10}%
\end{equation}
where $\beta_{i}$ are defined in \eqref{4.1}.

\begin{theorem}
\label{T4.1}(\cite{LadMult}: T.2.1, L.2.1, L.2.2, T.2.2, T.2.3)

If a function $\widetilde{m}(\xi)$ satisfies conditions \eqref{4.10}, then the
defined in \eqref{4.5} operator $M$, is a bounded linear operator from the
space $H^{\gamma\beta}(R^{K})$ to itself and
\begin{equation}
\left\Vert Mu\right\Vert _{H^{\gamma\beta}(R^{K})}\leq C(K,\gamma,\beta
,p,\nu,\{s_{i}\})\mu\left\Vert u\right\Vert _{H^{\gamma\beta}(R^{K}%
)},\label{4.11}%
\end{equation}%
\begin{equation}
{\sum\limits_{i=1}^{K}}\left\langle Mu\right\rangle _{x_{i},R^{K}}%
^{(\gamma\beta_{i})}\leq C(K,\gamma,\beta,p,\nu,\{s_{i}\})\mu{\sum
\limits_{i=1}^{K}}\left\langle u\right\rangle _{x_{i},R^{K}}^{(\gamma\beta
_{i})}.\label{4.12}%
\end{equation}

\end{theorem}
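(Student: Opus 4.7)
The plan is to follow the standard Littlewood--Paley route adapted to the anisotropy encoded by $\beta$. First I would introduce an anisotropic quasi-norm $\rho_\beta(\xi) = \max_i |\xi_i|^{1/\beta_i}$ (note that $\beta_1 = 1$ makes the scaling well-defined) and fix a smooth partition of unity $\{\psi_j\}_{j\in\mathbb{Z}}$ subordinate to the dyadic shells $\{2^{j-1}\le \rho_\beta(\xi)\le 2^{j+1}\}$, together with a low-frequency bump $\psi_0$. The key point is that each $\psi_j$ is obtained from $\psi_1$ by the anisotropic dilation $\xi_i\mapsto 2^{j/\beta_i}\xi_i$, and that the annulus $B_\nu$ in \eqref{4.9} can be chosen so that the support of $\psi_1$ lies inside $B_\nu$. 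I would then invoke the (well-known) Littlewood--Paley description of $H^{\gamma\beta}(R^K)$: modulo the low-frequency $L_2$ term,
\begin{equation*}
\sum_{i=1}^K \langle u\rangle_{x_i,R^K}^{(\gamma\beta_i)} \;\simeq\; \sup_{j\ge 1}\,2^{j\gamma}\,\|F^{-1}[\psi_j \widehat u]\|_{L_\infty(R^K)}.
\end{equation*}

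Second, I would decompose $Mu = \sum_j M_j u$ with $M_j u = F^{-1}[\widetilde m\,\psi_j\,\widehat u]$, and represent each piece as a convolution $M_j u = K_j * u$ where $K_j = F^{-1}[\widetilde m \psi_j]$. The uniform hypothesis \eqref{4.10}, after undoing the anisotropic dilation, says precisely that the rescaled multiplier $\widetilde m_j(\xi)\equiv \widetilde m(2^{j/\beta_1}\xi_1,\dots,2^{j/\beta_K}\xi_K)\psi_1(\xi)$ is bounded in the mixed Sobolev-type norm obtained by taking up to $s_i$ derivatives in each group $y_i$ and measuring in $L_p(B_\nu)$. By the anisotropic Sobolev embedding, valid because $s_i > N_i/p$ (here $N_i$ plays the role of $K_i$), this $L_p$-Sobolev norm controls the $L_1$-norm of the inverse Fourier transform on a compact set, so $\|F^{-1}[\widetilde m_j \psi_1]\|_{L_1} \le C\mu$ uniformly in $j$. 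Scaling back, the convolution kernels $K_j$ have $L_1$-norms bounded by $C\mu$ independently of $j$, hence by Young's inequality
\begin{equation*}
\|M_j u\|_{L_\infty}\le C\mu\,\|F^{-1}[\psi_j \widehat u]\|_{L_\infty},
\end{equation*}
with a suitable enlargement of the localizer $\psi_j$ so that $\psi_j\cdot\psi_j^\# = \psi_j$.

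Third, I would combine these pieces: multiplying by $2^{j\gamma}$, taking $\sup_j$ and using the Littlewood--Paley characterization recalled above yields \eqref{4.12}. For the low-frequency/$L_2$ part one just uses Plancherel together with boundedness of $\widetilde m$, giving \eqref{4.11}. The main obstacle I expect is the passage from the group-wise Sobolev condition \eqref{4.10} to an $L_1$ bound on $K_j$: the derivatives are taken only within each block $y_i$ (not in arbitrary mixed multi-indices), so the anisotropic embedding has to be applied carefully coordinate-group by coordinate-group, and one has to check that the implicit constant does not depend on $j$. A secondary technical point is to justify the Littlewood--Paley characterization of the space $H^{\gamma\beta}(R^K)$ defined through per-variable Hölder seminorms \eqref{4.3}, rather than through an isotropic seminorm; this is where the normalization $\beta_1=1$ and the specific scaling exponents $1/\beta_i$ in \eqref{4.10} are essential to match the anisotropy of the space with the anisotropy of the dyadic decomposition.
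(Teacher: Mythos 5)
The paper contains no proof of Theorem \ref{T4.1}: it is imported verbatim by citation from \cite{LadMult} (T.2.1, L.2.1, L.2.2, T.2.2, T.2.3) and Lemmas 2.2--2.4 of \cite{23}, so the only comparison available is with the arguments of those references. Your outline reproduces that argument in its essentials: an anisotropic dyadic partition matched to the dilations $\xi_{i}\mapsto\lambda^{1/\beta_{i}}\xi_{i}$ appearing in \eqref{4.10}, uniform $L_{1}$ bounds on the kernels $K_{j}=F^{-1}[\widetilde{m}\psi_{j}]$ obtained from \eqref{4.10} by Hausdorff--Young together with the group-wise condition $s_{i}>N_{i}/p$ (the ``main obstacle'' you flag is handled precisely because \eqref{4.10} already contains the mixed inter-group derivatives $D_{y_{1}}^{\omega_{1}}\cdots D_{y_{r}}^{\omega_{r}}$ with $|\omega_{i}|\leq s_{i}$, so H\"{o}lder's inequality with the weight $\prod_{i}(1+|y_{i}|)^{-s_{i}}$, integrable in $L_{p}$ exactly when $s_{i}>N_{i}/p$, closes the estimate uniformly in $j$), followed by the dyadic characterization of the per-variable H\"{o}lder seminorms, which is indeed valid here since $\gamma\beta_{i}\in(0,1)$ so first differences suffice and the dyadic sums converge. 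One step you leave implicit should be made explicit: the constant $C\mu$ in \eqref{4.11} requires $\|\widetilde{m}\|_{L_{\infty}}\leq C\mu$, not merely the assumed measurable boundedness of $\widetilde{m}$; this follows from \eqref{4.10} by the same mixed Sobolev embedding ($s_{i}>N_{i}/p$ per group) applied on a rescaled shell through any given frequency point, after which Plancherel yields the $L_{2}$ component of \eqref{4.11} with the stated constant.
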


Condition \eqref{4.10} can be especially easily verified in the cases, when
the function $\widetilde{m}(\xi)$ has the homogeneity of degree zero that is
when $\widetilde{m}(\lambda^{\frac{1}{\beta_{1}}}\xi_{1},...,\lambda^{\frac
{1}{\beta_{K}}}\xi_{K})=\widetilde{m}(\xi)$. Note also that condition
\eqref{4.10} includes the derivatives of $\widetilde{m}(\xi)$ in $y_{i}$ only
up to the orders $s_{i}$. The case $r=1$, $K_{1}=K$, $p=2$ is considered in
Lemma 2.1 in \cite{LadMult} and Lemma 2.2 of the same paper contains the case
$r=K$, $K_{i}=1$, $s_{i}=1$. The general case is analogous - see lemmas 2.2 -
2.4 in \cite{23}.

Now we formulate a generalization of Theorem \ref{T4.1} to the case of Fourier
multipliers in the spaces of functions with the H\"{o}lder condition only with
respect to a part of the variables. For that we need to split the whole set of
variables $x\in R^{K}$ and the corresponding dual (in the sense of the Fourier
transform) set of variables $\xi\in R^{K}$, besides splitting \eqref{4.6} and
regardless this splitting, also as follows.

Let $x=(x^{(1)},x^{(2)})$, $x^{(1)}=(x_{1},...,x_{S})\in R^{S}$,
$x^{(2)}=(x_{S+1},...,x_{K})\in R^{K-S}$ and correspondingly $\xi=(\xi
^{(1)},\xi^{(2)})$, $\xi^{(1)}=(\xi_{1},...,\xi_{S})\in R^{S}$, $\xi
^{(2)}=(\xi_{S+1},...,\xi_{K})\in R^{K-S}$. Let, further, analogously to
\eqref{4.1},
\[
\gamma\in(0,1),\quad\beta=(\beta_{1},...,\beta_{S}),\quad\beta_{1}%
=1,\quad\beta_{i}\in(0,1],i=2,...,S,
\]%
\begin{equation}
\varkappa=(\varkappa_{S+1},...,\varkappa_{K}),\quad\varkappa_{i}>0,\quad
i=S+1,...,K.\label{4.13}%
\end{equation}
Note that $\varkappa_{i}$ not necessarily belongs to $(0,1]$. Analogously to
\eqref{4.3} define the space $H_{x^{(1)},x^{(2)}}^{\gamma\beta,\quad
\gamma\varkappa}(R^{K})=C_{x^{(1)},x^{(2)}}^{\gamma\beta,\quad\gamma\varkappa
}(R^{K})\cap L_{2}(R^{K})$ as the Banach space of functions with the finite
norm
\begin{equation}
\left\Vert u\right\Vert _{H_{x^{(1)},x^{(2)}}^{\gamma\beta,\quad
\gamma\varkappa}(R^{K})}\equiv\left\Vert u\right\Vert _{L_{2}(R^{K})}%
+{\sum\limits_{i=1}^{S}}\left\langle u\right\rangle _{x_{i},R^{K}}%
^{(\gamma\beta_{i})}+{\sum\limits_{i=S+1}^{K}}\left\langle u\right\rangle
_{x_{i},R^{K}}^{(\gamma\varkappa_{i})}\label{4.14}%
\end{equation}
and analogously to \eqref{4.4}
\begin{equation}
\left\vert u\right\vert _{x^{(1)},x^{(2)},R^{K}}^{(\gamma\beta,\gamma
\varkappa)}\leq C (\gamma,\beta)\left\Vert u\right\Vert _{H_{x^{(1)}%
,x^{(2)}}^{\gamma\beta,\quad\gamma\varkappa}(R^{K})}.\label{4.15}%
\end{equation}
Besides, define the Banach space $H_{x^{(1)}}^{\gamma\beta}(R^{K})\supset
C_{x^{(1)}}^{\gamma\beta}(R^{K})\cap L_{2}(R^{K})$ with the finite norm
\begin{equation}
\left\Vert u\right\Vert _{H_{x^{(1)}}^{\gamma\beta}(R^{K})}\equiv\left\Vert
u\right\Vert _{L_{2}(R^{K})}+{\sum\limits_{i=1}^{S}}\left\langle
u\right\rangle _{x_{i},R^{K}}^{(\gamma\beta_{i})},\label{4.17}%
\end{equation}
and we stress that the functions from $H_{x^{(1)}}^{\gamma\beta}(R^{K})$ have
bounded H\"{o}lder seminorms with respect to the variables from the group
$x^{(1)}$ only. In particular, the functions from this space are not
necessarily bounded - see an example before Theorem 2.7 in \cite{23}.

\begin{theorem}
\label{T4.2}(\cite{23}: T.2.7) Let a function
\begin{equation}
\widetilde{m}(\xi)\in C(R^{K}\backslash\{0\})\text{ be continuous and bounded
in }R^{K}\backslash\{0\}\label{4.18}%
\end{equation}
and let it satisfy the condition
\begin{equation}
\widetilde{m}(\xi)|_{\xi^{(1)}=0}=\widetilde{m}(0,\xi^{(2)})\equiv0,\quad
\xi^{(2)}\in R^{K-S}\backslash\{0\}.\label{4.19}%
\end{equation}
Let, further, $p\in(1,2]$ and let positive numbers $s_{i}$, $i=1,...,r$,
satisfy the condition (comp. \eqref{4.8})
\begin{equation}
s_{i}>\frac{N_{i}}{p}+\gamma,\quad i=1,...,r.\label{4.20}%
\end{equation}
Let, besides, the following condition be satisfied (comp. \eqref{4.10})
\begin{equation}
{\sum\limits_{|\omega_{i}|\leq s_{i}}}\left\Vert D_{y_{1}}^{\omega_{1}%
}D_{y_{2}}^{\omega_{2}}...D_{y_{r}}^{\omega_{r}}\widetilde{m}(\lambda
^{\frac{1}{\beta_{1}}}\xi_{1},...,\lambda^{\frac{1}{\beta_{S}}}\xi_{S}%
,\lambda^{\frac{1}{\varkappa_{S+1}}}\xi_{1},...,\lambda^{\frac{1}%
{\varkappa_{K}}}\xi_{S})\right\Vert _{L_{p}(B_{\nu})}\leq\mu,\label{4.21}%
\end{equation}
where $B_{\nu}$ is defined in \eqref{4.9} and $\mu$ is a positive number.

Then the operator $M$ from \eqref{4.5} is a bounded linear operator from the
space $H_{x^{(1)}}^{\gamma\beta}(R^{K})$ to the space $H_{x^{(1)},x^{(2)}%
}^{\gamma\beta,\quad\gamma\varkappa}(R^{K})$, and
\begin{equation}
\left\Vert Mu\right\Vert _{H_{x^{(1)},x^{(2)}}^{\gamma\beta,\quad
\gamma\varkappa}(R^{K})}\leq C(K,\gamma,\beta,\varkappa,p,\nu,\{s_{i}%
\})\mu\left\Vert u\right\Vert _{H_{x^{(1)}}^{\gamma\beta}(R^{K})},\tag{4.22}%
\end{equation}%
\begin{equation}
{\sum\limits_{i=1}^{S}}\left\langle Mu\right\rangle _{x_{i},R^{K}}%
^{(\gamma\beta_{i})}+{\sum\limits_{i=S+1}^{K}}\left\langle Mu\right\rangle
_{x_{i},R^{K}}^{(\gamma\varkappa_{i})}\leq C(K,\gamma,\beta,\varkappa
,p,\nu,\{s_{i}\})\mu{\sum\limits_{i=1}^{S}}\left\langle u\right\rangle
_{x_{i},R^{K}}^{(\gamma\beta_{i})}.\tag{4.23}%
\end{equation}

\end{theorem}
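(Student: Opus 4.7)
The plan is to mirror the proof of Theorem \ref{T4.1}, with two new ingredients needed to accommodate the asymmetric setting: the hypothesis \eqref{4.19} that $\widetilde m$ vanishes on $\{\xi^{(1)}=0\}$, and the sharpened Sobolev threshold \eqref{4.20} (where $s_i > N_i/p + \gamma$, rather than $s_i > N_i/p$). I would start with an anisotropic Littlewood--Paley decomposition in $\xi$ adapted to the dilation appearing in \eqref{4.21}, writing $\widetilde m = \sum_k \phi_k \widetilde m$ with $\phi_k$ supported in a shell corresponding to scale $\lambda\sim 2^k$, so that each dyadic block inherits the same $L_p$-derivative bound uniformly in $k$.

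For each dyadic block, the Sobolev embedding of a group-wise $W^{s_i,p}$ into $C^{\gamma}$ --- applied coordinate-group by coordinate-group, which is precisely the reason \eqref{4.20} is sharper than \eqref{4.8} --- yields a uniform-in-scale H\"older-type estimate on the associated convolution kernel. Summing the dyadic pieces gives the bounds on the $x^{(1)}$-seminorms of $Mu$ by essentially the same argument as in Theorem \ref{T4.1}, since in those variables the input $u$ has genuine H\"older regularity (with $\beta_1=1$ available). The nontrivial point is the $x^{(2)}$-seminorm estimate.

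The main obstacle is controlling $\langle Mu\rangle_{x_i,R^K}^{(\gamma\varkappa_i)}$ for $i>S$, since the input carries no a priori $x^{(2)}$-smoothness. The vanishing condition \eqref{4.19}, combined with the derivative bound \eqref{4.21}, should allow, by a Taylor expansion of $\widetilde m$ in the $\xi^{(1)}$ variables near the hyperplane $\xi^{(1)}=0$, a factorisation of the schematic form $\widetilde m(\xi) = \widetilde m_1(\xi)\,|\xi^{(1)}|^{\gamma}$, where $\widetilde m_1$ is a bounded multiplier of the type already treated by Theorem \ref{T4.1}. The factor $|\xi^{(1)}|^{\gamma}$ corresponds to a fractional Laplacian of order $\gamma$ in $x^{(1)}$, whose action is controlled by the $x^{(1)}$-H\"older seminorms of $u$ (admissible precisely because $\beta_1=1$, so that $u$ has $\gamma$-order H\"older smoothness in $x_1$). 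The anisotropic scaling built into \eqref{4.21} then transmutes this $\gamma$-order gain in $x^{(1)}$ into a $\gamma\varkappa_i$-order H\"older bound in each $x_i$, $i>S$.

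The hardest technical point will be making this factorisation compatible with the dyadic decomposition and verifying uniform-in-scale $L_p$-derivative bounds on $\widetilde m_1$, so that Theorem \ref{T4.1} can in fact be invoked on each piece with a dyadic-summable estimate. Convergence at the low-frequency end of the sum is ensured by the vanishing \eqref{4.19} (which kills the contribution as $\xi^{(1)}\to 0$), and convergence at the high-frequency end is ensured by the sharp Sobolev threshold \eqref{4.20}; the bookkeeping between these two regimes, carried out uniformly in the anisotropic scale, is the bulk of the work.
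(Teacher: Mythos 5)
First, a point of comparison: the paper does not prove this theorem at all --- it is quoted from \cite{23} (Theorem 2.7), so your proposal has to be judged against the standard argument of that reference rather than against an internal proof.

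Your proposal contains a genuine gap at its central step, the factorisation $\widetilde m(\xi)=\widetilde m_{1}(\xi)\,|\xi^{(1)}|^{\gamma}$ with $\widetilde m_{1}$ satisfying the hypotheses of Theorem \ref{T4.1}. Condition \eqref{4.19} is only zeroth-order pointwise vanishing on the hyperplane $\{\xi^{(1)}=0\}$; a Taylor expansion in $\xi^{(1)}$ can only produce integer-order vanishing with polynomial factors $\xi^{(1)}_{j}$, never a factor $|\xi^{(1)}|^{\gamma}$ with noninteger $\gamma\in(0,1)$. What \eqref{4.20}, \eqref{4.21} and the embedding $W^{s,p}\hookrightarrow C^{\gamma}$ actually yield is the pointwise bound $|\widetilde m(\xi)|\leq C\mu\,|\xi^{(1)}|^{\gamma}$ on the rescaled shells, which is strictly weaker than a factorisation with controlled derivatives. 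If you nevertheless set $\widetilde m_{1}=\widetilde m/|\xi^{(1)}|^{\gamma}$, its derivatives blow up on $\{\xi^{(1)}=0\}$, and this set intersects every shell $B_{\nu}$ from \eqref{4.9}: a derivative of order $k$ produces terms of size $|\xi^{(1)}|^{-k}$ (since $D^{k}|\xi^{(1)}|^{-\gamma}\sim|\xi^{(1)}|^{-\gamma-k}$ while $\widetilde m$ vanishes only to order $\gamma$), and $|\xi^{(1)}|^{-kp}$ fails to be integrable near the hyperplane as soon as $kp\geq S$. For a group $y_{j}$ of dimension $K_{j}$ lying inside $\xi^{(1)}$ one must take derivatives up to order $s_{j}>K_{j}/p+\gamma$, which reaches this non-integrable range; hence $\widetilde m_{1}$ violates the $L_{p}$-condition \eqref{4.10} and Theorem \ref{T4.1} cannot be invoked for it. No dyadic bookkeeping repairs this, because the singularity sits inside each shell rather than at an isolated frequency scale. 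A second, independent failure: even granting the factorisation, an operator of order exactly $\gamma$ in $x^{(1)}$ such as $(-\Delta_{x^{(1)}})^{\gamma/2}$ is \emph{not} controlled by the $C^{\gamma}$-seminorm of $u$ --- with only $|\delta_{\eta}^{m}u|\leq\langle u\rangle^{(\gamma)}|\eta|^{\gamma}$ the defining integral diverges logarithmically at $\eta=0$ (compare the paper's own estimate \eqref{3.5}, which requires smoothness $\sigma+\beta$ with $\beta>0$ strictly above the order of the operator). Your transfer mechanism is therefore broken precisely at the endpoint it must handle.

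The argument that actually works (in \cite{23}, in the tradition of \cite{LadMult}) avoids any factorisation: for $i>S$ one estimates the difference $\delta_{h,x_{i}}Mu$ directly, pairing each anisotropic dyadic piece of the convolution kernel against finite differences of $u$ in the $x^{(1)}$ variables alone. This pairing is legitimate because \eqref{4.19} forces each kernel piece to have zero mean in $y^{(1)}$ (its Fourier transform vanishes at $\xi^{(1)}=0$), so constants in $x^{(1)}$ can be subtracted --- this is also what handles the possible unboundedness of $u\in H_{x^{(1)}}^{\gamma\beta}(R^{K})$. The sharpened threshold \eqref{4.20} then enters not through Sobolev embedding of the multiplier into $C^{\gamma}$, but to guarantee via Hausdorff--Young from \eqref{4.21} that the kernel multiplied by the weight $|y^{(1)}|^{\gamma}$ (which absorbs the seminorms $\langle u\rangle_{x_{j},R^{K}}^{(\gamma\beta_{j})}$) remains integrable; the factor $|h|^{\gamma\varkappa_{i}}$ in (4.23) comes from summing two geometric series over the anisotropic scales on either side of $\lambda\sim|h|^{-1}$, using boundedness of $\widetilde m$ on one side and the mean-zero gain on the other. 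Your Littlewood--Paley setup, your treatment of the $x^{(1)}$-seminorms, and your reading of why \eqref{4.20} is sharper than \eqref{4.8} are all sound; but without replacing the factorisation by this difference/mean-zero pairing, the estimate (4.23) for $i>S$ --- the entire content of the theorem beyond Theorem \ref{T4.1} --- does not go through.
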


As the conclusion of the section we present two auxiliary statements we need
in what follows.

\begin{lemma}
\label{L4.1}(\cite{23}: L.2.8) Let a function $\widetilde{f}(i\xi_{0} ,\xi)$
($i$ is the imaginary unit, $\xi_{0}\in R^{1}$, $\xi\in R^{N}$) be defined in
$R^{N+1}$ and can be extended to a function $\widetilde{f}(i\xi_{0}+a,\xi)$ in
the domain $a\geq0$ in the way that the extension $\widetilde{f}(i\xi
_{0}+a,\xi)$ possesses the properties:

1) $\widetilde{f}(i\xi_{0}+a,\xi)$ is continuous in the domain $a\geq0$;

2) the function $\widetilde{f}(i\xi_{0}+a,\xi)$ is analytic in the domain
$a>0$ with respect to the variable $p=i\xi_{0}+a$;

3) this function satisfies in the domain $a\geq0$ the following inequality
with some constants $C>0$, $m_{1}$, and $m_{2}$
\begin{equation}
\left\vert \widetilde{f}(p,\xi)\right\vert \leq C(1+|p|)^{m_{1}}%
(1+|\xi|)^{m_{2}}.\label{4.24}%
\end{equation}
Then the inverse Fourier transform of this function ( including in the sense
of distributions )
\[
f(t,x)=(2\pi)^{-(N+1)}{\int\limits_{R^{N+1}}}e^{it\xi_{0}+ix\xi}\widetilde
{f}(i\xi_{0},\xi)d\xi d\xi_{0}%
\]
vanishes for $t<0$
\begin{equation}
f(t,x)\equiv0,\quad t<0.\label{4.25}%
\end{equation}

\end{lemma}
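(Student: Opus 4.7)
The plan is to view $f$ as a tempered distribution (well defined since $\widetilde{f}(i\xi_0,\xi)$ has polynomial growth) and to verify that $\langle f,\phi\rangle=0$ for every test function $\phi\in C_c^\infty(R^{N+1})$ whose support lies in $\{t<-\delta\}$ for some $\delta>0$. The Parseval-type identity gives
\[
\langle f,\phi\rangle=\frac{1}{(2\pi)^{N+1}}\int_{R^{N+1}}\widetilde{f}(i\xi_0,\xi)\,\widehat{\phi}(-\xi_0,-\xi)\,d\xi_0\,d\xi,
\]
where $\widehat{\phi}$ denotes the usual Fourier transform of $\phi$. Since $\phi$ has compact support in the half-space $\{t<-\delta\}$, the Paley--Wiener--Schwartz theorem produces, for every integer $M\geq 0$, the bound
\[
\bigl|\widehat{\phi}(-s+ia,-\xi)\bigr|\leq C_{M}\,e^{-\delta a}(1+|s|+a+|\xi|)^{-M},\qquad a\geq 0,\ s\in R,
\]
the crucial point being the exponential gain $e^{-\delta a}$ in the upper half-plane.

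Next I would exploit the hypothesis on $\widetilde{f}$ to shift the $\xi_0$-contour. Analyticity of $\widetilde{f}(p,\xi)$ in $\operatorname{Re}p>0$ is the same as analyticity of $\xi_0\mapsto\widetilde{f}(i\xi_0,\xi)$ in the lower half-plane $\operatorname{Im}\xi_0<0$ (because $i(s-ia)=a+is$ has positive real part precisely when $a>0$). Fix $a>0$ and $\xi$. On the strip $-a\leq\operatorname{Im}\xi_0\leq 0$ the integrand $\widetilde{f}(i\xi_0,\xi)\widehat{\phi}(-\xi_0,-\xi)$ is holomorphic in $\xi_0$, continuous up to the boundary, and decays faster than any polynomial as $|\operatorname{Re}\xi_0|\to\infty$ uniformly in the strip (thanks to the Paley--Wiener decay of $\widehat{\phi}$, which dominates the polynomial growth of $\widetilde{f}$). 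Cauchy's theorem applied on a tall rectangle with vertical sides sent to infinity gives
\[
\int_{R}\widetilde{f}(i\xi_0,\xi)\widehat{\phi}(-\xi_0,-\xi)\,d\xi_0=\int_{R}\widetilde{f}(a+is,\xi)\widehat{\phi}(-s+ia,-\xi)\,ds.
\]

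Finally I would combine the bounds: on the shifted contour the integrand is dominated by $C\,e^{-\delta a}(1+a+|s|+|\xi|)^{m_1+m_2-M}$, so choosing $M>m_1+m_2+N+2$ makes the double integral absolutely convergent (justifying Fubini) and yields $|\langle f,\phi\rangle|\leq C'e^{-\delta a}$. Letting $a\to+\infty$ forces $\langle f,\phi\rangle=0$. Since such $\phi$ exhaust $C_c^\infty(\{t<0\})$ after choosing $\delta$ small, this gives $f\equiv 0$ on $\{t<0\}$. The chief technical obstacle is the contour shift itself: one must simultaneously control the polynomial growth of $\widetilde{f}$ in both the real and imaginary directions of $\xi_0$, show that the vertical sides of the rectangle contribute nothing, and carry out the subsequent $\xi$-integration via Fubini; all of this hinges on the fact that the Paley--Wiener estimate for $\widehat{\phi}$ supplies arbitrary polynomial decay paired with genuine exponential decay in $a$, which is what ultimately kills the polynomial factor $(1+|p|)^{m_1}(1+|\xi|)^{m_2}$.
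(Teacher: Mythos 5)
The paper never proves Lemma \ref{L4.1}: it is imported verbatim from \cite{23} (Lemma 2.8), so there is no in-paper argument to compare against; your proposal must stand on its own, and it does. It is a correct, essentially classical Paley--Wiener--Schwartz argument, and all three load-bearing steps check out: (i) since $\widetilde{f}(i\xi_0,\xi)$ is continuous with polynomial growth it defines a tempered distribution, and the pairing $\langle f,\phi\rangle=(2\pi)^{-(N+1)}\int\widetilde{f}(i\xi_0,\xi)\,\widehat{\phi}(-\xi_0,-\xi)\,d\xi_0\,d\xi$ is an absolutely convergent integral; (ii) for $\operatorname{supp}\phi\subset\{t\le-\delta\}$ the supporting-function form of the Paley--Wiener--Schwartz theorem yields exactly your bound $|\widehat{\phi}(-s+ia,-\xi)|\le C_M e^{-\delta a}(1+|s|+a+|\xi|)^{-M}$ for $a\ge0$, the exponential gain being precisely what dominates the polynomial growth \eqref{4.24}; (iii) the identification of $\{\operatorname{Re}\,p>0\}$ with $\{\operatorname{Im}\xi_0<0\}$ is right, and the rectangle contributions on the vertical sides $\operatorname{Re}\xi_0=\pm L$ are $O(L^{m_1+m_2-M})$ over a segment of fixed length $a$, hence vanish. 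Two details deserve to be written out in a finished version. First, Cauchy's theorem is applied on a closed strip where the integrand is only continuous up to the boundary (hypothesis 1) gives continuity at $a=0$, analyticity only for $a>0$); this is standard but should be justified, e.g.\ by shifting the upper edge to $\operatorname{Im}\xi_0=-\varepsilon$ and letting $\varepsilon\to0^+$ using the uniform polynomial bounds. Second, the final constant must be independent of $a$ for the limit $a\to+\infty$ to conclude; it is, since $(1+a+|s|+|\xi|)^{m_1+m_2-M}\le(1+|s|+|\xi|)^{m_1+m_2-M}$ for $a\ge0$ and the right-hand side is integrable over $R^{N+1}$ once $M>m_1+m_2+N+2$, so $|\langle f,\phi\rangle|\le C'e^{-\delta a}\to0$. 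Since every $\phi\in C_c^\infty(\{t<0\})$ is supported in some $\{t\le-\delta\}$, this gives \eqref{4.25}, and your proof is complete.
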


\begin{lemma}
\label{L4.2} Let $f(t)\in C^{n}([0,\infty))$, $n\geq1$, $f^{(k)}(t)\in
L_{1}([0,\infty))$, $k=0,1,...,n$ and $f^{(k)}(0)=0$, $k=0,1,...,n-1$ and let
$\theta\in(n-1,n)$. Then the fractional derivative $D_{\ast t}^{\theta}f(t)$,
extended along with the function $f(t)$ itself, in the domain $\{t<0\}$ by
zero, has the following Fourier transform
\begin{equation}
\widehat{D_{\ast t}^{\theta}f(t)}(\xi_{0})\equiv\frac{1}{\sqrt{2\pi}}%
{\int\limits_{-\infty}^{\infty}}D_{\ast t}^{\theta}f(t)e^{-i\xi_{0}%
t}dt,\label{4.26}%
\end{equation}
where the integral is understood as an improper one. Moreover,
\begin{equation}
\widehat{D_{\ast t}^{\theta}f(t)}(\xi_{0})=(i\xi_{0})^{[\theta]}(i\xi
_{0})^{\{\theta\}}\widehat{f}(\xi_{0}),\label{4.27}%
\end{equation}
where $\widehat{f}(\xi_{0})$ is the Fourier transform of $f(t)$ and
\begin{equation}
(i\xi_{0})^{\{\theta\}}\equiv|\xi_{0}|^{\{\theta\}}e^{i\{\theta\}\frac{\pi}%
{2}sign(\xi_{0})}\label{4.28}%
\end{equation}
is an analytic extension of the function $z^{\{\theta\}}$ from the positive
real half-axis to the right half-plane $\operatorname{Re}z>0$.
\end{lemma}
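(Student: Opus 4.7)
The plan is to realize $D_{*t}^{\theta}f$ as a convolution and then apply the convolution theorem. Since $f^{(k)}(0)=0$ for $k=0,\ldots,n-1$, the integral representation \eqref{1.5} reduces to $D_{*t}^{\theta}f(t)=(\phi_{n-\theta}*f^{(n)})(t)$, where $\phi_{\alpha}(t)=t_{+}^{\alpha-1}/\Gamma(\alpha)$ is the Riemann--Liouville fractional integration kernel and $\alpha=n-\theta=1-\{\theta\}\in(0,1)$. Both factors are supported in $[0,\infty)$ after the zero extension, so this is a genuine convolution on $R^{1}$.

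Next I would compute the two Fourier transforms separately. Using that $f^{(k)}(0)=0$ for $k=0,\ldots,n-1$ together with the $L_{1}$ integrability of every derivative up to order $n$, repeated integration by parts in $\int_{0}^{\infty}f^{(n)}(t)e^{-i\xi_{0}t}dt$ produces no boundary terms and yields $\widehat{f^{(n)}}(\xi_{0})=(i\xi_{0})^{n}\widehat{f}(\xi_{0})$. For $\widehat{\phi_{\alpha}}$ with $\alpha\in(0,1)$, I would regularize by $e^{-at}$, $a>0$, and evaluate $\int_{0}^{\infty}t^{\alpha-1}e^{-(a+i\xi_{0})t}dt=\Gamma(\alpha)(a+i\xi_{0})^{-\alpha}$, where $(a+i\xi_{0})^{-\alpha}$ is the principal branch in the right half-plane $\operatorname{Re}z>0$. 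Letting $a\to 0+$ gives $\widehat{\phi_{\alpha}}(\xi_{0})=(2\pi)^{-1/2}(i\xi_{0})^{-\alpha}$ with precisely the branch specified in \eqref{4.28}. Combining these via the convolution theorem yields
\[
\widehat{D_{*t}^{\theta}f}(\xi_{0})=(i\xi_{0})^{-(n-\theta)}(i\xi_{0})^{n}\widehat{f}(\xi_{0})=(i\xi_{0})^{\theta}\widehat{f}(\xi_{0}),
\]
and factoring $(i\xi_{0})^{\theta}=(i\xi_{0})^{n-1}(i\xi_{0})^{\{\theta\}}=(i\xi_{0})^{[\theta]}(i\xi_{0})^{\{\theta\}}$ (since $[\theta]=n-1$ and $\{\theta\}=\theta-n+1$) delivers \eqref{4.27}.

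The main obstacle is that $D_{*t}^{\theta}f$ need not lie in $L_{1}(R^{1})$: the kernel $\phi_{n-\theta}$ decays only like $t^{-\{\theta\}}$ at infinity, so the Fourier integral in \eqref{4.26} must be interpreted as an improper one and the convolution theorem cannot be invoked directly. To handle this rigorously I would work first with the damped function $f_{a}(t)=e^{-at}f(t)$ for $a>0$, where every integral in sight converges absolutely and the same convolution argument with $a+i\xi_{0}$ replacing $i\xi_{0}$ yields $\int_{0}^{\infty}D_{*t}^{\theta}f(t)e^{-(a+i\xi_{0})t}dt=(a+i\xi_{0})^{\theta}\widehat{f}(a+i\xi_{0})$. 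The right-hand side tends to $(i\xi_{0})^{\theta}\widehat{f}(\xi_{0})$ as $a\to 0+$ by continuity of $\widehat{f}$ and of the branch in \eqref{4.28}. On the left, a single integration by parts (valid since $D_{*t}^{\theta}f(0)=0$ by Proposition \ref{P3.1.00}) transfers the oscillatory factor onto the kernel, producing an absolutely convergent integrand thanks to $f^{(n)}\in L_{1}$; this lets us take the limit $a\to 0+$ inside the integral and identify the improper-integral value of \eqref{4.26} with the desired expression.
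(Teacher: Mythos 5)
Your overall route is sound and is essentially the classical argument: you write $D_{\ast t}^{\theta}f=\phi_{n-\theta}\ast f^{(n)}$ with $\phi_{\alpha}(t)=t_{+}^{\alpha-1}/\Gamma(\alpha)$ (which is just \eqref{1.5} read as a convolution of zero-extended functions), obtain $\widehat{f^{(n)}}(\xi_{0})=(i\xi_{0})^{n}\widehat{f}(\xi_{0})$ by boundary-term-free integration by parts (legitimate: $f^{(k)}(0)=0$ and $f^{(k)},f^{(k+1)}\in L_{1}$ force $f^{(k)}(t)\rightarrow0$ at infinity), compute $\widehat{\phi_{\alpha}}$ from the damped Gamma integral with the principal branch, and pass $a\rightarrow0+$. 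Note that the paper itself does not prove Lemma \ref{L4.2}: Remark \ref{R4.1} defers it to \cite{Samko}, Section 7, modulo the sign convention in the Fourier kernel and the observation that the zero extension turns $D_{\ast t}^{\theta}$ into the Liouville form with lower limit $-\infty$. Your proposal therefore supplies in full what the paper leaves to the reference, and by the same Laplace-damping method used there.

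There is, however, one step that fails as written: the closing ``single integration by parts'' used to control the limit $a\rightarrow0+$ and the improper convergence of \eqref{4.26}. Under the stated hypotheses $f^{(n)}$ is merely continuous and in $L_{1}$, so $\frac{d}{dt}D_{\ast t}^{\theta}f=\frac{d}{dt}\left(  \phi_{n-\theta}\ast f^{(n)}\right)$ need not exist (differentiating the convolution would require $f^{(n+1)}$, and $\phi_{n-\theta}^{\prime}(s)\sim s^{n-\theta-2}$ is not locally integrable), while integrating by parts on the kernel side instead produces divergent boundary terms ($s^{\alpha-1}$ blows up at $s=0$, $s^{\alpha}$ at $s=\infty$). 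Your appeal to Proposition \ref{P3.1.00} is also off target, since that proposition concerns H\"{o}lder classes; the fact you actually need, $D_{\ast t}^{\theta}f(0)=0$, follows directly from $|D_{\ast t}^{\theta}f(t)|\leq\sup_{[0,t]}|f^{(n)}|\,t^{n-\theta}/\Gamma(n-\theta+1)$. The repair keeps your architecture but replaces the integration by parts with Fubini on truncations: for $\xi_{0}\neq0$ and $a\geq0$,
\begin{equation}
{\int\limits_{0}^{T}}D_{\ast t}^{\theta}f(t)e^{-(a+i\xi_{0})t}dt={\int\limits_{0}^{T}}f^{(n)}(\tau)e^{-(a+i\xi_{0})\tau}\Phi_{a}(T-\tau)\,d\tau,\quad\Phi_{a}(X)\equiv{\int\limits_{0}^{X}}\phi_{n-\theta}(s)e^{-(a+i\xi_{0})s}ds,\nonumber
\end{equation}
where the interchange is justified by absolute convergence on the triangle. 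Since $e^{-as}\phi_{n-\theta}(s)$ is positive and decreasing, the Dirichlet test (second mean value theorem) gives $\sup_{X\geq0,\,a\geq0}|\Phi_{a}(X)|<\infty$ for fixed $\xi_{0}\neq0$, and $\Phi_{a}(X)\rightarrow(a+i\xi_{0})^{-(n-\theta)}$, with the branch \eqref{4.28} coming from your $a>0$ computation and continuity as $a\rightarrow0+$. Dominated convergence with dominating function $C|f^{(n)}|$ then yields simultaneously the existence of the improper integral in \eqref{4.26} and the identity \eqref{4.27}, with no differentiation of $D_{\ast t}^{\theta}f$ required.
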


\begin{remark}
\label{R4.1} We do not present here a detailed proof of this statement because
it, by essence, is contained in, for example, \cite{Samko}, Section 7. We note
only that at applications of \cite{Samko}, Section 7, one should take into
account that the monograph \cite{Samko} considers the direct Fourier transform
with $e^{ix\xi_{0}}$ and the present paper makes use in \eqref{1.7} of the
kernel $e^{-ix\xi_{0}}$. Besides, it should be taken into account that, since
$f(t)\equiv0$ for $t<0$, then
\[
D_{\ast t}^{\theta}f(t)=\frac{1}{\Gamma(1-\theta)}{\int\limits_{0}^{t}}%
\frac{f^{(n)}(\tau)d\tau}{(t-\tau)^{\theta-n+1}}=\frac{1}{\Gamma(1-\theta
)}{\int\limits_{-\infty}^{t}}\frac{f^{(n)}(\tau)d\tau}{(t-\tau)^{\theta-n+1}}.
\]

\end{remark}

\section{Cauchy problem for equation \eqref{1.1} in the case of a noninteger
$\theta\in(0,1)$ ,$\theta\alpha\in(0,1)$.}

\label{s5}

In the present section we study problem \eqref{1.1}, \eqref{1.2} with
$\theta\in(0,1)$ in the anisotropic H\"{o}lder spaces $C^{\overline{\sigma
}(1+\alpha),\theta+\theta\alpha}(\overline{R_{T}^{N}})$ from \eqref{2.8}. In
this way, consider the following Cauchy problem for a defined in
$\overline{R_{T}^{N}}$ unknown function $u(x,t)$
\begin{equation}
Lu(x,t)\equiv D_{\ast t}^{\theta}u(x,t)+{\sum_{k=1}^{r}}(-\Delta_{z_{k}%
})^{\frac{\sigma_{k}}{2}}u(x,t)=f(x,t),\quad(x,t)\in R_{T}^{N},\label{5.1}%
\end{equation}%
\begin{equation}
u(x,0)=u_{0}(x),\quad x\in R^{N},\label{5.2}%
\end{equation}
where
\begin{equation}
\theta\in(0,1),\quad\sigma_{k}>0,\quad k=1,...,r,\label{5.3.1}%
\end{equation}
and the given functions belong to the spaces
\begin{equation}
f(x,t)\in C^{\overline{\sigma}\alpha,\theta\alpha}(\overline{R_{T}^{N}}),\quad
u_{0}(x)\in C^{\overline{\sigma}(1+\alpha)}(R^{N}).\label{5.3}%
\end{equation}
In this section we suppose that $\alpha$ is so small that the following
condition is satisfied
\begin{equation}
\theta,\theta\alpha\in(0,1),\quad\sigma_{k}\alpha\in(0,1),\quad
k=1,...,r.\label{5.4}%
\end{equation}
Besides, the following compatibility conditions are expected to be met ( see
condition \eqref{1dop.1} and Remark \ref{1dop.000})
\begin{equation}
f(x,0)={\sum_{k=1}^{r}}(-\Delta_{z_{k}})^{\frac{\sigma_{k}}{2}}u_{0}(x),\quad
x\in R^{N}.\label{5.5}%
\end{equation}

The following theorem is valid.

\begin{theorem}
\label{T5.1} If conditions \eqref{5.3.1} - \eqref{5.5} are satisfied, then
problem \eqref{5.1}, \eqref{5.2} has the unique solution $u(x,t)\in
C^{\overline{\sigma}(1+\alpha),\theta+\theta\alpha}(\overline{R_{T}^{N}})$
with the following estimates
\begin{equation}
|u|_{\overline{R_{T}^{N}}}^{(\overline{\sigma}(1+\alpha),\theta+\theta\alpha
)}\leq C(\overline{\sigma},\theta,\alpha,T)\left(  |f|_{\overline{R_{T}^{N}}%
}^{(\overline{\sigma}\alpha,\theta\alpha)}+|u_{0}|_{R^{N}}^{(\overline{\sigma
}(1+\alpha))}\right)  , \label{5.6}%
\end{equation}%
\begin{equation}
\left\langle u\right\rangle _{\overline{R_{T}^{N}}}^{(\overline{\sigma
}(1+\alpha),\theta+\theta\alpha)}\leq C(\overline{\sigma},\theta
,\alpha)\left(  |f|_{\overline{R_{T}^{N}}}^{(\overline{\sigma}\alpha
,\theta\alpha)}+|u_{0}|_{R^{N}}^{(\overline{\sigma}(1+\alpha))}\right)  ,
\label{5.6.1}%
\end{equation}%
\begin{equation}
\left\vert u\right\vert _{\overline{R_{\widetilde{T}}^{N}}}^{(0)}\leq
C(\overline{\sigma},\theta,\alpha)\left(  |f|_{\overline{R_{T}^{N}}%
}^{(\overline{\sigma}\alpha,\theta\alpha)}+|u_{0}|_{R^{N}}^{(\overline{\sigma
}(1+\alpha))}\right)  \widetilde{T}^{\theta+\theta\alpha}+|u_{0}|_{R^{N}%
}^{(0)},\quad\widetilde{T}\leq T. \label{5.6.2}%
\end{equation}

\end{theorem}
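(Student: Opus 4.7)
The plan is to reduce the problem to zero initial data and then solve the resulting equation on the whole of $R^{N+1}$ via the Fourier multiplier machinery of Section \ref{s4}. Setting $w(x,t) = u(x,t) - u_{0}(x)$, Proposition \ref{P3.5} places
\[
g(x,t) \equiv f(x,t) - {\sum\limits_{k=1}^{r}} (-\Delta_{z_{k}})^{\frac{\sigma_{k}}{2}} u_{0}(x)
\]
in $C^{\overline{\sigma}\alpha,\theta\alpha}(\overline{R_{T}^{N}})$, while the compatibility condition \eqref{5.5} gives $g(x,0) \equiv 0$, so actually $g \in \underline{C}^{\overline{\sigma}\alpha,\theta\alpha}(\overline{R_{T}^{N}})$. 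Extending $g$ by zero into $t < 0$ (which preserves the class since $g$ has vanishing trace at $t=0$) and by a reflection-type extension into $t > T$ produces $G \in C^{\overline{\sigma}\alpha,\theta\alpha}(R^{N+1})$, supported in $\{t \geq 0\}$, with norm controlled by that of $g$. I then seek a full-space function $W$, supported in $\{t \geq 0\}$, satisfying $LW = G$ on $R^{N+1}$.

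Applying the Fourier transform in all variables and invoking Lemma \ref{L4.2} together with \eqref{fi.3}--\eqref{fi.4}, the symbol of $L$ is
\[
P(\xi_{0},\xi) = (i\xi_{0})^{\theta} + {\sum\limits_{k=1}^{r}} |\zeta_{k}|^{\sigma_{k}},
\]
and since $\theta \in (0,1)$ one has $\operatorname{Re}(i\xi_{0})^{\theta} = |\xi_{0}|^{\theta}\cos(\pi\theta/2) \geq 0$, so $\operatorname{Re} P > 0$ off the origin and $P$ is anisotropically homogeneous of degree $\theta$ under the scaling $(\xi_{0},\zeta_{k}) \mapsto (\lambda\xi_{0}, \lambda^{\theta/\sigma_{k}}\zeta_{k})$. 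The candidate representation $\widehat{W} = \widehat{G}/P$ yields
\[
D_{\ast t}^{\theta} W = F^{-1}\!\left[\tfrac{(i\xi_{0})^{\theta}}{P}\,\widehat{G}\right], \qquad (-\Delta_{z_{k}})^{\frac{\sigma_{k}}{2}} W = F^{-1}\!\left[\tfrac{|\zeta_{k}|^{\sigma_{k}}}{P}\,\widehat{G}\right],
\]
with all multipliers smooth and $0$-homogeneous under the same anisotropic scaling. I would then apply Theorem \ref{T4.1} (interpreting the action on $\widehat G$ in the Lizorkin-distributional sense of Section \ref{sfi} to handle the $L_{2}$ membership issue), matching the $\beta$-exponents to the anisotropic scaling so that the target seminorm orders are $\theta\alpha$ in $t$ and $\sigma_{k}\alpha$ in each $z_{k}$; the $0$-homogeneity reduces the verification of \eqref{4.10} to a uniform bound of mixed derivatives on the anisotropic annulus $B_{\nu}$, delivering the seminorm estimate for $D_{\ast t}^{\theta} W$ and for $(-\Delta_{z_{k}})^{\sigma_{k}/2} W$ in $C^{\overline{\sigma}\alpha,\theta\alpha}(R^{N+1})$ by the norm of $G$. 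Proposition \ref{P3.1.1} and estimate \eqref{3.6} then lift these seminorm controls to the full anisotropic Hölder norm on $W$; restricting to $\overline{R_{T}^{N}}$ and adding back $u_{0}(x)$ gives the solution $u$, with the sup-norm estimate \eqref{5.6.2} obtained by integrating the equation in $t$ from $t=0$.

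For uniqueness, let $v$ be the difference of two candidate solutions. Then $v \in \underline{C}^{\overline{\sigma}(1+\alpha),\theta+\theta\alpha}(\overline{R_{T}^{N}})$ solves the homogeneous problem with $v(x,0) \equiv 0$, and Proposition \ref{P3.1.00} combined with the vanishing initial trace permits the zero extension of $v$ into $t < 0$ to stay in the appropriate class. The resulting full-space object lies in $S'_{\overline{\sigma}}(R^{N+1})$ in the sense of \eqref{S.7.2}, so its Lizorkin-distributional Fourier image satisfies $P(\xi_{0},\xi)\widehat{v} = 0$; nonvanishing of $P$ outside the origin forces $\operatorname{supp}\widehat{v} \subseteq \{0\}$, whence $v$ is a polynomial modulo the Lizorkin equivalence, and the boundedness of $v$ together with $v(x,0)=0$ forces $v \equiv 0$. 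The main obstacle I expect is the multiplier verification: although the $0$-homogeneity of $(i\xi_{0})^{\theta}/P$ and $|\zeta_{k}|^{\sigma_{k}}/P$ is conceptually clean, bounding their mixed derivatives of all orders $(\omega_{0},\omega_{1},\ldots,\omega_{r})$ uniformly on $B_{\nu}$ requires careful use of $\operatorname{Re} P \geq |\xi_{0}|^{\theta}\cos(\pi\theta/2) > 0$ and the anisotropic structure, and it is precisely here that the restriction $\theta \in (0,1)$ enters essentially (compare the exclusion $n \neq 4k+2$ in Theorem \ref{T2.1}).
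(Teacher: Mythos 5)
Your existence scheme is in essence the paper's own (reduction to zero initial data, zero extension into $t<0$, the multipliers $(i\xi_{0})^{\theta}/P$ and $|\zeta_{k}|^{\sigma_{k}}/P$, Theorem \ref{T4.1}, lifting of seminorms via Proposition \ref{P3.1.1} and \eqref{3.6.1}), but the step where you apply Theorem \ref{T4.1} to $G$ ``in the Lizorkin-distributional sense'' is a genuine gap. Theorem \ref{T4.1} is stated on $H^{\gamma\beta}(R^{K})\subset L_{2}(R^{K})$, and your $G$ is merely bounded H\"older, not in $L_{2}(R^{N+1})$; moreover $\widehat{G}/P$ is not directly meaningful since $P$ vanishes at the origin, so dividing a tempered distribution by $P$ introduces a polynomial ambiguity that must be ruled out, and passing to $\Phi'$ quotients polynomials away but then loses the pointwise identification and the support property in $\{t\geq0\}$ that the solution must have. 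The paper closes all of this concretely: it writes the extended $f$ as $\partial_{t}^{n}F$ of a compactly supported $F$ (see \eqref{S.9.1}), mollifies and cuts off to $f_{m,\varepsilon}\in C_{0}^{\infty}$ (section \ref{s5.4}), shows for such data that $\widehat{u}=\widehat{f}/P$ is a rapidly decaying classical function whose inverse transform is supported in $\{t\geq0\}$ (Lemma \ref{L4.1}) and that the polynomial term in \eqref{5.21} vanishes, applies Theorem \ref{T4.1} literally, and only then passes to the limit by H\"older compactness and Proposition \ref{PS.1} (section \ref{s5.7}). Your approximation-free shortcut does not substitute for this chain. A smaller but real error: you expect to verify \eqref{4.10} by ``a uniform bound of mixed derivatives on $B_{\nu}$''; such pointwise bounds are false in general, since the derivatives blow up like $|\xi_{0}|^{-1+\theta}\prod_{k}|\zeta_{k}|^{-N_{k}+\sigma_{k}}$ near the coordinate subspaces (see \eqref{5.31}). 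Condition \eqref{4.10} asks only for $L_{p}(B_{\nu})$ bounds with the finite derivative orders $s_{0}=1$, $s_{k}=N_{k}$ and $p>1$ close to $1$, which is exactly how \eqref{5.32}, \eqref{5.33} proceed.

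The more serious gap is in your uniqueness argument. The difference $v$ of two solutions is defined only on $\overline{R_{T}^{N}}$; the zero extension into $t<0$ is fine, but to conclude $P(\xi_{0},\xi)\,\widehat{v}=0$ you must take the Fourier transform in $t$ over the whole real line, which requires $v$ to satisfy the homogeneous equation for all $t>T$ as well. No cheap extension (zero, constant, Taylor) beyond $t=T$ satisfies the equation there, and because the Caputo derivative carries memory from $[0,T]$ one cannot localize: this is precisely why the paper devotes section \ref{s5.8} (Lemma \ref{L5.1} and Proposition \ref{P5.2}) to extending a given solution on $[0,T]$ to a genuine solution on $[0,\infty)$, by solving a new Cauchy problem with starting point $t=T$ whose right-hand side absorbs the memory term $D_{\ast t}^{\theta}\widetilde{u}$, with polynomial-growth control so that the extended $v$ lies in $\Phi^{\prime}(R^{N+1})$. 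Only after that extension is the chain $P\widehat{v}=0\Rightarrow\operatorname{supp}\widehat{v}\subseteq\{0\}\Rightarrow v$ polynomial $\Rightarrow v\equiv0$ legitimate. As written, your uniqueness proof fails at the passage to the full-space Fourier identity, and this is a missing idea rather than a presentational omission.
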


The subsequent content of the section consists of the proof of the theorem and
this proof will be presented in several steps.

\subsection{Reduction of the initial data.}

\label{s5.1}

Note first that we can confine ourselves to the case
\begin{equation}
u_{0}(x)\equiv0,\quad f(x,0)\equiv0,\quad x\in R^{N}.\label{5.7}%
\end{equation}
Indeed, making in problem \eqref{5.1}, \eqref{5.2} the change of the unknown
function
\[
u(x,t)\rightarrow v(x,t)=u(x,t)-u_{0}(x),
\]
we see that $v(x,t)$ satisfies zero initial condition \eqref{5.2} and the
equation
\[
D_{\ast t}^{\theta}v+{\sum_{k=1}^{r}}(-\Delta_{z_{k}})^{\frac{\sigma_{k}}{2}%
}v=\widetilde{f(}x,t)\equiv f(x,t)-{\sum_{k=1}^{r}}(-\Delta_{z_{k}}%
)^{\frac{\sigma_{k}}{2}}u_{0}(x).
\]
From Proposition \ref{P3.6} it follows that $\widetilde{f(}x,t)\in
C^{\overline{\sigma}\alpha,\theta\alpha}(\overline{R_{T}^{N}})$, as well as
the original function $f(x,t)$, and besides
\begin{equation}
|\widetilde{f}|_{\overline{R_{T}^{N}}}^{(\overline{\sigma}\alpha,\theta
\alpha)}\leq C(\overline{\sigma},\theta,\alpha)\left(  |f|_{\overline
{R_{T}^{N}}}^{(\overline{\sigma}\alpha,\theta\alpha)}+|u_{0}|_{R^{N}%
}^{(\overline{\sigma}(1+\alpha))}\right)  .\label{5.8}%
\end{equation}
Moreover, by virtue of compatibility condition \eqref{5.5}, the condition
$\widetilde{f(}x,0)\equiv0$ is satisfied. Thus, we will assume below that
conditions \eqref{5.7} are satisfied.

\subsection{Extension of the data.}

\label{s5.2}

The function $f(x,t)$ can be extended in the domain $t>T$ to a finite in $t$
function with the preserving of the class $C^{\overline{\sigma}\alpha
,\theta\alpha}$ and with the qualified preserving of the norm
\begin{equation}
|f|_{\overline{R_{\infty}^{N}}}^{(\overline{\sigma}\alpha,\theta\alpha)}\leq
C(\overline{\sigma},\theta,\alpha)|f|_{\overline{R_{T}^{N}}}^{(\overline
{\sigma}\alpha,\theta\alpha)}. \label{5.9}%
\end{equation}
The way of such extension is described in, for example, \cite{Sol1},
\cite{LadParab}, Ch.4 (in our case under condition \eqref{5.4} the function
$f(x,t)$ can be simply extended in the even way trough the point $t=T$ with a
subsequent cut-off with respect to $t$).

Besides, since the second condition in \eqref{5.7} is satisfied and by virtue
of the first condition in \eqref{5.4}, we can assume that $f(x,t)$ is extended
by identical zero in the domain $t<0$ with the preserving of the class
$C^{\overline{\sigma}\alpha,\theta\alpha}$ and of the norm $|f|_{R^{N+1}
}^{(\overline{\sigma}\alpha,\theta\alpha)}=|f|_{\overline{R_{\infty}^{N}}%
}^{(\overline{\sigma}\alpha,\theta\alpha)}$.

Further, it is convenient for us for technical reasons to assume that the
extension of the function $f(x,t)$ is made in such a way that this function is
the derivative in $t$ of a sufficiently high order $n\geq1$ of a finite in $t$
function $F(x,t)$ with the properties
\[
F(x,t),\frac{\partial^{n}F(x,t)}{\partial t^{n}}\in C^{\overline{\sigma}%
\alpha,\theta\alpha}(R^{N+1}),\quad F(x,t)\equiv0,t<0,
\]%
\begin{equation}
f(x,t)=\frac{\partial^{n}F(x,t)}{\partial t^{n}},(x,t)\in R^{N+1}%
.\label{S.9.1}%
\end{equation}
This can be done as follows. Let the support of the extended in $t$ on the
whole $R^{N+1}$ function $f(x,t)$ is included in the domain $R^{N}%
\times\lbrack0,T_{1}]$, $T_{1}>T$. Define $F(x,t)$ as $n$-multiple integral in
$t$ of $f(x,t)$ with the subsequent cut-off
\begin{equation}
F(x,t)\equiv\eta(t){\int\limits_{-1}^{t}}\frac{(t-\tau)^{n-1}}{(n-1)!}%
f(x,\tau)d\tau,t\geq-1,\quad F(x,t)\equiv0,t<-1,\label{S.9.2}%
\end{equation}
where
\[
\eta(t)\in C^{\infty}(R^{1}),\quad\eta(t)\equiv1,t\leq T_{1}+1,\quad
\eta(t)\equiv0,t>T_{1}+1.
\]
In fact, $\partial^{n}F(x,t)/\partial t^{n}$ has the somewhat bigger support
in $R^{N}\times\lbrack0,T_{1}+1]$ than that of the original extended function
$f(x,t)$ and thus $\partial^{n}F(x,t)/\partial t^{n}$ does not coincides with
the original $f(x,t)$ for $T_{1}<t<T_{1}+1$. However, since $\partial
^{n}F(x,t)/\partial t^{n}=f(x,t)$ for $0\leq t\leq T$, we will consider it as
the finite in $t$ extension we need of the original function $f(x,t)$.
Properties \eqref{S.9.1} follow directly from the way of construction of
$F(x,t)$ in \eqref{S.9.2}.

Thus, in what follows we assume that $f(x,t)$ is a finite in $t$ function,
which is defined on the whole $R^{N+1}=R^{N}\times(-\infty,\infty)$, and it is
the derivative of order $n\geq1$ of some finite in $t$ function $F(x,t)$ with
the properties in \eqref{S.9.1}.

\subsection{Formulation of the problem in the domain $R^{N+1}=R^{N}%
\times(-\infty,\infty)$.}

\label{s5.3}

Since the function $f(x,t)$ is defined in $R^{N+1}$, we can assume that
problem \eqref{5.1}, \eqref{5.2} is the restriction to the interval $[0,T]$ of
a similar problem in the domain $R^{N+1}$ and we assume, in particular, that
$T=\infty$. Besides, in view of zero initial condition and in view of the
conditions $\theta<1$ and $\theta\alpha<1$, we can consider the alleged
solution $u(x,t)$ to \eqref{5.1}, \eqref{5.2} to be extended by identical zero
in the domain $\{t<0\}$. Under this extension the function $u(x,t)$ preserves
it's class $C^{\overline{\sigma}(1+\alpha),\theta+\theta\alpha}$, since at
$t=0$ not only $u(x,0)\equiv0$, but in the case $\theta+\theta\alpha\in(1,2)$
also $u_{t}(x,0)\equiv0$, as it follows from Proposition \ref{P3.1.1}.
Moreover, $D_{\ast t}^{\theta}u(x,t)$, which is defined only for $t>0$, also
can be extended by zero in the domain $t<0$. Such extension preserves the
smoothness not only for $u(x,t)$ but for $D_{\ast t}^{\theta}u(x,t)$ as well,
since this derivative satisfies $\left[  D_{\ast t}^{\theta}u(x,t)\right]
|_{t=0}=0$.

Thus problem \eqref{5.1}, \eqref{5.2} can be reformulated as the problem of
finding a function $u(x,t)\in C^{\overline{\sigma}(1+\alpha),\theta
+\theta\alpha}(R^{N+1})$, that satisfies in $R^{N+1}$ equation \eqref{5.1}
(with $f(x,t)\equiv0$ for $t<0$)
\begin{equation}
Lu(x,t)\equiv D_{\ast t}^{\theta}u(x,t)+{\sum_{k=1}^{r}}(-\Delta_{z_{k}%
})^{\frac{\sigma_{k}}{2}}u(x,t)=f(x,t),\quad(x,t)\in R^{N+1}\label{5.10}%
\end{equation}
and that satisfies the condition
\begin{equation}
u(x,t)\equiv0,\quad t<0.\label{5.11}%
\end{equation}

\subsection{ Smoothing of the data .}

\label{s5.4}

We are going to find a solution to problem \eqref{5.1}, \eqref{5.2} (and it's
sharp estimate) as the limit for a sequens of solutions of the class
$C^{\infty}(R^{N+1})$ to the same problem with finite with respect to all
variables data. Therefore we describe now some smoothing process for the data.
Fix a function (a mollifier kernel) with
\[
\omega(x,t)\in C^{\infty}(R^{N+1}),\quad\omega(x,t)\geq0,\quad\omega
(x,t)\equiv0,|x|+|t|>1,{\int\limits_{R^{N+1}}}\omega(x,t)dxdt=1,\quad
\]%
\begin{equation}
\omega_{\varepsilon}(x,t)\equiv\frac{1}{\varepsilon^{N+1}}\omega(\frac
{x}{\varepsilon},\frac{t}{\varepsilon}),\varepsilon>0.\label{5.12}%
\end{equation}
Fix, further, a cut-off function $\zeta(x)$ with
\begin{equation}
\zeta(x)\in C^{\infty}(R^{N}),\quad\zeta(x)\equiv1,|x|\leq1,\quad
\zeta(x)\equiv0,|x|>2.\label{5.13}%
\end{equation}
Introduce now the smoothed and cut off with respect to $x$ function from
\eqref{S.9.2}
\begin{equation}
F_{m,\varepsilon}(x,t)\equiv\zeta(\frac{x}{m}){\int\limits_{R^{N+1}}}%
\omega_{\varepsilon}(x-\xi,t-\tau)F(\xi,\tau-\varepsilon)dxdt,\quad
m=1,2,....\label{5.14}%
\end{equation}
Here, in view of properties of $\omega_{\varepsilon}(x,t)$ and
$F(x,t-\varepsilon)$, the function $F_{m,\varepsilon}(x,t)$ is finite with the
support in $\{|x|\leq2m\}\times\lbrack0,T_{\ast}]$, where $T_{\ast}$ is fixed
and does not depend on $m$ and $\varepsilon$. By virtue of the known
properties of convolution with a smooth kernel,
\[
F_{m,\varepsilon}(x,t)\in C^{\infty}(R^{N+1}),
\]%
\begin{equation}
|F_{m,\varepsilon}|_{R^{N+1}}^{(\sigma\alpha,\theta\alpha)}\leq
C|F(x,t-\varepsilon)|_{R^{N+1}}^{(\sigma\alpha,\theta\alpha)}%
=C|F(x,t)|_{R^{N+1}}^{(\sigma\alpha,\theta\alpha)}\label{5.15}%
\end{equation}
and, besides, for each compact set
\begin{equation}
P_{R}\equiv\{|x|\leq R\}\times\{|t|\leq R\},\quad R>0,\label{5.15.1}%
\end{equation}
and for an arbitrary $\alpha^{\prime}<\alpha$
\begin{equation}
\left\vert F_{m,\varepsilon}(x,t)-F(x,t)\right\vert _{P_{R}}^{(\sigma
\alpha^{\prime},\theta\alpha^{\prime})}\rightarrow0,\quad\varepsilon
\rightarrow0,m\rightarrow\infty.\label{5.16}%
\end{equation}
Further, in view of the properties of the convolution
\[
\frac{\partial^{n}F_{m,\varepsilon}(x,t)}{\partial t^{n}}=\zeta(\frac{x}%
{m})\omega_{\varepsilon}\ast\frac{\partial^{n}F(x,t-\varepsilon)}{\partial
t^{n}}=
\]%
\begin{equation}
=\zeta(\frac{x}{m}){\int\limits_{R^{N+1}}}\omega_{\varepsilon}(x-\xi
,t-\tau)f(\xi,\tau-\varepsilon)dxdt\equiv f_{m,\varepsilon}(x,t).\label{5.17}%
\end{equation}
And, analogously to the properties of $F_{m,\varepsilon}(x,t)$, the function
$f_{m,\varepsilon}(x,t)$ is finite with it's support in $\{|x|\leq
2m\}\times\lbrack0,T_{\ast}]$,
\[
f_{m,\varepsilon}(x,t)\in C^{\infty}(R^{N+1}),
\]%
\begin{equation}
|f_{m,\varepsilon}|_{R^{N+1}}^{(\sigma\alpha,\theta\alpha)}\leq
C|f(x,t-\varepsilon)|_{R^{N+1}}^{(\sigma\alpha,\theta\alpha)}%
=C|f(x,t)|_{R^{N+1}}^{(\sigma\alpha,\theta\alpha)}.\label{5.18}%
\end{equation}
Moreover, for each compact set $P_{R}$ from \eqref{5.15.1} and for any
$\alpha^{\prime}<\alpha$
\begin{equation}
\left\vert f_{m,\varepsilon}(x,t)-f(x,t)\right\vert _{P_{R}}^{(\sigma
\alpha^{\prime},\theta\alpha^{\prime})}\rightarrow0,\quad\varepsilon
\rightarrow0,m\rightarrow\infty.\label{5.19}%
\end{equation}

\subsection{Constructing a solution in the case of smooth finite data.}

\label{s5.5}

Suppose for a while that we are given such a solution $u(x,t)$ to
\eqref{5.10}, \eqref{5.11} from the class $u(x,t)\in C^{\overline{\sigma
}(1+\alpha),\theta+\theta\alpha}(R^{N+1})$ that it sufficiently rapidly decays
for $t\rightarrow+\infty$ and that vanishes for $t<0$. Make in equation
\eqref{5.10} the Fourier transform and denote the dual variable to $t$ by
$\xi_{0}$. The Fourier transform of $D_{\ast t}^{\theta}u(x,t)$ is correctly
defined in view of Lemma \ref{L4.2}: we need for that only $u(x,t)\in
L_{1}([0,\infty))$ for an arbitrary $x\in R^{N}$ since in our case $\theta
\in(0,1)$. The Fourier transform of the sum of the fractional powers of
Laplace operators we understand in the sense of the space $S^{\prime}%
(R^{N+1})$ since for each $t$ we have $u(x,\cdot)\in C^{\overline{\sigma
}(1+\alpha)}(R^{N})\subset S_{\sigma}^{\prime}(R^{N})\subset S^{\prime}%
(R^{N})$ - see Lemma \ref{LS.2}. In terms of the Fourier images equation
\eqref{5.10} takes the form (see \eqref{1.7}, \eqref{1.8}, \eqref{4.27})
\begin{equation}
\widehat{Lu}(\xi,\xi_{0})=(i\xi_{0})^{\theta}\widehat{u}(\xi,\xi_{0}%
)+{\sum_{k=1}^{r}}|\zeta_{k}|^{\sigma_{k}}\widehat{u}(\xi,\xi_{0})=\widehat
{f}(\xi,\xi_{0}),\quad(\xi,\xi_{0})\in R^{N+1}.\label{5.20}%
\end{equation}
Here $\xi=(\xi_{1},...,\xi_{N})$ are the dual variables to $x$, $\zeta
_{k}=(\xi_{i_{k}+1},...,\xi_{i_{k}+N_{k}})$ is group from the whole set $\xi$,
that is the dual group to the group $z_{k}$ from $x$. From equality
\eqref{5.10} it follows that
\begin{equation}
\widehat{u}(\xi,\xi_{0})=\frac{\widehat{f}(\xi,\xi_{0})}{(i\xi_{0})^{\theta
}+{\sum_{k=1}^{r}}|\zeta_{k}|^{\sigma_{k}}}+\widehat{P}(\xi,\xi_{0}%
),\label{5.21}%
\end{equation}
where $\widehat{P}(\xi,\xi_{0})$ is some distribution from $S^{\prime}%
(R^{N+1})$ with the support at the point $(\xi=0,\xi_{0}=0)$, which is a
finite linear combination of the $\delta$-function and it's derivatives (and
consequently which is the Fourier transform of a polynomial $P(x,t)$).
Consider the first term in the right hand side of \eqref{5.21} that is the
function
\[
\widehat{u}_{1}(\xi,\xi_{0})\equiv\frac{\widehat{f}(\xi,\xi_{0})}{(i\xi
_{0})^{\theta}+{\sum_{k=1}^{r}}|\zeta_{k}|^{\sigma_{k}}}=
\]%
\begin{equation}
=\frac{(i\xi_{0})^{n}}{(i\xi_{0})^{\theta}+{\sum_{k=1}^{r}}|\zeta_{k}%
|^{\sigma_{k}}}\widehat{F}(\xi,\xi_{0}),\label{5.22}%
\end{equation}
where we took advantage of the fact that
\[
\widehat{f}(\xi,\xi_{0})=\widehat{\frac{\partial^{n}F}{\partial t^{n}}}%
(\xi,\xi_{0})=(i\xi_{0})^{n}\widehat{F}(\xi,\xi_{0}).
\]
Note first that the function $\widehat{F}(\xi,\xi_{0})$, which is the Fourier
image of a finite function from $C^{\infty}(R^{N+1})$, decays at infinity
faster than any power of $(1+|\xi|+|\xi_{0}|)$. At the same time the function
$(i\xi_{0})^{n}/\left[  (i\xi_{0})^{\theta}+{\sum_{k=1}^{r}}|\zeta
_{k}|^{\sigma_{k}}\right]  $ is bounded at zero and grows at infinity not
faster than $(1+|\xi_{0}|)^{n-\theta}$. Therefore for an arbitrary $M>0$
\begin{equation}
\left\vert \widehat{u}_{1}(\xi,\xi_{0})\right\vert \leq C_{M}(1+|\xi|+|\xi
_{0}|)^{-M},\quad M>0.\label{5.23}%
\end{equation}
Consequently, there exists the Fourier pre-image of $\widehat{u}_{1}(\xi
,\xi_{0})$ and thus the corresponding Fourier pre-image $u_{1}(x,t)$ belongs
to $C^{\infty}(R^{N+1})$ and all derivatives of $u_{1}(x,t)$ are bounded on
$R^{N+1}$.

Further, in view of the described above properties of $\widehat{F}(\xi,\xi
_{0})$, the function $\widehat{u}_{1}(\xi,\xi_{0})$ has the derivatives in
$\xi_{0}$ up to the order $n-1$ with the estimate
\begin{equation}
\left\vert \frac{\partial^{k}\widehat{u}_{1}(\xi,\xi_{0})}{\partial\xi_{0}%
^{k}}\right\vert \leq C_{M}(1+|\xi|+|\xi_{0}|)^{-M},\quad k=0,...,n-1,M>0.
\label{5.24}%
\end{equation}
Therefore the function $u_{1}(x,t)$ satisfies the estimate
\begin{equation}
\left\vert \frac{\partial^{k}u_{1}(x,t)}{\partial t^{k}}\right\vert \leq
C_{k}(1+|t|)^{-(n-1)+k},\quad k=0,...,n-1, \label{5.25}%
\end{equation}
and therefore for $n \geq3$ the function $u_{1}(x,t)$ belongs to the space
$L_{1}(R^{1})$ at each fixed $x$.

At last, denote the factor at $\widehat{F}(\xi,\xi_{0})$ in the second
equality in \eqref{5.22} by
\begin{equation}
\widehat{G_{n}}(\xi,\xi_{0})\equiv\frac{(i\xi_{0})^{n}}{(i\xi_{0})^{\theta
}+{\sum_{k=1}^{r}}|\zeta_{k}|^{\sigma_{k}}}.\label{5.26}%
\end{equation}
This function satisfies all the conditions of Lemma \ref{L4.1} and therefore
the support of it's Fourier pre-image (that is the support of the distribution
$G_{n}(x,t)$) is completely included in the set $\{t\geq0\}$. The same is true
with respect to the function $F(x,t)$ by the assumptions. At the same time, by
virtue of the properties of the Fourier transform, from \eqref{5.22} it
follows that $u_{1}(x,t)$ is a convolution of the distribution $G_{n}(x,t)$
and the function $F(x,t)\in C^{\infty}(R^{N+1})$,
\[
u_{1}(x,t)=G_{n}(x,t)\ast F(x,t).
\]
Thus, in view of the known properties of convolution
\begin{equation}
u_{1}(x,t)=0,\quad t<0.\label{5.27}%
\end{equation}

The listed above properties of $u_{1}(x,t)$ mean that the operator
$Lu_{1}(x,t)$ in the left hand side of \eqref{5.10} is correctly defined on
$u_{1}(x,t)$. Besides, the Fourier transforms of all terms in the expression
for $Lu_{1}(x,t)$ are also correctly defined (in particular, for each fixed
$x\in R^{N}$ the function $u_{1}(x,t)$ satisfies the conditions of Lemma
\ref{L4.2}. Finally, the Fourier transform of $Lu_{1}(x,t)$ is equal to
\[
\widehat{Lu_{1}}(\xi,\xi_{0})=(i\xi_{0})^{\theta}\widehat{u}_{1}(\xi,\xi
_{0})+{\sum_{k=1}^{r}}|\zeta_{k}|^{\sigma_{k}}\widehat{u}_{1}(\xi,\xi
_{0})=\widehat{f}(\xi,\xi_{0})
\]
by virtue of the definition of $u_{1}(x,t)$ in \eqref{5.22}. But this means
that $u_{1}(x,t)$ satisfies problem \eqref{5.10}, \eqref{5.11} that is also
problem \eqref{5.1}, \eqref{5.2} with zero initial condition. Thus in equality
\eqref{5.21}
\[
\widehat{u}(\xi,\xi_{0})=\widehat{u}_{1}(\xi,\xi_{0})+\widehat{P}(\xi,\xi_{0})
\]
the distribution $\widehat{P}(\xi,\xi_{0})$ is identically equal to zero,
because $\widehat{u}(\xi,\xi_{0})$ and $\widehat{u}_{1}(\xi,\xi_{0})$ are the
Fourier images of bounded and vanishing for $t<0$ functions, while
$\widehat{P}(\xi,\xi_{0})$ is the Fourier image of a polynomial in the
variables $x$ and $t$. This situation is possible only in the case, when the
polynomial is identically equal to zero.

So, it is shown that for a function $f(x,t)\in C_{0}^{\infty}(R^{N+1})$
vanishing for $t<0$ there exists a solution $u(x,t)$ to problem \eqref{5.10},
\eqref{5.11} from the class $C^{\infty}(R^{N+1})$ with bounded derivatives of
any order and with some rapid decay for $t\rightarrow\infty$ for each fixed
$x\in R^{N}$. And as such solution, one can take $u(x,t)=u_{1}(x,t)$, since in
this case
\begin{equation}
\widehat{u}(\xi,\xi_{0})=\frac{\widehat{f}(\xi,\xi_{0})}{(i\xi_{0})^{\theta
}+{\sum_{k=1}^{r}}|\zeta_{k}|^{\sigma_{k}}}.\label{5.28}%
\end{equation}

\subsection{Estimate for H\"{o}lder norm of solution in the case of smooth
finite data.}

\label{s5.6}

We obtain now an estimate of solution from \eqref{5.28} in the space
$C^{\overline{\sigma}(1+\alpha),\theta+\theta\alpha}(R^{N+1})$ for infinitely
smooth finite data. To prove such estimate we are going to make use of
representation \eqref{5.28} and Theorem \ref{T4.1}. Consider first the time
derivative $D_{\ast t}^{\theta}u(x,t)$. On the ground of \eqref{4.27} and
\eqref{5.28} the Fourier transform of this derivative is equal to
\[
\widehat{D_{\ast t}^{\theta}u}(\xi,\xi_{0})=\widehat{m}_{0}(\xi,\xi
_{0})\widehat{f}(\xi,\xi_{0}),
\]
where
\begin{equation}
\widehat{m}_{0}(\xi,\xi_{0})\equiv\frac{(i\xi_{0})^{\theta}}{(i\xi
_{0})^{\theta}+{\sum_{k=1}^{r}}|\zeta_{k}|^{\sigma_{k}}}.\label{5.29}%
\end{equation}
Thus denoting by $\emph{F}$ the Fourier transform with respect to whole set of
the variables $(x,t)$, we have
\begin{equation}
D_{\ast t}^{\theta}u(x,t)=\emph{F}^{-1}\left[  \widehat{m}_{0}(\xi,\xi
_{0})\emph{F}\left[  f(x,t)\right]  \right]  ,\label{5.30}%
\end{equation}
that is $D_{\ast t}^{\theta}u(x,t)$ is obtained from $f(x,t)$ by the Fourier
multiplier $\widehat{m}_{0}(\xi,\xi_{0})$. Verify the properties of
$\widehat{m}_{0}(\xi,\xi_{0})$ required by Theorem \ref{T4.1}. At first, the
$\widehat{m}_{0}(\xi,\xi_{0})$ is evidently bounded on $R^{N+1}$. Further,
according to Theorem \ref{T4.1}, split the set of the variables $(\xi,\xi
_{0})$ into the groups $(\xi,\xi_{0})=(\zeta_{1},...,\zeta_{r},\zeta_{0})$,
where $\zeta_{1}$ - $\zeta_{r}$ are defined in \eqref{1.7}, \eqref{1.8} as the
dual for the space groups $z_{k}$, $k=1,...,r$, and $\zeta_{0}=\xi_{0}$ is the
dual variable to $t$. The ordered set of the smoothness exponents of $f(x,t)$
in the space variables and time is $(\sigma_{1}\alpha,...,\sigma_{r}%
\alpha,\theta\alpha)$. So in condition \eqref{4.10} we must consider the
derivatives of the functions ($\lambda>0$)
\[
\widehat{m}_{0}(\lambda^{\frac{1}{\sigma_{1}\alpha}}\zeta_{1},...,\lambda
^{\frac{1}{\sigma_{r}\alpha}}\zeta_{r},\lambda^{\frac{1}{\theta\alpha}}\xi
_{0})=\frac{(i\lambda^{\frac{1}{\theta\alpha}}\xi_{0})^{\theta}}%
{(i\lambda^{\frac{1}{\theta\alpha}}\xi_{0})^{\theta}+{\sum_{k=1}^{r}}%
|\lambda^{\frac{1}{\sigma_{k}\alpha}}\zeta_{k}|^{\sigma_{k}}}=
\]%
\begin{equation}
=\frac{(i\xi_{0})^{\theta}}{(i\xi_{0})^{\theta}+{\sum_{k=1}^{r}}|\zeta
_{k}|^{\sigma_{k}}}=\widehat{m}_{0}(\xi,\xi_{0}).\label{5.30.0}%
\end{equation}
That is condition \eqref{4.10} must be verified for the function $\widehat
{m}_{0}(\xi,\xi_{0})$ itself. Let
\begin{equation}
p=\frac{1}{1-\delta}>1,\quad\varepsilon\in(0,1),\label{5.30.1}%
\end{equation}
where $\delta>0$ is sufficiently small and will be chosen below. Let further
$s_{0}=1$ so that according to \eqref{4.8}
\begin{equation}
1=s_{0}>\frac{N_{0}}{p}=\frac{1}{p}=1-\delta,\label{5.30.2}%
\end{equation}
where $N_{0}=1$ is the dimension of the group $\zeta_{0}=(\xi_{0})$.
Consequently, it is enough to consider the integrability with the power $p$
over the annulus $B_{\nu}\equiv\{(\xi,\xi_{0})\in R^{N+1}:\nu\leq|(\xi,\xi
_{0})|\leq\nu^{-1},\nu>0\}$ from \eqref{4.9} of the (possibly mixed)
derivatives of $\widehat{m}_{0}(\xi,\xi_{0})$, that contains the
differentiation in $\xi_{0}$ of order not higher than one. Analogously, choose
$s_{k}=N_{k}$, $k=1,...,r$, where $N_{k}$ is the dimension of the group
$\zeta_{k}$, so that
\begin{equation}
s_{k}=N_{k}>\frac{N_{k}}{p}=N_{k}(1-\delta).\label{5.30.3}%
\end{equation}
Therefore it is enough to consider the integrability with the power $p$ over
the annulus $B_{\nu}$ of the mixed derivatives of $\widehat{m}_{0}(\xi,\xi
_{0})$ in the variables of the group $\zeta_{k}$ of order not higher than
$N_{k}$. At the same time direct simple consideration shows that on $B_{\nu}$
\begin{equation}
\left\vert D_{\xi_{0}}^{\omega_{0}}D_{\zeta_{1}}^{\omega_{1}}...D_{\zeta_{r}%
}^{\omega_{r}}\widehat{m}_{0}(\xi,\xi_{0})\right\vert \leq C_{\nu
,\theta,\overline{\sigma}}|\xi_{0}|^{-1+\theta}{\prod\limits_{k=1}^{r}}%
|\zeta_{k}|^{-N_{k}+\sigma_{k}},\label{5.31}%
\end{equation}
under the condition that $\omega_{0}\leq1=s_{0}$, $|\omega_{k}|\leq
N_{k}=s_{k}$. Consequently, we have for the integration of this derivative
with the power $p$ over $B_{\nu}$
\[
{\int\limits_{B_{\nu}}}\left\vert D_{\xi_{0}}^{\omega_{0}}D_{\zeta_{1}%
}^{\omega_{1}}...D_{\zeta_{r}}^{\omega_{r}}\widehat{m}_{0}(\xi,\xi
_{0})\right\vert ^{p}d\xi d\xi_{0}\leq
\]%
\begin{equation}
\leq C_{\nu,\theta,\overline{\sigma}}{\int\limits_{|\xi_{0}|\leq\nu^{-1}}}%
|\xi_{0}|^{-\frac{1-\theta}{1-\delta}}d\xi_{0}{\prod\limits_{k=1}^{r}%
\int\limits_{|\zeta_{k}|\leq\nu^{-1}}}|\zeta_{k}|^{-\frac{N_{k}-\sigma_{k}%
}{1-\delta}}d\zeta_{k}<\infty,\label{5.32}%
\end{equation}
if $\delta>0$ chosen sufficiently small to satisfy the conditions
\begin{equation}
\frac{1-\theta}{1-\delta}<1,\quad\frac{N_{k}-\sigma_{k}}{1-\delta}<N_{k},\quad
k=1,...,r.\label{5.33}%
\end{equation}
Thus, under the choice of $\delta>0$ from conditions \eqref{5.33}, condition
\eqref{4.10} is satisfied for the multiplier $\widehat{m}_{0}(\xi,\xi_{0})$
and therefore, according to Theorem \ref{T4.1},
\begin{equation}
\left\langle D_{\ast t}^{\theta}u(x,t)\right\rangle _{R^{N+1}}^{(\overline
{\sigma}\alpha,\theta\alpha)}\leq C(\theta,\overline{\sigma})\left\langle
f(x,t)\right\rangle _{R^{N+1}}^{(\overline{\sigma}\alpha,\theta\alpha
)}.\label{5.34}%
\end{equation}

Analogously, for the Fourier transform of the fractional Laplace operator
$(-\Delta_{z_{i}})^{\frac{\sigma_{i}}{2}}u(x,t)$ on the solution $u(x,t)$ with
respect to the space group $z_{i}$ we have the representation
\begin{equation}
\widehat{(-\Delta_{z_{i}})^{\frac{\sigma_{i}}{2}}u}(\xi,\xi_{0})=\frac
{|\zeta_{i}|^{\sigma_{i}}}{(i\xi_{0})^{\theta}+{\sum_{k=1}^{r}}|\zeta
_{k}|^{\sigma_{k}}}\widehat{f}(\xi,\xi_{0})\equiv\label{5.35}%
\end{equation}%
\[
\equiv\widehat{m}_{i}(\xi,\xi_{0})\widehat{f}(\xi,\xi_{0}).
\]
The direct verification shows that the multiplier $\widehat{m}_{i}(\xi,\xi
_{0})$ possesses, similar to $\widehat{m}_{0}(\xi,\xi_{0})$, all the necessary
properties for the application of Theorem \ref{T4.1}, including estimates
\eqref{5.31} and \eqref{5.32} with the same $p>1$. Consequently
\begin{equation}
\left\langle (-\Delta_{z_{i}})^{\frac{\sigma_{i}}{2}}u(x,t)\right\rangle
_{R^{N+1}}^{(\overline{\sigma}\alpha,\theta\alpha)}\leq C(\theta
,\overline{\sigma})\left\langle f(x,t)\right\rangle _{R^{N+1}}^{(\overline
{\sigma}\alpha,\theta\alpha)}.\label{5.36}%
\end{equation}
Based now on \eqref{5.34}, \eqref{5.36} and making use of Proposition
\ref{P3.1.1} and of estimate \eqref{3.6.1}, we get
\begin{equation}
\left\langle u(x,t)\right\rangle _{\overline{R_{T}^{N}}}^{(\overline{\sigma
}+\overline{\sigma}\alpha,\theta+\theta\alpha)}\leq C(\theta,\overline{\sigma
})\left\vert f(x,t)\right\vert _{\overline{R_{T}^{N}}}^{(\overline{\sigma
}\alpha,\theta\alpha)},\label{5.37.1}%
\end{equation}
where we suppose that either the support of $f(x,t)$ is included in
$\overline{R_{T}^{N}}$ or the norm of $f(x,t)$ on it's compact support is
estimated by it's norm over $\overline{R_{T}^{N}}$ (as it is the case after
the extension of $f(x,t)$ out of $\overline{R_{T}^{N}}$). Besides, we are
considering now the restriction of the solution $u(x,t)$ from $R^{N+1}$ to
$\overline{R_{T}^{N}}$.

Further we note that estimate \eqref{5.37.1} for the highest seminorm of
$u(x,t)$ does not depend on the dimensions of the the support of $f(x,t)$ and
it is defined only by the total norm $\left\vert f(x,t)\right\vert _{R^{N+1}%
}^{(\overline{\sigma}\alpha,\theta\alpha)}$ over $R^{N+1}$. At the same time
the lowest norm $|u(x,t)|_{\overline{R_{T}^{N}}}^{(0)}$ depends on $T$ in
general . The simplest estimate for this norm follows from the fact that
$u(x,0)\equiv0$, from the last assertion of Proposition \ref{P3.1.1}, from the
contained in \eqref{5.37.1} estimate
\[
\left\langle u(x,t)\right\rangle _{t,\overline{R_{T}^{N}}}^{(\theta
+\theta\alpha)}\leq C(\theta,\overline{\sigma})\left\vert f(x,t)\right\vert
_{\overline{R_{T}^{N}}}^{(\overline{\sigma}\alpha,\theta\alpha)},
\]
and from from the definition of the H\"{o}lder seminorm with respect to $t$
itself. Such estimate takes place for an arbitrary $\widetilde{T}\leq T$
\begin{equation}
|u(x,t)|_{\overline{R_{\widetilde{T}}^{N}}}^{(0)}\leq\left\langle
u(x,t)\right\rangle _{t,\overline{R_{\widetilde{T}}^{N}}}^{(\theta
+\theta\alpha)}\widetilde{T}^{\theta+\theta\alpha}\leq C(\theta,\overline
{\sigma})\left\vert f(x,t)\right\vert _{\overline{R_{T}^{N}}}^{(\overline
{\sigma}\alpha,\theta\alpha)}\widetilde{T}^{\theta+\theta\alpha}%
,\quad\widetilde{T}\leq T. \label{5.37.2}%
\end{equation}

Combining \eqref{5.37.1} and \eqref{5.37.2} we arrive at the full estimate of
the norm
\begin{equation}
\left\vert u(x,t)\right\vert _{\overline{R_{T}^{N}}}^{(\overline{\sigma
}+\overline{\sigma}\alpha,\theta+\theta\alpha)}\leq C(\theta,\overline{\sigma
},T)\left\vert f(x,t)\right\vert _{\overline{R_{T}^{N}}}^{(\overline{\sigma
}\alpha,\theta\alpha)}=C(\theta,\overline{\sigma},T)\left\vert
f(x,t)\right\vert _{\overline{R_{T}^{N}}}^{(\overline{\sigma}\alpha
,\theta\alpha)}. \label{5.37}%
\end{equation}

\subsection{Existence of a solution for an arbitrary $f(x,t)\in C^{\overline
{\sigma}\alpha,\theta\alpha}(\overline{R_{T}^{N}})$.}

\label{s5.7}

Let now $f(x,t)$ be an arbitrary function from $C^{\overline{\sigma}%
\alpha,\theta\alpha}(\overline{R_{T}^{N}})$ with $f(x,0)\equiv0$. Consider the
sequence of it's smooth finite approximations $f_{m,\varepsilon}(x,t)$ that
was constructed in section \ref{s5.4} and which satisfies \eqref{5.18},
\eqref{5.19}. In sections \ref{s5.5} and \ref{s5.6} it was shown that for each
$f_{m,\varepsilon}(x,t)$ there exists a solution $u_{m,\varepsilon}(x,t)\in
C^{\overline{\sigma}(1+\alpha),\theta+\theta\alpha}(\overline{R_{T}^{N}})$ to
problem \eqref{5.1}, \eqref{5.2} (with $u_{0}(x)\equiv0$), which satisfies
estimate \eqref{5.37} that is
\[
\left\vert u_{m,\varepsilon}(x,t)\right\vert _{\overline{R_{T}^{N}}%
}^{(\overline{\sigma}+\overline{\sigma}\alpha,\theta+\theta\alpha)}\leq
C(\theta,\overline{\sigma},T)\left\vert f_{m,\varepsilon}(x,t)\right\vert
_{\overline{R_{T}^{N}}}^{(\overline{\sigma}\alpha,\theta\alpha)}\leq
\]%
\begin{equation}
\leq C(\theta,\overline{\sigma},T)\left\vert f(x,t)\right\vert _{\overline
{R_{T}^{N}}}^{(\overline{\sigma}\alpha,\theta\alpha)}.\label{5.38}%
\end{equation}
Or, more precisely,
\begin{equation}
\left\langle u_{m,\varepsilon}(x,t)\right\rangle _{\overline{R_{T}^{N}}%
}^{(\overline{\sigma}+\overline{\sigma}\alpha,\theta+\theta\alpha)}\leq
C(\theta,\overline{\sigma})\left\vert f(x,t)\right\vert _{\overline{R_{T}^{N}%
}}^{(\overline{\sigma}\alpha,\theta\alpha)},\label{5.38.1}%
\end{equation}%
\begin{equation}
|u_{m,\varepsilon}(x,t)|_{\overline{R_{\widetilde{T}}^{N}}}^{(0)}\leq
C(\theta,\overline{\sigma})\left\vert f(x,t)\right\vert _{\overline
{R_{\widetilde{T}}^{N}}}^{(\overline{\sigma}\alpha,\theta\alpha)}\widetilde
{T}^{\theta+\theta\alpha},\quad\widetilde{T}\leq T.\label{5.38.2}%
\end{equation}
From the know properties of H\"{o}lder spaces (see \cite{23}, the end part of
the proof to Theorem 2.1 on page 404) it follows that there exists a function
$u(x,t)\in C^{\overline{\sigma}(1+\alpha),\theta+\theta\alpha}(\overline
{R_{T}^{N}})$ with
\begin{equation}
\left\vert u(x,t)\right\vert _{\overline{R_{T}^{N}}}^{(\overline{\sigma
}+\overline{\sigma}\alpha,\theta+\theta\alpha)}\leq C(\theta,\overline{\sigma
},T)\left\vert f(x,t)\right\vert _{\overline{R_{T}^{N}}}^{(\overline{\sigma
}\alpha,\theta\alpha)},\label{5.39}%
\end{equation}%
\begin{equation}
\left\langle u(x,t)\right\rangle _{\overline{R_{T}^{N}}}^{(\overline{\sigma
}+\overline{\sigma}\alpha,\theta+\theta\alpha)}\leq C(\theta,\overline{\sigma
})\left\vert f(x,t)\right\vert _{\overline{R_{T}^{N}}}^{(\overline{\sigma
}\alpha,\theta\alpha)},\label{5.39.1}%
\end{equation}%
\begin{equation}
|u(x,t)|_{\overline{R_{\widetilde{T}}^{N}}}^{(0)}\leq C(\theta,\overline
{\sigma})\left\vert f(x,t)\right\vert _{\overline{R_{T}^{N}}}^{(\overline
{\sigma}\alpha,\theta\alpha)}\widetilde{T}^{\theta+\theta\alpha}%
,\quad\widetilde{T}\leq T,\label{5.39.2}%
\end{equation}
and for each compact set
\[
B_{R,T}\equiv\{(x,t)\in\overline{R_{T}^{N}}:|x|\leq R,\quad t\in
\lbrack0,T]\},\quad R>0,
\]
and each $\alpha^{\prime}\in(0,\alpha)$%
\begin{equation}
\left\vert u_{m,\varepsilon}(x,t)-u(x,t)\right\vert _{B_{R,T}}^{(\overline
{\sigma}+\overline{\sigma}\alpha^{\prime},\theta+\theta\alpha^{\prime}%
)}\rightarrow0,\quad\varepsilon\rightarrow0,m\rightarrow\infty,\label{5.40}%
\end{equation}
at least for a subsequence. Relations \eqref{5.38} - \eqref{5.40} permit to
perform a limiting process in equation \eqref{5.1}. Indeed, from this
relations it follows, in particular, that for a fixed $t\in\lbrack0,T]$ the
sequence (subsequence) $\{u_{m,\varepsilon}(x,t)\}\subset S_{\overline{\sigma
}}^{\prime}(R^{N})\subset S^{\prime}(R^{N})$ is included in the class
$S_{\overline{\sigma}}^{\prime}(R^{N})$, which was defined in \eqref{S.7.2}
and this sequence satisfies the conditions of Proposition \ref{PS.1}.
Consequently, this sequens converges to $u(x,t)$ in $S^{\prime}(R^{N})$ for a
fixed $t\in\lbrack0,T]$,
\[
u_{m,\varepsilon}(\cdot,t)\rightarrow_{S^{\prime}(R^{N})}u(\cdot,t),
\]%
\begin{equation}
{\sum_{k=1}^{r}}(-\Delta_{z_{k}})^{\frac{\sigma_{k}}{2}}u_{m,\varepsilon
}(\cdot,t)\rightarrow_{S^{\prime}(R^{N})}{\sum_{k=1}^{r}}(-\Delta_{z_{k}%
})^{\frac{\sigma_{k}}{2}}u(\cdot,t),\quad\varepsilon\rightarrow0,m\rightarrow
\infty,\label{5.41}%
\end{equation}
where $S_{\overline{\sigma}}^{\prime}(R^{N})$ is defined in \eqref{S.7.2}.
Further, Proposition \ref{P3.1}, applied to the sequence $u_{m,\varepsilon
}(x,t)$ uniformly in $x\in R^{N}$, together with \eqref{5.40} means, in
particular, the convergence
\begin{equation}
\max_{x\in B_{R,T}}\left\vert D_{\ast t}^{\theta}u_{m,\varepsilon}%
(x,\cdot)-D_{\ast t}^{\theta}u(x,\cdot)\right\vert _{[0,T]}^{(\theta
\alpha^{\prime})}\rightarrow0,\quad\varepsilon\rightarrow0,m\rightarrow
\infty,\quad x\in R^{N}.\label{5.42}%
\end{equation}
Consequently, the more, for a fixed $t\in\lbrack0,T]$,
\begin{equation}
D_{\ast t}^{\theta}u_{m,\varepsilon}(\cdot,t)\rightarrow_{S^{\prime}(R^{N}%
)}D_{\ast t}^{\theta}u(\cdot,t).\label{5.43}%
\end{equation}
Thus, taking also into account \eqref{5.19}, we can perform the limiting
process in equation \eqref{5.1} for a fixed $t\in\lbrack0,T]$ in space
$S^{\prime}(R^{N})$ for the functions $u_{m,\varepsilon}(x,t)$ and
$f_{m,\varepsilon}(x,t)$. This means that that for a fixed $t\in\lbrack0,T]$
the limit function $u(x,t)$ satisfies the equation in the sense of $S^{\prime
}(R^{N})$. But the function $u(x,t)$ belongs to the space $C^{\overline
{\sigma}(1+\alpha),\theta+\theta\alpha}(\overline{R_{T}^{N}})$ and all the
fractional differential operators in the left hand side of \eqref{5.1} are
defined in the usual sense (see section \ref{Hold}). Therefore $u(x,t)$ is a
solution from the space $C^{\overline{\sigma}(1+\alpha),\theta+\theta\alpha
}(\overline{R_{T}^{N}})$ to problem \eqref{5.1}, \eqref{5.2} with
$u_{0}(x)\equiv0$, which satisfies estimate \eqref{5.39}. Since the general
case with $u_{0}(x)\in C^{\overline{\sigma}(1+\alpha)}(R^{N})$ can be reduced
to zero initial data, as it was shown in section \ref{s5.1}, then we've proved
the following assertion.

\begin{proposition}
\label{P5.1} If conditions \eqref{5.3.1} - \eqref{5.5} are satisfied, then
problem \eqref{5.1}, \eqref{5.2} has a solution $u(x,t)\in C^{\overline
{\sigma}(1+\alpha),\theta+\theta\alpha}(\overline{R_{T}^{N}})$ with estimates
\eqref{5.6} - \eqref{5.6.2}.
\end{proposition}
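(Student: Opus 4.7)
My plan is to convert the inhomogeneous Cauchy problem into an equation on the whole space $R^{N+1}$ with vanishing history, then to build a solution for smooth compactly supported data by Fourier inversion, to obtain the required H\"{o}lder estimate via the multiplier Theorem \ref{T4.1}, and finally to pass to the limit using the density of smooth data in $C^{\overline{\sigma}\alpha,\theta\alpha}(\overline{R_{T}^{N}})$.

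First I would reduce to the case $u_{0}\equiv0$ and $f(x,0)\equiv0$. Setting $v(x,t)=u(x,t)-u_{0}(x)$ replaces $f$ by $\widetilde{f}(x,t)=f(x,t)-\sum_{k=1}^{r}(-\Delta_{z_{k}})^{\sigma_{k}/2}u_{0}(x)$, which lies in $C^{\overline{\sigma}\alpha,\theta\alpha}(\overline{R_{T}^{N}})$ by Proposition \ref{P3.6} and vanishes at $t=0$ by the compatibility condition \eqref{5.5}. Next, since $\theta\in(0,1)$ and $\theta\alpha<1$, the function $\widetilde f$ and any candidate solution $u$ can be extended by zero across $t=0$ without losing regularity (by Proposition \ref{P3.1.1} the Caputo derivative also extends by zero). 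Extending $\widetilde f$ smoothly past $t=T$ to a finite function, I would write it as $\widetilde f=\partial_{t}^{n}F$ for a smooth, finite, causal $F$ (as in \eqref{S.9.2}); this technical step will make the multiplier bounds integrable near the origin in frequency space.

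For data of class $C_{0}^{\infty}(R^{N+1})$ with support in $\{t\ge0\}$, I would construct the candidate solution directly by
\[
\widehat{u}(\xi,\xi_{0})=\frac{\widehat{f}(\xi,\xi_{0})}{(i\xi_{0})^{\theta}+\sum_{k=1}^{r}|\zeta_{k}|^{\sigma_{k}}}=\frac{(i\xi_{0})^{n}}{(i\xi_{0})^{\theta}+\sum_{k=1}^{r}|\zeta_{k}|^{\sigma_{k}}}\,\widehat{F}(\xi,\xi_{0}).
\]
The rapid decay of $\widehat F$ guarantees that $u\in C^{\infty}\cap L^{\infty}$, and the symbol $\widehat{G}_{n}$ satisfies the hypotheses of Lemma \ref{L4.1}, so $u$ vanishes for $t<0$; then Lemma \ref{L4.2} shows that $u$ indeed solves $Lu=f$. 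Uniqueness within this smooth causal class follows because the only possible ambiguity is by the Fourier transform of a polynomial, which is incompatible with causality and boundedness.

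To obtain the key H\"{o}lder estimate, I would apply Theorem \ref{T4.1} to the multipliers
\[
\widehat{m}_{0}(\xi,\xi_{0})=\frac{(i\xi_{0})^{\theta}}{(i\xi_{0})^{\theta}+\sum_{k=1}^{r}|\zeta_{k}|^{\sigma_{k}}},\qquad \widehat{m}_{i}(\xi,\xi_{0})=\frac{|\zeta_{i}|^{\sigma_{i}}}{(i\xi_{0})^{\theta}+\sum_{k=1}^{r}|\zeta_{k}|^{\sigma_{k}}},
\]
each of which is homogeneous of degree zero under the parabolic scaling dictated by the exponents $(\sigma_{1}\alpha,\dots,\sigma_{r}\alpha,\theta\alpha)$. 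The verification \eqref{5.31}--\eqref{5.33} of condition \eqref{4.10} requires choosing $p=1/(1-\delta)>1$ with $\delta$ small enough that $(1-\theta)/(1-\delta)<1$ and $(N_{k}-\sigma_{k})/(1-\delta)<N_{k}$, at which point the singular factors from differentiating the symbol become integrable on the dyadic annulus $B_{\nu}$. This yields $\langle D_{\ast t}^{\theta}u\rangle\le C\langle f\rangle$ and analogously for each fractional Laplacian term; combining with the inverse estimate \eqref{3.6.1} and with Proposition \ref{P3.1.1} gives the seminorm bound \eqref{5.6.1}, while $u(\cdot,0)\equiv0$ together with the time-H\"{o}lder bound produces the sup-norm estimate \eqref{5.6.2} and hence \eqref{5.6}.

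Finally, for an arbitrary $f\in C^{\overline{\sigma}\alpha,\theta\alpha}(\overline{R_{T}^{N}})$ with $f(x,0)=0$, I would approximate it by smooth finite $f_{m,\varepsilon}$ as in section \ref{s5.4}, apply the uniform estimate \eqref{5.39} to the corresponding solutions $u_{m,\varepsilon}$, and extract a subsequence converging in $C^{\overline{\sigma}(1+\alpha),\theta+\theta\alpha}$ on compacts to some $u$ of the same regularity. Passing to the limit in the equation in $S'(R^{N})$ at each fixed $t$ (justified by Proposition \ref{PS.1} for the spatial operator and by Proposition \ref{P3.1} for the Caputo derivative) produces the desired solution. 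Uniqueness of the solution in the class $C^{\overline{\sigma}(1+\alpha),\theta+\theta\alpha}(\overline{R_{T}^{N}})$ will be handled via the Fourier representation on the Lizorkin space $\Phi'(R^{N+1})$ from Section \ref{sfi}, using \eqref{fi.3}--\eqref{fi.4} and the fact that the symbol $(i\xi_{0})^{\theta}+\sum|\zeta_{k}|^{\sigma_{k}}$ is nonzero off the origin. The chief technical obstacle will be the Fourier multiplier verification: choosing $\delta$, $p$, and the orders $s_{i}$ so that all mixed derivatives of $\widehat m_{j}$ are $L^{p}(B_{\nu})$-integrable requires the precise interplay \eqref{5.33}, and this is where the hypothesis $\theta,\sigma_{k}\alpha\in(0,1)$ is truly essential.
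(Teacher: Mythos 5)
Your proposal reproduces the paper's own proof essentially step for step: the same reduction to $u_{0}\equiv0$, $\widetilde{f}(x,0)\equiv0$ via Proposition \ref{P3.6} and the compatibility condition, the same causal extension and representation $f=\partial_{t}^{n}F$, the same Fourier construction $\widehat{u}=\widehat{f}/\bigl[(i\xi_{0})^{\theta}+\sum_{k}|\zeta_{k}|^{\sigma_{k}}\bigr]$ for smooth finite data with Lemmas \ref{L4.1} and \ref{L4.2} and the polynomial-exclusion argument, the same multiplier verification \eqref{5.30.1}--\eqref{5.33} through Theorem \ref{T4.1} combined with \eqref{3.6.1} and Proposition \ref{P3.1.1}, and the same mollification--compactness passage to the limit in $S^{\prime}(R^{N})$ via Propositions \ref{PS.1} and \ref{P3.1}. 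The only cosmetic deviation is that the compact convergence of the approximating solutions holds in the slightly weaker exponent $\alpha^{\prime}<\alpha$ as in \eqref{5.40}, not in the full norm, which does not affect the argument.
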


Formulate now a corollary of this proposition about the existence of a global
solution to problem \eqref{5.1}, \eqref{5.2} for all $t>0$, that is for
$T=\infty$, on infinite interval $(0,\infty)$.

\begin{corollary}
\label{C5.1} Let for problem \eqref{5.1}, \eqref{5.2} with $T=\infty$
conditions \eqref{5.3.1} - \eqref{5.5} are satisfied and the given function
$f(x,t)$ is defined for all $t>0$ in the domain $\overline{R_{\infty}^{N}%
}=R^{N}\times\lbrack0,\infty)$ and it's norm in the space $C^{\overline
{\sigma}\alpha,\theta\alpha}(\overline{R_{\infty}^{N}})$ is finite that is
$\left\vert f(x,t)\right\vert _{\overline{R_{\infty}^{N}}}^{(\overline{\sigma
}\alpha,\theta\alpha)}<\infty$. Then problem \eqref{5.1}, \eqref{5.2} has a
solution $u(x,t)$, which locally in $t$ belongs to the space $C^{\overline
{\sigma}(1+\alpha),\theta+\theta\alpha}(\overline{R_{\infty}^{N} })$ and which
satisfies estimates \eqref{5.6.1}, \eqref{5.6.2} that is
\begin{equation}
\left\langle u\right\rangle _{\overline{R_{\infty}^{N}}}^{(\overline{\sigma
}(1+\alpha),\theta+\theta\alpha)}\leq C(\overline{\sigma},\theta
,\alpha)\left(  |f|_{\overline{R_{\infty}^{N}}}^{(\overline{\sigma}%
\alpha,\theta\alpha)}+|u_{0}|_{R^{N}}^{(\overline{\sigma}(1+\alpha))}\right)
, \label{5.44}%
\end{equation}%
\begin{equation}
\left\vert u\right\vert _{\overline{R_{\widetilde{T}}^{N}}}^{(0)}\leq
C(\overline{\sigma},\theta,\alpha)\left(  |f|_{\overline{R_{\infty}^{N}}%
}^{(\overline{\sigma}\alpha,\theta\alpha)}+|u_{0}|_{R^{N}}^{(\overline{\sigma
}(1+\alpha))}\right)  \widetilde{T}^{\theta+\theta\alpha}+|u_{0}|_{R^{N}%
}^{(0)},\quad\widetilde{T}\leq\infty. \label{5.44.1}%
\end{equation}
Consequently, for an arbitrary finite $T>0$ estimate \eqref{5.6} is also valid.
\end{corollary}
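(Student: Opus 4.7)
My plan is to exhaust the half-space $\overline{R_{\infty}^{N}}$ by finite slabs and paste together the finite-time solutions furnished by Proposition \ref{P5.1}, exploiting the fact that the highest-seminorm estimate \eqref{5.39.1} carries no dependence on $T$.

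First, for each positive integer $k$ I would apply Proposition \ref{P5.1} on $\overline{R_{k}^{N}}$ to the restriction $f_k \equiv f|_{\overline{R_{k}^{N}}}$, which clearly lies in $C^{\overline{\sigma}\alpha,\theta\alpha}(\overline{R_{k}^{N}})$ with norm bounded by $|f|_{\overline{R_{\infty}^{N}}}^{(\overline{\sigma}\alpha,\theta\alpha)}$, and satisfies the compatibility condition \eqref{5.5} with $u_0$. This produces a solution $u^{(k)} \in C^{\overline{\sigma}(1+\alpha),\theta+\theta\alpha}(\overline{R_{k}^{N}})$ to \eqref{5.1}, \eqref{5.2} on $[0,k]$, subject to estimates \eqref{5.6.1}, \eqref{5.6.2} with $T=k$.

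Next, I need to paste these solutions into a single $u$ on $\overline{R_{\infty}^{N}}$. On the overlap $[0,k]$, the difference $w \equiv u^{(k)} - u^{(k+1)}|_{\overline{R_{k}^{N}}}$ solves the homogeneous problem with $f \equiv 0$ and $u_0 \equiv 0$. To see $w \equiv 0$, extend $w$ by zero to $t<0$ (this preserves the class, by Proposition \ref{P3.1.1} and the vanishing of $D_{\ast t}^{\theta}u$ at $t=0$ given by Proposition \ref{P3.1.00}), and apply the Fourier-transform argument of section \ref{s5.5}: as in \eqref{5.21}, $\widehat{w}$ equals a distribution supported at the origin, i.e.\ the Fourier image of a polynomial; but $w$ is bounded and vanishes for $t<0$, forcing that polynomial to be zero. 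Hence $u^{(k+1)}$ agrees with $u^{(k)}$ on $\overline{R_{k}^{N}}$, and the formula $u|_{\overline{R_{k}^{N}}} \equiv u^{(k)}$ gives a well-defined global solution $u$, which locally belongs to $C^{\overline{\sigma}(1+\alpha),\theta+\theta\alpha}$.

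Finally, for the two estimates, I would pass to the supremum in $k$. Applying \eqref{5.6.1} on each slab yields
\[
\langle u \rangle_{\overline{R_{k}^{N}}}^{(\overline{\sigma}(1+\alpha),\theta+\theta\alpha)} \leq C(\overline{\sigma},\theta,\alpha)\bigl( |f|_{\overline{R_{k}^{N}}}^{(\overline{\sigma}\alpha,\theta\alpha)} + |u_{0}|_{R^{N}}^{(\overline{\sigma}(1+\alpha))}\bigr) \leq C(\overline{\sigma},\theta,\alpha)\bigl( |f|_{\overline{R_{\infty}^{N}}}^{(\overline{\sigma}\alpha,\theta\alpha)} + |u_{0}|_{R^{N}}^{(\overline{\sigma}(1+\alpha))}\bigr),
\]
and monotone passage $k \to \infty$ gives \eqref{5.44}. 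Estimate \eqref{5.44.1} follows by picking any $k \geq \widetilde{T}$ and invoking \eqref{5.6.2} for that $k$, again replacing $|f|_{\overline{R_{k}^{N}}}$ by $|f|_{\overline{R_{\infty}^{N}}}$. The last claim about finite $T$ is then immediate from \eqref{5.44}, \eqref{5.44.1} together with the bound $\widetilde{T}^{\theta+\theta\alpha} \leq T^{\theta+\theta\alpha}$.

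The main obstacle will be the pasting step, i.e.\ ensuring consistency of the slab solutions before the general uniqueness theorem of the paper is available. I would handle this locally via the Fourier argument just sketched, which uses only machinery already developed in section \ref{s5.5} (Lemma \ref{L4.2} and the vanishing-at-negative-$t$ property); the remainder of the corollary is then a routine monotone-exhaustion argument relying on the $T$-free character of \eqref{5.39.1}.
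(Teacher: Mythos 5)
Your pasting step contains a genuine circularity. To show that $u^{(k)}$ and $u^{(k+1)}$ agree on $\overline{R_{k}^{N}}$, you apply the Fourier-transform uniqueness argument to the difference $w$, extended by zero to $t<0$. But the Fourier transform in $t$ requires $w$ to be defined (with controlled growth) on the whole time axis, and $w$ lives only on $R^{N}\times[0,k]$: it does not vanish at $t=k$ and is undefined beyond it, so $\widehat{w}(\xi,\xi_{0})$ simply does not exist as an element of $S^{\prime}(R^{N+1})$ or $\Phi^{\prime}(R^{N+1})$. This is exactly why the paper's uniqueness proof (Lemma \ref{L5.2} in section \ref{s5.9}) first extends the solution of the homogeneous problem from $[0,T]$ to all $t>0$ via Proposition \ref{P5.2} — and the proof of Proposition \ref{P5.2} invokes Corollary \ref{C5.1} itself (the solvability on an infinite time interval starting from $t=T$). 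So at the point where Corollary \ref{C5.1} must be proved, neither uniqueness nor the extension machinery is available, and your argument uses the statement being proved. The paper flags this explicitly: it stresses that one \emph{cannot} assert that $u_{m}$ restricted to $\overline{R_{T_{n}}^{N}}$ coincides with $u_{n}$, precisely because uniqueness is not yet known.

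The paper's actual route avoids consistency of the slab solutions altogether: it takes solutions $u_{m}$ on expanding domains $\overline{R_{T_{m}}^{N}}$ with the uniform ($T$-independent) seminorm bounds \eqref{5.6.1}, \eqref{5.6.2}, extracts by the compactness argument of section \ref{s5.7} a subsequence converging on $\overline{R_{T_{1}}^{N}}$ to a solution, then nested further subsequences on $\overline{R_{T_{2}}^{N}}$, $\overline{R_{T_{3}}^{N}}$, \dots, and finally a diagonal subsequence converging on every compact subset of $\overline{R_{\infty}^{N}}$ to a single global solution inheriting \eqref{5.44}, \eqref{5.44.1}. Your observation that the highest-seminorm estimate is $T$-free is correct and is indeed the engine of the proof, and your treatment of the estimates would be fine once a global solution exists; but the well-definedness of $u$ must come from the diagonal extraction, not from pasting. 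Replacing your consistency argument by this compactness argument repairs the proof.
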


\begin{proof}
From Proposition \ref{P5.1} it follows that for an arbitrary $T>0$ there
exists a solution $u(x,t)\in C^{\overline{\sigma}(1+\alpha),\theta
+\theta\alpha}(\overline{R_{T}^{N}})$ to problem \eqref{5.1}, \eqref{5.2} in
the domain $\overline{R_{T}^{N}}$ with properties in \eqref{5.6} -
\eqref{5.6.2}. Consider a sequence $\{T_{n}\}$, $n=1,2,...$, $T_{n+1}>T_{n}%
>0$, $T_{n}\rightarrow\infty$, $n\rightarrow\infty$. Denote by $u_{n}(x,t)\in
C^{\overline{\sigma}(1+\alpha),\theta+\theta\alpha}(\overline{R_{T_{n}}^{N}})$
a corresponding solution to \eqref{5.1}, \eqref{5.2} in the domain
$\overline{R_{T_{n}}^{N}}$. We stress at this point that we don't have an
assertion on the uniqueness of the solution to \eqref{5.1}, \eqref{5.2} in the
domains $\overline{R_{T_{n} }^{N}}$, therefore we can not assert that for
$m>n$ the solution $u_{n}$ in the domain $\overline{R_{T_{n}}^{N}}$ coincides
with the restriction of the solution $u_{m}$ in the domain $\overline
{R_{T_{m}}^{N}}$ to the domain $\overline{R_{T_{n}}^{N}}$, $T_{n}<T_{m}$.

Consider the first number (index) $n=1$ and consider the solutions $u_{m}$ in
the more wider domains $\overline{R_{T_{m} }^{N}}$, $m>1$. On the ground of
estimates \eqref{5.6.1}, \eqref{5.6.2} we conclude that for the all numbers
$m>1$ we have estimates \eqref{5.6} - \eqref{5.6.2} in the domain
$\overline{R_{T_{1}}^{N}}$ for $u_{m}$
\begin{equation}
|u_{m}|_{\overline{R_{T_{1}}^{N}}}^{(\overline{\sigma}(1+\alpha),\theta
+\theta\alpha)}\leq C(\overline{\sigma},\theta,\alpha,T_{1})\left(
|f|_{\overline{R_{T}^{N}}}^{(\overline{\sigma}\alpha,\theta\alpha)}%
+|u_{0}|_{R^{N}}^{(\overline{\sigma}(1+\alpha))}\right)  ,\quad
m>1,\label{Cor.1}%
\end{equation}
\begin{equation}
\left\langle u_{m}\right\rangle _{\overline{R_{T_{1}}^{N}}}^{(\overline
{\sigma}(1+\alpha),\theta+\theta\alpha)}\leq C(\overline{\sigma},\theta
,\alpha)\left(  |f|_{\overline{R_{T_{1}}^{N}}}^{(\overline{\sigma}%
\alpha,\theta\alpha)}+|u_{0}|_{R^{N}}^{(\overline{\sigma}(1+\alpha))}\right)
,\quad m>1, \label{Cor.2}%
\end{equation}%
\begin{equation}
\left\vert u_{m}\right\vert _{\overline{R_{\widetilde{T}}^{N}}}^{(0)}\leq
C(\overline{\sigma},\theta,\alpha)\left(  |f|_{\overline{R_{T_{1}}^{N}}%
}^{(\overline{\sigma}\alpha,\theta\alpha)}+|u_{0}|_{R^{N}}^{(\overline{\sigma
}(1+\alpha))}\right)  \widetilde{T}^{\theta+\theta\alpha}+|u_{0}|_{R^{N}%
}^{(0)},\quad\widetilde{T}\leq T_{1},m>1. \label{Cor.3}%
\end{equation}
Consequently, as it was in the proof of Proposition \ref{P5.1}, there exists
such a function $u(x,t)\in C^{\overline{\sigma}(1+\alpha),\theta+\theta\alpha
}(\overline{R_{T_{1}}^{N}})$, together with a subsequence $\{u_{m_{k}}%
^{(1)},k=1,2,...\}\subset\{u_{m},m>1\}$, that the sequence $\{u_{m_{k}}%
^{(1)},k=1,2,...\}$ converges to $u(x,t)$ in the sense of \eqref{5.40} -
\eqref{5.43}. And at that $u(x,t)$ is a solution to \eqref{5.1}, \eqref{5.2}
in the domain $\overline{R_{T_{1}}^{N}}$ with estimates \eqref{Cor.1} -
\eqref{Cor.3}. Consider now the sequence $\{u_{m_{k}}^{(1)}\}$ in the domain
$\overline{R_{T_{2}}^{N}}$ for the numbers $k$ with $m_{k}>2$, so that all the
functions $u_{m_{k}}^{(1)}$ are defined in the domain $\overline{R_{T_{2}}%
^{N}} $. Exactly as it was at the first step, we choose from the sequence
$\{u_{m_{k}}^{(1)}\}$ such a subsequence $\{u_{m_{k}}^{(2)},k=1,2,...\}$, that
converges in the sense of \eqref{5.40} - \eqref{5.43} to a solution of
\eqref{5.1}, \eqref{5.2} already in the wider domain $\overline{R_{T_{2}}^{N}%
}$. We denote the obtained solution by the same symbol $u(x,t)$ since on the
included domain $\overline{R_{T_{1}}^{N}}\subset\overline{R_{T_{2}}^{N}}$ the
limit remains the same after going to a subsequence. Proceeding with this
process, we obtain a countable set of included subsequences $\{u_{m_{k}}%
^{(1)}\}\supset\{u_{m_{k}}^{(2)}\}\supset...\supset\{u_{m_{k}}^{(n)}%
\}\supset....$. Each of these subsequences converges in the sense of
\eqref{5.40} - \eqref{5.43} to the same solution $u(x,t)\in C^{\overline
{\sigma}(1+\alpha),\theta+\theta\alpha}$ to \eqref{5.1}, \eqref{5.2} with
properties \eqref{5.44}, \eqref{5.44.1} in the corresponding expanding domains
$\overline{R_{T_{n} }^{N}}$, $n=1,2,...,n,...$. Choosing now from this set of
the subsequences a diagonal subsequence, we see that the last converges,
starting with the corresponding index, on each compact set in $\overline
{R_{\infty}^{N}}\equiv R^{N}\times\lbrack0,\infty)$ to the solution $u(x,t)$
to \eqref{5.1}, \eqref{5.2} for $T=\infty$. And it is readily verified by the
construction that the obtained solution $u(x,t)$ possesses properties
\eqref{5.44}, \eqref{5.44.1} in the whole domain $\overline{R_{\infty}^{N}}$,
which finishes the proof.
\end{proof}

\subsection{Extension of a solution to the whole interval $t\in(0,\infty)$.}

\label{s5.8}

In the present section we consider the issue of an extension of a solution
from the finite time interval to the whole time half-axis. This consideration
plays an auxiliary role. We need such considerations at this stage because we
don't have for a while an assertion on the uniqueness of the solution from the
class $C^{\overline{\sigma} (1+\alpha),\theta+\theta\alpha}(\overline
{R_{T}^{N}})$ on a time interval $[0,T]$. Therefore, obtaining a solution on
the whole time half-axis on the ground of Corollary \ref{C5.1}, we can not
assert for a while that this solution is the extension of a given solution on
a finite interval $[0,T]$. The uniqueness of the solution will be proved in
the following section \ref{s5.9} based on an extension and with an application
of the Fourier transform.

Let the right hand side $f(x,t)$ in \eqref{5.1} is defined (or extended from
the finite interval $[0,T]$) on $t\in(0,\infty)$ in the way that it belongs on
the whole $t\in(0,\infty)$ to the space $C^{\overline{\sigma}\beta,\theta
\beta}(\overline{R_{\infty}^{N}})$ with a possibly reduced exponent $\beta
\leq\alpha$ with $\theta+\theta\beta<1$ (note that $f(x,t)\in C^{\overline
{\sigma}\alpha,\theta\alpha}(\overline{R_{T}^{N}})\subset C^{\overline{\sigma
}\beta,\theta\beta}(\overline{R_{T}^{N}})$). We will show that then a solution
$u(x,t)\in C^{\overline{\sigma}(1+\alpha),\theta+\theta\alpha}(\overline
{R_{T}^{N}})$ to \eqref{5.1}, \eqref{5.2} on a finite interval $[0,T]$ can be
extended up to a solution to \eqref{5.1}, \eqref{5.2} on the whole time
half-axis with estimates in the space $C^{\overline{\sigma}(1+\beta
),\theta+\theta\beta}(\overline{R_{\infty}^{N}} )$ analogously to
\eqref{5.44}, \eqref{5.44.1} from Corollary \ref{C5.1} (with the replacing
$\alpha$ with $\beta$).

We first prove an auxiliary lemma.

\begin{lemma}
\label{L5.1} Let $\theta,\beta>0$ be nonintegers, $\{\theta\}+\theta\beta
\in(0,1)$. Let further a function $u(x,t)\in C^{\overline{\sigma}%
(1+\beta),\theta+\theta\beta}(\overline{R_{T}^{N}})$, $T>0$ and let at $t=0$
it satisfy the conditions
\begin{equation}
u_{t}^{(k)}(x,0)\equiv0,\quad k=0,1,...,[\theta].\label{L.1}%
\end{equation}
Let, besides, this function be extended on the whole $\overline{R_{\infty}%
^{N}}$ to the domain $t>T$ by the Taylor polynomial in $t$ of power $[\theta]$
at the point $t=T$ that is
\begin{equation}
u(x,t)={\sum\limits_{k=0}^{[\theta]}}u_{t}^{(k)}(x,T)\frac{(t-T)^{k}}%
{k!},\quad t\geq T.\label{L.2}%
\end{equation}
Then there exists the bounded for $t>0$ fractional Caputo - Jrbashyan
derivative $D_{\ast t}^{\theta}u(x,t)$ of the extended function and it belongs
to the space $C^{\overline{\sigma}\beta,\theta\beta}(\overline{R_{\infty}^{N}%
})$. Moreover,
\begin{equation}
\left\vert D_{\ast t}^{\theta}u(x,t)\right\vert _{\overline{R_{\infty}^{N}}%
}^{(\overline{\sigma}\beta,\theta\beta)}\leq C(\theta,\beta,T)\left\vert
u(x,t)\right\vert _{\overline{R_{T}^{N}}}^{(\overline{\sigma}(1+\beta
),\theta+\theta\beta)}.\label{L.3}%
\end{equation}

\end{lemma}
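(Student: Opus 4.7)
The plan is to split the half-line into $[0,T]$ and $[T,\infty)$ and treat each piece by a different tool. On $[0,T]$ the hypothesis on $u$ places us exactly in the setting of Proposition \ref{P3.8}: the conditions $\{\theta\}+\theta\beta<1$ and $u\in C^{\overline{\sigma}(1+\beta),\theta+\theta\beta}(\overline{R_T^N})$ are already assumed, so the estimate
\[
|D_{\ast t}^{\theta}u|_{\overline{R_T^N}}^{(\overline{\sigma}\beta,\theta\beta)}\leq C\,|u|_{\overline{R_T^N}}^{(\overline{\sigma}(1+\beta),\theta+\theta\beta)}
\]
is obtained for free. The vanishing conditions \eqref{L.1} will also be used to ensure that the representation in \eqref{1.5} through $u^{(n-1)}(\cdot)-u^{(n-1)}(0)=u^{(n-1)}(\cdot)$ (with $n=[\theta]+1$) holds without compensating polynomials.

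Next I would verify that the Taylor extension \eqref{L.2} does not spoil the anisotropic class globally. Since the coefficients $u_t^{(k)}(x,T)$ are traces of $u$ at $t=T$, the mixed-derivative interpolation inequality \eqref{2.6} gives them the required spatial smoothness in the relevant groups $z_j$, with norms controlled by $|u|_{\overline{R_T^N}}^{(\overline{\sigma}(1+\beta),\theta+\theta\beta)}$; and the extension is by construction of class $C^{[\theta]}$ across $t=T$, hence of class $C^{\theta+\theta\beta}$ in $t$ locally. Thus the extended $u$ lies in $C^{\overline{\sigma}(1+\beta),\theta+\theta\beta}_{\mathrm{loc}}(\overline{R_\infty^N})$.

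For $t>T$ I would use representation \eqref{1.5} with $n=[\theta]+1$. Because the extension is a polynomial of degree $\leq[\theta]$ in $t$ on $[T,\infty)$, its $n$-th time derivative vanishes there, so
\[
D_{\ast t}^{\theta}u(x,t)=\frac{1}{\Gamma(1-\{\theta\})}\int_0^{T}\frac{u_\tau^{(n)}(x,\tau)}{(t-\tau)^{\{\theta\}}}\,d\tau,\qquad t>T.
\]
This integrand has no singularity on $t>T\geq\tau$, so the expression is $C^\infty$ in $t$ on $(T,\infty)$ and decays like $t^{-\{\theta\}}$ at infinity. Uniform bounds and $t$-Hölder bounds on $[T,\infty)$ follow by differentiating under the integral and estimating $\partial_t(t-\tau)^{-\{\theta\}}$ in terms of $|u_\tau^{(n)}(\cdot,\tau)|\leq C|u|_{\overline{R_T^N}}^{(\overline{\sigma}(1+\beta),\theta+\theta\beta)}$ (using the second equality in \eqref{1.5} and \eqref{3.1} on $[0,T]$ to absorb one derivative into a bounded $D_{\ast t}^{\theta}u$ on $[0,T]$ if needed). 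The spatial seminorms in each $z_k$ are handled by moving the finite difference $\delta_{h,z_k}^m$ inside the integral and using the spatial Hölder norm of $u_\tau^{(n)}$, which is controlled because $u_t^{([\theta])}$ inherits the full spatial smoothness by \eqref{2.6}.

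The main obstacle I anticipate is the matching across $t=T$ for the Hölder seminorm in $t$: I must check that the global seminorm $\langle D_{\ast t}^{\theta}u\rangle_{t,\overline{R_\infty^N}}^{(\theta\beta)}$ is controlled not merely by its restrictions to $[0,T]$ and $[T,\infty)$ but also by differences $|D_{\ast t}^{\theta}u(x,t_1)-D_{\ast t}^{\theta}u(x,t_2)|$ with $t_1<T<t_2$. For this I would use continuity of $D_{\ast t}^{\theta}u$ at $t=T$ (both expressions agree there, as the limit from the right uses exactly the same integral over $[0,T]$ that appears from the left), combined with the elementary fact that a function whose restrictions to two overlapping intervals each lie in $C^{\theta\beta}$ with uniform constants lies in $C^{\theta\beta}$ on the union with the same bound up to a multiplicative constant. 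Putting the three pieces together yields \eqref{L.3}.
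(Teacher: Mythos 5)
There is a genuine gap at the center of your argument for $t>T$: the displayed representation
\[
D_{\ast t}^{\theta}u(x,t)=\frac{1}{\Gamma(1-\{\theta\})}\int_{0}^{T}\frac{u_{\tau}^{(n)}(x,\tau)}{(t-\tau)^{\{\theta\}}}\,d\tau,\qquad n=[\theta]+1,
\]
uses the integer derivative $u_{\tau}^{([\theta]+1)}$, which does not exist for $u\in C^{\overline{\sigma}(1+\beta),\theta+\theta\beta}$ under the standing hypothesis $\{\theta\}+\theta\beta<1$: indeed $\theta+\theta\beta=[\theta]+\{\theta\}+\theta\beta<[\theta]+1$, so the first equality in \eqref{1.5} is simply not available in this class, and the bounds you propose ($\vert u_{\tau}^{(n)}\vert\leq C\vert u\vert^{(\overline{\sigma}(1+\beta),\theta+\theta\beta)}$, and moving $\delta_{h,z_k}^m$ onto $u_{\tau}^{(n)}$) rest on a nonexistent quantity. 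Your parenthetical remark about "absorbing one derivative" via the second equality in \eqref{1.5} points at the correct fix but is not carried out. The paper's proof works exactly at this level: thanks to \eqref{L.1} and \eqref{3.57}, it writes $D_{\ast t}^{\theta}u$ as (a constant times) the Caputo derivative of order $\{\theta\}$ of $v:=u_{t}^{([\theta])}$, i.e.\ representation \eqref{L.5} with the outer $d/dt$ kept outside the integral; the extension \eqref{L.2} makes $v(x,t)\equiv v(x,T)$ for $t\geq T$ while $v$ retains its class $C^{\cdot,\{\theta\}+\theta\beta}$ globally (estimate \eqref{L.4}) precisely because $\{\theta\}+\theta\beta<1$, and then for $t>2T$ one gets the legitimate explicit split $C\int_{0}^{T}u_{\tau}^{([\theta])}(x,\tau)(t-\tau)^{-1-\{\theta\}}d\tau+C\,u_{t}^{([\theta])}(x,T)(t-T)^{-\{\theta\}}$ (estimate \eqref{L.7}).

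A secondary overclaim compounds the problem: even with a correct representation, $D_{\ast t}^{\theta}u$ is \emph{not} smooth with uniform bounds on all of $(T,\infty)$; its $t$-derivative degenerates like $(t-T)^{\theta\beta-1}$ as $t\downarrow T$, so "differentiating under the integral" gives Hölder control only away from $t=T$ (say on $[2T,\infty)$), and the right-sided Hölder estimate at $T$ requires a separate argument. The paper avoids this by estimating on the \emph{overlapping} interval $[0,3T]$ via Theorem \ref{T3.2} applied to $v$ (estimate \eqref{L.6}) and using the explicit formula only for $t\geq 2T$, so no matching at a single point is ever needed. Your gluing step itself would be admissible, since $\theta\beta<1-\{\theta\}<1$ means the $t$-seminorm is a first-difference Hölder seminorm and matching continuous pieces at one point loses only a factor $2$; likewise your use of Proposition \ref{P3.8} on $[0,T]$ and of \eqref{2.6} for the traces $u_t^{(k)}(x,T)$ is sound. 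But as written, the core estimate on $(T,\infty)$ — both the representation and the claimed uniform smoothness — does not go through and must be replaced by the $u_t^{([\theta])}$-based argument sketched above.
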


\begin{proof}
Note first that after the extension \eqref{L.2} the highest possible integer
derivative $u_{t}^{([\theta])}(x,t)$ is continuous $t=T$ (along with all
derivatives in $t$ of a less order). Besides, in view of the condition
$\{\theta\}+\theta\beta<1$, which means the absence of the next integer
derivative $u_{t}^{([\theta]+1)}(x,t)$, $u_{t}^{([\theta])}(x,t)$ retains in
the whole domain $\overline{R_{\infty}^{N}}$ the smoothness class inherited
from the subdomain $\overline{R_{T}^{N}}$. As it follows from \eqref{2.6},
$u_{t}^{([\theta])}(x,t)\in C^{\frac{\{\theta\}+\theta\beta}{\theta
+\theta\beta}\overline{\sigma}(1+\beta),\{\theta\}+\theta\beta}(\overline
{R_{\infty}^{N}})$. At that, since $u_{t}^{([\theta])}(x,t)\equiv
u_{t}^{([\theta])}(x,T)$ for all $t\geq T$, then
\begin{equation}
\left\vert u_{t}^{([\theta])}(x,t)\right\vert _{\overline{R_{\infty}^{N}}%
}^{\left(  \frac{\{\theta\}+\theta\beta}{\theta+\theta\beta}\overline{\sigma
}(1+\beta),\{\theta\}+\theta\beta\right)  }\leq C(\theta,\beta)\left\vert
u(x,t)\right\vert _{\overline{R_{T}^{N}}}^{(\overline{\sigma}(1+\beta
),\theta+\theta\beta)}.\label{L.4}%
\end{equation}
Make use of the representation for $D_{\ast t}^{\theta}u(x,t)$ from the second
equality in \eqref{1.5}. That is, bearing in mind \eqref{L.1},
\begin{equation}
D_{\ast t}^{\theta}u(x,t)=\frac{1}{\Gamma(1-\theta)}\frac{d}{dt}%
{\int\limits_{0}^{t}}\frac{u_{\tau}^{([\theta])}(x,\tau)d\tau}{(t-\tau
)^{\{\theta\}}}.\label{L.5}%
\end{equation}
From this representation it follows that in fact $D_{\ast t}^{\theta}u(x,t)$
coincides (up to a constant factor) with the Caputo - Jrbashyan derivative in
$t$ of order $\{\theta\}$ of the function $u_{t}^{([\theta])}(x,t)$ (we remind
condition \eqref{L.1}). This derivative is defined correctly since
$u_{t}^{([\theta])}(x,t)$ has the smoothness in $t$ of order $\{\theta
\}+\theta\beta$. Applying Theorem \ref{T3.2}, we conclude that on the time
interval $[0,3T]$ (that is in the domain $\overline{R_{3T}^{N}}$) the
following estimate is valid
\[
\left\vert D_{\ast t}^{\theta}u\right\vert _{\overline{R_{3T}^{N}}%
}^{(\overline{\sigma}\beta,\theta\beta)}\leq C(\theta,\beta,T)\left\vert
u_{t}^{([\theta])}\right\vert _{\overline{R_{3T}^{N}}}^{\left(  \frac
{\{\theta\}+\theta\beta}{\theta+\theta\beta}\overline{\sigma}(1+\beta
),\{\theta\}+\theta\beta\right)  }\leq
\]%
\begin{equation}
\leq C(\theta,\beta,T)\left\vert u(x,t)\right\vert _{\overline{R_{T}^{N}}%
}^{(\overline{\sigma}(1+\beta),\theta+\theta\beta)}.\label{L.6}%
\end{equation}
Let now $t>2T$. Taking into account that $u_{t}^{([\theta])}(x,t)\equiv
u_{t}^{([\theta])}(x,T)$ for $t\geq T$, represent the derivative $D_{\ast
t}^{\theta}u$ from \eqref{L.5} in the form of the sum
\[
D_{\ast t}^{\theta}u(x,t)=C(\theta)\frac{d}{dt}\left[  {\int\limits_{0}^{T}%
}\frac{u_{\tau}^{([\theta])}(x,\tau)d\tau}{(t-\tau)^{\{\theta\}}}%
+{\int\limits_{T}^{t}}\frac{u_{\tau}^{([\theta])}(x,\tau)d\tau}{(t-\tau
)^{\{\theta\}}}\right]  =
\]%
\[
=C(\theta){\int\limits_{0}^{T}}\frac{u_{\tau}^{([\theta])}(x,\tau)d\tau
}{(t-\tau)^{1+\{\theta\}}}+C(\theta)u_{t}^{([\theta])}(x,T)\frac{d}{dt}%
{\int\limits_{T}^{t}}\frac{d\tau}{(t-\tau)^{\{\theta\}}}=
\]%
\[
=C(\theta){\int\limits_{0}^{T}}\frac{u_{\tau}^{([\theta])}(x,\tau)d\tau
}{(t-\tau)^{1+\{\theta\}}}+C(\theta)u_{t}^{([\theta])}(x,T)(t-T)^{-\{\theta
\}}\equiv I_{1}(x,t)+I_{2}(x,t).
\]
Here for $t>2T$ the integral $I_{1}(x,t)$ does not have even weak singularity
and it is, in fact, infinitely differentiable in $t$ with the preserving of
the smoothness in $x$ of $u_{t}^{([\theta])}(x,t)$. The same is valid with
respect to $I_{2}(x,t)$ as well. Therefore, due to the estimate for
$u_{t}^{([\theta])}(x,t)$ in \eqref{L.4}, we the more have for $t>2T$
\begin{equation}
\left\vert D_{\ast t}^{\theta}u\right\vert _{R^{N}\times\lbrack2T,\infty
)}^{(\overline{\sigma}\beta,\theta\beta)}\leq C(\theta,\beta,T)\left\vert
u(x,t)\right\vert _{\overline{R_{T}^{N}}}^{(\overline{\sigma}(1+\beta
),\theta+\theta\beta)}.\label{L.7}%
\end{equation}
Combining \eqref{L.6} and \eqref{L.7} we arrive at the assertion of the lemma.
\end{proof}

\begin{proposition}
\label{P5.2} Let us be given a solution $u(x,t)\in C^{\overline{\sigma
}(1+\alpha),\theta+\theta\alpha}(\overline{R_{T}^{N}})$ to \eqref{5.1},
\eqref{5.2}, which is defined in $\overline{R_{T} ^{N}}$. Let also the right
hand side $f(x,t)$ in \eqref{5.1} be defined (or extended from $[0,T]$) for
all $t>0$ in the domain $\overline{R_{\infty}^{N}}$ and let $f(x,t)\in
C^{\overline{\sigma}\beta,\theta\beta}(\overline{R_{\infty}^{N}})$  with such
$\beta\leq\alpha$ that $\theta+\theta\beta<1$. Then there exists a solution to
\eqref{5.1}, \eqref{5.2} in the domain $\overline{R_{\infty}^{N}}$ (which is
denoted by the same symbol $u(x,t)$). This solution belongs to the space
$C^{\overline{\sigma}(1+\beta),\theta+\theta\beta}(\overline{R_{\widetilde{T}%
}^{N}})$ for any $\widetilde{T}>0$, it coincides with $u(x,t)$ in
$\overline{R_{T}^{N}}$, and it obeys in $\overline{R_{\infty}^{N}}$ to the
analogous to \eqref{5.44}, \eqref{5.44.1} estimates
\begin{equation}
\left\langle u\right\rangle _{\overline{R_{\infty}^{N}}}^{(\overline{\sigma
}(1+\beta),\theta+\theta\beta)}\leq C(\overline{\sigma},\theta,\beta)\left(
|f|_{\overline{R_{\infty}^{N}}}^{(\overline{\sigma}\beta,\theta\beta
)}+\left\vert u\right\vert _{\overline{R_{T}^{N}}}^{(\overline{\sigma
}+\overline{\sigma}\beta,\theta+\theta\beta)}\right)  , \label{Ex.1}%
\end{equation}%
\begin{equation}
\left\vert u\right\vert _{\overline{R_{\widetilde{T}}^{N}}}^{(0)}\leq
C(\overline{\sigma},\theta,\alpha)\left(  |f|_{\overline{R_{\infty}^{N}}%
}^{(\overline{\sigma}\beta,\theta\beta)}+\left\vert u\right\vert
_{\overline{R_{T}^{N}}}^{(\overline{\sigma}+\overline{\sigma}\beta
,\theta+\theta\beta)}\right)  \widetilde{T}^{\theta+\theta\beta}+\left\vert
u\right\vert _{\overline{R_{T}^{N}}}^{(\overline{\sigma}+\overline{\sigma
}\beta,\theta+\theta\beta)},\quad\widetilde{T}\leq\infty. \label{Ex.2}%
\end{equation}

\end{proposition}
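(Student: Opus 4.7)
The plan is to extend the given solution past $t = T$ by a Taylor-type correction and then solve a causal auxiliary Cauchy problem that takes care of the residual. Using the reduction of Section \ref{s5.1} we may assume $u_0 \equiv 0$ and $u(x,0) \equiv 0$; since $[\theta] = 0$, Lemma \ref{L5.1} is then applicable to $u$. Extend $u$ to $\overline{R_\infty^N}$ by formula \eqref{L.2}, which in the present regime amounts to $\tilde u(x,t) := u(x,t)$ for $t \le T$ and $\tilde u(x,t) := u(x,T)$ for $t \ge T$. By Lemma \ref{L5.1} the Caputo-Jrbashyan derivative $D_{*t}^\theta \tilde u$ exists globally and belongs to $C^{\overline{\sigma}\beta,\theta\beta}(\overline{R_\infty^N})$, with norm controlled by $|u|_{\overline{R_T^N}}^{(\overline{\sigma}(1+\beta),\theta+\theta\beta)}$. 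Together with Proposition \ref{P3.6} applied to each $(-\Delta_{z_k})^{\sigma_k/2}\tilde u$, this gives $L\tilde u \in C^{\overline{\sigma}\beta,\theta\beta}(\overline{R_\infty^N})$ with a controllable norm.

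Define the residual source $h(x,t) := f(x,t) - L\tilde u(x,t)$. Since $\tilde u = u$ on $\overline{R_T^N}$ and $u$ solves \eqref{5.1} there, $h$ vanishes identically on $[0,T]$ and belongs to $C^{\overline{\sigma}\beta,\theta\beta}(\overline{R_\infty^N})$ with norm dominated by $|f|_{\overline{R_\infty^N}}^{(\overline{\sigma}\beta,\theta\beta)} + |u|_{\overline{R_T^N}}^{(\overline{\sigma}(1+\beta),\theta+\theta\beta)}$. The compatibility condition for the auxiliary problem $Lw = h$, $w(x,0) = 0$ is automatic since $h(x,0) = 0$, so Corollary \ref{C5.1} (with $\beta$ in place of $\alpha$) produces some $w \in C^{\overline{\sigma}(1+\beta),\theta+\theta\beta}(\overline{R_{\widetilde T}^N})$ for each $\widetilde T > 0$ solving $Lw = h$, with the estimates \eqref{5.44}, \eqref{5.44.1}.

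The crucial step is to select this $w$ so that it genuinely vanishes on $[0,T]$, for otherwise $u := \tilde u + w$ would not agree with the given solution on $\overline{R_T^N}$; uniqueness is not yet at our disposal, so the property must be built into the construction. The Fourier representation \eqref{5.28} realizes $w$ as a convolution $w = G \ast h$, where $G$ is the inverse Fourier transform of $1/[(i\xi_0)^\theta + \sum_k |\zeta_k|^{\sigma_k}]$. By Lemma \ref{L4.1} the distribution $G$ is supported in $\{t \ge 0\}$, hence if $h$ is supported in $\{t \ge T\}$ then $w$ inherits the same support. This causality must then be propagated through the smoothing of Section \ref{s5.4} (whose mollifier shift $\varepsilon$ tends to zero) and through the compact exhaustion of $\overline{R_\infty^N}$ carried out in Corollary \ref{C5.1}, both of which preserve vanishing on $[0,T]$ in the limit.

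Setting $u := \tilde u + w$ yields the desired extension in $C^{\overline{\sigma}(1+\beta),\theta+\theta\beta}(\overline{R_{\widetilde T}^N})$ for every $\widetilde T > 0$, and estimates \eqref{Ex.1}, \eqref{Ex.2} follow by adding the bound of Lemma \ref{L5.1} for $\tilde u$ to the bounds of Corollary \ref{C5.1} for $w$. The main obstacle is precisely the causal selection of $w$ in the third step: once we know that the Fourier-multiplier construction of Sections \ref{s5.5}--\ref{s5.7} transforms causally supported sources into causally supported solutions, the remainder is a routine assembly of the results already established.
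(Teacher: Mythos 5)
Your construction is correct, and it resolves the crucial matching on $[0,T]$ by a genuinely different device than the paper's own proof. Both arguments share the same opening move: freeze the solution past $t=T$ by the constant-in-time extension $\tilde u$ and invoke Lemma \ref{L5.1} to place $D_{\ast t}^{\theta}\tilde u$ in $C^{\overline{\sigma}\beta,\theta\beta}(\overline{R_{\infty}^{N}})$ with a controlled norm. But the paper never poses a global problem from $t=0$: it re-bases the Caputo derivative at $t=T$ and solves problem \eqref{5.77}, \eqref{5.78} for the unknown on $R^{N}\times[T,\infty)$ with the shifted derivative $D_{T,\ast t}^{\theta}$, right-hand side $f-D_{\ast t}^{\theta}\tilde u$, and initial datum $u(x,T)=u_{T}(x)$; after the translation $t=\bar t+T$ this falls under Corollary \ref{C5.1}, the compatibility condition at $t=T$ holds because the given $u$ solves \eqref{5.1} up to $t=T$, coincidence on $[0,T]$ is automatic since nothing is re-solved there, and the identity $D_{T,\ast t}^{\theta}u+D_{\ast t}^{\theta}\tilde u=D_{\ast t}^{\theta}u$ for $t\geq T$ shows the glued function solves \eqref{5.1} globally. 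You instead solve $Lw=h$, $w(x,0)=0$, on all of $[0,\infty)$ with the residual $h=f-L\tilde u$ supported in $\{t\geq T\}$, and so must prove --- without uniqueness, as you rightly flag --- that the constructed $w$ vanishes on $[0,T]$. Your causality argument does discharge this within the paper's toolkit: the antiderivative of \eqref{S.9.2} inherits the support in $\{t\geq T\}$, the mollification \eqref{5.14} shifts the density forward in time and so preserves that support, Lemma \ref{L4.1} gives support of the kernel in $\{t\geq0\}$, hence every smooth approximant vanishes for $t<T$, and this property survives the locally uniform limits and the diagonal extraction of Corollary \ref{C5.1}. In fact your $w=u-\tilde u$ satisfies $D_{\ast t}^{\theta}w=D_{T,\ast t}^{\theta}u$ for $t\geq T$ precisely because $w$ vanishes on $[0,T]$, so the two decompositions are mathematically equivalent; what the shifted-derivative formulation buys the author is that the vanishing on $[0,T]$ is built into the statement of the auxiliary problem rather than extracted a posteriori from the Fourier construction, so Sections \ref{s5.4}--\ref{s5.7} need not be re-opened. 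Your route is structurally more transparent (linear superposition $u=\tilde u+w$) at the cost of that support-tracking; both assemble \eqref{Ex.1}, \eqref{Ex.2} by the same triangle-inequality bookkeeping from \eqref{L.3} and \eqref{5.44}, \eqref{5.44.1}.
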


\begin{proof}
Note first that, as it was shown in section \ref{s5.1}, we can assume without
loss of generality that we are given zero initial data in \eqref{5.2} that is
$u(x,0)=u_{0}(x)\equiv0$. Consider in the domain $R_{T,\infty}^{N}\equiv
R^{N}\times\lbrack T,\infty)$ the following Cauchy problem for the unknown
function, which is denoted by the same symbol $u(x,t)$,
\begin{equation}
Lu(x,t)\equiv D_{T,\ast t}^{\theta}u(x,t)+{\sum_{k=1}^{r}}(-\Delta_{z_{k}%
})^{\frac{\sigma_{k}}{2}}u(x,t)=f(x,t)-D_{\ast t}^{\theta}\widetilde
{u}(x,t),\quad(x,t)\in R_{T,T_{1}}^{N},\label{5.77}%
\end{equation}%
\begin{equation}
u(x,T)=u_{T}(x),\quad x\in R^{N}.\label{5.78}%
\end{equation}
Here $D_{T,\ast t}^{\theta}u(x,t)$ is the Kaputo - Jrbashyan derivative of
$u(x,t)$ with the starting point $t=T$
\[
D_{T,\ast t}^{\theta}u(x,t)\equiv\frac{1}{\Gamma(1-\theta)}\frac{d}{dt}%
{\int\limits_{T}^{t}}\frac{\left[  u(x,\tau)-u(x,T)\right]  d\tau}%
{(t-\tau)^{\theta}}=
\]%
\begin{equation}
=\frac{1}{\Gamma(1-\theta)}\frac{d}{dt}{\int\limits_{T}^{t}}\frac{\left[
u(x,\tau)-u_{T}(x)\right]  d\tau}{(t-\tau)^{\theta}},\label{5.79}%
\end{equation}
the function $\widetilde{u}(x,t)$ is the extension of the given in
$\overline{R_{T}^{N}}$ solution $u(x,t)$ to the domain $t>T$ by the Taylor
polynomial of zero order in $t$ (that is by the corresponding constant in
$t$)
\begin{equation}
\widetilde{u}(x,t)\equiv%
%TCIMACRO{\QDATOPD{\{}{.}{u(x,t),\quad t\in\lbrack0,T],}{u(x,T),\quad t\geq
%T,}}%
%BeginExpansion
\genfrac{\{}{.}{0pt}{0}{u(x,t),\quad t\in\lbrack0,T],}{u(x,T),\quad t\geq T,}%
%EndExpansion
\label{5.80}%
\end{equation}
and $D_{\ast t}^{\theta}\widetilde{u}(x,t)$ is the Caputo - Jrbashyan
derivative of $\widetilde{u}(x,t)$ with the starting point $t=0$
\begin{equation}
D_{\ast t}^{\theta}\widetilde{u}(x,t)\equiv\frac{1}{\Gamma(1-\theta)}\frac
{d}{dt}{\int\limits_{0}^{t}}\frac{\widetilde{u}(x,\tau)d\tau}{(t-\tau
)^{\theta}},\label{5.81}%
\end{equation}
since by assumption $\widetilde{u}(x,0)=u(x,0)\equiv0$.

We are going to apply to problem \eqref{5.77}, \eqref{5.78} Corollary
\ref{C5.1} on the solvability. Note first that the difference of the starting
point $t=T$ from the case $t=0$ in Corollary \ref{C5.1} is not essential. This
case is trivially reduced to the starting point $t=0$ by the time change
$t=\overline{t}+T$. Therefore, it is enough to verify the conditions of
Corollary \ref{C5.1} for problem \eqref{5.77}, \eqref{5.78}.

Firstly, $f(x,t)\in C^{\overline{\sigma}\beta,\theta\beta}(\overline
{R_{\infty}^{N}})$ by assumption and the more $f(x,t)\in C^{\overline{\sigma
}\beta,\theta\beta}(\overline{R_{T,\infty}^{N}})$. Further, the function
$\widetilde{u}(x,t)$ from \eqref{5.80} belongs to the space $C^{\overline
{\sigma}(1+\alpha),\theta+\theta\alpha}(\overline{R_{T}^{N}})\subset
C^{\overline{\sigma}(1+\beta),\theta+\theta\beta}(\overline{R_{T}^{N}})$.
Therefore, from Lemma \ref{L5.1} it follows that
\begin{equation}
\left\vert D_{\ast t}^{\theta}\widetilde{u}(x,t)\right\vert _{\overline
{R_{\infty}^{N}}}^{(\overline{\sigma}\beta,\theta\beta)}\leq C \left\vert
u(x,t)\right\vert _{\overline{R_{T}^{N}}}^{(\overline{\sigma}+\overline
{\sigma}\beta,\theta+\theta\beta)}.\label{5.83}%
\end{equation}
At last, verify compatibility condition \eqref{5.5} at $t=T$. Indeed,
according with this condition we must have the equality
\[
{\sum_{k=1}^{r}}(-\Delta_{z_{k}})^{\frac{\sigma_{k}}{2}}u(x,T)=f(x,T)-D_{\ast
t}^{\theta}\widetilde{u}(x,t)|_{t=T}.
\]
But at $t=T$ we have by definition $D_{\ast t}^{\theta}\widetilde
{u}(x,t)|_{t=T}=D_{\ast t}^{\theta}u(x,t)|_{t=T}$ and the equality holds in
view of the fact that $u(x,t)$ is a solution to \eqref{5.1} in $\overline
{R_{T}^{N}}$. Therefore, on the ground of Corollary \ref{C5.1}, we infer that
there exists a solution $u(x,t)$ to problem \eqref{5.77}, \eqref{5.78} with
estimates \eqref{Ex.1}, \eqref{Ex.2}.

Show now that the originally defined in $\overline{R_{T}^{N}}$ function
$u(x,t)$, which is defined already in the whole $\overline{R_{\infty}%
^{N}\text{ }}$, satisfies equation \eqref{5.1} totally in $\overline
{R_{\infty} ^{N}\text{ }}$. Note that in view of initial condition
\eqref{5.78} and the condition $\theta+\theta\beta<1$ the function $u(x,t)\in
C^{\overline{\sigma}(1+\beta),\theta+\theta\beta}(\overline{R_{\widetilde{T}%
}^{N}})$ for an arbitrary $\widetilde{T}>0$. Further, for $t \leq T$ the
function $u(x,t)$ satisfies equation \eqref{5.1} and initial condition
\eqref{5.2} by assumption. Let now $t \geq T$. Moving the expression $D_{\ast
t}^{\theta}\widetilde{u}(x,t)$ from the right hand side of equation
\eqref{5.77} to it's left hand side and taking into account \eqref{5.79} and
definition \eqref{5.80}, we see that for $t \geq T$
\[
D_{T,\ast t}^{\theta}u(x,t)+D_{\ast t}^{\theta}\widetilde{u}(x,t)=D_{\ast
t}^{\theta}u(x,t),
\]
that is equation \eqref{5.1} is satisfied for all $t>T$. Since $u(x,t)\in
C^{\overline{\sigma}(1+\beta),\theta+\theta\beta}(\overline{R_{\widetilde{T}%
}^{N}})$ for an arbitrary $\widetilde{T}>0$ and the operators $D_{\ast
t}^{\theta}$, $(-\Delta_{z_{k}})^{\frac{\sigma_{k}}{2}}$ in \eqref{5.1} are
continuous from the pointed space to the space $C^{\overline{\sigma}%
\beta,\theta\beta}(\overline{R_{\widetilde{T}}^{N}})$, equation \eqref{5.1} is
satisfied by continuity at $t=T$ as well. And this finishes the proof.
\end{proof}

\subsection{Uniqueness of solution and finishing the proof of Theorem
\ref{T5.1}.}

\label{s5.9}

On this step we prove the uniqueness of the obtained in Proposition \ref{P5.1}
solution from the class $C^{\overline{\sigma} (1+\alpha),\theta+\theta\alpha
}(\overline{R_{T}^{N}})$.

\begin{lemma}
\label{L5.2} Let a function $u(x,t)\in C^{\overline{\sigma}(1+\alpha
),\theta+\theta\alpha}(\overline{R_{T}^{N}})$ satisfy the homogeneous problem
\eqref{5.1}, \eqref{5.2} with $f(x,t)\equiv0$ and $u_{0}(x)\equiv0$. Then
$u(x,t)\equiv0$ in $\overline{R_{T}^{N}}$.
\end{lemma}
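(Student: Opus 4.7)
\textbf{Proof plan for Lemma \ref{L5.2}.} The plan is to reduce the homogeneous problem to a Fourier-space identity on the dual Lizorkin space $\Phi'(R^{N+1})$ introduced in Section \ref{sfi} and exploit the fact that the symbol of $L$ vanishes only at the origin.

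First I would extend $u$ by zero to $\{t<0\}$. Because $u(x,0)=0$, $\theta+\theta\alpha<2$, and in the case $\theta+\theta\alpha>1$ Proposition \ref{P3.1.1} yields that $u_t(x,0)=0$ as well, this extension preserves the class $C^{\overline{\sigma}(1+\alpha),\theta+\theta\alpha}$; moreover, in view of Proposition \ref{P3.1.00}, the fractional derivative $D_{\ast t}^{\theta}u(x,t)$ itself can be extended by zero to $\{t<0\}$ with preserved smoothness, exactly as discussed in Section \ref{s5.3}. Next, applying Proposition \ref{P5.2} to the homogeneous data $f\equiv 0$, I would extend $u$ to all $t>0$ so that the extended function is in $C^{\overline{\sigma}(1+\beta),\theta+\theta\beta}_{loc}(\overline{R_{\infty}^{N}})$ for some $\beta\leq\alpha$ with $\theta+\theta\beta<1$, with at most polynomial growth in $t$ and uniformly bounded H\"older seminorms in the space variables. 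The globally defined function $u$ then satisfies $Lu=0$ on $R^{N+1}$ (trivially for $t<0$ and by construction for $t\geq 0$), and the supports of $u$ and $D_{\ast t}^{\theta}u$ are contained in $\{t\geq 0\}$.

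Then I would view $u$ as an element of $\Phi'(R^{N+1})$ (note that $u$ is tempered, by its polynomial-in-$t$ growth and bounded spatial behaviour), and take the Fourier transform of $Lu=0$. Using \eqref{fi.3} and \eqref{fi.4}, together with Lemma \ref{L4.2} to justify the formula for $\widehat{D_{\ast t}^{\theta}u}$ (applicable because all derivatives of $u$ in $t$ up to order $[\theta]$ vanish at $t=0$), one obtains
\[
\Bigl[(i\xi_0)^{\theta}+\sum_{k=1}^{r}|\zeta_k|^{\sigma_k}\Bigr]\,\widehat{u}(\xi,\xi_0)=0
\]
in the corresponding quotient space. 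For $\theta\in(0,1)$ one has $\operatorname{Re}(i\xi_0)^\theta=|\xi_0|^{\theta}\cos(\theta\pi/2)>0$ whenever $\xi_0\neq 0$, while $\sum_{k}|\zeta_k|^{\sigma_k}\geq 0$ vanishes only when all $\zeta_k=0$. Hence the symbol of $L$ is nonzero away from the origin. It follows that $\widehat{u}$ is supported at the single point $(\xi,\xi_0)=0$, which forces $u$ itself to be a polynomial in $(x,t)$. Since this polynomial vanishes on the half-space $\{t<0\}$, it must vanish identically, and therefore $u\equiv 0$ in $\overline{R_T^N}$.

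The main obstacle is the rigorous handling of the Fourier transform step: one must check that the extended $u$, which has polynomial growth in $t$ but only local control of H\"older norms in $t$ after the extension via Proposition \ref{P5.2}, gives a well-defined element of $S'(R^{N+1})$ on which both $D_{\ast t}^{\theta}$ and $\sum_{k}(-\Delta_{z_k})^{\sigma_k/2}$ act consistently with the Fourier-image formulas \eqref{fi.3}, \eqref{fi.4}. This is where the Lizorkin-space framework of Section \ref{sfi} and Proposition \ref{PS.1} (for the spatial operators on the class $S'_{\overline{\sigma}}(R^{N})$) do the decisive technical work; once this is in place, the symbol computation and the standard fact that distributions supported at a point are polynomials in physical space complete the argument.
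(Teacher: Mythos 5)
Your proposal is correct and follows essentially the same route as the paper's own proof: extend the solution to all $t>0$ via Proposition \ref{P5.2}, extend by zero to $t<0$, pass to the Fourier transform in the Lizorkin framework of Section \ref{sfi}, and conclude from the symbol identity that $\widehat{u}$ is supported at the origin, so $u$ is a polynomial vanishing for $t<0$, hence identically zero. Your added observation that $\operatorname{Re}(i\xi_0)^{\theta}=|\xi_0|^{\theta}\cos(\theta\pi/2)>0$ for $\theta\in(0,1)$ makes the nonvanishing of the symbol away from the origin explicit, which the paper leaves implicit.
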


\begin{proof}
Make use of Proposition \ref{P5.2} and extend the given solution $u(x,t)$ for
all $t>0$ to a solution (with the same symbol $u(x,t)$) to the homogeneous
problem \eqref{5.1}, \eqref{5.2} in the whole domain $\overline{R_{\infty}%
^{N}}$. It is possible since in our case $f(x,t)\equiv0$ on $t\in\lbrack0,T]$
and it can be extended by identical zero to $t\in\lbrack0,\infty)$. The
obtained solution $u(x,t)$ can be also extended by identical zero in the
domain $t<0$ with the preserving of the smoothness (in view of zero initial
data and in view of \eqref{3.4.0}). Besides, this extended solution has, in
view of \eqref{Ex.2}, a power growth for $t\rightarrow\infty$. Consequently,
we can consider this function as an element of the space of distributions
$\Phi^{\prime}(R^{N+1})$ from section \ref{sfi}. Finally, the extended by zero
in the domain $t<0$ function $u(x,t)$ satisfies equation \eqref{5.1} on the
whole space $R^{N+1}$.

Apply the Fourier transform according to formulas \eqref{fi.3}, \eqref{fi.4}
to equation \eqref{5.1} for the function $u(x,t)$ under consideration. We
obtain
\[
\left[  (i\xi_{0})^{\theta}+{\sum_{k=1}^{r}}|\zeta_{k}|^{\sigma_{k}}\right]
\widehat{u}(\xi,\xi_{0})=0,\quad(\xi,\xi_{0})\in R^{N+1}.
\]
From this it follows that $\widehat{u}(\xi,\xi_{0})$ is a distribution from
$\Psi^{\prime}(R^{N+1})$ with the support at the point $(\xi,\xi_{0})=(0,0)$
that is, according to the definition of the space $\Psi^{\prime}(R^{N+1})$,
this distribution represents the zero class of $\Psi^{\prime}(R^{N+1})$. But
this means that the function $u(x,t)$ itself represents the zero class of the
space $\Phi^{\prime}(R^{N+1})$ that is $u(x,t)$ is a polynomial. But since
$u(x,t)$ identically vanishes for $t<0$, then $u(x,t)\equiv0$ in $R^{N+1}$
and, in particular, in $\overline{R_{T}^{N}}$. This finishes the proof.
\end{proof}

On the ground of the proved lemma the uniqueness of solution to problem
\eqref{5.1}, \eqref{5.2} in the class $C^{\overline{\sigma} (1+\alpha
),\theta+\theta\alpha}(\overline{R_{T}^{N}})$ follows now from linearity of
problem \eqref{5.1}, \eqref{5.2}.

Thus, Proposition \ref{P5.1} together with Lemma \ref{L5.2} finish the proof
of Theorem \ref{T5.1}.

\section{A Cauchy problem for equation \eqref{1.1} in the case of the integer
$\theta=1$,$\theta\alpha=\alpha\in(0,1)$.}

\label{s6}

In this section we consider solvability of problem \eqref{1.1}, \eqref{1.2} in
the anisotropic H\"{o}lder spaces $C^{\overline{\sigma}(1+\alpha),1+\alpha
}(\overline{R_{T}^{N}})$ from \eqref{2.8} in the case of the integer
derivative in time of order $\theta=1$. Consider the following Cauchy problem
for the defined in $\overline{R_{T}^{N}}$ unknown function $u(x,t)$
\begin{equation}
Lu(x,t)\equiv u_{t}(x,t)+{\sum_{k=1}^{r}}(-\Delta_{z_{k}})^{\frac{\sigma_{k}%
}{2}}u(x,t)=f(x,t),\quad(x,t)\in R_{T}^{N},\label{6.1}%
\end{equation}%
\begin{equation}
u(x,0)=u_{0}(x),\quad x\in R^{N},\label{6.2}%
\end{equation}
where the given functions belongs to the spaces
\begin{equation}
f(x,t)\in C^{\overline{\sigma}\alpha,\alpha}(\overline{R_{T}^{N}}),\quad
u_{0}(x)\in C^{\overline{\sigma}(1+\alpha)}(R^{N}).\label{6.3}%
\end{equation}
In the present section we first suppose that $\alpha$ is sufficiently small
\begin{equation}
\quad\alpha\in(0,1),\quad0<\alpha\leq\alpha_{0}=\alpha_{0}(\overline{\sigma
},\{N_{k}\}),\label{6.4}%
\end{equation}
where the particular value of $\alpha_{0}\in(0,1)$ will be given below. On the
later steps in this section the second restriction in \eqref{6.4} will be
removed. We stress also that for an integer derivative in $t$ of order
$\theta$ a compatibility condition of the kind \eqref{5.5} is not necessary.

The main assertion of the present section is as follows.

\begin{theorem}
\label{T6.1} If $\alpha\in(0,1)$ and conditions \eqref{6.3} are satisfied,
then problem \eqref{6.1}, \eqref{6.2} has the unique solution $u(x,t)\in
C^{\overline{\sigma}(1+\alpha),1+\alpha}(\overline{R_{T}^{N}})$ with the
estimates
\begin{equation}
|u|_{\overline{R_{T}^{N}}}^{(\overline{\sigma}(1+\alpha),1+\alpha)}\leq
C(\overline{\sigma},\alpha,T)\left(  |f|_{\overline{R_{T}^{N}}}^{(\overline
{\sigma}\alpha,\alpha)}+|u_{0}|_{R^{N}}^{(\overline{\sigma}(1+\alpha
))}\right)  , \label{6.6}%
\end{equation}%
\begin{equation}
\left\langle u\right\rangle _{\overline{R_{T}^{N}}}^{(\overline{\sigma
}(1+\alpha),1+\alpha)}\leq C(\overline{\sigma},\alpha)\left(  |f|_{\overline
{R_{T}^{N}}}^{(\overline{\sigma}\alpha,\alpha)}+|u_{0}|_{R^{N}}^{(\overline
{\sigma}(1+\alpha))}\right)  , \label{6.6.1}%
\end{equation}%
\begin{equation}
\left\vert u\right\vert _{\overline{R_{\widetilde{T}}^{N}}}^{(0)}\leq
C(\overline{\sigma},\alpha)\left(  |f|_{\overline{R_{T}^{N}}}^{(\overline
{\sigma}\alpha,\alpha)}+|u_{0}|_{R^{N}}^{(\overline{\sigma}(1+\alpha
))}\right)  (1 + \widetilde{T}^{1+\alpha} ) + |u_{0}|_{R^{N}}^{(0)}%
,\quad\widetilde{T}\leq T. \label{6.6.2}%
\end{equation}

\end{theorem}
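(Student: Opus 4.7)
The plan is to mirror the scheme of Theorem~\ref{T5.1} from Section~\ref{s5}, specialized to the integer case $\theta=1$. The key simplification is that no compatibility condition of the form \eqref{5.5} at $t=0$ is required here; the main additional difficulty will be the removal of the smallness restriction $\alpha\le\alpha_0$ announced in \eqref{6.4}.

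First I would reduce to zero initial data by substituting $v(x,t)=u(x,t)-u_0(x)$; by Proposition~\ref{P3.5}, the modified right-hand side $\widetilde f=f-\sum_{k=1}^r(-\Delta_{z_k})^{\sigma_k/2}u_0$ still lies in $C^{\overline\sigma\alpha,\alpha}(\overline{R_T^N})$ with a controlled norm. Then, as in Section~\ref{s5.2}, I extend $\widetilde f$ by zero to $t<0$, smoothly to $t>T$, and write $\widetilde f=\partial_t^n F$ for a finite function $F$ supported in $\{t\ge 0\}$. Next I would posit the representation
\[
\widehat u(\xi,\xi_0)=\frac{\widehat f(\xi,\xi_0)}{i\xi_0+\sum_{k=1}^r|\zeta_k|^{\sigma_k}}=\frac{(i\xi_0)^n}{i\xi_0+\sum_{k=1}^r|\zeta_k|^{\sigma_k}}\,\widehat F(\xi,\xi_0),
\]
and verify via Lemma~\ref{L4.1} that the symbol $(i\xi_0)^n/(i\xi_0+\sum|\zeta_k|^{\sigma_k})$ extends analytically into the right half-plane $\operatorname{Re}p>0$, $p=i\xi_0$, with the polynomial bound \eqref{4.24}; this forces the Fourier pre-image to vanish for $t<0$, while rapid decay of $\widehat F$ yields smoothness and boundedness of all derivatives of $u$ on $R^{N+1}$ for smooth finite data.

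The central quantitative step is to apply Theorem~\ref{T4.1} to the two families of multipliers
\[
\widehat m_0(\xi,\xi_0)=\frac{i\xi_0}{i\xi_0+\sum_k|\zeta_k|^{\sigma_k}},\qquad \widehat m_i(\xi,\xi_0)=\frac{|\zeta_i|^{\sigma_i}}{i\xi_0+\sum_k|\zeta_k|^{\sigma_k}},\ \ i=1,\ldots,r,
\]
which transport $f$ to $u_t$ and to $(-\Delta_{z_i})^{\sigma_i/2}u$ respectively. Under the parabolic anisotropic dilation $\xi_0\to\lambda\xi_0$, $\zeta_k\to\lambda^{1/\sigma_k}\zeta_k$, both symbols are homogeneous of degree zero, so condition \eqref{4.10} collapses to the $L_p$-integrability of their mixed derivatives on a single annulus $B_\nu$. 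Repeating the estimation in \eqref{5.31}-\eqref{5.33} gives derivative bounds of the form $C\prod_k|\zeta_k|^{-N_k+\sigma_k}$, which are $L_p$-integrable for $p=1/(1-\delta)$ once $\delta>0$ is chosen small. Theorem~\ref{T4.1} then delivers the highest-seminorm estimate \eqref{6.6.1}; \eqref{6.6.2} and \eqref{6.6} follow from $u(x,0)\equiv 0$ and the definition of $\langle u\rangle_{t}^{(1+\alpha)}$, precisely as in \eqref{5.37.1}-\eqref{5.37.2}. Passing to general $f\in C^{\overline\sigma\alpha,\alpha}$ is accomplished by the mollification-and-truncation procedure of Section~\ref{s5.4} together with extraction of a convergent subsequence via the standard compactness of bounded sets in H\"older spaces, exactly as in Section~\ref{s5.7}. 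For uniqueness I would replay the argument of Lemma~\ref{L5.2}: extend a solution of the homogeneous problem by zero to $t<0$ (admissible since $u(x,0)=0$ and $\theta=1$), treat it as an element of $\Phi'(R^{N+1})$, invoke \eqref{fi.3}-\eqref{fi.4} to see that $\widehat u$ is supported at the origin, and conclude $u$ is a polynomial, which by vanishing on $\{t<0\}$ must be identically zero.

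The principal obstacle I anticipate is lifting the smallness restriction $\alpha\le\alpha_0$. Its origin is that the normalized scaling exponent in Theorem~\ref{T4.1} equals $\gamma=\alpha\max\{1,\sigma_1,\ldots,\sigma_r\}$ and must lie in $(0,1)$, so the multiplier machinery above applies only when $\alpha<1/\max\{1,\sigma_k\}$. To reach arbitrary $\alpha\in(0,1)$ I expect to combine the already-proven small-$\alpha$ case with a bootstrap in the smoothness index, using Theorems~\ref{Tdop1} and~\ref{T3dop2} on fractional Laplacians of lower order $\rho_k<\sigma_k$, together with the interpolation Lemma~\ref{L3.2}, so as to split the target anisotropic smoothness $(\overline\sigma(1+\alpha),1+\alpha)$ into intermediate layers each falling inside the multiplier-admissible window, and then to reassemble \eqref{6.6.1}-\eqref{6.6.2} through the usual telescoping of seminorm estimates.
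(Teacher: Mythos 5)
Your scheme breaks at its central quantitative step. You correctly note that no compatibility condition of the form \eqref{5.5} is imposed for $\theta=1$, but you then still extend $\widetilde f$ by zero to $t<0$ and apply Theorem~\ref{T4.1} "precisely as in \eqref{5.31}--\eqref{5.33}". This is exactly what fails: since $\widetilde f(x,0)=f(x,0)-\sum_k(-\Delta_{z_k})^{\sigma_k/2}u_0(x)$ need not vanish, the zero extension has a jump in $t$ at $t=0$ and does \emph{not} belong to $C^{\overline{\sigma}\alpha,\alpha}(R^{N+1})$; after mollification the $t$-H\"older seminorm of $f_{m,\varepsilon}$ over $R^{N+1}$ is only of order $\varepsilon^{-\alpha}$ (Lemma~\ref{L6.1}, estimate \eqref{6.16}), so Theorem~\ref{T4.1} yields nothing better than the singular bound \eqref{6.23}. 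The paper's proof instead applies Theorem~\ref{T4.2} on multipliers in spaces with \emph{partial} H\"older regularity, taking $\xi^{(1)}=\xi$, $\xi^{(2)}=\xi_0$ and using that $\widehat m_i(0,\xi_0)\equiv 0$ (condition \eqref{4.19}): this bounds $\langle(-\Delta_{z_k})^{\sigma_k/2}u_{m,\varepsilon}\rangle$ uniformly in $\varepsilon$ using only the uniformly controlled $x$-seminorms \eqref{6.15}, and the highest $t$-seminorm of $u$ is then recovered from the equation itself on $\{t\ge 0\}$ as in \eqref{6.29}. Two further points you gloss over: the mollification must be shifted ($f(\xi,\tau+\varepsilon)$ in \eqref{6.10.2}) so the jump sits at $t=-\varepsilon$, and $u_{m,\varepsilon}(x,0)$ no longer vanishes exactly, so one needs \eqref{6.27}--\eqref{6.28} to see that the limit satisfies the zero initial condition; your claim that \eqref{6.6.2} follows "from $u(x,0)\equiv 0$" presupposes what must be proved. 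Incidentally, it is precisely the use of Theorem~\ref{T4.2} (condition \eqref{4.20}, $s_i>N_i/p+\gamma$, forcing $\alpha<\delta$ with $\delta$ from \eqref{5.33}) that produces the restriction $\alpha\le\alpha_0(\overline{\sigma},\{N_k\})$ -- not the normalization $\gamma\in(0,1)$ you cite; see also Remark~\ref{R6.1} for why the classical device of building $U_0(x,t)$ matching $u_0$ and $u_1$ is unavailable for irrational $\sigma_k$.

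Your proposed removal of the smallness restriction is also not a workable mechanism as stated. Layering fractional Laplacians of lower order via Theorems~\ref{Tdop1}, \ref{T3dop2} and Lemma~\ref{L3.2} cannot raise the regularity of $u$, because those results only estimate lower-order quantities \emph{given} the full regularity of $u$, which is what is to be proved; "telescoping of seminorm estimates" begs the question. The paper's Section~\ref{s6.5} uses a different idea: for fixed $h\in R^{N_k}$ one considers the normalized difference quotient $u_k=\delta^m_{h,z_k}u/|h|^{\rho}$ with $\rho=(1-\omega)\sigma_k\alpha$ and $\alpha_k=\omega\alpha\le\alpha_0$; by linearity and uniqueness in the \emph{lower} class, $u_k$ is the solution of problem \eqref{6.1}, \eqref{6.2} with data $f_k$, $u_{0,k}$ whose norms are controlled uniformly in $h$ by Proposition~\ref{DopMushel}. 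Applying the already-proved small-exponent theorem to $u_k$ and then choosing the second increment $s=h$ in \eqref{6.45} upgrades to the full seminorm $\langle u\rangle^{(\sigma_k+\sigma_k\alpha)}_{z_k}$; the $t$-regularity then follows from the equation together with Theorem~\ref{T3.4} (estimates \eqref{6.49}--\eqref{6.52}). Without this difference-quotient-plus-uniqueness device, or an equivalent substitute, your bootstrap does not close.
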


Analogously to the previous section, we give the proof of the above theorem
into several steps, which is the subsequent content of the present section.

\subsection{Reduction of the problem to zero initial conditions.}

\label{s6.1}

Similar to the previous section we can assume from the very beginning zero
initial data
\begin{equation}
u_{0}(x)\equiv0,\quad x\in R^{N},\label{6.7}%
\end{equation}
which is achieved by the change of the unknown in problem \eqref{6.1},
\eqref{6.2}
\[
u(x,t)\rightarrow v(x,t)=u(x,t)-u_{0}(x).
\]
The new unknown function $v(x,t)$ satisfies initial condition \eqref{6.2} and
the equation
\[
v_{t}+{\sum_{k=1}^{r}}(-\Delta_{z_{k}})^{\frac{\sigma_{k}}{2}}v=\widetilde
{f(}x,t)\equiv f(x,t)-{\sum_{k=1}^{r}}(-\Delta_{z_{k}})^{\frac{\sigma_{k}}{2}%
}u_{0}(x).
\]
From Proposition \ref{P3.6} it follows that $\widetilde{f(}x,t)\in
C^{\overline{\sigma}\alpha,\alpha}(\overline{R_{T}^{N}})$ like the original
function $f(x,t)$, and besides
\begin{equation}
|\widetilde{f}|_{\overline{R_{T}^{N}}}^{(\overline{\sigma}\alpha,\theta
\alpha)}\leq C(\overline{\sigma},\alpha)\left(  |f|_{\overline{R_{T}^{N}}%
}^{(\overline{\sigma}\alpha,\theta\alpha)}+|u_{0}|_{R^{N}}^{(\overline{\sigma
}(1+\alpha))}\right)  .\label{6.8}%
\end{equation}
We stress that in contrast to the case of fractional time derivative
$\theta\in(0,1)$ in previous section, now we do not have a compatibility
condition of the kind \eqref{5.5}. Therefore we can not assert the validity of
the condition $\widetilde{f(}x,0)\equiv0$. This fact is the main difference
from the previous case, since this does not permit to extend $\widetilde
{f(}x,t)$ by zero for $t<0$ with the preserving of the smoothness in $t$.
Therefore in the present section, to estimate the highest seminorm in the
corresponding H\"{o}lder space, we make use Theorem \ref{T4.2} on Fourier
multipliers instead of Theorem \ref{T4.1}. Theorem \ref{T4.2} permits us to
perform the necessary estimates without assuming the smoothness of
$\widetilde{f(}x,t)$ for all $t$. But this causes some technical restrictions
for the H\"{o}lder exponent $\alpha$ (see the last condition in \eqref{6.4})
and therefore some additional consideration then needed to remove these restrictions.

Thus, below we assume zero initial conditions that is \eqref{6.7}.

\begin{remark}
\label{R6.1} Here the following should be noted. There exists a method of the
extension of the initial data $u_{0}(x)\in C^{\overline{\sigma}(1+\alpha
)}(R^{N})$ to the domain $t>0$ up to a function $U_{0}(x,t)$ from the space
$C^{\overline{\sigma}(1+\alpha),1+\alpha}(\overline{R_{T}^{N}})$ that not only
retains the initial value $U_{0}(x,0)=u_{0} (x)$ but also the initial value of
the derivative in time of the solution itself $\partial U_{0}(x,0)/\partial
t=u_{1} (x)\equiv\partial u(x,0)/\partial t$. This last is prescribed by the
equation \eqref{6.1} with taking into account condition \eqref{6.2}. If one
has such a function $U_{0}(x,t)$, one can make the change of the unknown
$u(x,t)\rightarrow u(x,t)-U_{0}(x,t)$ and the original problem can be reduced
to the case, when one has not only zero initial data but also $\widetilde
{f(}x,0)\equiv0$. Such method is described in, for example, \cite{Sol1}.
However, according to this method the auxiliary function $U_{0}(x,t)$ is
constructed as a solution to the simplest model parabolic equation with
integer derivatives and with the initial data $u_{0}(x)$ and $u_{1}(x)$. The
properties of Cauchy problems for the mentioned simplest model equation (heat
equation, for example) are supposed to be well known in advance. The
application of the method from \cite{Sol1} permits to obtain such extension
$U_{0}(x,t)$ for our case as well, but only when all the orders of fractional
differentiation $\sigma_{k}$ in $x$ are rational. But for the case when at
least on of the orders $\sigma_{k}$ is irrational this method does not appear
to be applicable. The reason is that equation \eqref{6.1} is precisely that
simplest model equation for the case of fractional differentiation and our
purpose is precisely to describe it's properties in the space $C^{\overline
{\sigma}(1+\alpha),1+\alpha}(\overline{R_{T}^{N}})$. This causes the necessity
to apply below Theorem \ref{T4.2} on Fourier multipliers in H\"{o}lder spaces
with some partial H\"{o}lder regularity.
\end{remark}

\subsection{Extension and smoothing of data, formulation of the problem in
$R^{N+1}=R^{N}\times(-\infty,\infty)$.}

\label{s6.2}

In this section we, analogously to sections \ref{s5.2}, \ref{s5.3}, assume
zero initial data $u(x,0)\equiv0$ and assume that the righthand side $f(x,t)$
is extended in $t$ to the domain $t>T$ up to a finite in $t$ function with the
class preservation. Besides, we assume that $f(x,t)$ is extended to $t<0$ by
identical zero. As a result, we have a finite in $t$ function (which is
denoted by the same symbol $f(x,t)$) and this function is defined in the whole
$R^{N+1}$. The extended $f(x,t)$ has (in general) a jump at $t=0$ and it has
the properties
\begin{equation}
|f|_{\overline{R_{\infty}^{N}}}^{(\overline{\sigma}\alpha,\alpha)}\leq
C(\overline{\sigma},\alpha)|f|_{\overline{R_{T}^{N}}}^{(\overline{\sigma
}\alpha,\alpha)},\quad|f|_{R^{N+1}}^{(0)}\leq C|f|_{\overline{R_{T}^{N}}%
}^{(0)},\quad f(x,t)\equiv0,t<0. \label{6.9}%
\end{equation}
That is $f(x,t)$ has a controlled H\"{o}lder norm in the upper half-space only
but it has a controlled bound in the whole space.

Further, as it was in section \ref{s5.2}, we assume that $f(x,t)$ is an
integer derivative in $t$ of a sufficiently high order $n$ of a finite in $t$
function $F(x,t)$ with
\[
F(x,t),\frac{\partial^{n}F(x,t)}{\partial t^{n}}\in C^{\overline{\sigma}%
\alpha,\alpha}(\overline{R_{\infty}^{N}}),\quad\frac{\partial^{n-1}%
F(x,t)}{\partial t^{n-1}}\in C^{\overline{\sigma}\alpha,\alpha}(R^{N+1}),
\]%
\begin{equation}
F(x,t)\equiv0,t<0,\quad f(x,t)=\frac{\partial^{n}F(x,t)}{\partial t^{n}%
},(x,t)\in R^{N+1}. \label{6.10}%
\end{equation}
The way to construct such a function $F(x,t)$ coincides with that from section
\ref{s5.2}.

We apply also to the functions $F(x,t)$ and $f(x,t)$ the smoothing by the
convolution with the mollifier $\omega_{\varepsilon}(x,t)$ from \eqref{5.12},
and after that we cut them off up to finite in $x$ functions $F_{m,\varepsilon
}(x,t)$ and $f_{m,\varepsilon}(x,t)$ from $C^{\infty}(R^{N+1})$. This process
is almost replicates the process from \eqref{5.13}, \eqref{5.14} but with a
tiny difference. In the present situation we put
\begin{equation}
F_{m,\varepsilon}(x,t)\equiv\zeta(\frac{x}{m}){\int\limits_{R^{N+1}}}%
\omega_{\varepsilon}(x-\xi,t-\tau)F(\xi,\tau+\varepsilon)dxdt,\quad
m=1,2,...,\label{6.10.1}%
\end{equation}
and correspondingly
\begin{equation}
f_{m,\varepsilon}(x,t)\equiv\zeta(\frac{x}{m}){\int\limits_{R^{N+1}}}%
\omega_{\varepsilon}(x-\xi,t-\tau)f(\xi,\tau+\varepsilon)dxdt,\quad
m=1,2,....\label{6.10.2}%
\end{equation}
We stress that the difference is that the density $F(\xi,\tau+\varepsilon)$
and $f(\xi,\tau+\varepsilon)$ of the convolutions are shifted by $\varepsilon$
in the negative direction of the $t$-axis, in contrast to \eqref{5.14}. The
aim of this is to move the possible jump of $f(x,t)$ from the point $t=0$ to
the point $t=-\varepsilon$ and thus to provide the uniform regularity of the
mollified functions in the half-space $\{t\geq0\}$ for all $\varepsilon$. At
that, identically to \eqref{5.17},
\begin{equation}
\frac{\partial^{n}F_{m,\varepsilon}(x,t)}{\partial t^{n}}\equiv
f_{m,\varepsilon}(x,t),\quad(x,t)\in R^{N+1}.\label{6.11}%
\end{equation}

Remind that the function $f(x,t)$ has, in general, a jump at $t=0$, and
therefore the smoothed functions $f_{m,\varepsilon}(x,t)$ possess more weak
uniform properties than those in \eqref{5.18}, \eqref{5.19}. Namely,
\begin{equation}
|f_{m,\varepsilon}|_{\overline{R_{\infty}^{N}}}^{(\sigma\alpha,\alpha)}\leq
C|f(x,t+\varepsilon)|_{R^{N}\times\lbrack-\varepsilon,\infty)}^{(\sigma
\alpha,\alpha)}=C|f(x,t)|_{\overline{R_{\infty}^{N}}}^{(\sigma\alpha,\alpha
)}\leq C(\overline{\sigma},\alpha)|f|_{\overline{R_{T}^{N}}}^{(\overline
{\sigma}\alpha,\alpha)} \label{6.12}%
\end{equation}
and moreover, for each compact $P_{R}^{+}\equiv P_{R}\cap\{t\geq0\}$, where
$P_{R}$ is defined in \eqref{5.15.1}, and for each $\alpha^{\prime}<\alpha$
\begin{equation}
\left\vert f_{m,\varepsilon}(x,t)-f(x,t)\right\vert _{P_{R}^{+}}%
^{(\sigma\alpha^{\prime},\alpha^{\prime})}\rightarrow0,\quad\varepsilon
\rightarrow0,m\rightarrow\infty. \label{6.13}%
\end{equation}
Besides, in the whole space $R^{N+1}$ the functions $f_{m,\varepsilon}(x,t)$
have the following properties ($\varepsilon\in(0,1)$)
\[
f_{m,\varepsilon}(x,t)\in C^{\infty}(R^{N+1}),
\]
\begin{equation}
f_{m,\varepsilon}(x,t)\equiv0,t<-2\varepsilon,\quad|f_{m,\varepsilon
}(x,t)|_{R^{N+1}}^{(0)}\leq C|f(x,t)|_{\overline{R_{T}^{N}}}^{(0)},
\label{6.14}%
\end{equation}%
\[
\left\langle f_{m,\varepsilon}\right\rangle _{x,R^{N+1}}^{(\sigma\alpha)}\leq
C|f(x,t-\varepsilon)|_{R^{N}\times\lbrack-\varepsilon,\infty)}^{(\sigma
\alpha,\alpha)} =
\]
\begin{equation}
=C|f(x,t)|_{\overline{R_{\infty}^{N}}}^{(\sigma\alpha,\alpha)}\leq
C(\overline{\sigma},\alpha)|f|_{\overline{R_{T}^{N}}}^{(\overline{\sigma
}\alpha,\alpha)}, \label{6.15}%
\end{equation}%
\[
\left\langle f_{m,\varepsilon}\right\rangle _{t,R^{N+1}}^{(\alpha)}\leq
C\varepsilon^{-\alpha}|f(x,t-\varepsilon)|_{R^{N}\times\lbrack-\varepsilon
,\infty)}^{(\sigma\alpha,\alpha)} =
\]
\begin{equation}
=C\varepsilon^{-\alpha}|f|_{\overline{R_{\infty}^{N}}}^{(\sigma\alpha,\alpha
)}\leq C(\overline{\sigma},\alpha)\varepsilon^{-\alpha}|f|_{\overline
{R_{T}^{N}}}^{(\overline{\sigma}\alpha,\alpha)}. \label{6.16}%
\end{equation}
That is the finite functions $f_{m,\varepsilon}$ have uniformly bounded
maximum of modulus over the whole space $R^{N+1}$ and the uniformly bounded
H\"{o}lder seminorms in $x$ over the whole space $R^{N+1}$, as it is stated in
\eqref{6.14}, \eqref{6.15}. This facts easy follows from the definitions of
$f_{m,\varepsilon}$ with taking into account that the extended by zero to
$t<0$ original function $f(x,t)$ does not have jumps in directions of $x$ -
variables. The last property in \eqref{6.16} asserts that the H\"{o}lder
seminorms in $t$ of the smoothed functions $f_{m,\varepsilon}$ over the whole
$R^{N+1}$ have the order $\varepsilon^{-\alpha}$ for $\varepsilon\rightarrow
0$. This follows from from definitions \eqref{5.12}, \eqref{6.10.2} and it is
because the extended function $f(x,t)$ may have a jump in $t$ at $t=0$. For
the completeness we present a simple auxiliary lemma, which is a direct ground
for \eqref{6.16}.

\begin{lemma}
\label{L6.1} Let $f(x,t)\in L_{\infty}(R^{N+1})$ and let $\omega_{\varepsilon
}(x,t)$ be defined in \eqref{5.12}. Then
\[
\left\langle f_{\varepsilon}\right\rangle _{t,R^{N+1}}^{(\alpha)}\leq C
\varepsilon^{-\alpha}\left\Vert f\right\Vert _{L_{\infty}(R^{N+1})},
\]
where
\[
f_{\varepsilon}(x,t)\equiv{\int\limits_{R^{N+1}}}\omega_{\varepsilon}%
(x-\xi,t-\tau)f(\xi,\tau)dxdt.
\]

\end{lemma}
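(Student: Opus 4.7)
The plan is to split the estimate into two cases according to the size of the time increment $h$ relative to $\varepsilon$, and in each case exploit a different trivial bound on $f_{\varepsilon}$.

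First I would note that since $\omega_{\varepsilon}\in C^{\infty}(R^{N+1})$ with support in $\{|x|+|t|\leq\varepsilon\}$ and $\int\omega_{\varepsilon}=1$, the scaling law gives
\[
\left|\frac{\partial\omega_{\varepsilon}}{\partial t}(x,t)\right|=\frac{1}{\varepsilon^{N+2}}\left|\frac{\partial\omega}{\partial t}\left(\frac{x}{\varepsilon},\frac{t}{\varepsilon}\right)\right|,
\]
so that $\|\partial_{t}\omega_{\varepsilon}\|_{L_{1}(R^{N+1})}\leq C\varepsilon^{-1}$ with $C$ depending only on $\omega$. Differentiating under the integral sign in the definition of $f_{\varepsilon}$ and using Young's inequality (or directly estimating the convolution) yields
\[
\left|\frac{\partial f_{\varepsilon}}{\partial t}(x,t)\right|\leq\|\partial_{t}\omega_{\varepsilon}\|_{L_{1}(R^{N+1})}\,\|f\|_{L_{\infty}(R^{N+1})}\leq C\varepsilon^{-1}\|f\|_{L_{\infty}(R^{N+1})}.
\]
At the same time the trivial pointwise bound $|f_{\varepsilon}(x,t)|\leq\|f\|_{L_{\infty}(R^{N+1})}$ holds.

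Now fix $(x,t)\in R^{N+1}$ and $h>0$, and estimate $|f_{\varepsilon}(x,t+h)-f_{\varepsilon}(x,t)|$. If $h\leq\varepsilon$, the mean value theorem combined with the derivative bound gives
\[
|f_{\varepsilon}(x,t+h)-f_{\varepsilon}(x,t)|\leq C\varepsilon^{-1}\|f\|_{L_{\infty}(R^{N+1})}h=C\varepsilon^{-1}h^{1-\alpha}\cdot h^{\alpha}\|f\|_{L_{\infty}(R^{N+1})}\leq C\varepsilon^{-\alpha}h^{\alpha}\|f\|_{L_{\infty}(R^{N+1})},
\]
since $h^{1-\alpha}\leq\varepsilon^{1-\alpha}$. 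If on the other hand $h>\varepsilon$, the trivial $L_{\infty}$ bound yields
\[
|f_{\varepsilon}(x,t+h)-f_{\varepsilon}(x,t)|\leq 2\|f\|_{L_{\infty}(R^{N+1})}\leq 2\varepsilon^{-\alpha}\varepsilon^{\alpha}\|f\|_{L_{\infty}(R^{N+1})}\leq 2\varepsilon^{-\alpha}h^{\alpha}\|f\|_{L_{\infty}(R^{N+1})}.
\]
Taking the supremum over $(x,t)$ and $h>0$ gives the asserted estimate.

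There is no real obstacle here; the only point worth watching is the scaling of $\|\partial_{t}\omega_{\varepsilon}\|_{L_{1}}$, which is what produces the factor $\varepsilon^{-1}$ in the differentiability bound and, after interpolation with the uniform bound, the factor $\varepsilon^{-\alpha}$ in the H\"older seminorm.
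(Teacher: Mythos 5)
Your proof is correct. It shares the paper's skeleton --- a case split at $h=\varepsilon$, with the large-increment case handled by the trivial bound $|f_{\varepsilon}|\leq\|f\|_{L_{\infty}}$ exactly as in the paper --- but the small-increment case is argued slightly differently. The paper transfers the increment onto the kernel and uses the $\alpha$-H\"older seminorm of the scaled mollifier, $\left\langle \omega_{\varepsilon}\right\rangle _{t}^{(\alpha)}\sim\varepsilon^{-\alpha-N-1}$, integrated over the support of measure $O(\varepsilon^{N+1})$; you instead use the $C^{1}$ bound $\|\partial_{t}\omega_{\varepsilon}\|_{L_{1}}\leq C\varepsilon^{-1}$, the mean value theorem, and the interpolation $h^{1-\alpha}\leq\varepsilon^{1-\alpha}$ for $h\leq\varepsilon$ (valid since $\alpha\in(0,1)$ here). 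Both arguments rest on the same scaling of the mollifier's regularity; yours is marginally more economical in that it establishes the stronger statement that $f_{\varepsilon}$ is Lipschitz in $t$ with constant $C\varepsilon^{-1}\|f\|_{L_{\infty}}$, from which the $\varepsilon^{-\alpha}$ bound for every H\"older exponent $\alpha\in(0,1)$ follows at once, whereas the paper's computation is tailored to the fixed exponent $\alpha$. Either way the constant depends only on the fixed kernel $\omega$, as required.
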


\begin{proof}
On the ground of the definition of $f_{\varepsilon}(x,t)$, consider the
difference
\[
D(\Delta t)\equiv\left\vert \frac{f_{\varepsilon}(x,t+\Delta t)-f_{\varepsilon
}(x,t)}{(\Delta t)^{\alpha}}\right\vert =
\]%
\[
=\left\vert {\int\limits_{R^{N+1}}}\frac{\omega_{\varepsilon}(x-\xi,t+\Delta
t-\tau)-\omega_{\varepsilon}(x-\xi,t-\tau)}{(\Delta t)^{\alpha}}f(\xi
,\tau)dxdt\right\vert ,\quad\Delta t>0.
\]
Since $\omega(x,t)\in C^{\infty}(R^{N+1})$, then $\left\langle \omega
(x,t)\right\rangle _{t,R^{N+1}}^{(\alpha)}=C<\infty$. And at the same time, as
it can be easily verified by scaling of the variables, $\left\langle
\omega(\frac{x}{\varepsilon},\frac{t}{\varepsilon})\right\rangle _{t,R^{N+1}%
}^{(\alpha)}=C\varepsilon^{-\alpha}$. Besides, the support of the function
$\omega_{\varepsilon}(x,t)$ is included in the set $\left\{  |x|+|t|\leq
\varepsilon\right\}  $. Consider the following two cases.

Let first $\Delta t<\varepsilon$. Then the support of the fraction under the
integral is included in $\left\{  (\xi,\tau):|\xi-x|+|t-\tau|\leq
2\varepsilon\right\}  $. Consequently,
\[
D(\Delta t)\leq\left\langle \omega_{\varepsilon}(x,t)\right\rangle
_{t,R^{N+1}}^{(\alpha)}{\int\limits_{\left\{  (\xi,\tau):|\xi-x|+|t-\tau
|\leq2\varepsilon\right\}  }}\left\vert f(\xi,\tau)\right\vert dxdt\leq
\]%
\[
\leq C \varepsilon^{-\alpha-N-1}\left\Vert f\right\Vert _{L_{\infty
}(R^{N+1})}{\int\limits_{\left\{  (\xi,\tau):|\xi-x|+|t-\tau|\leq
2\varepsilon\right\}  }}dxdt= C \varepsilon^{-\alpha}\left\Vert
f\right\Vert _{L_{\infty}(R^{N+1})}.
\]

If now $\Delta t\geq\varepsilon$, then
\[
D(\Delta t)\leq\varepsilon^{-\alpha}\left(  \left\vert f_{\varepsilon
}(x,t+\Delta t)\right\vert +\left\vert f_{\varepsilon}(x,t)\right\vert
\right)  \leq2\varepsilon^{-\alpha}\left\Vert f\right\Vert _{L_{\infty
}(R^{N+1})},
\]
which follows from properties of $\omega_{\varepsilon}(x,t)$ in \eqref{5.12}.

This completes the proof.
\end{proof}

Eventually, similar to section \ref{s5.3}, instead of problem \eqref{6.1},
\eqref{6.2} consider now the follows sequence of problems. It is required to
find such a function $u(x,t)\in C^{\overline{\sigma}(1+\alpha),1+\alpha
}(R^{N+1})$ that satisfies in the whole $R^{N+1}$ equation \eqref{6.1} with
$f_{m,\varepsilon}(x,t)$ instead of $f(x,t)$ ($f_{m,\varepsilon}(x,t)$ is
defined in \eqref{6.10.2}),
\begin{equation}
Lu(x,t)\equiv u_{t}(x,t)+{\sum_{k=1}^{r}}(-\Delta_{z_{k}})^{\frac{\sigma_{k}%
}{2}}u(x,t)=f_{m,\varepsilon}(x,t),\quad(x,t)\in R^{N+1},\label{6.17}%
\end{equation}
and the condition
\begin{equation}
u(x,t)\equiv0,\quad t<-2\varepsilon.\label{6.18}%
\end{equation}

\subsection{Solvability and estimates for problem \eqref{6.17}, \eqref{6.18}.}

\label{s6.3}

By repeating verbatim the reasonings of section \ref{s5.5} (with the
application of the Fourier transform in $x$ and $t$ to obtain a solution to
problem \eqref{5.10}, \eqref{5.11} ) we obtain a solution $u_{m,\varepsilon
}(x,t)\in C^{\infty}(R^{N+1})$ for equation \eqref{6.17} and for problem
\eqref{6.17}, \eqref{6.18} in general. In terms of it's Fourier-image
$\widehat{u}_{m,\varepsilon}(\xi,\xi_{0})$, this solution has, similar to
\eqref{5.28}, the representation
\begin{equation}
\widehat{u}_{m,\varepsilon}(\xi,\xi_{0})=\frac{\widehat{f_{m,\varepsilon}}%
(\xi,\xi_{0})}{i\xi_{0}+{\sum_{k=1}^{r}}|\zeta_{k}|^{\sigma_{k}}}.\label{6.19}%
\end{equation}
Besides, similar to obtaining property \eqref{5.27} in section \ref{s5.5}, we
have for $u_{m,\varepsilon}(x,t)$ property \eqref{6.18} that is (in view of
$u_{m,\varepsilon}(x,t)\in C^{\infty}(R^{N+1})$)
\begin{equation}
u_{m,\varepsilon}(x,t)\equiv0,\quad t\leq-2\varepsilon.\label{6.20}%
\end{equation}
And for the derivatives $\partial u_{m,\varepsilon}(x,t)/\partial t$ and
$(-\Delta_{z_{i}})^{\frac{\sigma_{i}}{2}}u_{m,\varepsilon}(x,t)$ we have the
representations in terms of the Fourier images
\begin{equation}
\frac{\widehat{\partial u_{m,\varepsilon}}}{\partial t}(\xi,\xi_{0}%
)=\frac{i\xi_{0}}{i\xi_{0}+{\sum_{k=1}^{r}}|\zeta_{k}|^{\sigma_{k}}}%
\widehat{f_{m,\varepsilon}}(\xi,\xi_{0})\equiv\widehat{m}_{0}(\xi,\xi
_{0})\widehat{f_{m,\varepsilon}}(\xi,\xi_{0}),\label{6.21}%
\end{equation}%
\begin{equation}
\widehat{(-\Delta_{z_{i}})^{\frac{\sigma_{i}}{2}}u_{m,\varepsilon}}(\xi
,\xi_{0})=\frac{|\zeta_{i}|^{\sigma_{i}}}{i\xi_{0}+{\sum_{k=1}^{r}}|\zeta
_{k}|^{\sigma_{k}}}\widehat{f_{m,\varepsilon}}(\xi,\xi_{0})\equiv\widehat
{m}_{i}(\xi,\xi_{0})\widehat{f_{m,\varepsilon}}(\xi,\xi_{0}).\label{6.22}%
\end{equation}
Replicating the reasoning of section \ref{s5.6} with the application of
Theorem \ref{T4.1} to obtain estimate \eqref{5.34}, we obtain from
representation \eqref{6.21} on the ground of this theorem the estimate for the
H\"{o}lder seminorm of the derivative of $u_{m,\varepsilon}(x,t)$ in $t$ in
the whole space $R^{N+1}$
\begin{equation}
\left\langle \frac{\partial u_{m,\varepsilon}(x,t)}{\partial t}\right\rangle
_{t,R^{N+1}}^{(\alpha)}\leq C(\overline{\sigma})\left\langle f_{m,\varepsilon
}(x,t)\right\rangle _{R^{N+1}}^{(\overline{\sigma}\alpha,\alpha)}\leq
C(\overline{\sigma},\alpha)\varepsilon^{-\alpha}|f|_{\overline{R_{T}^{N}}%
}^{(\overline{\sigma}\alpha,\alpha)},\label{6.23}%
\end{equation}
where the second inequality follows from \eqref{6.16}. This estimates contains
in it's right hand side the singular factor $\varepsilon^{-\alpha}$ and this
estimate is temporary and technical. The similar estimate is valid also for
the highest H\"{o}lder seminorms in $x$ - variables, but for now estimate
\eqref{6.23} is sufficient for us.

The main step to obtain the estimate for the highest H\"{o}lder seminorm of
the solution $u_{m,\varepsilon}(x,t)$ is an application of Theorem \ref{T4.2}
to representation \eqref{6.22}. The direct verification shows that for the
functions $f_{m,\varepsilon}(x,t)$ and for the multipliers $\widehat{m}%
_{i}(\xi,\xi_{0})$ in \eqref{6.22} all the conditions of Theorem \ref{T4.2}
are satisfied if $\alpha$ is sufficiently small. Firstly, functions
$f_{m,\varepsilon}(x,t)$ are finite in $R^{N+1}$ and consequently belong to
$L_{2}(R^{N+1})$. Besides, in view of \eqref{6.15}, these functions have
uniformly bounded in $m$ and $\varepsilon$ H\"{o}lder seminorms in the space
variables $x$. Further, the multiplier $\widehat{m}_{i}(\xi,\xi_{0})$ in
\eqref{6.22} possesses properties \eqref{4.18}, \eqref{4.19} if one takes
$\xi^{(1)}\equiv\xi$, $\xi^{(2)}\equiv\xi_{0}$. Finally, exactly as it was in
section \ref{s5.6}, by replicating relations \eqref{5.30.0} - \eqref{5.33},
one can verify conditions \eqref{4.20}, \eqref{4.21} of Theorem \ref{T4.2} if
$\alpha\in(0,1)$ is sufficiently small. Similar to \eqref{5.30.0} -
\eqref{5.33} we choose $s_{0}=1$, $s_{k}=N_{k}$. At that, in the present case
of the application of Theorem \ref{T4.2}, under a choice of $\delta$ from
condition \eqref{5.33}, conditions \eqref{5.30.2}, \eqref{5.30.3} acquire a
more rigid form
\[
1=s_{0}>\frac{N_{0}}{p}+\alpha=\frac{1}{p}+\alpha=1-\delta+\alpha,
\]%
\[
s_{k}=N_{k}>\frac{N_{k}}{p}+\alpha=N_{k}(1-\delta)+\alpha.
\]
But these conditions are evidently satisfied if $\alpha<\delta$, where
$\delta$ is chosen from condition \eqref{5.33}. Consequently, on the ground of
Theorem \ref{T4.2} and \eqref{6.15} we have the estimate for solutions
$u_{m,\varepsilon}(x,t)$ to \eqref{6.17}, \eqref{6.18}
\begin{equation}
{\sum\limits_{k=1}^{r}}\left\langle \left(  -\Delta_{z_{k}}\right)
^{\frac{\sigma_{k}}{2}}u_{m,\varepsilon}\right\rangle _{R^{N+1}}%
^{(\overline{\sigma}\alpha,\alpha)}\leq C(\alpha,\overline{\sigma}%
,\{N_{k}\})\left\vert f(x,t)\right\vert _{\overline{R_{T}^{N}}}^{(\overline
{\sigma}\alpha,\alpha)},\quad m=1,2,...,\varepsilon\in(0,1).\label{6.24}%
\end{equation}
And from this, on the ground of \eqref{3.6.1}, it follows, in particular, that
$u_{m,\varepsilon}(x,t)$ has the desired smoothness in $x$
\begin{equation}
\left\langle u_{m,\varepsilon}\right\rangle _{x,R^{N+1}}^{(\overline{\sigma
}+\overline{\sigma}\alpha)}\leq C(\alpha,\overline{\sigma},\{N_{k}%
\})\left\vert f(x,t)\right\vert _{\overline{R_{T}^{N}}}^{(\overline{\sigma
}\alpha,\alpha)},\quad m=1,2,...,\varepsilon\in(0,1).\label{6.25}%
\end{equation}
As for the highest H\"{o}lder seminorm in $t$ that is $\left\langle
u_{m,\varepsilon}\right\rangle _{t,\overline{R_{T}^{N}}}^{(1+\alpha
)}=\left\langle \partial u_{m,\varepsilon}(x,t)/\partial t\right\rangle
_{t,\overline{R_{T}^{N}}}^{(\alpha)}$, it's estimate over the set
$\overline{R_{T}^{N}}$ follows directly from equation \eqref{6.17} (which is
satisfied in $\overline{R_{T}^{N}}$ by the functions $u_{m,\varepsilon}(x,t)$)
and from \eqref{6.24}, \eqref{6.12}
\begin{equation}
\left\langle \frac{\partial u_{m,\varepsilon}(x,t)}{\partial t}\right\rangle
_{t,\overline{R_{T}^{N}}}^{(\alpha)}\leq{\sum\limits_{k=1}^{r}}\left\langle
\left(  -\Delta_{z_{k}}\right)  ^{\frac{\sigma_{k}}{2}}u_{m,\varepsilon
}\right\rangle _{t,\overline{R_{T}^{N}}}^{(\alpha)}+\left\langle
f_{m,\varepsilon}(x,t)\right\rangle _{t,\overline{R_{T}^{N}}}^{(\alpha)}\leq
C(\overline{\sigma},\alpha)|f|_{\overline{R_{T}^{N}}}^{(\overline{\sigma
}\alpha,\alpha)}.\label{6.29}%
\end{equation}
Thus estimates \eqref{6.25} and \eqref{6.29} give the full estimate of the
highest H\"{o}lder seminorm over $\overline{R_{T}^{N}}$ that is
\begin{equation}
\left\langle u_{m,\varepsilon}\right\rangle _{\overline{R_{T}^{N}}%
}^{(\overline{\sigma}+\overline{\sigma}\alpha,1+\alpha)}\leq C(\alpha
,\overline{\sigma},\{N_{k}\})\left\vert f(x,t)\right\vert _{\overline
{R_{T}^{N}}}^{(\overline{\sigma}\alpha,\theta\alpha)},\quad
m=1,2,...,\varepsilon\in(0,1).\label{6.30}%
\end{equation}

To finalize the estimate we need the behavior of the functions
$u_{m,\varepsilon}(x,t)$ and $\partial u_{m,\varepsilon}(x,t)/\partial t$ at
$t=0$ with respect to $m$ and $\varepsilon$, and also we need the estimates of
the module maxima for functions $\left\vert u_{m,\varepsilon}(x,t)\right\vert
_{\overline{R_{T}^{N}}}^{(0)}$. Firstly, from \eqref{6.20} (\eqref{6.18}) and
\eqref{6.16} it follows that for each $t\in\lbrack-2\varepsilon,0]$ we have
\[
\left\vert \frac{\partial u_{m,\varepsilon}(x,t)}{\partial t}\right\vert
=\left\vert \frac{\partial u_{m,\varepsilon}(x,t)}{\partial t}-\frac{\partial
u_{m,\varepsilon}(x,-2\varepsilon)}{\partial t}\right\vert \leq
\]%
\begin{equation}
\leq\left\langle u_{m,\varepsilon}\right\rangle _{t,R^{N+1}}^{(1+\alpha
)}|-2\varepsilon|^{\alpha}\leq C(\overline{\sigma},\alpha)|f|_{\overline
{R_{T}^{N}}}^{(\overline{\sigma}\alpha,\alpha)},\quad t\in\lbrack
-2\varepsilon,0].\label{6.27}%
\end{equation}
Consequently,
\begin{equation}
\left\vert u_{m,\varepsilon}(x,0)\right\vert \leq{\int\limits_{-2\varepsilon
}^{0}}\left\vert \frac{\partial u_{m,\varepsilon}(x,t)}{\partial t}\right\vert
dt\leq C(\overline{\sigma},\alpha)|f|_{\overline{R_{T}^{N}}}^{(\overline
{\sigma}\alpha,\alpha)}\varepsilon.\label{6.28}%
\end{equation}
Now from \eqref{6.29} and \eqref{6.27} we infer the estimate for $\left\vert
\partial u_{m,\varepsilon}(x,t)/\partial t\right\vert _{\overline{R_{T}^{N}}%
}^{(0)}$,
\[
\left\vert \frac{\partial u_{m,\varepsilon}(x,t)}{\partial t}\right\vert
\leq\left\vert \frac{\partial u_{m,\varepsilon}(x,0)}{\partial t}\right\vert
_{R^{N}}^{(0)}+\left\langle \frac{\partial u_{m,\varepsilon}(x,t)}{\partial
t}\right\rangle _{t,\overline{R_{T}^{N}}}^{(\alpha)}t^{\alpha}\leq
\]%
\[
\leq C(\overline{\sigma},\alpha)|f|_{\overline{R_{T}^{N}}}^{(\overline{\sigma
}\alpha,\alpha)}\left(  1+T^{\alpha}\right)  .
\]
And this permits to obtain, at last, the estimate for $\left\vert
u_{m,\varepsilon}(x,t)\right\vert _{\overline{R_{T}^{N}}}^{(0)}$ for
$t\in\lbrack0,T]$
\[
\left\vert u_{m,\varepsilon}(x,t)\right\vert \leq\left\vert u_{m,\varepsilon
}(x,0)\right\vert +{\int\limits_{0}^{t}}\left\vert \frac{\partial
u_{m,\varepsilon}(x,\tau)}{\partial\tau}\right\vert d\tau\leq
\]%
\[
\leq C(\overline{\sigma},\alpha)|f|_{\overline{R_{T}^{N}}}^{(\overline{\sigma
}\alpha,\alpha)}(\varepsilon+t\left(  1+T^{\alpha}\right)  ),
\]
that is
\begin{equation}
\left\vert u_{m,\varepsilon}(x,t)\right\vert _{\overline{R_{T}^{N}}}^{(0)}\leq
C(\overline{\sigma},\alpha)|f|_{\overline{R_{T}^{N}}}^{(\overline{\sigma
}\alpha,\alpha)}(\varepsilon+T(\left(  1+T^{\alpha}\right)  )).\label{6.31}%
\end{equation}

Thus, combining the reasoning and the estimates of this section, we have
proved the following assertion.

\begin{lemma}
\label{L6.2} There exists such $\alpha_{0}=\alpha_{0}(\overline{\sigma
},\{N_{k}\})$ that for $\alpha\in(0,\alpha_{0}]$ problems \eqref{6.17},
\eqref{6.18} have solutions $u_{m,\varepsilon}(x,t)\in C^{\overline{\sigma
}(1+\alpha),1+\alpha}(R^{N+1})$ with the uniformly bounded in $\overline
{R_{T}^{N}}$ norms
\begin{equation}
\left\vert u_{m,\varepsilon}(x,t)\right\vert _{\overline{R_{T}^{N}}%
}^{(\overline{\sigma}+\overline{\sigma}\alpha,1+\alpha)}\leq C(T,\overline
{\sigma})\left\vert f(x,t)\right\vert _{\overline{R_{T}^{N}}}^{(\overline
{\sigma}\alpha,\alpha)}, \label{6.32}%
\end{equation}
and
\begin{equation}
\left\langle u_{m,\varepsilon}(x,t)\right\rangle _{\overline{R_{T}^{N}}%
}^{(\overline{\sigma}+\overline{\sigma}\alpha,1+\alpha)}\leq C(\overline
{\sigma})\left\vert f(x,t)\right\vert _{\overline{R_{T}^{N}}}^{(\overline
{\sigma}\alpha,\alpha)} \label{6.33}%
\end{equation}
with some constant $C(\theta,\overline{\sigma})$, which does not depend on $T$.

Moreover, for an arbitrary $\widetilde{T}\leq T$
\begin{equation}
\left\vert u_{m,\varepsilon}(x,t)\right\vert _{\overline{R_{\widetilde{T}}%
^{N}}}^{(0)}\leq C(\overline{\sigma},\alpha)|f|_{\overline{R_{T}^{N}}%
}^{(\overline{\sigma}\alpha,\alpha)}(1+\widetilde{T}^{1+\alpha}), \label{6.34}%
\end{equation}
and also estimate \eqref{6.28} is valid at $t=0$.
\end{lemma}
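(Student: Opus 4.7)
The plan is to construct $u_{m,\varepsilon}$ as the inverse Fourier transform of the expression in \eqref{6.19} and then to derive the uniform bounds \eqref{6.32}--\eqref{6.34} by a combination of the multiplier theorems \ref{T4.1} and \ref{T4.2}, together with integration of the time derivative backwards from the vanishing set $\{t\leq-2\varepsilon\}$. The smooth data $f_{m,\varepsilon}$ have already been set up in section \ref{s6.2} so that $f_{m,\varepsilon}=\partial_t^n F_{m,\varepsilon}$ with $F_{m,\varepsilon}$ finite and infinitely smooth, supported in $\{t\geq-2\varepsilon\}$, and, by \eqref{6.12}--\eqref{6.15}, with H\"older norm in the spatial variables bounded uniformly in $m,\varepsilon$ by $|f|_{\overline{R_T^N}}^{(\overline{\sigma}\alpha,\alpha)}$ (the H\"older norm in $t$ is only controlled modulo a factor $\varepsilon^{-\alpha}$).

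The first step is to define $u_{m,\varepsilon}$ by \eqref{6.19}: because $\hat{f}_{m,\varepsilon}$ decays faster than any polynomial and the denominator $i\xi_0+\sum_k|\zeta_k|^{\sigma_k}$ is bounded away from zero on the support of the numerator's decay in a useful quantitative sense, writing $\hat{f}_{m,\varepsilon}=(i\xi_0)^n\hat{F}_{m,\varepsilon}$ yields, as in section \ref{s5.5}, that $\hat{u}_{m,\varepsilon}$ is a Schwartz function and $u_{m,\varepsilon}\in C^\infty(R^{N+1})$ with rapid decay. Lemma \ref{L4.1} applied to the symbol $(i\xi_0)^n/[i\xi_0+\sum_k|\zeta_k|^{\sigma_k}]$ then gives the causality property \eqref{6.20}. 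Substituting into \eqref{6.17} shows that $u_{m,\varepsilon}$ solves the equation on all of $R^{N+1}$.

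The second step is the seminorm bound. Differentiating in $t$ and in $z_i$ yields \eqref{6.21}--\eqref{6.22}. The multiplier $\widehat{m}_0$ is homogeneous of degree zero under the scaling $(\zeta_k,\xi_0)\mapsto(\lambda^{1/(\sigma_k\alpha)}\zeta_k,\lambda^{1/\alpha}\xi_0)$, so condition \eqref{4.10} reduces to an $L_p$ integrability on $B_\nu$ of derivatives up to order $s_0=1$, $s_k=N_k$ of $\widehat{m}_0$ itself. A direct pointwise bound of these derivatives by $|\xi_0|^{-1+1}\prod_k|\zeta_k|^{-N_k+\sigma_k}$ gives, for any $p=(1-\delta)^{-1}$ with $\delta>0$ small enough that $N_k-\sigma_k<N_k(1-\delta)$ for every $k$, the required finiteness; Theorem \ref{T4.1} then yields the time-derivative bound in \eqref{6.23}. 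For the spatial multiplier $\widehat{m}_i$ the same scaling argument is used, but now the extra H\"older regularity of $u_{m,\varepsilon}$ must be extracted from only the spatial H\"older regularity of $f_{m,\varepsilon}$. Here $\widehat{m}_i$ vanishes on $\xi=0$ as required by \eqref{4.19} with $\xi^{(1)}=\xi$, $\xi^{(2)}=\xi_0$, and Theorem \ref{T4.2} applies provided $s_0>N_0/p+\alpha$ and $s_k>N_k/p+\alpha$, i.e.\ provided $\alpha<\delta$. Fixing $\delta$ from the integrability step and setting $\alpha_0<\delta$ completes this step and yields \eqref{6.24}, hence \eqref{6.25} via \eqref{3.6.1}. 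The seminorm \eqref{6.29} in $t$ on $\overline{R_T^N}$ then follows directly from the equation \eqref{6.17} restricted to $\overline{R_T^N}$, where $\langle f_{m,\varepsilon}\rangle_{t,\overline{R_T^N}}^{(\alpha)}$ is uniformly controlled by \eqref{6.12} (not by \eqref{6.16}), giving \eqref{6.33}.

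The last step upgrades these seminorm bounds to the full norm \eqref{6.32} and to \eqref{6.34} and \eqref{6.28}. By \eqref{6.20} the derivative $\partial_t u_{m,\varepsilon}$ vanishes at $t=-2\varepsilon$, so integrating the H\"older estimate in $t$ gives $|\partial_t u_{m,\varepsilon}(x,0)|\leq C|f|_{\overline{R_T^N}}^{(\overline{\sigma}\alpha,\alpha)}\varepsilon^\alpha$ (bounded, uniformly in $\varepsilon$) and then $|u_{m,\varepsilon}(x,0)|\leq C|f|_{\overline{R_T^N}}^{(\overline{\sigma}\alpha,\alpha)}\varepsilon$, which is \eqref{6.28}; integrating once more on $[0,\widetilde T]$ yields \eqref{6.34}. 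The main obstacle is the second step: because the extended $f$ has a jump at $t=0$, the mollification only controls the $t$-H\"older seminorm of $f_{m,\varepsilon}$ up to a singular factor $\varepsilon^{-\alpha}$ on $R^{N+1}$, and this blocks any direct use of Theorem \ref{T4.1} for the spatial fractional Laplacian of $u_{m,\varepsilon}$; circumventing this via Theorem \ref{T4.2} is what forces the smallness condition $\alpha\leq\alpha_0(\overline{\sigma},\{N_k\})$.
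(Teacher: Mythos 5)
Your proposal reproduces the paper's own proof essentially step for step: the Fourier construction \eqref{6.19} with causality from Lemma \ref{L4.1}, the temporary $\varepsilon^{-\alpha}$-singular time estimate \eqref{6.23} via Theorem \ref{T4.1}, the uniform spatial estimate \eqref{6.24} via Theorem \ref{T4.2} with $\xi^{(1)}=\xi$, $\xi^{(2)}=\xi_{0}$ (which is precisely where the smallness $\alpha\leq\alpha_{0}<\delta$ enters), the $t$-seminorm \eqref{6.29} on $\{t\geq0\}$ from the equation together with \eqref{6.12}, and the backwards integration from $t=-2\varepsilon$ for \eqref{6.28}, \eqref{6.34}, \eqref{6.32}. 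One small quantitative slip: on $[-2\varepsilon,0]$ the only available seminorm is the singular one from \eqref{6.23}, so the H\"{o}lder increment gives
$\left\vert \partial_{t}u_{m,\varepsilon}(x,0)\right\vert \leq C\varepsilon^{-\alpha}|f|_{\overline{R_{T}^{N}}}^{(\overline{\sigma}\alpha,\alpha)}(2\varepsilon)^{\alpha}=C|f|_{\overline{R_{T}^{N}}}^{(\overline{\sigma}\alpha,\alpha)}$
--- uniformly bounded, but not $O(\varepsilon^{\alpha})$ as you wrote --- which is nevertheless exactly what your argument actually uses, since integrating this bounded derivative over the interval of length $2\varepsilon$ still yields $\left\vert u_{m,\varepsilon}(x,0)\right\vert \leq C|f|_{\overline{R_{T}^{N}}}^{(\overline{\sigma}\alpha,\alpha)}\varepsilon$, i.e.\ \eqref{6.28}, just as in the paper.
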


\subsection{Solvability and estimates of the solutions to problem \eqref{6.1},
\eqref{6.2}. Extension of the solution and it's uniqueness.}

\label{s6.4}

Exactly as it was done in section \ref{s5.7} under the the proof of
Proposition \ref{P5.1}, we can go to the limit at $\varepsilon\rightarrow0$,
$m\rightarrow\infty$ in the sequence of problems \eqref{6.17}, \eqref{6.18}.
Similar to section \ref{s5.7}, this is grounded on the convergence on compact
sets $\overline{R_{T}^{N}}$ of the sequence of the mollified functions
$f_{m,\varepsilon}(x,t)$ to the original function $f(x,t)$ in the sense of
\eqref{6.13}. Besides, due to the estimates of Lemma \ref{L6.2}, a subsequence
of the solutions $u_{m,\varepsilon}(x,t)$ has the analogous convergence,
completely similar to \eqref{5.41} - \eqref{5.43}. The only difference from
section \ref{s5.7} is that the sequence of the solutions $u_{m,\varepsilon
}(x,t)$ does not vanish at $t=0$ automatically. However, estimate \eqref{6.28}
at $t=0$ tells that the limiting function $u(x,t)\in C^{\overline{\sigma
}(1+\alpha),1+\alpha}(\overline{R_{T}^{N}})$ not only satisfies equation
\eqref{6.1}, but vanishes at $t=0$ according to the zero initial condition.
Therefore, exactly replicating reasonings of section \ref{s5.7}, including the
proof of Corollary \ref{C5.1} and the proof of Proposition \ref{P5.2} of
section \ref{s5.8} on the extension of the solution, we arrive at the
following assertion.

\begin{proposition}
\label{P6.1} There exists such $\alpha_{0}=\alpha_{0}(\overline{\sigma
},\{N_{k}\})$ that for $\alpha\in(0,\alpha_{0}]$ and under conditions
\eqref{6.3}, \eqref{6.4} problem \eqref{6.1}, \eqref{6.2} has a solution
$u(x,t)\in C^{\overline{\sigma}(1+\alpha),1+\alpha}(\overline{R_{T}^{N}})$
with estimates \eqref{6.6} - \eqref{6.6.2}.

In the case when $f(x,t)$ is defined for all $t>0$ in the domain
$\overline{R_{\infty}^{N} }=R^{N}\times\lbrack0,\infty)$ and it's norm in the
space $C^{\overline{\sigma}\alpha,\alpha}(\overline{R_{\infty}^{N}})$,
$\alpha\in(0,\alpha_{0}]$, is finite that is $\left\vert f(x,t)\right\vert
_{\overline{R_{\infty}^{N}}}^{(\overline{\sigma}\alpha,\alpha)}<\infty$
problem \eqref{6.1}, \eqref{6.2} has such a solution $u(x,t)$ that locally in
time $t$ belongs to the space $C^{\overline{\sigma}(1+\alpha),1+\alpha
}(\overline{R_{\infty}^{N}})$ and that obeys estimates \eqref{6.6.1},
\eqref{6.6.2} that is
\begin{equation}
\left\langle u\right\rangle _{\overline{R_{\infty}^{N}}}^{(\overline{\sigma
}(1+\alpha),1+\alpha)}\leq C(\overline{\sigma},\alpha)\left(  |f|_{\overline
{R_{\infty}^{N}}}^{(\overline{\sigma}\alpha,\alpha)}+|u_{0}|_{R^{N}%
}^{(\overline{\sigma}(1+\alpha))}\right)  , \label{6.35}%
\end{equation}%
\begin{equation}
\left\vert u\right\vert _{\overline{R_{\widetilde{T}}^{N}}}^{(0)}\leq
C(\overline{\sigma},\alpha)\left(  |f|_{\overline{R_{\infty}^{N}}}%
^{(\overline{\sigma}\alpha,\alpha)}+|u_{0}|_{R^{N}}^{(\overline{\sigma
}(1+\alpha))}\right)  (1+\widetilde{T}^{1+\alpha})+|u_{0}|_{R^{N}}^{(0)}%
,\quad\widetilde{T}\leq\infty. \label{6.36}%
\end{equation}
Consequently, for each finite $T>0$ estimate \eqref{6.6} is also valid.

Besides, let a solution $u(x,t)\in C^{\overline{\sigma}(1+\alpha),1+\alpha
}(\overline{R_{T}^{N}})$, $\alpha\in(0,\alpha_{0}]$, to problem \eqref{6.1},
\eqref{6.2} in $\overline{R_{T} ^{N}}$ be fixed and let the right hand side
$f(x,t)$ in \eqref{6.1} be defined (or extended from $[0,T]$) for all $t>0$ on
the domain $\overline{R_{\infty}^{N}}$, and $f(x,t)\in C^{\overline{\sigma
}\alpha,\alpha}(\overline{R_{\infty}^{N}})$. Then there exists such solution
to problem \eqref{6.1}, \eqref{6.2} in $\overline{R_{\infty}^{N}}$ (denoted by
the same symbol $u(x,t)$) that belongs to $C^{\overline{\sigma}(1+\alpha
),1+\alpha}(\overline{R_{\widetilde{T}}^{N}})$ for each $\widetilde{T}>0$ and
that coincides with $u(x,t)$ in $\overline{R_{T}^{N}}$ with the analogous to
\eqref{6.35}, \eqref{6.36} estimates in $\overline{R_{\infty}^{N}}$
\begin{equation}
\left\langle u\right\rangle _{\overline{R_{\infty}^{N}}}^{(\overline{\sigma
}(1+\alpha),1+\alpha)}\leq C(\overline{\sigma},\alpha)\left(  |f|_{\overline
{R_{\infty}^{N}}}^{(\overline{\sigma}\alpha,\alpha)}+\left\vert u\right\vert
_{\overline{R_{T}^{N}}}^{(\overline{\sigma}+\overline{\sigma}\alpha,1+\alpha
)}\right)  , \label{6.37}%
\end{equation}
\begin{equation}
\left\vert u\right\vert _{\overline{R_{\widetilde{T}}^{N}}}^{(0)}\leq
C(\overline{\sigma},\alpha)\left(  |f|_{\overline{R_{\infty}^{N}}}%
^{(\overline{\sigma}\alpha,\alpha)}+\left\vert u\right\vert _{\overline
{R_{T}^{N}}}^{(\overline{\sigma}+\overline{\sigma}\alpha,1+\alpha)}\right)
(1+\widetilde{T}^{1+\alpha})+\left\vert u\right\vert _{\overline{R_{T}^{N}}%
}^{(\overline{\sigma}+\overline{\sigma}\alpha,1+\alpha)},\quad\widetilde
{T}\leq\infty. \label{6.38}%
\end{equation}

\end{proposition}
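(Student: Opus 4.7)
\textbf{Proof proposal for Proposition \ref{P6.1}.}

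The plan is to closely parallel the argument of Section \ref{s5}, substituting Lemma \ref{L6.2} for the infinitely smooth solutions constructed in Sections \ref{s5.5}--\ref{s5.6}. First I would reduce to the case of zero initial data $u_0 \equiv 0$ as in Section \ref{s6.1}, using Proposition \ref{P3.5} to control the norm of the modified right-hand side. After the extension and mollification of Section \ref{s6.2}, Lemma \ref{L6.2} supplies smooth solutions $u_{m,\varepsilon}(x,t)$ of \eqref{6.17}, \eqref{6.18} with the uniform bounds \eqref{6.32}--\eqref{6.34} for $\alpha \in (0,\alpha_0]$.

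Next I would extract a convergent subsequence of $\{u_{m,\varepsilon}\}$. By a standard compactness argument for anisotropic H\"older spaces (the one used at the end of the proof of Theorem 2.1 in \cite{23} and in Section \ref{s5.7}), the uniform bound \eqref{6.32} yields a subsequence converging on every compact set $B_{R,T}\subset\overline{R_T^N}$, in the norm of $C^{\bar{\sigma}(1+\alpha'),1+\alpha'}$ for each $\alpha' < \alpha$, to some $u(x,t)\in C^{\bar{\sigma}(1+\alpha),1+\alpha}(\overline{R_T^N})$ which inherits estimates \eqref{6.6}--\eqref{6.6.2}. Passing to the limit in \eqref{6.17} is done exactly as in Section \ref{s5.7}: for each fixed $t$ the functions $u_{m,\varepsilon}(\cdot,t)$ lie in $S'_{\bar{\sigma}}(R^N)$ uniformly, so Proposition \ref{PS.1} gives convergence of $\sum_{k}(-\Delta_{z_k})^{\sigma_k/2}u_{m,\varepsilon}(\cdot,t)$ in $S'(R^N)$, while the convergence of $\partial_t u_{m,\varepsilon}$ follows from the $C^{1+\alpha'}$-convergence in $t$. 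Combined with \eqref{6.13} for $f_{m,\varepsilon}\to f$, this identifies the limit $u$ as a solution of \eqref{6.1}. The zero initial condition is recovered from estimate \eqref{6.28}, which gives $|u_{m,\varepsilon}(x,0)|\leq C\varepsilon\to 0$; since $u(x,0)$ is continuous in $x$, we get $u(x,0)\equiv 0$.

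For the global statement when $f\in C^{\bar{\sigma}\alpha,\alpha}(\overline{R_\infty^N})$, I would imitate the proof of Corollary \ref{C5.1}. Constants in the seminorm estimate \eqref{6.33} and in the $(0)$-norm bound \eqref{6.34} do not depend on $T$, so solving on a growing sequence of intervals $[0,T_n]$ with $T_n\to\infty$ and extracting a diagonal subsequence yields a global solution on $\overline{R_\infty^N}$ that satisfies \eqref{6.35}--\eqref{6.36}, and hence \eqref{6.6} on any finite subinterval.

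For the extension statement I would replicate Proposition \ref{P5.2}, but the step is actually easier here because $\theta=1$: no compatibility condition of the form \eqref{5.5} is needed, and the derivative is local in $t$. Given a local solution $u$ on $\overline{R_T^N}$ and $f$ defined on $\overline{R_\infty^N}$, I would pose on $[T,\infty)$ the Cauchy problem \eqref{6.1} with initial datum $u_T(x) := u(x,T)\in C^{\bar{\sigma}(1+\alpha)}(R^N)$, apply the global existence result just proved after a time translation $t\mapsto t-T$, and glue. Continuity across $t=T$ of both $u$ and $u_t$ (the latter via \eqref{6.1} and continuity of $(-\Delta_{z_k})^{\sigma_k/2}u$ at $t=T$) guarantees that the glued function solves \eqref{6.1} throughout $\overline{R_\infty^N}$, with estimates \eqref{6.37}--\eqref{6.38} following from the global bound applied to the shifted problem. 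The main subtlety lies not in this bookkeeping but in the groundwork already done in Lemma \ref{L6.2}: unlike the fractional case, $f$ cannot be extended by zero through $t=0$ with preservation of H\"older smoothness in $t$, which is precisely why Theorem \ref{T4.2} (partial regularity) had to replace Theorem \ref{T4.1}, and why the zero initial condition in the limit must be argued a posteriori via \eqref{6.28} rather than built into the approximations.
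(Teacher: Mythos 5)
Your proposal is correct and follows essentially the same route as the paper: limit passage in the mollified problems \eqref{6.17}, \eqref{6.18} via the uniform bounds of Lemma \ref{L6.2} and the $S'_{\overline{\sigma}}$-convergence machinery of Section \ref{s5.7}, recovery of the zero initial condition a posteriori from \eqref{6.28}, a diagonal argument as in Corollary \ref{C5.1} for $T=\infty$, and an extension past $t=T$ that simplifies Proposition \ref{P5.2} thanks to the locality of $\partial_t$ (no analogue of equation \eqref{5.77} or Lemma \ref{L5.1}, and no drop of the exponent from $\alpha$ to $\beta<\alpha$). You even identify the same key subtleties the paper emphasizes, so nothing further is needed.
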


Note that the proof of the last assertion of this proposition about the
extension of the solution is not only a replication of the proof to
Proposition \ref{P5.2} but is the significant simplification of the last. In
particular, due to locality of the first derivative in $t$, we do not need an
additional term in the right hand side of the equation as it was in equation
\eqref{5.77}, and consequently, we do not need an additional lemma of the kind
of Lemma \ref{L5.1}. Besides, we do not need to reduce the smoothness exponent
from $\alpha$ to $\beta< \alpha$ to achieve the agreement of the solution at
$t=T$ as it was done in Proposition \ref{P5.2}. This reflect the known
semigroup property of Cauchy problems with the first time derivative.

On the ground of Proposition \ref{P6.1} on the extension of the solution and
by the simple replication of the proof of Lemma \ref{L5.2} we get the
assertion about the uniqueness of the solution from $C^{\overline{\sigma
}(1+\alpha),1+\alpha}(\overline{R_{T}^{N}})$ to problem \eqref{6.1},
\eqref{6.2}. We stress that in this place we do not need any restriction on
the smoothness exponent $\alpha$ to have it sufficiently small since the space
with a bigger $\alpha$ is continuously embedded in each space with a less
$\alpha$.

\begin{proposition}
\label{P6.2} Problem \eqref{6.1}, \eqref{6.2} has at most one solution from
the space $C^{\overline{\sigma}(1+\alpha),1+\alpha}(\overline{R_{T}^{N}})$.
\end{proposition}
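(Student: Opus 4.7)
The plan is to follow exactly the scheme of Lemma \ref{L5.2}, which handled the uniqueness for $\theta\in(0,1)$. By linearity, it suffices to show that a solution $u(x,t)\in C^{\overline{\sigma}(1+\alpha),1+\alpha}(\overline{R_{T}^{N}})$ of the homogeneous problem (with $f\equiv 0$ and $u_0\equiv 0$) vanishes identically. The first step is a reduction: since the embedding $C^{\overline{\sigma}(1+\alpha),1+\alpha}(\overline{R_T^N})\hookrightarrow C^{\overline{\sigma}(1+\alpha'),1+\alpha'}(\overline{R_T^N})$ is continuous for any $\alpha'\in(0,\alpha)$, we may replace $\alpha$ by a smaller exponent $\alpha'\leq\alpha_0$, where $\alpha_0$ is the threshold from Proposition \ref{P6.1}. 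Thus, without loss of generality, $\alpha$ itself lies in the range where the extension result of Proposition \ref{P6.1} applies.

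Next, I would extend $u$ to the whole time half-axis. Since $f\equiv 0$ extends trivially to $\overline{R_\infty^N}$ with $f\in C^{\overline{\sigma}\alpha,\alpha}(\overline{R_\infty^N})$, the last assertion of Proposition \ref{P6.1} produces a solution on $\overline{R_\infty^N}$ (denoted again by $u$) that coincides with the original on $\overline{R_T^N}$, lies locally in $C^{\overline{\sigma}(1+\alpha),1+\alpha}$, and satisfies estimate \eqref{6.38}; in particular $|u|^{(0)}_{\overline{R_{\widetilde T}^N}}$ grows at most like $\widetilde T^{1+\alpha}$. I would then extend $u$ by identical zero to $t<0$. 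Because $u(x,0)\equiv 0$ and the derivative in $t$ is of integer order one, this zero extension preserves the class $C^{\overline{\sigma}(1+\alpha),1+\alpha}(R^{N+1})$, and the equation \eqref{6.1} with zero right-hand side is satisfied throughout $R^{N+1}$. The polynomial growth in $t$ at infinity and the finiteness of the spatial H\"older norms guarantee that $u$ defines an element of $\Phi'(R^{N+1})$ in the sense of Section \ref{sfi}.

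Now apply the Fourier transform in the sense of $\Phi'(R^{N+1})$, using \eqref{fi.3} and \eqref{fi.4} (for $\theta=1$, the latter reduces to $\widehat{u_t}=i\xi_0\widehat{u}$ since $\{\theta\}=0$, $[\theta]=1$, and the support condition is fulfilled by the zero extension). The equation becomes
\[
\Bigl(i\xi_0+{\sum_{k=1}^{r}}|\zeta_{k}|^{\sigma_{k}}\Bigr)\widehat{u}(\xi,\xi_0)=0
\]
in $\Psi'(R^{N+1})$. Since the symbol in parentheses vanishes only at the origin $(\xi,\xi_0)=(0,0)$, $\widehat{u}$ is supported at that point and hence, by the definition of $\Psi'(R^{N+1})$ as the quotient modulo $\delta$-combinations at the origin, represents the zero class. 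Consequently $u$ represents the zero class of $\Phi'(R^{N+1})$, i.e.\ $u$ is a polynomial in $(x,t)$. But $u\equiv 0$ for $t<0$ forces the polynomial to vanish identically, whence $u\equiv 0$ on $\overline{R_T^N}$, proving uniqueness.

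The main technical point to watch is the legitimacy of the zero extension to $t<0$ preserving the full anisotropic H\"older class and, via this, the applicability of the $\Phi'$-Fourier calculus; this is the exact analogue of the step in Lemma \ref{L5.2} and is simpler here because $\theta=1$ is an integer and no fractional derivative has to be extended, so no analogue of the compatibility check through Proposition \ref{P3.1.1} is needed. The preliminary reduction from arbitrary $\alpha\in(0,1)$ to $\alpha\leq\alpha_0$ is also routine given the embedding of anisotropic H\"older spaces.
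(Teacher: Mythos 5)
Your proposal follows the paper's own route exactly: the paper proves Proposition \ref{P6.2} precisely by combining the extension result of Proposition \ref{P6.1} with a verbatim replication of Lemma \ref{L5.2} (zero extension to $t<0$, Fourier transform in $\Phi^{\prime}(R^{N+1})$, support of $\widehat{u}$ at the origin, hence $u$ a polynomial vanishing for $t<0$), and it likewise notes that no smallness restriction on $\alpha$ is needed because of the continuous embedding into spaces with smaller exponent.

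One justification in your write-up is insufficient as stated. You claim the zero extension to $t<0$ preserves the class $C^{\overline{\sigma}(1+\alpha),1+\alpha}(R^{N+1})$ ``because $u(x,0)\equiv 0$ and the derivative in $t$ is of integer order one,'' and you add that no analogue of the compatibility check via Proposition \ref{P3.1.1} is needed. That is not quite right: for the extended function to be $C^{1+\alpha}$ in $t$ across $t=0$ one needs not only $u(x,0)\equiv 0$ but also $u_{t}(x,0)\equiv 0$, since otherwise $u_{t}$ has a jump at $t=0$. The needed fact does hold here, but it is not automatic from the zero initial value; it comes from evaluating the homogeneous equation \eqref{6.1} at $t=0$: since $u(x,0)\equiv 0$ forces $\sum_{k}(-\Delta_{z_{k}})^{\frac{\sigma_{k}}{2}}u(x,0)\equiv 0$ and $f\equiv 0$, the equation gives $u_{t}(x,0)\equiv 0$. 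This is exactly the analogue of the role played by \eqref{3.4.0} in Lemma \ref{L5.2}, so a one-line check is in fact required; with it inserted, your argument is complete.
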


\subsection{Removing the restriction on the H\"{o}lder exponent $0<\alpha
\leq\alpha_{0}=\alpha_{0}(\overline{\sigma},\{N_{k}\})$ and completion of the
proof of Theorem \ref{T6.1}.}

\label{s6.5}

Let now in problem \eqref{6.1}, \eqref{6.2} the exponent $\alpha$ satisfies
instead of conditions \eqref{6.4} the only condition
\begin{equation}
\alpha\in(0,1), \label{6.39}%
\end{equation}
without the restriction $\alpha\leq\alpha_{0} \in(0,1)$. Moreover, since the
case of a sufficiently small $\alpha\leq\alpha_{0}$ is already considered
above, we assume, to be specific, that $\alpha\in(\alpha_{0},1)$. Since the
functions $f(x,t)$ and $u_{0}(x)$ satisfy condition \eqref{6.3}, then they
satisfy the more weak condition
\[
f(x,t)\in C^{\overline{\sigma}\alpha_{0},\alpha_{0}}(\overline{R_{T}^{N}%
}),\quad u_{0}(x)\in C^{\overline{\sigma}(1+\alpha_{0})}(R^{N}).
\]
This follows from the continuous embeddings $C^{\overline{\sigma}\alpha
,\alpha}(\overline{R_{T}^{N}})\subset C^{\overline{\sigma}\alpha_{0}%
,\alpha_{0}}(\overline{R_{T}^{N}})$ and $C^{\overline{\sigma} (1+\alpha
)}(R^{N})\subset C^{\overline{\sigma}(1+\alpha_{0})}(R^{N})$ in view of the
assumption $\alpha\in(\alpha_{0},1)$. On the ground of Propositions \ref{P6.1}
and \ref{P6.2} there exists the unique solution $u(x,t)\in C^{\overline
{\sigma}(1+\alpha_{0}),1+\alpha_{0}}(\overline{R_{T}^{N}})$ to problem
\eqref{6.1}, \eqref{6.2} with estimates \eqref{6.6} - \eqref{6.6.2} with the
exponent $\alpha_{0}$ instead of $\alpha$. We show below that in fact the
solution belongs to the class $u(x,t)\in C^{\overline{\sigma}(1+\alpha
),1+\alpha}(\overline{R_{T}^{N}})$ and estimates \eqref{6.6} - \eqref{6.6.2}
are valid with the bigger exponent $\alpha$ itself.

Let an index $k\in\{1,...,r\}$ be fixed and let a non-zero $h\in R^{N_{k}}$ be
also fixed, where $R^{N_{k}}$ is the corresponding subspace of $R^{N}$,
containing space variables $z_{k}$. For $\omega\in(0,1)$ and $m>\sigma
_{k}+\sigma_{k}\alpha$ consider the function
\begin{equation}
u_{k}(x,t)\equiv\frac{\delta_{h,z_{k}}^{m}u(x,t)}{|h|^{\rho}},\quad\rho
\equiv(1-\omega)\sigma_{k}\alpha,\quad\omega\in(0,1),\quad\alpha_{k}%
\equiv\omega\alpha,\label{6.40}%
\end{equation}
where $\omega$ is chosen so small that $\alpha_{k}\equiv\omega\alpha\leq
\alpha_{0}$, and such that $\rho$, $\alpha_{k}$, $\sigma_{i}\alpha_{k}$, and
$\sigma_{i}+\sigma_{i}\alpha_{k}$ are noninteger and $\sigma_{i}\alpha_{k}%
\in(0,1)$, $i=1,2,...,r$. In view of linearity of problem \eqref{6.1},
\eqref{6.2}, it can be directly verified that the function $u_{k}(x,t)$
satisfies this problem with the following right hand side and the initial data
correspondingly
\begin{equation}
f_{k}(x,t)\equiv\frac{\delta_{h,z_{k}}^{m}f(x,t)}{|h|^{\rho}},\quad
u_{0,k}(x)\equiv\frac{\delta_{h,z_{k}}^{m}u_{0}(x)}{|h|^{\rho}}.\label{6.41}%
\end{equation}
According to Proposition \ref{DopMushel}, the functions $f_{k}(x,t)$ and
$u_{0,k}(x)$ belong to the spaces
\[
f_{k}(x,t)\in C^{\overline{\sigma}\alpha_{k},\alpha_{k}}(\overline{R_{T}^{N}%
}),\quad u_{0,k}(x)\in C^{\overline{\sigma}(1+\alpha_{k})}(R^{N}),
\]
and
\begin{equation}
\left\vert f_{k}\right\vert _{\overline{R_{T}^{N}}}^{(\overline{\sigma}%
\alpha_{k},\alpha_{k})}\leq C(\alpha,\overline{\sigma},\omega)\left\vert
f\right\vert _{\overline{R_{T}^{N}}}^{(\overline{\sigma}\alpha,\alpha)}%
,\quad\left\vert u_{0,k}\right\vert _{R^{N}}^{(\overline{\sigma}(1+\alpha
_{k}))}\leq C(\alpha,\overline{\sigma},\omega)\left\vert u_{0}\right\vert
_{R^{N}}^{(\overline{\sigma}(1+\alpha))}.\label{6.42}%
\end{equation}
From Propositions \ref{P6.1} and \ref{P6.2} and from \eqref{6.42} it follows
that $u_{k}(x,t)\in C^{\overline{\sigma}(1+\alpha_{k}),1+\alpha_{k}}%
(\overline{R_{T}^{N}})$, and
\[
\left\langle u_{k}\right\rangle _{\overline{R_{T}^{N}}}^{(\overline{\sigma
}(1+\alpha_{k}),1+\alpha_{k})}\leq C(\overline{\sigma},\alpha_{k})\left(
\left\vert f_{k}\right\vert _{\overline{R_{T}^{N}}}^{(\overline{\sigma}%
\alpha_{k},\alpha_{k})}+\left\vert u_{0,k}\right\vert _{R^{N}}^{(\overline
{\sigma}(1+\alpha_{k}))}\right)  \leq
\]%
\begin{equation}
\leq C(\alpha,\overline{\sigma},\omega)\left(  \left\vert f\right\vert
_{\overline{R_{T}^{N}}}^{(\overline{\sigma}\alpha,\alpha)}+\left\vert
u_{0}\right\vert _{R^{N}}^{(\overline{\sigma}(1+\alpha))}\right)
.\label{6.43}%
\end{equation}
In particular, this estimate includes the highest H\"{o}lder seminorm with
respect to the group of the space variables $z_{k}$
\[
\left\langle u_{k}\right\rangle _{z_{k},\overline{R_{T}^{N}}}^{(\sigma
_{k}+\sigma_{k}\alpha_{k})}\leq C(\alpha,\overline{\sigma},\omega)\left(
\left\vert f\right\vert _{\overline{R_{T}^{N}}}^{(\overline{\sigma}%
\alpha,\alpha)}+\left\vert u_{0}\right\vert _{R^{N}}^{(\overline{\sigma
}(1+\alpha))}\right)
\]
that is in view of property \eqref{2.4} (remind that $m>\sigma_{k}+\sigma
_{k}\alpha$),
\begin{equation}
\sup_{s\in R^{N_{k}},s\neq0}\frac{|\delta_{s,z_{k}}^{m}u_{k}(x,t)|}%
{|s|^{\sigma_{k}+\sigma_{k}\alpha_{k}}}=C\left\langle u_{k}\right\rangle
_{z_{k},\overline{R_{T}^{N}}}^{(\sigma_{k}+\sigma_{k}\alpha_{k})}\leq
C(\alpha,\overline{\sigma},\omega)\left(  \left\vert f\right\vert
_{\overline{R_{T}^{N}}}^{(\overline{\sigma}\alpha,\alpha)}+\left\vert
u_{0}\right\vert _{R^{N}}^{(\overline{\sigma}(1+\alpha))}\right)
.\label{6.44}%
\end{equation}
Thus, according to the definition of $u_{k}(x,t)$ in \eqref{6.40},
\begin{equation}
\sup_{s\in R^{N_{k}},s\neq0}\frac{|\delta_{s,z_{k}}^{m}\delta_{h,z_{k}}%
^{m}u(x,t)|}{|s|^{\sigma_{k}+\sigma_{k}\alpha_{k}}|h|^{\rho}}\leq
C(\alpha,\overline{\sigma},\omega)\left(  \left\vert f\right\vert
_{\overline{R_{T}^{N}}}^{(\overline{\sigma}\alpha,\alpha)}+\left\vert
u_{0}\right\vert _{R^{N}}^{(\overline{\sigma}(1+\alpha))}\right)
.\label{6.45}%
\end{equation}
Choosing here the particular value $s=h$ and making use of the definitions of
$\alpha_{k}$ and $\rho$ in \eqref{6.40}, we conclude that
\begin{equation}
\frac{|\delta_{h,z_{k}}^{2m}u(x,t)|}{|h|^{\sigma_{k}+\sigma_{k}\alpha}}\leq
C(\alpha,\overline{\sigma},\omega)\left(  \left\vert f\right\vert
_{\overline{R_{T}^{N}}}^{(\overline{\sigma}\alpha,\alpha)}+\left\vert
u_{0}\right\vert _{R^{N}}^{(\overline{\sigma}(1+\alpha))}\right)
.\label{6.46}%
\end{equation}
Since $h\in R^{N_{k}}\backslash\{0\}$ is arbitrary, we can infer from this, on
the ground of \eqref{2.4}, the estimate for the highest seminorm in $z_{k}$ of
the solution $u(x,t)$ itself with the original $\alpha$
\begin{equation}
\left\langle u(x,t)\right\rangle _{z_{k},\overline{R_{T}^{N}}}^{(\sigma
_{k}+\sigma_{k}\alpha)}\leq C(\alpha,\overline{\sigma},\omega)\left(
\left\vert f\right\vert _{\overline{R_{T}^{N}}}^{(\overline{\sigma}%
\alpha,\alpha)}+\left\vert u_{0}\right\vert _{R^{N}}^{(\overline{\sigma
}(1+\alpha))}\right)  .\label{6.47}%
\end{equation}
It is clear that these reasonings do not depend on the index $k$ of a group of
the space variables, and thus \eqref{6.47} is valid for each $k=1,...,r$ that
is
\begin{equation}
\left\langle u(x,t)\right\rangle _{x,\overline{R_{T}^{N}}}^{(\overline{\sigma
}(1+\alpha))}\leq C(\alpha,\overline{\sigma},\omega)\left(  \left\vert
f\right\vert _{\overline{R_{T}^{N}}}^{(\overline{\sigma}\alpha,\alpha
)}+\left\vert u_{0}\right\vert _{R^{N}}^{(\overline{\sigma}(1+\alpha
))}\right)  .\label{6.48}%
\end{equation}
On the ground of the properties of the fractional operators $(-\Delta_{z_{k}%
})^{\frac{\sigma_{k}}{2}}$ from equation \eqref{6.1}, that are formulated in
Proposition \ref{P3.5} in \eqref{3.18}, we conclude that the consecuence of
\eqref{6.48} is the estimate
\begin{equation}
\left\langle {\sum_{k=1}^{r}}(-\Delta_{z_{k}})^{\frac{\sigma_{k}}{2}%
}u(x,t)\right\rangle _{x,\overline{R_{T}^{N}}}^{(\overline{\sigma}\alpha)}\leq
C(\alpha,\overline{\sigma},\omega,\{N_{k}\})\left(  \left\vert f\right\vert
_{\overline{R_{T}^{N}}}^{(\overline{\sigma}\alpha,\alpha)}+\left\vert
u_{0}\right\vert _{R^{N}}^{(\overline{\sigma}(1+\alpha))}\right)
.\label{6.49}%
\end{equation}
But then from equation \eqref{6.1} and from \eqref{6.3} we obtain the estimate
for the H\"{o}lder seminorm of the time derivative $u_{t}(x,t)$ with respect
to $x$
\begin{equation}
\left\langle u_{t}(x,t)\right\rangle _{x,\overline{R_{T}^{N}}}^{(\overline
{\sigma}\alpha)}\leq C(\alpha,\overline{\sigma},\omega,\{N_{k}\})\left(
\left\vert f\right\vert _{\overline{R_{T}^{N}}}^{(\overline{\sigma}%
\alpha,\alpha)}+\left\vert u_{0}\right\vert _{R^{N}}^{(\overline{\sigma
}(1+\alpha))}\right)  .\label{6.50}%
\end{equation}
Consider again some fixed group $z_{k}$ of the space variables. Based on
\eqref{6.47} and \eqref{6.50}, we can apply Theorem \ref{T3.4} and it's
estimate \eqref{3.72} to the function $u(x,t)$ to obtain
\begin{equation}
\left\langle (-\Delta_{z_{k}})^{\frac{\sigma_{k}}{2}}u(x,t)\right\rangle
_{t,\overline{R_{T}^{N}}}^{(\alpha)}\leq C(\alpha,\overline{\sigma}%
,\omega,\{N_{k}\})\left(  \left\vert f\right\vert _{\overline{R_{T}^{N}}%
}^{(\overline{\sigma}\alpha,\alpha)}+\left\vert u_{0}\right\vert _{R^{N}%
}^{(\overline{\sigma}(1+\alpha))}\right)  .\label{6.51}%
\end{equation}
Since the group $z_{k}$ is arbitrary, we can infer from the last inequality
and again from equation \eqref{6.1} that
\begin{equation}
\left\langle u_{t}(x,t)\right\rangle _{t,\overline{R_{T}^{N}}}^{(\alpha)}\leq
C(\alpha,\overline{\sigma},\omega,\{N_{k}\})\left(  \left\vert f\right\vert
_{\overline{R_{T}^{N}}}^{(\overline{\sigma}\alpha,\alpha)}+\left\vert
u_{0}\right\vert _{R^{N}}^{(\overline{\sigma}(1+\alpha))}\right)
.\label{6.52}%
\end{equation}
Combining now estimates \eqref{6.48}, \eqref{6.52} (and estimate \eqref{6.6}
for the solution $u(x,t)$ in the weaker space $C^{\overline{\sigma}%
(1+\alpha_{0}),1+\alpha_{0}}(\overline{R_{T}^{N}})$, containing the estimate
of the lowest norm $\left\vert u(x,t)\right\vert _{\overline{R_{T}^{N}}}%
^{(0)}$), we get finally
\[
|u|_{\overline{R_{T}^{N}}}^{(\overline{\sigma}(1+\alpha),1+\alpha)}\leq
C(\overline{\sigma},\alpha,T)\left(  |f|_{\overline{R_{T}^{N}}}^{(\overline
{\sigma}\alpha,\alpha)}+|u_{0}|_{R^{N}}^{(\overline{\sigma}(1+\alpha
))}\right)  ,
\]
which completes the proof of Theorem \ref{T6.1}.

\section{Problem \eqref{6.1}, \eqref{6.2} for an arbitrary $\alpha>0$.}

\label{s7}

In the previous section we've considered problem \eqref{6.1}, \eqref{6.2}
under the restriction $\alpha\in(0,1)$. This imposes the restriction on the
smoothness of the solution in $t$ within one integer derivative in $t$ with
the finite H\"{o}lder seminorm in $t$ of order $\alpha\in(0,1)$. This also
imposes the restriction on the smoothness of the solution in $x$ to the
anisotropic orders $\sigma_{k} + \sigma_{k}\alpha$. In this section, we will
get rid of the restriction $\alpha\in(0,1)$, which will allow to consider
problem \eqref{6.1}, \eqref{6.2} in anisotropic H\"{o}lder spaces of arbitrary smoothness.

So, let first in problem \eqref{6.1}, \eqref{6.2} we have $\alpha\in(1,2)$ and
let \eqref{6.3} be satisfied. Then, on the ground of Theorem \ref{T6.1}, this
problem has the unique solution from the space $u(x,t)\in C^{\overline{\sigma
}(1+\beta),1+\beta}(\overline{R_{T}^{N}})$, $\beta\equiv\alpha-1\in(0,1)$, due
to the fact that the right hand side and the initial data are even surplus smooth.

Show first the additional smoothness of the solution in $t$. For this we will
simply differentiate equation \eqref{6.1} in $t$ to reduce the problem with a
bigger $\alpha$ to a problem with a smaller one. Formally differentiating
equation \eqref{6.1} in $t$ and denoting (formally for a while) the derivative
$u_{t}(x,t)$ by $v(x,t)\equiv u_{t}(x,t)$, we obtain for $v(x,t)$ the same
equation, but with the another right hand side
\begin{equation}
Lu(x,t)\equiv v_{t}(x,t)+{\sum_{k=1}^{r}}(-\Delta_{z_{k}})^{\frac{\sigma_{k}%
}{2}}v=g(x,t)\equiv f_{t}(x,t),\quad(x,t)\in R_{T}^{N}.\label{7.1}%
\end{equation}
The initial condition for the function $v(x,t)\equiv u_{t}(x,t)$ at $t=0$ is
defined from the original equation \eqref{6.1}, which shows that at $t=0$
\begin{equation}
v(x,0)=v_{0}(x)\equiv f(x,0)-{\sum_{k=1}^{r}}(-\Delta_{z_{k}})^{\frac
{\sigma_{k}}{2}}u_{0}(x),\quad x\in R^{N}.\label{7.2}%
\end{equation}
Making use of formulas \eqref{2.5}, \eqref{2.6} and Theorem \ref{T3dop2} to
calculate the spaces of smoothness for the functions $g(x,t)$ and $v_{0}(x)$,
from \eqref{7.1} and \eqref{7.2}, with taking the notation
\begin{equation}
\beta\equiv\alpha-1\in(0,1),\label{7.3}%
\end{equation}
we obtain that
\begin{equation}
g(x,t)\in C^{\overline{\sigma}\beta,\beta}(\overline{R_{T}^{N}}),\quad
v_{0}(x)\in C^{\overline{\sigma}(1+\beta)}(R^{N})\label{7.4}%
\end{equation}
and the following estimates are valid
\begin{equation}
|g|_{\overline{R_{T}^{N}}}^{(\overline{\sigma}\beta,\beta)}\leq C(\overline
{\sigma},\alpha,T)|f|_{\overline{R_{T}^{N}}}^{(\overline{\sigma}\alpha
,\alpha)},\quad|v_{0}|_{R^{N}}^{(\overline{\sigma}(1+\beta))}\leq
C(\overline{\sigma},\alpha)|u_{0}|_{R^{N}}^{(\overline{\sigma}(1+\alpha
))}.\label{7.5}%
\end{equation}
Based on Theorem \ref{T6.1}, we infer that problem \eqref{7.1}, \eqref{7.2}
has the unique solution $v(x,t)\in C^{\overline{\sigma}(1+\beta),1+\beta
}(\overline{R_{T}^{N}})$ and
\begin{equation}
|v|_{\overline{R_{T}^{N}}}^{(\overline{\sigma}(1+\beta),1+\beta)}\leq
C(\overline{\sigma},\alpha,T)\left(  |g|_{\overline{R_{T}^{N}}}^{(\overline
{\sigma}\beta,\beta)}+|v_{0}|_{R^{N}}^{(\overline{\sigma}(1+\beta))}\right)
.\label{7.6}%
\end{equation}
The direct verification with taking into account the definitions of $g(x,t)$
in \eqref{7.1} and of $v_{0}(x)$ in \eqref{7.2} shows that the function
\[
\widetilde{u}(x,t)\equiv u_{0}(x)+{\int\limits_{0}^{t}}v(x,\tau)d\tau
\]
satisfies the original problem \eqref{6.1}, \eqref{6.2}. Since in addition
$\widetilde{u}(x,t)\in C^{\overline{\sigma}(1+\beta),1+\beta}(\overline
{R_{T}^{N}})$ on the ground of \eqref{7.6}, then from the uniqueness in this
class it follows that $\widetilde{u}(x,t)$ coincides with the original
solution $u(x,t)$ that is, in particular, $v(x,t)\equiv u_{t}(x,t)$ not
formally but in fact. But then $u_{t}(x,t)\in C^{\overline{\sigma}%
(1+\beta),1+\beta}(\overline{R_{T}^{N}})$ and for this function estimate
\eqref{7.6} is valid. This means, in particular, that
\[
\left\langle u_{t}(x,t)\right\rangle _{t,\overline{R_{T}^{N}}}^{(1+\beta)}\leq
C(\overline{\sigma},\alpha,T)\left(  |g|_{\overline{R_{T}^{N}}}^{(\overline
{\sigma}\beta,\beta)}+|v_{0}|_{R^{N}}^{(\overline{\sigma}(1+\beta))}\right)  ,
\]
and from this, on the ground of the definition of $\beta$, of the definition
of H\"{o}lder seminorms, and of \eqref{7.5}, we obtain the higher smoothness
of the solution $u(x,t)$ in $t$ that is
\begin{equation}
\left\langle u(x,t)\right\rangle _{t,\overline{R_{T}^{N}}}^{(1+\alpha)}\leq
C(\overline{\sigma},\alpha,T)\left(  |f|_{\overline{R_{T}^{N}}}^{(\overline
{\sigma}\alpha,\alpha)}+|u_{0}|_{R^{N}}^{(\overline{\sigma}(1+\alpha
))}\right)  ,\quad\alpha\in(0,2).\label{7.7}%
\end{equation}

Turning now to the additional smoothness in $x$, we confine ourselves to a
brief explanation because the proof is identical to that from section
\ref{s6.5} at the removing of the restriction on the smallness of $\alpha$. At
that we consider the functions ($h\in R^{N_{k}}$)
\[
u_{k}(x,t)\equiv\frac{\delta_{h,z_{k}}^{m}u(x,t)}{|h|^{\rho}},\quad\rho
\equiv(1-\omega)\sigma_{k}\alpha,\quad\omega\in(0,1),\quad\alpha_{k}%
\equiv\omega\alpha,
\]
where similar to the previous reasonings $\omega$ is chosen such that
$\alpha_{k}\equiv\omega\alpha\in(0,1)$ and also such that the numbers $\rho$,
$\alpha_{k}$, $\sigma_{i}\alpha_{k}$, and $\sigma_{i}+\sigma_{i} \alpha_{k}$
are nonintegers. Further reasonings also coincide with reasonings from section
\ref{s6.5}, which gives
\begin{equation}
\left\langle u(x,t)\right\rangle _{x,\overline{R_{T}^{N}}}^{\left(
\overline{\sigma}(1+\alpha)\right)  }\leq C(\overline{\sigma},\alpha,T)\left(
|f|_{\overline{R_{T}^{N}}}^{(\overline{\sigma}\alpha,\alpha)}+|u_{0}|_{R^{N}%
}^{(\overline{\sigma}(1+\alpha))}\right)  ,\quad\alpha\in(0,2). \label{7.8}%
\end{equation}

Estimates \eqref{7.7} and \eqref{7.8} mean that Theorem \ref{T6.1} is valid
not only for $\alpha\in(0,1)$ but $\alpha\in(1,2)$ as well that is for a
noninteger $\alpha\in(0,2)$. Iterating this process by induction, we, finally,
arrive at the following assertion.

\begin{theorem}
\label{T7.1} If $\alpha>0$ is a noninteger and conditions \eqref{6.3} are
satisfied, then problem \eqref{6.1}, \eqref{6.2} has the unique solution
$u(x,t)\in C^{\overline{\sigma}(1+\alpha),1+\alpha}(\overline{R_{T}^{N}})$
with the estimates
\begin{equation}
|u|_{\overline{R_{T}^{N}}}^{(\overline{\sigma}(1+\alpha),1+\alpha)}\leq
C(\overline{\sigma},\alpha,T)\left(  |f|_{\overline{R_{T}^{N}}}^{(\overline
{\sigma}\alpha,\alpha)}+|u_{0}|_{R^{N}}^{(\overline{\sigma}(1+\alpha
))}\right)  , \label{7.9}%
\end{equation}%
\begin{equation}
\left\langle u\right\rangle _{\overline{R_{T}^{N}}}^{(\overline{\sigma
}(1+\alpha),1+\alpha)}\leq C(\overline{\sigma},\alpha)\left(  |f|_{\overline
{R_{T}^{N}}}^{(\overline{\sigma}\alpha,\alpha)}+|u_{0}|_{R^{N}}^{(\overline
{\sigma}(1+\alpha))}\right)  , \label{7.10}%
\end{equation}%
\begin{equation}
\left\vert u\right\vert _{\overline{R_{\widetilde{T}}^{N}}}^{(0)}\leq
C(\overline{\sigma},\alpha)\left(  |f|_{\overline{R_{T}^{N}}}^{(\overline
{\sigma}\alpha,\alpha)}+|u_{0}|_{R^{N}}^{(\overline{\sigma}(1+\alpha
))}\right)  \widetilde{T}^{1+\alpha}+|u_{0}|_{R^{N}}^{(0)},\quad\widetilde
{T}\leq T. \label{7.11}%
\end{equation}

\end{theorem}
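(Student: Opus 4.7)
The plan is to prove Theorem \ref{T7.1} by induction on $[\alpha]$, taking Theorem \ref{T6.1} as the base case $[\alpha]=0$ and following exactly the scheme sketched in the paragraphs preceding the statement. So I assume the assertion is already established for every admissible non-integer exponent in $(0,n)$, and consider $\alpha \in (n, n+1)$ with $\alpha$ satisfying the hypotheses of the theorem. The two things to obtain are the full norm estimate \eqref{7.9} (together with \eqref{7.10}, \eqref{7.11}) and the existence/uniqueness of a solution in $C^{\overline{\sigma}(1+\alpha), 1+\alpha}(\overline{R_T^N})$; uniqueness will be free, since any such solution automatically lies in $C^{\overline{\sigma}(1+\alpha_0), 1+\alpha_0}(\overline{R_T^N})$ for a small $\alpha_0 \in (0,1)$, where Proposition \ref{P6.2} applies.

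For the gain in $t$-smoothness I would set $v(x,t) \equiv u_t(x,t)$ and formally differentiate equation \eqref{6.1} in $t$, obtaining the auxiliary Cauchy problem
\begin{equation*}
v_t + \sum_{k=1}^{r} (-\Delta_{z_k})^{\frac{\sigma_k}{2}} v = g, \qquad v(x,0) = v_0(x),
\end{equation*}
with $g \equiv f_t$ and $v_0(x) \equiv f(x,0) - \sum_{k=1}^{r} (-\Delta_{z_k})^{\frac{\sigma_k}{2}} u_0(x)$. Proposition \ref{P3.5} and the embedding \eqref{2.6} identify the data classes as $g \in C^{\overline{\sigma}(\alpha-1), \alpha-1}(\overline{R_T^N})$ and $v_0 \in C^{\overline{\sigma}\alpha}(R^N)$, with norms controlled by $|f|^{(\overline{\sigma}\alpha,\alpha)}$ and $|u_0|^{(\overline{\sigma}(1+\alpha))}$. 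The induction hypothesis at exponent $\alpha - 1 \in (n-1, n)$ produces a unique $v \in C^{\overline{\sigma}\alpha, \alpha}(\overline{R_T^N})$ satisfying the analogues of \eqref{7.9}--\eqref{7.11}. The function $\widetilde u(x,t) \equiv u_0(x) + \int_0^t v(x,\tau) d\tau$ then satisfies the original problem \eqref{6.1}, \eqref{6.2}: the initial condition is obvious, and the equation follows because integrating $g = f_t$ from $0$ to $t$ and adding $f(x,0) = v_0 + \sum_k (-\Delta_{z_k})^{\sigma_k/2} u_0$ reconstructs $f(x,t)$. Since $\widetilde u \in C^{\overline{\sigma}(1+\alpha_0), 1+\alpha_0}$ for small $\alpha_0$, Proposition \ref{P6.2} forces $\widetilde u = u$, so $u_t = v \in C^{\overline{\sigma}\alpha, \alpha}$, delivering $\langle u \rangle_{t, \overline{R_T^N}}^{(1+\alpha)}$ with the right bound.

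For the gain in $x$-smoothness I would replay verbatim the difference-quotient scaling of Section \ref{s6.5}. Fix a group $z_k$ and $h \in R^{N_k} \setminus \{0\}$, pick $\omega \in (0,1)$ so that $\alpha_k \equiv \omega \alpha \in (0,1)$ and so that $(1-\omega)\sigma_k \alpha$, $\alpha_k$, and all $\sigma_i \alpha_k$, $\sigma_i + \sigma_i \alpha_k$ are non-integers, and consider
\begin{equation*}
u_k(x,t) \equiv \frac{\delta_{h,z_k}^{m} u(x,t)}{|h|^{(1-\omega)\sigma_k \alpha}}, \qquad m > \sigma_k(1+\alpha).
\end{equation*}
By linearity $u_k$ solves problem \eqref{6.1}, \eqref{6.2} with data $f_k$, $u_{0,k}$ defined analogously, and Proposition \ref{DopMushel} bounds the norms of these data in $C^{\overline{\sigma}\alpha_k, \alpha_k}$ and $C^{\overline{\sigma}(1+\alpha_k)}$ by the original data norms. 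Theorem \ref{T6.1} applied at the small exponent $\alpha_k$ yields a bound on $\langle u_k \rangle_{z_k, \overline{R_T^N}}^{(\sigma_k(1+\alpha_k))}$; specializing the inner difference step $s = h$ as in \eqref{6.44}--\eqref{6.47} collapses the double difference into $\delta_{h,z_k}^{2m} u / |h|^{\sigma_k(1+\alpha)}$ and, since $h$ was arbitrary, produces the required control of $\langle u \rangle_{z_k, \overline{R_T^N}}^{(\sigma_k(1+\alpha))}$. Combining over $k = 1, \ldots, r$ with the $t$-estimate from the previous paragraph and the zero-order bound inherited from the base case gives \eqref{7.9}--\eqref{7.11}.

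The main obstacle, and essentially the only delicate point, is the bookkeeping of non-integer exponents through the induction: one must check that the reduced problem for $v$ indeed satisfies the hypotheses of Theorem \ref{T6.1} (at stage $n=1$) or of the inductive hypothesis (at later stages), and that the auxiliary $\omega$ can always be selected avoiding the countable set of choices that would produce an integer exponent somewhere in $\sigma_i \alpha_k$, $\sigma_i(1+\alpha_k)$, or $(1-\omega)\sigma_k \alpha$. Once this selection is carried out, uniqueness at the current smoothness level is immediate from the embedding into the already-handled lower-smoothness class, and the constants in \eqref{7.9}--\eqref{7.11} are tracked through the finitely many inductive steps.
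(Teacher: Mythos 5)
Your proposal is correct and follows essentially the same route as the paper's own proof: the paper likewise differentiates the equation in $t$, solves the reduced problem for $v=u_{t}$ with data $g=f_{t}$, $v_{0}=f(x,0)-\sum_{k}(-\Delta_{z_{k}})^{\frac{\sigma_{k}}{2}}u_{0}$ at the lowered exponent via Theorem \ref{T6.1}, recovers $u=u_{0}+\int_{0}^{t}v\,d\tau$ through uniqueness in the lower-smoothness class, and obtains the spatial gain by the identical difference-quotient rescaling $\delta_{h,z_{k}}^{m}u/|h|^{(1-\omega)\sigma_{k}\alpha}$ from Section \ref{s6.5}. The only cosmetic difference is that you organize the argument as an explicit induction on $[\alpha]$ (with the exponent bookkeeping flagged), whereas the paper writes out the case $\alpha\in(1,2)$ and then says the process is iterated by induction.
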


Note that estimate \eqref{7.11} is obtained completely similar to \eqref{6.6.2}.

\section{Smoothness of solution to \eqref{5.1}, \eqref{5.2} for more smooth
initial data.}

\label{s8}

It was shown in the previous section that in problem \eqref{6.1}, \eqref{6.2},
which contains the integer derivative in $t$ of the first order, the
smoothness of the solution can be arbitrary high, depending on the smoothness
of the data. In the present section We will extend this property to solutions
of problem \eqref{5.1}, \eqref{5.2}. However, as we will show below, the
complete extension of this property to the case of a fractional derivative in
$t$ of order $\theta$ requires from the right hand side of the equation to
obey a series of conditions. At that there are no restrictions for the
increasing of the smoothness in the space variables $x$ and such smoothness
increases together with the increasing of the corresponding smoothness of the
data. Pretty different situation takes place with the smoothness in $t$. We
explain this with the following simple example, which completely reflects the
essence of the situation.

\begin{example}
\label{E8.1} Let in problem \eqref{5.1}, \eqref{5.2} the right hand side and
the initial data be equal correspondingly
\begin{equation}
f(x,t)\equiv t,\quad u_{0}(x)\equiv0. \label{8.1}%
\end{equation}
It can be directly verified that the unique (in view of Theorem \ref{T5.1})
solution to \eqref{5.1}, \eqref{5.2} with such data is the function with the
dependence on $t$ solely
\begin{equation}
u(x,t)=C(\theta)t^{1+\theta}. \label{8.2}%
\end{equation}
Evidently that the function $f(x,t)$ belongs to any space $C^{\overline
{\sigma}\alpha,\theta\alpha}(\overline{R_{T}^{N}})$ with an arbitrary large
$\alpha$ that is, in fact, to $C^{\infty}$. However, for $\theta<1$ the
solution $u(x,t)$ has the smoothness in $t$ in the closed domain $\{t \geq0\}$
only up to the order $1+\theta$, though this solution is infinitely smooth in
$x$. In terms of the previous section about the raising of smoothness by
differentiation this corresponds to the following. After the differentiation
of equation \eqref{5.1} in $t$ and at the consideration of the obtained from
\eqref{5.1} equation for $u_{t}(x,t)$ we obtain the right hand side
$f_{t}(x,t)\equiv1$ and the initial data $u_{t}(x,0)\equiv0$ (in view of
Proposition \ref{P3.1.00}). And thus the necessary condition \eqref{5.5} is
not satisfied. To satisfy this condition we must have in our case
$f_{t}(x,0)\equiv0$. Consequently, the violation of condition \eqref{5.5} is a
blockage for further raising of the smoothness in $t$ in the closed domain
$\{t \geq0\}$. Thus for the further raising of the smoothness in $t$ in closed
domain we must impose additional restrictions on the derivatives $f_{t}%
^{(k)}(x,0)$. At the same time for problem \eqref{6.1}, \eqref{6.2} with an
integer derivative in $t$ additional restrictions of kind \eqref{5.5} are not necessary.

This example shows also to some extent that for additional smoothness in $x$
no additional restrictions are required.
\end{example}

In particular, Theorem \ref{T5.1} can be generalized as follows.

\begin{proposition}
\label{P8.1} Theorem \ref{T5.1} stays valid if condition \eqref{5.4} is
replaced by the more weak condition
\begin{equation}
\theta,\theta\alpha\in(0,1) \label{8.3}%
\end{equation}
that is the restrictions $\sigma_{k}\alpha\in(0,1)$,$\quad k=1,...,r$, can be omitted.
\end{proposition}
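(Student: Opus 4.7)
The plan is to mirror the argument from Section \ref{s6.5}: apply Theorem \ref{T5.1} in its already-proved form with a reduced H\"older exponent for which the extra restriction $\sigma_k\alpha\in(0,1)$ holds, then use a rescaled finite-difference trick in the space variables to upgrade the spatial regularity back to the original $\alpha$, and finally bootstrap the time regularity via Theorem \ref{T3.4} and the inverse estimate of Proposition \ref{P3.1.1}. As in the earlier sections, it suffices to treat the case $u_0\equiv 0$, $f(x,0)\equiv 0$ after the standard shift $u\mapsto u-u_0$, whose admissibility rests on compatibility condition \eqref{5.5} and Proposition \ref{P3.6}.

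First I would pick $\omega\in(0,1)$ small enough that $\alpha_\ast:=\omega\alpha$ satisfies $\sigma_i\alpha_\ast\in(0,1)$ for all $i=1,\dots,r$ and so that all of $\alpha_\ast$, $\theta\alpha_\ast$, $\sigma_i\alpha_\ast$, $\sigma_i+\sigma_i\alpha_\ast$, $\theta+\theta\alpha_\ast$ are non-integers (generic condition on $\omega$). Then Theorem \ref{T5.1} as already proved applies with $\alpha_\ast$ in place of $\alpha$. Fix $k\in\{1,\dots,r\}$ and $h\in R^{N_k}\setminus\{0\}$, and set
\[
u_k(x,t)=\frac{\delta^{m}_{h,z_k}u(x,t)}{|h|^{\rho}},\qquad \rho=(1-\omega)\sigma_k\alpha,\qquad m>\sigma_k(1+\alpha).
\]
Because $D^\theta_{\ast t}$ and each $(-\Delta_{z_j})^{\sigma_j/2}$ commute with translations in $z_k$, the function $u_k$ solves \eqref{5.1}--\eqref{5.2} with right-hand side $f_k=\delta^m_{h,z_k}f/|h|^\rho$ and initial datum $u_{0,k}=\delta^m_{h,z_k}u_0/|h|^\rho$, and compatibility \eqref{5.5} is preserved. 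Proposition \ref{DopMushel} bounds the norms of $f_k$ and $u_{0,k}$ in $C^{\overline{\sigma}\alpha_\ast,\theta\alpha_\ast}(\overline{R_T^N})$ and $C^{\overline{\sigma}(1+\alpha_\ast)}(R^N)$ uniformly in $h$ by the original norms of $f$ and $u_0$. Applying the already-known Theorem \ref{T5.1} to $u_k$ gives $u_k\in C^{\overline{\sigma}(1+\alpha_\ast),\theta+\theta\alpha_\ast}(\overline{R_T^N})$ with a uniform-in-$h$ estimate; extracting the highest seminorm in $z_k$ and specializing the second finite-difference step to $s=h$ (exactly as in \eqref{6.44}--\eqref{6.47}) produces
\[
\langle u\rangle^{(\sigma_k+\sigma_k\alpha)}_{z_k,\overline{R_T^N}}\le C\bigl(|f|^{(\overline{\sigma}\alpha,\theta\alpha)}_{\overline{R_T^N}}+|u_0|^{(\overline{\sigma}(1+\alpha))}_{R^N}\bigr),
\]
and summing over $k$ yields the full spatial regularity of $u$ at the original exponent $\alpha$.

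Next I would restore time regularity. Proposition \ref{P3.5} converts the spatial bound on $u$ into a bound $\langle(-\Delta_{z_k})^{\sigma_k/2}u\rangle^{(\overline{\sigma}\alpha)}_{x,\overline{R_T^N}}\le C(|f|+|u_0|)$, and the equation then gives $\langle D^\theta_{\ast t}u\rangle^{(\overline{\sigma}\alpha)}_{x,\overline{R_T^N}}\le C(|f|+|u_0|)$. With $\beta=\sigma_k\alpha$, $\gamma=\sigma_k(1+\alpha)$ and $\mu=\sigma_k$ in Theorem \ref{T3.4} (so that the exponent $\theta(\gamma-\mu)/(\gamma-\beta)=\theta\alpha$ is precisely what we want), we obtain $\langle(-\Delta_{z_k})^{\sigma_k/2}u\rangle^{(\theta\alpha)}_{t,\overline{R_T^N}}\le C(|f|+|u_0|)$ for each $k$. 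Feeding this back into the equation yields $\langle D^\theta_{\ast t}u\rangle^{(\theta\alpha)}_{t,\overline{R_T^N}}\le C(|f|+|u_0|)$. After the reduction to zero initial data $u(x,0)\equiv0$ and zero value $f(x,0)\equiv 0$ (which forces $D^\theta_{\ast t}u|_{t=0}=0$ via Proposition \ref{P3.1.00}), Proposition \ref{P3.1.1} (applied pointwise in $x$, with $[\theta]=0$ because $\theta\in(0,1)$) lifts this into $\langle u\rangle^{(\theta+\theta\alpha)}_{t,\overline{R_T^N}}\le C(|f|+|u_0|)$. Combined with the spatial estimate and the $C^0$-estimate over $\overline{R_{\widetilde T}^N}$ obtained from zero initial data and the time-seminorm bound exactly as in \eqref{5.6.2}, this furnishes the full norm inequality required by Theorem \ref{T5.1}.

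The main obstacle is the careful bookkeeping in the $s=h$ extraction when $\sigma_k+\sigma_k\alpha$ is large (so $m$ must be chosen large enough throughout, and $u_k$ must really be in the smaller H\"older scale so that the seminorm taken in it sees the full difference order $2m$), and the simultaneous genericity requirement on $\omega$ to keep all fractional-part constraints satisfied; the latter is easily handled by slightly perturbing $\omega$ away from the (countable) set of forbidden values. Uniqueness in the enlarged class follows from the uniqueness already proved in the class with exponent $\alpha_\ast<\alpha$, since $C^{\overline{\sigma}(1+\alpha),\theta+\theta\alpha}(\overline{R_T^N})\hookrightarrow C^{\overline{\sigma}(1+\alpha_\ast),\theta+\theta\alpha_\ast}(\overline{R_T^N})$.
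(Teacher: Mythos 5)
Your proposal follows exactly the route the paper intends: the paper gives no detailed proof of Proposition \ref{P8.1}, stating only that it replicates verbatim the argument by which the restriction on $\alpha$ was removed for Theorem \ref{T6.1} (the finite-difference-quotient scheme of Section \ref{s6.5}, which the paper's text cites, apparently by a slip, as Section \ref{s6.1}). Your spatial step (the quotients $u_k=\delta^m_{h,z_k}u/|h|^{\rho}$ with $\rho=(1-\omega)\sigma_k\alpha$, Proposition \ref{DopMushel} for uniform-in-$h$ data bounds, the specialization $s=h$ to recover $\langle u\rangle^{(\sigma_k+\sigma_k\alpha)}_{z_k}$) is precisely \eqref{6.40}--\eqref{6.48}, and your adaptations to fractional $\theta$ --- lifting $\langle D^{\theta}_{\ast t}u\rangle^{(\theta\alpha)}_{t}$ to $\langle u\rangle^{(\theta+\theta\alpha)}_{t}$ via Proposition \ref{P3.1.1}, with $D^{\theta}_{\ast t}u|_{t=0}=0$ secured by the reduction to zero data --- are the correct replacements for the steps \eqref{6.50}--\eqref{6.52} that in Section \ref{s6.5} used the local derivative $u_t$.

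One point deserves flagging, in your write-up and equally in the paper's own sketch. Your application of Theorem \ref{T3.4} with $\beta=\sigma_k\alpha$, $\gamma=\sigma_k(1+\alpha)$, $\mu=\sigma_k$ requires $\mu\in(\beta,\gamma)$, i.e.\ $\sigma_k\alpha<\sigma_k$, i.e.\ $\alpha<1$. In Section \ref{s6.5} this was automatic since Theorem \ref{T6.1} assumes $\alpha\in(0,1)$, but Proposition \ref{P8.1} only requires $\theta\alpha\in(0,1)$, so for small $\theta$ the exponent $\alpha$ may exceed $1$ --- and this regime genuinely occurs in the induction proving Theorem \ref{T8.1}, where the reduced exponent is $\beta=\alpha-1/\theta$. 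In the regime $\mu<\beta$ the interpolation of Theorem \ref{T3.4} degenerates (its remark only yields a bounded $D^{\theta}_t(-\Delta_{z_k})^{\sigma_k/2}u$, hence time-smoothness $\theta<\theta\alpha$, which is insufficient). A patch within the paper's toolbox: for the smooth, rapidly decaying approximants $u_{m,\varepsilon}$ of Section \ref{s5.5} the seminorm $A=\langle u_{m,\varepsilon}\rangle^{(\theta+\theta\alpha)}_{t,\overline{R_T^N}}$ is finite a priori, so one may combine the equation, Proposition \ref{P3.1.1}, and the interpolation inequality \eqref{3.30} of Proposition \ref{P3.6} (valid for all $\alpha>0$) with $\varepsilon$ chosen \emph{large}, absorbing the term $(C/\varepsilon)A$ into the left-hand side and paying $C\varepsilon^{\alpha}$ against the already-established spatial seminorm; passing to the limit then gives the endpoint estimate for the solution itself. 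With that replacement of the Theorem \ref{T3.4} step when $\alpha\geq 1$, your argument is complete and matches the paper's intent; for $\alpha<1$ your proof is correct as written.
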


We do not present here a detailed proof of this proposition since the proof
verbatim coincides with the reasonings of section \ref{s6.1} at the removing
the analogous restrictions on $\alpha$ in the proof of Theorem \ref{T6.1}. The
presence in the equation either integer or fractional derivative in $t$ does
not matter in these reasonings, therefore we refer the reader to section
\ref{s6.1}.

Formulate now the main assertion of the present section.

\begin{theorem}
\label{T8.1} Let in problem \eqref{5.1}, \eqref{5.2} conditions \eqref{5.3.1},
\eqref{5.3}, and \eqref{5.5} be satisfied. Let, further, $\alpha$ be such that
numbers $\theta\alpha$, $\theta+\theta\alpha$, $\sigma_{k}\alpha$, $\sigma
_{k}+\sigma_{k}\alpha$, $k=1,...,r$ are nonintegers and for some positive
integer $n$
\begin{equation}
\theta\alpha\in(n-1,n). \label{8.3.1}%
\end{equation}
Let, finally, for $n\geq2$ besides the agreement condition \eqref{5.5} the
following condition be satisfied
\begin{equation}
\frac{\partial^{m}f(x,0)}{\partial t^{m}}\equiv0,\quad m=1,...,n-1.
\label{8.3.2}%
\end{equation}

Then problem \eqref{5.1}, \eqref{5.2} has the unique solution $u(x,t)\in
C^{\overline{\sigma}(1+\alpha),\theta+\theta\alpha}(\overline{R_{T}^{N}})$
with the estimates
\begin{equation}
|u|_{\overline{R_{T}^{N}}}^{(\overline{\sigma}(1+\alpha),\theta+\theta\alpha
)}\leq C(\overline{\sigma},\theta,\alpha,T)\left(  |f|_{\overline{R_{T}^{N}}%
}^{(\overline{\sigma}\alpha,\theta\alpha)}+|u_{0}|_{R^{N}}^{(\overline{\sigma
}(1+\alpha))}\right)  , \label{8.3.3}%
\end{equation}%
\begin{equation}
\left\langle u\right\rangle _{\overline{R_{T}^{N}}}^{(\overline{\sigma
}(1+\alpha),\theta+\theta\alpha)}\leq C(\overline{\sigma},\theta
,\alpha)\left(  |f|_{\overline{R_{T}^{N}}}^{(\overline{\sigma}\alpha
,\theta\alpha)}+|u_{0}|_{R^{N}}^{(\overline{\sigma}(1+\alpha))}\right)  ,
\label{8.3.4}%
\end{equation}%
\begin{equation}
\left\vert u\right\vert _{\overline{R_{\widetilde{T}}^{N}}}^{(0)}\leq
C(\overline{\sigma},\theta,\alpha)\left(  |f|_{\overline{R_{T}^{N}}%
}^{(\overline{\sigma}\alpha,\theta\alpha)}+|u_{0}|_{R^{N}}^{(\overline{\sigma
}(1+\alpha))}\right)  \widetilde{T}^{\theta+\theta\alpha}+|u_{0}|_{R^{N}%
}^{(0)},\quad\widetilde{T}\leq T. \label{8.3.5}%
\end{equation}

\end{theorem}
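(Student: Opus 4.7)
Following the template of Section \ref{s7}, the plan is to prove Theorem \ref{T8.1} by induction on the integer $n$ with $\theta\alpha \in (n-1, n)$, the base case $n = 1$ being Proposition \ref{P8.1}. After the standard reduction to homogeneous initial data $u_0 \equiv 0$ (as in Section \ref{s5.1}), the compatibility condition \eqref{5.5} becomes $f(x,0) \equiv 0$ while \eqref{8.3.2} is preserved, so $\partial_t^m f(x,0) \equiv 0$ for $m = 0, 1, \ldots, n-1$.

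For the inductive step ($n-1 \Rightarrow n$, with $n \geq 2$), I would differentiate equation \eqref{5.1} once in $t$ and set $v \equiv u_t$. A short computation based on the representation \eqref{1.3} (writing $D_{*t}^\theta u$ as the Riemann--Liouville integral $I^{1-\theta}(u_\tau)$ and integrating by parts using $u_\tau(x,0)=0$) shows that on the class $C^{\overline{\sigma}(1+\alpha),\theta+\theta\alpha}$ with $u_t(x,0)=0$ the operators $\partial_t$ and $D_{*t}^\theta$ commute, so $v$ satisfies
\begin{equation*}
D_{*t}^\theta v + \sum_{k=1}^r (-\Delta_{z_k})^{\sigma_k/2} v = f_t(x,t), \qquad v(x,0)=0.
\end{equation*}
The vanishing $u_t(x,0)=0$ is supplied by Proposition \ref{P3.1.1} applied to $u$: we have $u(x,0)=0$ after reduction, $D_{*t}^\theta u(x,0)=0$ by Proposition \ref{P3.1.00}, and $\theta+\theta\alpha > 1$ because $n\geq 2$. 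The compatibility condition \eqref{5.5} for the $v$-problem reads $f_t(x,0)=0$, which holds by the $m=1$ case of \eqref{8.3.2}, and the remaining zero-derivative conditions $\partial_t^m f_t(x,0)=\partial_t^{m+1}f(x,0)=0$ for $m=1,\ldots,n-2$ are the shifted copies of \eqref{8.3.2}. The index bookkeeping $\alpha' = \alpha - 1/\theta$, $\theta\alpha' = \theta\alpha-1 \in (n-2, n-1)$, places the $v$-problem in the range of the inductive hypothesis. Applying it yields $v \in C^{\overline{\sigma}(1+\alpha'),\theta+\theta\alpha'}(\overline{R_T^N})$ with the corresponding estimate, which in turn lifts the time-smoothness of $u$ from the base-case level $\theta+\theta\alpha'$ to the claimed $\theta+\theta\alpha$.

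The spatial smoothness is then raised by the difference-quotient scheme of Sections \ref{s6.5} and \ref{s7}: fix a group $z_k$ and a step $h \in R^{N_k}$, and form $u_{k}(x,t) \equiv \delta_{h,z_k}^{m} u(x,t)/|h|^\rho$ with $\rho = (1-\omega)\sigma_k\alpha$ and $\omega$ chosen small enough that $\omega\alpha$ is non-integer and lies within the already-established range. By Proposition \ref{DopMushel}, $u_{k}$ satisfies the problem with data $f_{k}$, $u_{0,k}$ of the same reduced smoothness, and these data inherit the compatibility and vanishing-derivative conditions from $f$ and $u_0$. Applying the base case to $u_k$ and then specializing $s=h$ in the resulting highest-seminorm bound, exactly as in \eqref{6.44}--\eqref{6.47}, delivers the estimate for $\langle u\rangle_{z_k,\overline{R_T^N}}^{(\sigma_k+\sigma_k\alpha)}$; iterating over $k$ yields the full $x$-estimate. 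The cross Hölder seminorms in time of $(-\Delta_{z_k})^{\sigma_k/2}u$ and hence of $D_{*t}^\theta u = f - \sum_k (-\Delta_{z_k})^{\sigma_k/2}u$ are then controlled via Theorem \ref{T3.4}, and the low norm $|u|^{(0)}_{\overline{R_{\widetilde T}^N}}$ is recovered from $u(x,0)\equiv 0$ by the standard argument of \eqref{5.37.2}.

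The principal obstacle is the rigorous justification of the commutation $(D_{*t}^\theta u)_t = D_{*t}^\theta u_t$ together with the vanishing $u_t(x,0)=0$: the two facts are interlocked, since the commutation itself rests on the vanishing of the initial value. Proposition \ref{P3.1.1} supplies exactly the missing ingredient, but one must apply it prior to the formal differentiation, and must verify that the differentiated equation holds genuinely (not only distributionally) so that the inductive hypothesis can be invoked in the strong form stated in the theorem. Everything after this point is an inductive reshuffling of smoothness indices and a reuse of the machinery already developed in Sections \ref{s3}, \ref{sDOP}, and \ref{s6}.
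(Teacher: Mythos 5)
Your proposal follows essentially the same route as the paper's proof: induction on $n$ with base case Theorem \ref{T5.1}/Proposition \ref{P8.1}, reduction to zero initial data, differentiation of the equation in $t$ with the commutation $(D_{\ast t}^{\theta}u)_{t}=D_{\ast t}^{\theta}u_{t}$ justified via $u_{t}(x,0)\equiv 0$ from \eqref{3.4.0}, the compatibility conditions \eqref{8.8} and the shifted copies of \eqref{8.3.2} for the differentiated problem with the bookkeeping $\beta=\alpha-1/\theta$, and the difference-quotient scheme of sections \ref{s6.5}, \ref{s7} together with Theorem \ref{T3.4} for the spatial smoothness. The only refinement worth noting is that the paper resolves exactly the interlocking you flag by reversing the logical order: it never differentiates the given solution, but instead solves the differentiated problem for $v$ directly, sets $u(x,t)=\int_{0}^{t}v(x,\tau)\,d\tau$, checks via \eqref{8.5.1} and $f(x,0)\equiv 0$ that this solves the original problem, and identifies it with the unique solution in the weaker class, so that $u_{t}=v$ holds ``not formally but in fact'' without ever needing $D_{\ast t}^{\theta}u_{t}$ to be defined a priori.
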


\begin{proof}
We will carry out the proof by induction in the value of $\theta\alpha$ by
formal differentiation of equation \eqref{5.1} in $t$ and reducing a problem
with more smooth data to a similar problem with less smooth data, according to
the schema of the previous section.

Thus, since for $\theta\alpha\in(0,1)$ the situation is described in Theorem
\ref{T5.1} and Proposition \ref{P8.1}, we suppose now that $\theta\alpha
\in(1,2)$. Without loss of generality we can assume zero initial data
$u_{0}(x)$ in problem \eqref{5.1}, \eqref{5.2}, as it was explained in section
\ref{s5.1}. Formally differentiating equation \eqref{5.1} in $t$, we obtain
the following equation for the function $v(x,t)\equiv u_{t}(x,t)$
\begin{equation}
Lv(x,t)\equiv D_{\ast t}^{\theta}v(x,t)+{\sum_{k=1}^{r}}(-\Delta_{z_{k}%
})^{\frac{\sigma_{k}}{2}}v(x,t)=f_{t}(x,t),\quad(x,t)\in R_{T}^{N}\label{8.4}%
\end{equation}
with the initial condition
\begin{equation}
v(x,0)\equiv0,\quad x\in R^{N}.\label{8.5}%
\end{equation}
Initial condition \eqref{8.5} is due to the fact that under the condition
$u_{0}(x)\equiv0$ and for $\theta+\theta\alpha>1$ (as it is in our case) we
have $u_{t}(x,0)\equiv0$ in view of \eqref{3.4.0}. Note also that
differentiating in $t$ the nonlocal derivative $D_{\ast t}^{\theta}u(x,t)$ in
equation \eqref{5.1}, due to the fact that $u_{t}(x,0)\equiv0$ and according
to the definition of the Caputo - Jrbashyan derivative, we have
\begin{equation}
\left(  D_{\ast t}^{\theta}u(x,t)\right)  _{t}=D_{\ast t}^{\theta}%
u_{t}(x,t).\label{8.5.1}%
\end{equation}

On the ground of \eqref{2.6} the right hand side in \eqref{8.4} belongs to the
space
\begin{equation}
f_{t}(x,t)\in C^{\overline{\sigma}\beta,\theta\beta}(\overline{R_{T}^{N}%
}),\quad\beta\equiv\frac{\theta\alpha-1}{\theta}, \label{8.6}%
\end{equation}
and
\begin{equation}
\left\vert f_{t}(x,t)\right\vert _{\overline{R_{T}^{N}}}^{(\overline{\sigma
}\beta,\theta\beta)}\leq C (\overline{\sigma},\theta,\alpha)\left\vert
f(x,t)\right\vert _{\overline{R_{T}^{N}}}^{(\overline{\sigma}\alpha
,\theta\alpha)}. \label{8.7}%
\end{equation}
As it was shown in Example \ref{E8.1}, for problem \eqref{8.4}, \eqref{8.5} to
be solvable in the class $C^{\overline{\sigma} (1+\beta),\theta+\theta\beta}$
condition \eqref{5.5} must be met, which in our case has the form
\begin{equation}
f_{t}(x,0)\equiv0,\quad x\in R^{N}. \label{8.8}%
\end{equation}
Since according to the assumptions of the the theorem condition \eqref{8.8} is
met and $\theta\beta=\theta\alpha-1\in(0,1)$, then based on Theorem \ref{T5.1}
and on Proposition \ref{P8.1} we infer that there exists a solution $v(x,t)$
to problem \eqref{8.4}, \eqref{8.5} from the class $v(x,t)\in C^{\overline
{\sigma}(1+\beta),\theta+\theta\beta}(\overline{R_{T}^{N}})$, and
\begin{equation}
\left\vert v(x,t)\right\vert _{\overline{R_{T}^{N}}}^{(\overline{\sigma
}(1+\beta),\theta+\theta\beta)}\leq C \left\vert f_{t}(x,t)\right\vert
_{\overline{R_{T}^{N}}}^{(\overline{\sigma}\beta,\theta\beta)}\leq C
(\overline{\sigma},\alpha,\theta,T ,N)\left\vert f(x,t)\right\vert
_{\overline{R_{T}^{N}}}^{(\overline{\sigma}\alpha,\theta\alpha)}. \label{8.9}%
\end{equation}

Consider the function
\begin{equation}
u(x,t)\equiv{\int\limits_{0}^{t}}v(x,\tau)d\tau.\label{8.10}%
\end{equation}
From \eqref{8.5.1} it follows that
\[
D_{\ast t}^{\theta}u(x,t)={\int\limits_{0}^{t}}D_{\ast\tau}^{\theta}%
v(x,\tau)d\tau.
\]
Now from the last relation and from \eqref{8.8} it follows that the function
$u(x,t)$ satisfies problem \eqref{5.1}, \eqref{5.2} with zero initial
condition. Besides, on the ground of \eqref{8.10} and \eqref{8.9} we infer
that $u(x,t)\in C^{\overline{\sigma}(1+\beta),\theta+\theta\beta}%
(\overline{R_{T}^{N}})$ and, consequently, it is the unique solution to
problem \eqref{5.1}, \eqref{5.2} from the pointed above or more smooth class.
At the same time, again from \eqref{8.10} and \eqref{8.9} it follows that
$u(x,t)$ has the smoothness in $t$ up to the order $\theta+\theta
\beta+1=\theta+\theta\alpha$ and
\begin{equation}
\left\langle u(x,t)\right\rangle _{t,\overline{R_{T}^{N}}}^{(\theta
+\theta\alpha)}\leq C (\overline{\sigma},\alpha,\theta,T
,N)\left\vert f(x,t)\right\vert _{\overline{R_{T}^{N}}}^{(\overline{\sigma
}\alpha,\theta\alpha)}.\label{8.11}%
\end{equation}
As for the smoothness in $x$ up to the orders $\sigma_{k}+\sigma_{k}\alpha$
(in each group of space variables $z_{k}$), it is proved verbatim to the
corresponding reasonings from section \ref{s6.1}, which proves Theorem
\ref{T8.1} for the case $\theta\alpha\in(0,2)$.

Finally, the general case $\theta\alpha\in(n-1,n)$ with an arbitrary positive
integer $n$ is obtained by induction in $n$, which completes the proof of the theorem.
\end{proof}

\section{Construction of functions from $C^{\overline{\sigma}(1+\alpha
),\theta+\theta\alpha}(\overline{R_{T}^{N}})$ from their initial data at
$t=0$.}

\label{s9}

In the present section we describe a way of constructing a function $w(x,t)$
from the class $C^{\overline{\sigma} (1+\alpha),\theta+\theta\alpha}%
(\overline{R_{T}^{N}})$ with $\theta+\theta\alpha>1$ according to it's initial
data at $t=0$. These initial data are the trace at $t=0$ of a function
$w(x,0)$ itself and the traces of it's derivatives in $t$ up to the order
$[\theta]$ regardless of whether $\theta$ is integer or fractional. The
constructing of such function is an important technical device in
investigations of parabolic initial value problems including \eqref{1.1},
\eqref{1.2}. This permits to reduce investigations of a problem to it's
investigations in the case of zero initial data and to consider the problem in
the classes of such functions that vanish at $t=0$ together with all their
possible derivatives in $t$. This approach is rather standard nowdays at
different considerations of parabolic problems and was applied, in particular,
in the classical paper \cite{Sol1}. In section \ref{s5} it was applied for the
extension of a solution to the domain $t<0$ with the class preservation, which
permitted to consider the problem in the whole space $R^{N+1}$. In section
\ref{s6} we did not have such opportunity yet and the extension to the domain
$t<0$ was performed without class preservation in $t$, which caused some more
complex structure of the proof. If now we are going to consider the problem of
higher order with integer or fractional $\theta>1$, then the extension of a
solution to the domain $t<0$ without class preservation with the aim of the
considerations in the whole $R^{N+1}$ would lead to emergence in the equation
of supported at $t=0$ delta-functions. Clearly, such way potentially could
make the investigations even more complicated.

Naturally, the mentioned function $w(x,t)$ is not unique and the way of it's
constructing, we present below, closely reproduces the construction from
\cite{LadParab}, Ch. IV, Theorem 4.3. The only difference consists of the
making use of a parabolic operator with some fractional Laplace operator
instead of the standard heat operator. However, the author have not managed to
produce an algorithm to construct such $w(x,t)$ for an arbitrary value of
$\alpha$. The problem with following \cite{LadParab} is that we must have some
results on the solvability and the estimates for the simplest Cauchy problems
with the initial data from the less smooth spaces than the order of the
corresponding differential operator. For the operator of the heat equation
(and much more general - see \cite{Sol1}) such results are known due to
estimates of the corresponding parabolic potentials, unlike the situation with
fractional operators. Therefore in the present section we first confine
ourselves to the case of somewhat heightened exponent $\alpha$. Namely, we
suppose that
\begin{equation}
\{\theta\}+\theta\alpha>1. \label{9.0}%
\end{equation}

Before we turn to the strict statements of this section, we describe, on the
ground of property \eqref{2.6} of anisotropic H\"{o}lder spaces, the
particular functional classes for the derivatives in $t$ up to the order
$[\theta]$ of a function $w(x,t)$ from the space $C^{\overline{\sigma}
(1+\alpha),\theta+\theta\alpha}(\overline{R_{T}^{N}})$. And also we formulate
some definitions.

Let $w(x,t)\in C^{\overline{\sigma} (1+\alpha),\theta+\theta\alpha}%
(\overline{R_{T}^{N}})$ with $\theta>1$ so that $[\theta]\geq1$, and
consequently the function $w(x,t)$ has the first derivative in $t$. According
to \eqref{2.6} the derivative $w_{t}(x,t)$ belongs to the space
\begin{equation}
w_{t}(x,t)\in C^{\overline{\sigma}(1+\alpha)\frac{\theta+\theta\alpha
-1}{\theta+\theta\alpha},\theta+\theta\alpha-1}(\overline{R_{T}^{N}%
})=C^{\overline{\sigma}(1+\alpha)-\frac{1}{\theta}\overline{\sigma}%
,\theta+\theta\alpha-1}(\overline{R_{T}^{N}}). \label{8.12}%
\end{equation}
That is after the differentiation in $t$ the smoothness in $t$ decreases by
one, and the smoothness in a group of the space variables $z_{k} $ decreases
by $\sigma_{k}/\theta$. Somewhat transforming the smoothness exponents of the
space in \eqref{8.12}, we formulate \eqref{8.12} as follows
\begin{equation}
w_{t}(x,t)\in C^{\frac{\overline{\sigma}}{\theta}(1+\alpha(\theta
,1)),1+\alpha(\theta,1)}(\overline{R_{T}^{N}}), \label{8.13}%
\end{equation}
where
\begin{equation}
\frac{\overline{\sigma}}{\theta}\equiv\{\frac{\sigma_{1}}{\theta}%
,...,\frac{\sigma_{r}}{\theta}\},\quad\alpha(\theta,1)\equiv\theta
-1+\theta\alpha-1>0, \label{8.14}%
\end{equation}
and we note that positivity of the exponent $\alpha(\theta,1)$ follows from
assumption \eqref{9.0}. Generally, if $[\theta]=n\geq1$, then proceeding with
the differentiating in $t$ as it is described in \eqref{8.12} - \eqref{8.14},
we get by induction
\begin{equation}
\frac{\partial^{i}w(x,t)}{\partial t^{i}}\in C^{\frac{\overline{\sigma}%
}{\theta}(1+\alpha(\theta,i)),1+\alpha(\theta,i)}(\overline{R_{T}^{N}%
})=C^{\overline{\sigma}(1+\alpha)-\frac{i}{\theta}\overline{\sigma}%
,\theta+\theta\alpha-i}(\overline{R_{T}^{N}}),\quad i=1,2,...n, \label{8.15}%
\end{equation}
where
\begin{equation}
\alpha(\theta,i)\equiv\theta-i+\theta\alpha-1>0 \label{8.16}%
\end{equation}
and
\begin{equation}
\left\vert \frac{\partial^{i}w(x,t)}{\partial t^{i}}\right\vert _{\overline
{R_{T}^{N}}}^{(\overline{\sigma}(1+\alpha)-\frac{i}{\theta}\overline{\sigma
},\theta+\theta\alpha-i)}\leq C\left\vert w(x,t)\right\vert _{\overline
{R_{T}^{N}}}^{(\overline{\sigma}(1+\alpha),\theta+\theta\alpha)}. \label{8.17}%
\end{equation}
Besides, applying relation \eqref{2.6}, we infer that the differentiating in
$t$ is a bounded linear operator from $C^{\overline{\sigma}(1+\alpha)-\frac
{i}{\theta}\overline{\sigma} ,\theta+\theta\alpha-i}(\overline{R_{T}^{N}})$
with $i\leq n-1$ to $C^{\overline{\sigma}(1+\alpha)-\frac{i+1}{\theta
}\overline{\sigma},\theta+\theta\alpha-i-1}(\overline{R_{T}^{N}})$ that is
\begin{equation}
\left\vert \frac{\partial^{i+1}w(x,t)}{\partial t^{i+1}}\right\vert
_{\overline{R_{T}^{N}}}^{(\overline{\sigma}(1+\alpha)-\frac{i+1}{\theta
}\overline{\sigma},\theta+\theta\alpha-i-1)}\leq C\left\vert \frac
{\partial^{i}w(x,t)}{\partial t^{i}}\right\vert _{\overline{R_{T}^{N}}%
}^{(\overline{\sigma}(1+\alpha)-\frac{i}{\theta}\overline{\sigma}%
,\theta+\theta\alpha-i)}. \label{8.18}%
\end{equation}

We note in addition the following. If the exponents $\theta\alpha$ and
$\sigma_{k}\alpha$, $k=1,...,r$, (the "additional" smoothness exponents) are
nonintegers, then for each $i=1,2,...n$ the "additional" smoothness exponents
$(\theta-i)\alpha(\theta,i)=\theta\alpha$ and $\sigma_{k}(\theta
,i)\alpha(\theta,i)=\sigma_{k}\alpha$ stay in fact the same nonintegers.

We need also the fact that in view of Theorem \ref{T3dop2} with $\overline
{\rho}=\overline{\sigma}/\theta$ the operators $(-\Delta_{z_{k}}%
)^{\frac{\sigma_{k}}{2\theta}}$, $k=1,...,r$, are, similar to the
differentiating in $t$, linear bounded operators from $C^{\overline{\sigma
}(1+\alpha),\theta+\theta\alpha} (\overline{R_{T}^{N}})$ to $C^{\overline
{\sigma}(1+\alpha)-\frac{1}{\theta}\overline{\sigma},\theta+\theta\alpha
-1}(\overline{R_{T}^{N}})$. That is for $w(x,t)\in C^{\overline{\sigma
}(1+\alpha),\theta+\theta\alpha}(\overline{R_{T}^{N}})$
\begin{equation}
\left\vert (-\Delta_{z_{k}})^{\frac{\sigma_{k}}{2\theta}}w(x,t)\right\vert
_{\overline{R_{T}^{N}}}^{(\overline{\sigma}(1+\alpha)-\frac{1}{\theta
}\overline{\sigma},\theta+\theta\alpha-1)}\leq C\left\vert w(x,t)\right\vert
_{\overline{R_{T}^{N}}}^{(\overline{\sigma}(1+\alpha),\theta+\theta\alpha)}.
\label{8.19}%
\end{equation}
Besides, based on Theorem \ref{T3dop2} we infer that similar to \eqref{8.17},
\eqref{8.18} we have for iterations of the operators $(-\Delta_{z_{k}}%
)^{\frac{\sigma_{k}}{2\theta}}$%
\begin{equation}
\left\vert \left[  (-\Delta_{z_{k}})^{\frac{\sigma_{k}}{2\theta}}\right]
^{i}w(x,t)\right\vert _{\overline{R_{T}^{N}}}^{(\overline{\sigma}%
(1+\alpha)-\frac{i}{\theta}\overline{\sigma},\theta+\theta\alpha-i)}\leq
C\left\vert w(x,t)\right\vert _{\overline{R_{T}^{N}}}^{(\overline{\sigma
}(1+\alpha),\theta+\theta\alpha)},i\leq n . \label{8.20}%
\end{equation}
And moreover for $i\leq n-1$
\begin{equation}
\left\vert (-\Delta_{z_{k}})^{\frac{\sigma_{k}}{2\theta}}\left\{  \left[
(-\Delta_{z_{k}})^{\frac{\sigma_{k}}{2\theta}}\right]  ^{i}w(x,t)\right\}
\right\vert _{\overline{R_{T}^{N}}}^{(\overline{\sigma}(1+\alpha)-\frac
{i+1}{\theta}\overline{\sigma},\theta+\theta\alpha-i-1)}\leq\label{8.21}%
\end{equation}%
\[
\leq C\left\vert \left[  (-\Delta_{z_{k}})^{\frac{\sigma_{k}}{2\theta}%
}\right]  ^{i}w(x,t)\right\vert _{\overline{R_{T}^{N}}}^{(\overline{\sigma
}(1+\alpha)-\frac{i}{\theta}\overline{\sigma},\theta+\theta\alpha-i)}
\]
that is $(-\Delta_{z_{k}})^{\frac{\sigma_{k}}{2\theta}}$ is a bounded linear
operator from $C^{\overline{\sigma}(1+\alpha)-\frac{i}{\theta}\overline
{\sigma},\theta+\theta\alpha-i}(\overline{R_{T}^{N}})$ with $i\leq n-1$ to
$C^{\overline{\sigma}(1+\alpha)-\frac{i+1}{\theta}\overline{\sigma}%
,\theta+\theta\alpha-i-1}(\overline{R_{T}^{N}})$.

Formulate now the main assertion of the present section.

\begin{theorem}
\label{T9.1} Let such positive integer or noninteger numbers $\theta$,
$\sigma_{k}$, $k=1,...,r$, and $\alpha$ be given that the numbers
$\theta\alpha$, $\theta+\theta\alpha$, $\sigma_{k}\alpha$, and $\sigma
_{k}+\sigma_{k}\alpha$ are nonintegers. Let, further, condition \eqref{9.0} is
met. Let, finally, such $n+1=[\theta]+1$ functions $\varphi_{i}(x)$,
$i=0,...,n$, be given that they are defined in $R^{N}$ and belong to the
spaces ($\overline{\sigma}\equiv\{\sigma_{1},...,\sigma_{r}\}$)
\begin{equation}
\varphi_{i}(x)\in C^{\overline{\sigma}(1+\alpha)-\frac{i}{\theta}%
\overline{\sigma}}(R^{N}),\quad i=0,...,n. \label{9.1}%
\end{equation}

Then there exists such functions $w(x,t)\in C^{\overline{\sigma}%
(1+\alpha),\theta+\theta\alpha}(\overline{R_{T}^{N}})$ that at $t=0$
\begin{equation}
\frac{\partial^{i}w(x,0)}{\partial t^{i}}=\varphi_{i}(x),\quad
i=0,...,n\label{9.2}%
\end{equation}
and
\begin{equation}
\left\vert w(x,t)\right\vert _{\overline{R_{T}^{N}}}^{(\overline{\sigma
}(1+\alpha),\theta+\theta\alpha)}\leq C{\sum\limits_{i=0}^{n}}\left\vert
\varphi_{i}(x)\right\vert _{R^{N}}^{(\overline{\sigma}(1+\alpha)-\frac
{i}{\theta}\overline{\sigma})}.\label{9.3}%
\end{equation}

\end{theorem}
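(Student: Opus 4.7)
The plan is to adapt the classical Ladyzhenskaya construction from \cite{LadParab}, Ch.~IV, Theorem~4.3 to the anisotropic fractional setting by using the auxiliary first-order-in-time parabolic operator
\[
\mathcal{M} u \equiv u_t(x,t) + \mathcal{M}_s u(x,t),\qquad \mathcal{M}_s u \equiv \sum_{k=1}^{r}(-\Delta_{z_k})^{\sigma_k/(2\theta)}u.
\]
This is the natural parabolic rescaling of \eqref{1.1}: by \eqref{8.15} and Theorem~\ref{Tdop1}, one $t$-differentiation and one application of $\mathcal{M}_s$ both drop the smoothness in a group $z_k$ by precisely $\sigma_k/\theta$, so the anisotropy $(\overline{\sigma}/\theta,1)$ of $\mathcal{M}$ matches the jumps from $\varphi_{i-1}$ to $\varphi_i$ prescribed by \eqref{9.1}. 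I shall seek the required extension in the form
\[
w(x,t)=\sum_{i=0}^{n}\frac{t^i}{i!}\,W_i(x,t),
\]
where each $W_i$ is the unique solution of the model Cauchy problem $\mathcal{M} W_i=0$ in $\overline{R_T^N}$ with $W_i(x,0)=\psi_i(x)$, and the data $\psi_i$ are chosen recursively so as to match the initial traces \eqref{9.2}.

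Using $\partial_t^{m}(t^i/i!)|_{t=0}=\delta_{im}$ together with $\partial_t W_i=-\mathcal{M}_s W_i$, the Leibniz rule yields
\[
\frac{\partial^{j} w}{\partial t^{j}}(x,0)=\sum_{i=0}^{j}\binom{j}{i}(-\mathcal{M}_s)^{j-i}\psi_i(x),\qquad j=0,\dots,n.
\]
The requirement $\partial_t^{j}w(x,0)=\varphi_j$ is therefore a lower-triangular system in $\{\psi_j\}$ that is inverted explicitly by
\[
\psi_j(x)=\sum_{i=0}^{j}\binom{j}{i}\mathcal{M}_s^{\,j-i}\varphi_i(x),\qquad j=0,\dots,n.
\]
Iterating Theorem~\ref{Tdop1} with $\omega=1-1/[\theta(1+\alpha)]$, every factor $(-\Delta_{z_k})^{\sigma_k/(2\theta)}$ drops the overall space smoothness by $\overline{\sigma}/\theta$, so $\mathcal{M}_s^{\,j-i}\varphi_i \in C^{\overline{\sigma}(1+\alpha)-\frac{j}{\theta}\overline{\sigma}}(R^N)$ and consequently $\psi_j\in C^{\overline{\sigma}(1+\alpha)-\frac{j}{\theta}\overline{\sigma}}(R^N)$ with norm controlled by $\sum_{\ell\le j}|\varphi_\ell|_{R^N}^{(\overline{\sigma}(1+\alpha)-\frac{\ell}{\theta}\overline{\sigma})}$.

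The next step is to solve the model problem for each $W_i$. I apply Theorem~\ref{T7.1} to $\mathcal{M}$, treating $\sigma_k/\theta$ as the spatial orders, $1$ as the (integer) time order, and $\alpha_i\equiv \theta+\theta\alpha-i-1$ as the Hölder exponent playing the role of $\alpha$. The hypothesis $\{\theta\}+\theta\alpha>1$ is used here precisely at the worst case $i=n=[\theta]$, where it becomes $\alpha_n=\{\theta\}+\theta\alpha-1>0$; for $i<n$ positivity of $\alpha_i$ is automatic. The non-integrality assumptions of the theorem imply that $\alpha_i$ and $(\sigma_k/\theta)\alpha_i$ are non-integer as well. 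Theorem~\ref{T7.1} then furnishes a unique
\[
W_i\in C^{\overline{\sigma}(1+\alpha)-\frac{i}{\theta}\overline{\sigma},\,\theta+\theta\alpha-i}(\overline{R_T^N})
\]
with $|W_i|$ in this norm dominated by $|\psi_i|_{R^N}^{(\overline{\sigma}(1+\alpha)-\frac{i}{\theta}\overline{\sigma})}$, hence by $\sum_{\ell\le i}|\varphi_\ell|_{R^N}^{(\overline{\sigma}(1+\alpha)-\frac{\ell}{\theta}\overline{\sigma})}$.

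The last and most delicate step is to show that each summand $t^i W_i/i!$ lies in the target space $C^{\overline{\sigma}(1+\alpha),\theta+\theta\alpha}(\overline{R_T^N})$ with the appropriate bound. Since $W_i$ is only of reduced space smoothness $\overline{\sigma}(1+\alpha)-\frac{i}{\theta}\overline{\sigma}$, but $\partial_t^{m}(t^i W_i)|_{t=0}=0$ for $m<i$, one has to compensate this deficit by splitting joint increments $\delta_{\tau,t}^{q}\delta_{h,z_k}^{p}(t^i W_i)$ according to whether $\tau\le\varepsilon |h|^{\sigma_k/\theta}$ or not, and estimating each piece by either the highest $z_k$-seminorm of $W_i$ or by a time-seminorm of $\partial_t^{i}(t^i W_i)$ (which in turn is controlled by the already known norm of $W_i$). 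This is an anisotropic parabolic interpolation along the lines of Lemma~\ref{L3.2}, carried out group-by-group, and it is the principal technical obstacle of the proof. Once it is established, summing over $i=0,\dots,n$ yields \eqref{9.2} by construction of the $\psi_i$ and \eqref{9.3} by summing the associated bounds.
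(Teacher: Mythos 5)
Your setup coincides with the paper's up to the final assembly: the paper uses the same auxiliary operator $L=\partial_t+M$ with $M=\sum_{k}(-\Delta_{z_k})^{\sigma_k/(2\theta)}$ and exactly your corrector functions $\psi_j=\sum_{s\le j}C_j^s M^s\varphi_{j-s}$ (your binomial inversion is correct and matches \eqref{9.5}). But the paper then builds $w$ as the solution of a \emph{chain of inhomogeneous} Cauchy problems: $w^{(n)}$ solves the homogeneous problem with data $\psi_n$, and recursively $\partial_t w^{(j)}+Mw^{(j)}=w^{(j+1)}$ with $w^{(j)}(x,0)=\psi_j$, ending with $w$ itself. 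Each application of Theorem \ref{T7.1} then gains exactly one time order and $\overline{\sigma}/\theta$ of space order over the right-hand side, so the final $w$ lands in $C^{\overline{\sigma}(1+\alpha),\theta+\theta\alpha}(\overline{R_T^N})$ automatically, with condition \eqref{9.0} needed only to make the worst exponent $\alpha(\theta,n)=\{\theta\}+\theta\alpha-1$ positive. You instead set $w=\sum_i \frac{t^i}{i!}W_i$ with $W_i$ solving the \emph{homogeneous} problem with data $\psi_i$, and this is where your proof has a genuine gap.

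The problem is your last step: $t^iW_i$ does not lie in the target space on the strength of what you have proved, and the interpolation you invoke cannot supply it. Theorem \ref{T7.1} gives $W_i\in C^{\overline{\sigma}(1+\alpha)-\frac{i}{\theta}\overline{\sigma},\,\theta+\theta\alpha-i}(\overline{R_T^N})$ only; membership of $t^iW_i$ in $C^{\overline{\sigma}(1+\alpha),\theta+\theta\alpha}$ requires the top spatial seminorm $\langle t^iW_i\rangle_{z_k}^{(\sigma_k(1+\alpha))}$ to be finite, and at any fixed $t>0$ this is $\frac{t^i}{i!}\langle W_i(\cdot,t)\rangle_{z_k}^{(\sigma_k(1+\alpha))}$, a quantity that is simply not controlled (possibly infinite) under the stated regularity of $W_i$. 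Your increment-splitting handles only the regime $t\lesssim |h|^{\sigma_k/\theta}$, where the known seminorm of order $\sigma_k(1+\alpha)-\frac{i}{\theta}\sigma_k$ together with the factor $t^i$ indeed yields the bound $t^i|h|^{\sigma_k(1+\alpha)-i\sigma_k/\theta}\le |h|^{\sigma_k(1+\alpha)}$; in the complementary regime $t\gtrsim|h|^{\sigma_k/\theta}$ one needs an interior parabolic smoothing estimate of the form $\langle W_i(\cdot,t)\rangle_{z_k}^{(\sigma_k(1+\alpha))}\le C\,t^{-i}\,|\psi_i|^{(\overline{\sigma}(1+\alpha)-\frac{i}{\theta}\overline{\sigma})}_{R^N}$, i.e.\ a weighted Schauder estimate for the model problem with initial data rougher than the class order. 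Lemma \ref{L3.2} interpolates between seminorms that are already finite and cannot manufacture this missing regularity. Moreover, the paper explicitly states (in the discussion surrounding \eqref{9.0} and after Theorem \ref{T9.1}) that precisely such potential-theoretic smoothing estimates, known for the heat operator from \cite{Sol1}, \cite{LadParab}, are \emph{unknown} for the fractional anisotropic operator — this is the author's stated reason for replacing the classical $t$-weighted construction by the chain of inhomogeneous problems. To repair your proof you would either have to establish those smoothing estimates (a substantial new result), or switch to the paper's chain construction, which sidesteps them entirely.
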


\begin{proof}
The schema of proof for this theorem coincides with that from Theorem 4.3, Ch.
IV in \cite{LadParab}. However we present the proof here for completeness
because instead of the standard heat operator with well known properties,
which was used in \cite{LadParab}, we make use of the operator that was
investigated above in sections \ref{s6}, \ref{s7}.

Define the differential operator
\begin{equation}
L\equiv\frac{\partial}{\partial t}+M\equiv\frac{\partial}{\partial t}%
+{\sum_{k=1}^{r}}(-\Delta_{z_{k}})^{\frac{\sigma_{k}}{2\theta}}.\label{9.4}%
\end{equation}
From \eqref{8.15}, \eqref{8.16}, \eqref{8.20}, and \eqref{8.21} it follows
that $L$ is a linear bounded operator from the space $C^{\overline{\sigma
}(1+\alpha)-\frac{i}{\theta}\overline{\sigma},\theta+\theta\alpha-i}%
(\overline{R_{T}^{N}})$ with $0\leq i\leq n-1$ to the space $C^{\overline
{\sigma}(1+\alpha)-\frac{i+1}{\theta}\overline{\sigma},\theta+\theta
\alpha-i-1}(\overline{R_{T}^{N}})$,%
\begin{equation}
L:C^{\overline{\sigma}(1+\alpha)-\frac{i}{\theta}\overline{\sigma}%
,\theta+\theta\alpha-i}(\overline{R_{T}^{N}})\rightarrow C^{\overline{\sigma
}(1+\alpha)-\frac{i+1}{\theta}\overline{\sigma},\theta+\theta\alpha
-i-1}(\overline{R_{T}^{N}}),\quad0\leq i\leq n-1.\label{9.4.1}%
\end{equation}
Define, further, the functions
\begin{equation}
\psi_{j}(x)\equiv{\sum_{s=0}^{j}}C_{j}^{s}M^{s}\varphi_{j-s}(x),\quad
j=0,...,n.\label{9.5}%
\end{equation}

Let a function $w(x,t)\in C^{\overline{\sigma}(1+\alpha),\theta+\theta\alpha
}(R_{T}^{N})$ satisfies the condition
\begin{equation}
\left.  \left(  \frac{\partial}{\partial t}+M\right)  ^{j}w(x,t)\right\vert
_{t=0}=\psi_{j}(x),\quad j=0,...,n,\label{9.6}%
\end{equation}
where all relations are correctly defined in view of \eqref{9.4.1}. Show that
then this function satisfies also conditions\eqref{9.2}. Taking into account
the definition of the function $\psi_{j}(x)$ in \eqref{9.5}, condition
\eqref{9.6} can be formulated in the form
\begin{equation}
{\sum_{s=0}^{j}}C_{j}^{s}M^{s}\left[  \left.  \frac{\partial^{j-s}%
w(x,t)}{\partial t^{j-s}}\right\vert _{t=0}-\varphi_{j-s}(x)\right]  =0,\quad
j=0,...,n.\label{9.7}%
\end{equation}
From this condition we get by induction starting from $j=0$
\[
w(x,t)|_{t=0}=\varphi_{0}(x).
\]
Further, making use of this relation, we infer from \eqref{9.7} for $j=1$
that
\[
\left.  \frac{\partial w(x,t)}{\partial t}\right\vert _{t=0}=\varphi_{1}(x).
\]
Proceeding this process by induction we verify all the relations in \eqref{9.2}.

Define now the function $w(x,t)$ we need recursively from the Cauchy problem
\begin{equation}
\frac{\partial w}{\partial t}+Mw=w^{(1)}(x,t),\quad w(x,0)=\psi_{0}%
w(x)=\varphi_{0}(x), \label{9.8}%
\end{equation}
where the function $w^{(1)}(x,t)$ is defined in advance from the problem
\begin{equation}
\frac{\partial w^{(1)}}{\partial t}+Mw^{(1)}=w^{(2)}(x,t),\quad w^{(1)}%
(x,0)=\psi_{1}(x), \label{9.9}%
\end{equation}
and so on. And the initial function $w^{(n)}$ is defined from the problem
\begin{equation}
\frac{\partial w^{(n)}}{\partial t}+Mw^{(n)}=0,\quad w^{(n)}(x,0)=\psi_{n}(x).
\label{9.10}%
\end{equation}
On the base of Theorem \ref{T7.1} all functions $w^{(j)}(x,t)$ are correctly
defined, since all spaces $C^{\frac{\overline{\sigma} }{\theta}(1+\alpha
(\theta,i)),1+\alpha(\theta,i)}(\overline{R_{T}^{N}})=C^{\overline{\sigma
}(1+\alpha)-\frac{i}{\theta}\overline{\sigma},\theta+\theta\alpha-i}%
(\overline{R_{T}^{N}})$ with $\alpha(\theta,i)>0$ in the above Cauchy problems
are appropriate for the application of Theorem \ref{T7.1}.

Since, in view of the definition,
\[
w^{(j)}(x,t)=\left(  \frac{\partial}{\partial t}+M\right)  ^{j}w(x,t),
\]
then the function $w(x,t)$ satisfies conditions \eqref{9.6} and, consequently
\eqref{9.2} by the construction of $w^{(j)}(x,t)$.

Estimate \eqref{9.3} is obtained now by the successive application of Theorem
\ref{T7.1} to the chain of problems \eqref{9.8} - \eqref{9.10}.
\end{proof}

We stress one more that requirement \eqref{9.0} in this theorem is due to the
fact that the least smooth initial data in problem \eqref{9.10} belongs to the
the space $C^{\overline{\sigma}(1+\alpha)-\frac{n}{\theta}\overline{\sigma}%
,}(R^{N})=C^{\frac{\overline{\sigma}}{\theta}(1+\alpha(\theta,n))}(R^{N})$
with
\[
\alpha(\theta,n)=\theta-n+\theta\alpha-1=\theta-[\theta]+\theta\alpha
-1=\{\theta\}+\theta\alpha-1.
\]
Therefore $\alpha(\theta,n)$ is negative under violation of condition
\eqref{9.0} ($\alpha(\theta,n)$ can not be equal to zero since $\theta
-n+\theta\alpha$ is a noninteger by the assumption). At the same time the
total smoothness exponent for the space $\psi_{n}(x)\in C^{\overline{\sigma
}(1+\alpha)-\frac{n}{\theta}\overline{\sigma},}(R^{N})$ is positive and for
the case of integer derivatives in \cite{Sol1} by methods of parabolic
potentials sharp estimates for the solutions of the corresponding problems
\eqref{9.10} for parabolic systems were obtained. As for the fractional
problem under consideration, such estimates are unknown by now.

However, if $\theta\in(0,2)$ that is $n=[\theta]\leq1$, the the desired
function can be constructed without condition \eqref{9.0}. For $n=0$ the
construction is trivial and for $n=1$ such function can be constructed as the
solution of the problem
\begin{equation}
\frac{\partial w(x,t)}{\partial t}+Mw=\varphi_{1}(x)+M\varphi_{0}(x),(x,t)\in
R_{T}^{N};\quad w(x,0)=\varphi_{0}(x),x\in R^{N}. \label{9.11}%
\end{equation}
Here the initial data $\varphi_{0}(x)$ and the right hand side $\varphi
_{1}(x)+M\varphi_{0}(x)$ belong to the appropriate for the application of
Theorem \ref{T6.1} space and consequently problem \eqref{9.11} has the unique
solution from the desired class with the corresponding estimate of it's norm
over the norms of the functions $\varphi_{0}(x)$ and $\varphi_{1}(x)$. Thus
the following assertion is valid.

\begin{theorem}
\label{T9.2} For $\theta\in(0,2)$ Theorem \ref{T9.1} stays valid without
restriction \eqref{9.0}.
\end{theorem}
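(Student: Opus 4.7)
The plan is to bypass the recursive construction of Theorem \ref{T9.1} by exploiting the fact that for $\theta\in(0,2)$ the integer $n=[\theta]$ equals $0$ or $1$, so the chain of auxiliary problems \eqref{9.8}--\eqref{9.10} reduces to at most one problem and restriction \eqref{9.0} (which was imposed only to secure positivity of $\alpha(\theta,n)$ in problem \eqref{9.10}) can be avoided entirely. I would split into two cases according to whether $\theta\in(0,1)$ or $\theta\in[1,2)$.

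If $\theta\in(0,1)$, then $n=0$ and the only prescribed datum is $\varphi_{0}\in C^{\overline{\sigma}(1+\alpha)}(R^{N})$. I would simply take $w(x,t)\equiv\varphi_{0}(x)$, independent of $t$. Then $w\in C^{\overline{\sigma}(1+\alpha),\theta+\theta\alpha}(\overline{R_{T}^{N}})$ trivially, the trace $w(x,0)=\varphi_{0}(x)$ is immediate, and \eqref{9.3} holds with constant $1$.

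If $\theta\in[1,2)$, then $n=1$ and I would define $w(x,t)$ as the solution of the auxiliary Cauchy problem
\begin{equation}
\frac{\partial w}{\partial t}+Mw=\varphi_{1}(x)+M\varphi_{0}(x),\quad(x,t)\in R_{T}^{N};\qquad w(x,0)=\varphi_{0}(x),\quad x\in R^{N}, \label{9.aux}
\end{equation}
with $M={\sum_{k=1}^{r}}(-\Delta_{z_{k}})^{\sigma_{k}/(2\theta)}$, exactly as indicated in the hint following Theorem \ref{T9.1}. This is a first order in time equation whose space part consists of fractional Laplacians of orders $\sigma_{k}/\theta$; under the relabelling $\sigma_{k}'=\sigma_{k}/\theta$ and $\alpha'=\theta+\theta\alpha-1$ it fits exactly the framework of Theorem \ref{T7.1}, since $\alpha'>0$ is noninteger (because $\theta+\theta\alpha$ is) and $\overline{\sigma}'(1+\alpha')=\overline{\sigma}(1+\alpha)$. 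To apply that theorem I need to verify that the datum $\varphi_{0}$ and the forcing $\varphi_{1}+M\varphi_{0}$ lie in the proper Hölder classes: $\varphi_{0}\in C^{\overline{\sigma}'(1+\alpha')}(R^{N})=C^{\overline{\sigma}(1+\alpha)}(R^{N})$ by hypothesis; $\varphi_{1}\in C^{\overline{\sigma}(1+\alpha)-\overline{\sigma}/\theta}(R^{N})=C^{\overline{\sigma}'\alpha'}(R^{N})$ by hypothesis; and $M\varphi_{0}$ lies in the same space as $\varphi_{1}$ by Theorem \ref{Tdop1} applied with $\omega=1-1/[\theta(1+\alpha)]$ so that $\rho_{k}=\sigma_{k}/\theta$.

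Theorem \ref{T7.1} then yields $w\in C^{\overline{\sigma}'(1+\alpha'),1+\alpha'}(\overline{R_{T}^{N}})=C^{\overline{\sigma}(1+\alpha),\theta+\theta\alpha}(\overline{R_{T}^{N}})$ together with estimate \eqref{9.3}. The first trace $w(x,0)=\varphi_{0}(x)$ holds by construction, and evaluating \eqref{9.aux} at $t=0$ gives $w_{t}(x,0)=\varphi_{1}(x)+M\varphi_{0}(x)-Mw(x,0)=\varphi_{1}(x)$, as required. The main point requiring care is the bookkeeping needed to check that after the relabelling all exponents entering the hypotheses of Theorem \ref{T7.1} remain noninteger; this however follows from the implicit assumption that the spaces $C^{\overline{\sigma}(1+\alpha)-(i/\theta)\overline{\sigma}}(R^{N})$ used in the statement of Theorem \ref{T9.1} are themselves well defined, together with the noninteger conditions on $\theta+\theta\alpha$ and $\sigma_{k}(1+\alpha)$ already assumed there.
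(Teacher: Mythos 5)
Your proposal is correct and follows essentially the same route as the paper: for $n=0$ the trivial $t$-independent extension, and for $n=1$ exactly the auxiliary Cauchy problem \eqref{9.11} with forcing $\varphi_{1}+M\varphi_{0}$ and datum $\varphi_{0}$, solved in the target class via the relabelling $\overline{\sigma}'=\overline{\sigma}/\theta$, $\alpha'=\theta+\theta\alpha-1>0$. Your invocation of Theorem \ref{T7.1} (where the paper cites Theorem \ref{T6.1}) is in fact the slightly more careful choice, since the relabelled exponent $\alpha'$ need not lie in $(0,1)$, and your verification that $M\varphi_{0}$ lands in $C^{\overline{\sigma}(1+\alpha)-\overline{\sigma}/\theta}(R^{N})$ via Theorem \ref{Tdop1} makes explicit a step the paper leaves implicit.
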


Note finally that if we consider to construct a function $w(x,t)$ from the
space $C^{\overline{\sigma}(1+\alpha),\theta+\theta\alpha}(\overline{R_{T}%
^{N}})$ not by the full set of the traces of it's integer derivatives in $t$
up to the order $n=[\theta]$ but by the set of the traces of it's integer
derivatives in $t$ up to the order $n-1=[\theta]-1$, then we can do without
\eqref{9.0}. Indeed, in this case the last (the initial) problem in the chain
of problems \eqref{9.8} - \eqref{9.10} is not the problem with the number $n$,
but the one with the number $n-1$
\begin{equation}
\frac{\partial w^{(n-1)}}{\partial t}+Mw^{(n-1)}=0,\quad w^{(n-1)}%
(x,0)=\psi_{n-1}(x), \label{9.12}%
\end{equation}
where
\[
\psi_{n-1}(x)\in C^{\overline{\sigma}(1+\alpha)-\frac{n-1}{\theta}%
\overline{\sigma},}(R^{N})=C^{\frac{\overline{\sigma}}{\theta}(1+\alpha
(\theta,n-1))}(R^{N}),
\]
and
\[
\alpha(\theta,n-1)=\theta-(n-1)+\theta\alpha-1=\theta-[\theta]+\theta
\alpha=\{\theta\}+\theta\alpha>0.
\]
Thus the simple replication of the proof of Theorem \ref{T9.1} leads to the
following (more weak in a sense) assertion without restriction \eqref{9.0}.

\begin{theorem}
\label{T9.3} Let such positive integer or noninteger numbers $\theta$,
$\sigma_{k}$, $k=1,...,r$, and $\alpha$ be given that the numbers
$\theta\alpha$, $\theta+\theta\alpha$, $\sigma_{k}\alpha$, and $\sigma
_{k}+\sigma_{k}\alpha$ are nonintegers. Let also such $n=[\theta]$ functions
$\varphi_{i}(x)$, $i=0,...,n-1$, be given that they are defined in $R^{N}$ and
belong to the spaces ($\overline{\sigma}\equiv\{\sigma_{1},...,\sigma_{r}\}$)
\begin{equation}
\varphi_{i}(x)\in C^{\overline{\sigma}(1+\alpha)-\frac{i}{\theta}%
\overline{\sigma}}(R^{N}),\quad i=0,...,n-1.\label{9.13}%
\end{equation}
Then there exists such a function $w(x,t)\in C^{\overline{\sigma}%
(1+\alpha),\theta+\theta\alpha}(\overline{R_{T}^{N}})$ that at $t=0$
\begin{equation}
\frac{\partial^{i}w(x,0)}{\partial t^{i}}=\varphi_{i}(x),\quad
i=0,...,n-1\label{9.14}%
\end{equation}
and
\begin{equation}
\left\vert w(x,t)\right\vert _{\overline{R_{T}^{N}}}^{(\overline{\sigma
}(1+\alpha),\theta+\theta\alpha)}\leq C{\sum\limits_{i=0}^{n-1}}\left\vert
\varphi_{i}(x)\right\vert _{R^{N}}^{(\overline{\sigma}(1+\alpha)-\frac
{i}{\theta}\overline{\sigma})}.\label{9.15}%
\end{equation}

\end{theorem}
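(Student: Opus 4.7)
The plan is to replicate verbatim the recursive construction used in the proof of Theorem \ref{T9.1}, simply stopping the chain of auxiliary Cauchy problems one level earlier. The single point where the proof of Theorem \ref{T9.1} required condition \eqref{9.0} was the base problem \eqref{9.10}, whose initial datum $\psi_n$ lay in a space with smoothness exponent $\alpha(\theta,n)=\{\theta\}+\theta\alpha-1$; by prescribing only the traces $\varphi_0,\ldots,\varphi_{n-1}$ the deepest problem will instead carry the datum $\psi_{n-1}$, for which $\alpha(\theta,n-1)=\{\theta\}+\theta\alpha>0$ automatically.

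First I would set $L\equiv\partial_t+M$ with $M\equiv\sum_{k=1}^{r}(-\Delta_{z_k})^{\sigma_k/(2\theta)}$, and introduce the auxiliary initial functions
\begin{equation*}
\psi_j(x)\equiv\sum_{s=0}^{j}C_j^{s}M^{s}\varphi_{j-s}(x),\quad j=0,\ldots,n-1.
\end{equation*}
By Theorem \ref{Tdop1} each operator $M^{s}$ maps $C^{\overline{\sigma}(1+\alpha)-\frac{j-s}{\theta}\overline{\sigma}}(R^N)$ boundedly into $C^{\overline{\sigma}(1+\alpha)-\frac{j}{\theta}\overline{\sigma}}(R^N)$, so $\psi_j\in C^{\overline{\sigma}(1+\alpha)-\frac{j}{\theta}\overline{\sigma}}(R^N)$ with the norm controlled by $\sum_{i=0}^{j}|\varphi_i|^{(\overline{\sigma}(1+\alpha)-\frac{i}{\theta}\overline{\sigma})}_{R^N}$.

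Next I would define $w$ by descending recursion: solve
\begin{equation*}
\partial_t w^{(n-1)}+Mw^{(n-1)}=0,\quad w^{(n-1)}(x,0)=\psi_{n-1}(x),
\end{equation*}
and then, for $j=n-2,\ldots,0$, solve
\begin{equation*}
\partial_t w^{(j)}+Mw^{(j)}=w^{(j+1)},\quad w^{(j)}(x,0)=\psi_j(x),
\end{equation*}
finally setting $w\equiv w^{(0)}$. Each of these is an integer–first–order Cauchy problem of the type \eqref{6.1}, \eqref{6.2}; in view of \eqref{9.4.1} the right-hand side at level $j$ lies in $C^{\overline{\sigma}(1+\alpha)-\frac{j+1}{\theta}\overline{\sigma},\theta+\theta\alpha-j-1}(\overline{R_T^N})$ and the initial datum in $C^{\overline{\sigma}(1+\alpha)-\frac{j}{\theta}\overline{\sigma}}(R^N)$, so Theorem \ref{T7.1} applies and yields a unique solution $w^{(j)}\in C^{\overline{\sigma}(1+\alpha)-\frac{j}{\theta}\overline{\sigma},\theta+\theta\alpha-j}(\overline{R_T^N})$ with the corresponding quantitative bound. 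At the base step $j=n-1$ the smoothness parameter is $\{\theta\}+\theta\alpha>0$, which is positive unconditionally — this is precisely why hypothesis \eqref{9.0} can be dropped.

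To conclude, an easy induction on $j$, based on $w^{(j+1)}=(\partial_t+M)w^{(j)}$, shows that $w^{(j)}=(\partial_t+M)^{j}w$, so $(\partial_t+M)^{j}w(x,0)=\psi_j(x)$ for $j=0,\ldots,n-1$. Expanding the binomial and using the definition of $\psi_j$ gives the triangular system
\begin{equation*}
\sum_{s=0}^{j}C_j^{s}M^{s}\bigl[\partial_t^{j-s}w(x,0)-\varphi_{j-s}(x)\bigr]=0,\quad j=0,\ldots,n-1,
\end{equation*}
from which $\partial_t^{i}w(x,0)=\varphi_i(x)$ follows by induction on $i$, establishing \eqref{9.14}. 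Chaining the estimates from Theorem \ref{T7.1} along the recursion, together with the boundedness of $M^{s}$ on the initial-data spaces, produces \eqref{9.15}. The only nontrivial aspect of the argument is the bookkeeping that matches each recursion level with the correct anisotropic H\"older space; the obstruction that forced condition \eqref{9.0} in Theorem \ref{T9.1} is entirely absent here by construction.
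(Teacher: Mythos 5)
Your proposal is correct and follows essentially the same route as the paper: Theorem \ref{T9.3} is obtained there precisely by replicating the recursive construction of Theorem \ref{T9.1} with the chain \eqref{9.8}--\eqref{9.10} truncated at level $n-1$, where the base datum $\psi_{n-1}$ has positive smoothness parameter $\alpha(\theta,n-1)=\{\theta\}+\theta\alpha>0$, so that condition \eqref{9.0} is never needed. Your bookkeeping of the anisotropic spaces at each level (via \eqref{9.4.1} and Theorem \ref{T7.1}) and the triangular induction recovering $\partial_t^i w(x,0)=\varphi_i(x)$ from $(\partial_t+M)^j w(x,0)=\psi_j(x)$ match the paper's argument exactly.
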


\section{The proofs of theorems \ref{T2.1}, \ref{T2.2}, and \ref{T2.3}.}

\label{s10}

In the present section we outline the proof of theorems \ref{T2.1},
\ref{T2.2}, \ref{T2.3}. But we do not present here the detailed proofs since
they, in fact, would be simple verbatim copies of the reasoning and
constructions of the proofs for the main assertions from the previous
sections. In particular, Theorem \ref{T2.1} is a direct generalization to the
case $\theta=n\geq1$ of Theorem \ref{T7.1}, and Theorem \ref{T2.2}
generalizes, evidently, Theorem \ref{T8.1} to the case of an arbitrary
noninteger $\theta>0$. These theorems were not proved in their full generality
in the previous sections \ref{s5} and \ref{s6} since the preliminary
considerations of the cases $\theta\in(0,1)$ and $\theta=1$ were necessary for
the only reason to obtain the results of section \ref{s9} on the extension of
the initial data to the domain $\{t>0\}$. Without this extension it would not
be possible to reduce Cauchy problems of higher order in $t$ to the case of
zero initial data with the subsequent extension of solutions by zero to the
domain $\{t<0\}$. After such reducing all others steps in the corresponding
proofs stays unaltered.

As for Theorem \ref{T2.3}, it is a direct corollary of Proposition \ref{P3.6},
Proposition \ref{P3.8}, and Theorem \ref{T5.1}.

Therefore we comment only on the proofs of Theorems \ref{T2.1} and \ref{T2.2}.

\subsection{Proof of Theorem \ref{T2.1}.}

\label{s10.1}

For the case $n=1$ Theorem \ref{T2.1} coincides with Theorem \ref{T7.1}. If
$n>1$ the proof of Theorem \ref{T2.1} completely follows the steps and
reasonings of the proofs of Theorems \ref{T6.1}, \ref{T7.1}.

Firstly, the boundedness of operator $\emph{L}$ in the spaces of Theorem
\ref{T2.1} follows directly from property \eqref{2.6} and Proposition
\ref{P3.6}.

Further, to demonstrate the inverse bounded operator for $\emph{L}$, we,
similar to Theorem \ref{T6.1}, assume first that the smoothness exponent
$n\alpha\in(0,1)$ and it is sufficiently small.

On the first step we reduce the problem to zero initial data \eqref{1.2}. For
that we make use of Theorem \ref{T9.3}.

Taking advantage of the fact that the given initial data are equal to zero, we
on the second step extend the desired solution and the right hand side to the
domain $\{t<0\}\cup\{t>T\}$ to a finite in $t$ function and formulate the
original problem in the domain $R^{N}\times(-\infty,\infty)$. At that equation
\eqref{1.1} stays unchanged since the derivative in $t$ of order $n-1$ of the
solution is continuous at $t=0$ and consequently no terms of the kind of
supported at $\{t=0\}$ distributions emerge in the equation while calculating
the highest $t$-derivative of order $n$ - completely similar to the case
$n=1$. We also cut the right hand side off to a finite in $x$ function and
apply the smoothing to obtain the right hand side from $C^{\infty}(R^{N}%
\times(-\infty,\infty))$, exactly as it was done in sections \ref{s5.4} and
\ref{s6.2}.

On the next step we obtain solvability and estimates of the solution to the
problem with a finite right hand side of the class $C^{\infty}(R^{N}%
\times(-\infty,\infty))$. As it was in section \ref{s6.3}, we apply the
Fourier transform with respect to all variables and obtain the representation
for the highest derivatives of the solution
\begin{equation}
\frac{\widehat{\partial^{n}u_{m,\varepsilon}}}{\partial t^{n}}(\xi,\xi
_{0})=\frac{(i\xi_{0})^{n}}{(i\xi_{0})^{n}+{\sum_{k=1}^{r}}|\zeta_{k}%
|^{\sigma_{k}}}\widehat{f_{m,\varepsilon}}(\xi,\xi_{0})\equiv\widehat{m}%
_{0}(\xi,\xi_{0})\widehat{f_{m,\varepsilon}}(\xi,\xi_{0}),\label{10.1}%
\end{equation}%
\begin{equation}
\widehat{(-\Delta_{z_{i}})^{\frac{\sigma_{i}}{2}}u_{m,\varepsilon}}(\xi
,\xi_{0})=\frac{|\zeta_{i}|^{\sigma_{i}}}{(i\xi_{0})^{n}+{\sum_{k=1}^{r}%
}|\zeta_{k}|^{\sigma_{k}}}\widehat{f_{m,\varepsilon}}(\xi,\xi_{0}%
)\equiv\widehat{m}_{i}(\xi,\xi_{0})\widehat{f_{m,\varepsilon}}(\xi,\xi
_{0}).\label{10.2}%
\end{equation}
Note that the right hand side $f(x,t)$ in the extended to the domain $\{t<0\}$
problem is not continuous at $t=0$ in general since the derivative in $t$ of
order $n$ in the equation is not generally continuous at $t=0$.
Correspondingly, the smoothed right hand side $f_{m,\varepsilon}(x,t)$ in
\eqref{10.1} and in \eqref{10.2}, similar to section \ref{s6.3}, does not have
uniformly bounded in mollifying parameter $\varepsilon$ H\"{o}lder seminorm in
$t$. Consequently, similar to section \ref{s6.3}, we apply to the multipliers
$\widehat{m}_{0}(\xi,\xi_{0})$ and $\widehat{m}_{i}(\xi,\xi_{0})$ not only
Theorem \ref{T4.1} to obtain estimate \eqref{6.23}, but also Theorem
\ref{T4.2}, which permits to obtain uniform in $\varepsilon$ estimate of the
highest H\"{o}lder seminorms of the solution in the space variables $x$. At
that, since by assumption $n$ is not equal to a number of the form $4j+2$,
$j=0,1,...$, then $(i\xi_{0})^{n}$ is not equal to $-|\xi_{0}|^{n}$ for all
nonzero $\xi_{0}$. Consequently, the modulus of denominator in the expressions
for $\widehat{m}_{0}(\xi,\xi_{0})$ and $\widehat{m}_{i}(\xi,\xi_{0})$ is
separated from zero on the sets $B_{\nu}$ from \eqref{4.9}. Therefore, similar
to section \ref{s6.3}, it is not difficult to verify conditions of theorems
\ref{T4.1} and \ref{T4.2}. Application of these theorems leads, under definite
smallness of the exponent $\alpha$, to the estimate of the highest seminorms
of the solution $u_{m,\varepsilon}(x,t)$ for the problem with the smoothed
finite right hand side $f_{m,\varepsilon}(x,t)$. In particular, for the
highest derivative in $t$, based also on Lemma \ref{L6.1}, we obtain the
estimate in the whole space $R^{N+1}$
\begin{equation}
\left\langle \frac{\partial^{n}u_{m,\varepsilon}(x,t)}{\partial t^{n}%
}\right\rangle _{t,R^{N+1}}^{(n\alpha)}\leq C(\overline{\sigma})\left\langle
f_{m,\varepsilon}(x,t)\right\rangle _{R^{N+1}}^{(\overline{\sigma}%
\alpha,n\alpha)}\leq C(\overline{\sigma},\alpha)\varepsilon^{-n\alpha
}|f|_{\overline{R_{T}^{N}}}^{(\overline{\sigma}\alpha,n\alpha)},\label{15.551}%
\end{equation}
which is an analog of \eqref{6.23}. And similar to \eqref{6.30}, based also on
the equation, we obtain the uniform in $\varepsilon$ estimate in the domain
$\overline{R_{T}^{N}}$
\begin{equation}
\left\langle u_{m,\varepsilon}\right\rangle _{\overline{R_{T}^{N}}%
}^{(\overline{\sigma}+\overline{\sigma}\alpha,n+n\alpha)}\leq C(\alpha
,\overline{\sigma},\{N_{k}\})\left\vert f(x,t)\right\vert _{\overline
{R_{T}^{N}}}^{(\overline{\sigma}\alpha,n\alpha)},\quad m=1,2,...,\varepsilon
\in(0,1).\label{15.552}%
\end{equation}
To obtain an analog of \eqref{6.28} and \eqref{6.31} we first make use of
\eqref{15.551} to obtain
\[
\left\vert \frac{\partial^{n}u_{m,\varepsilon}(x,t)}{\partial t^{n}%
}\right\vert =\left\vert \frac{\partial^{n}u_{m,\varepsilon}(x,t)}{\partial
t^{n}}-\frac{\partial^{n}u_{m,\varepsilon}(x,-2\varepsilon)}{\partial t^{n}%
}\right\vert \leq
\]%
\begin{equation}
\leq\left\langle u_{m,\varepsilon}\right\rangle _{t,R^{N+1}}^{(n+n\alpha
)}|-2\varepsilon|^{n\alpha}\leq C(n,\overline{\sigma},\alpha)|f|_{\overline
{R_{T}^{N}}}^{(\overline{\sigma}\alpha,n\alpha)},\quad t\in\lbrack
-2\varepsilon,0].\label{15.553}%
\end{equation}
And then, analogously to \eqref{6.28},
\begin{equation}
\left\vert u_{m,\varepsilon}(x,0)\right\vert \leq C(n){\int
\limits_{-2\varepsilon}^{0}}\left\vert \frac{\partial^{n}u_{m,\varepsilon
}(x,t)}{\partial t^{n}}\right\vert t^{n-1}dt\leq C(n,\overline{\sigma}%
,\alpha)|f|_{\overline{R_{T}^{N}}}^{(\overline{\sigma}\alpha,\alpha
)}\varepsilon^{n}.\label{15.554}%
\end{equation}
Eventually, analogously to the obtaining \eqref{6.31}, we successively obtain
on $\overline{R_{T}^{N}}$
\[
\left\vert \frac{\partial^{n}u_{m,\varepsilon}(x,t)}{\partial t^{n}%
}\right\vert \leq\left\vert \frac{\partial^{n}u_{m,\varepsilon}(x,0)}{\partial
t^{n}}\right\vert _{R^{N}}^{(0)}+\left\langle \frac{\partial^{n}%
u_{m,\varepsilon}(x,t)}{\partial t^{n}}\right\rangle _{t,\overline{R_{T}^{N}}%
}^{(n\alpha)}t^{n\alpha}\leq
\]%
\[
\leq C(n,\overline{\sigma},\alpha)|f|_{\overline{R_{T}^{N}}}^{(\overline
{\sigma}\alpha,n\alpha)}\left(  1+T^{n\alpha}\right)
\]
and consequently
\[
\left\vert u_{m,\varepsilon}(x,t)\right\vert \leq\left\vert u_{m,\varepsilon
}(x,0)\right\vert +{C(n)\int\limits_{0}^{t}}\left\vert \frac{\partial
^{n}u_{m,\varepsilon}(x,\tau)}{\partial\tau^{n}}\right\vert (t-\tau
)^{n-1}d\tau\leq
\]%
\[
\leq C(n,\overline{\sigma},\alpha)|f|_{\overline{R_{T}^{N}}}^{(\overline
{\sigma}\alpha,n\alpha)}(\varepsilon^{n}+t^{n}\left(  1+T^{n\alpha}\right)  ),
\]
that is
\begin{equation}
\left\vert u_{m,\varepsilon}(x,t)\right\vert _{\overline{R_{T}^{N}}}^{(0)}\leq
C(n,\overline{\sigma},\alpha)|f|_{\overline{R_{T}^{N}}}^{(\overline{\sigma
}\alpha,n\alpha)}(1+T^{n+n\alpha}).\label{15.555}%
\end{equation}

Further, completely similar to section \ref{s6.3} and analogously to reasoning
of section \ref{s5.7} at the proving of Proposition \ref{P5.1}, based on
estimates \eqref{15.552} and \eqref{15.555}, we make the transition to the
limit on the parameters of smoothing and cutting-off under $\varepsilon
\rightarrow0$, $m\rightarrow\infty$ on the set $\overline{R_{T}^{N}}$ in the
sequence of the problems with smooth finite data. This results in a solution
$u(x,t)$ of the original problem with the estimates
\begin{equation}
\left\langle u\right\rangle _{\overline{R_{T}^{N}}}^{(\overline{\sigma
}(1+\alpha),n+n\alpha)}\leq C(\overline{\sigma},n,\alpha)\left(
|f|_{\overline{R_{\infty}^{N}}}^{(\overline{\sigma}\alpha,n\alpha)}%
+{\sum\limits_{i=0}^{n-1}}|u_{i}|_{R^{N}}^{(\overline{\sigma}(1+\alpha
)-\frac{i}{n}\overline{\sigma})}\right)  ,\label{10.3}%
\end{equation}%
\begin{equation}
\left\vert u\right\vert _{\overline{R_{\widetilde{T}}^{N}}}^{(0)}\leq
C(\overline{\sigma},\alpha,n)\left(  |f|_{\overline{R_{\infty}^{N}}%
}^{(\overline{\sigma}\alpha,n\alpha)}+{\sum\limits_{i=0}^{n-1}}|u_{i}|_{R^{N}%
}^{(\overline{\sigma}(1+\alpha)-\frac{i}{n}\overline{\sigma})}\right)
\times\label{10.4}%
\end{equation}%
\[
\times(1+\widetilde{T}^{n+\alpha})+|u_{0}|_{R^{N}}^{(0)},\quad\widetilde
{T}\leq T,
\]
where $T$ can be equal to $\infty$ if the original function $f(x,t)$ was
defined on $R^{N}\times\lbrack0,\infty)$ - analogously to Proposition
\ref{P6.1} in section \ref{s6.4}.

The uniqueness of the solution is obtained completely similar to section
\ref{s6.4} by the extension of the solution and applying the Fourier transform
in the space of distributions, which completes the proof of the solvability
and the estimates for problem \eqref{1.1}, \eqref{1.2} under a sufficiently
small $\alpha>0$.

On the last step, repeating verbatim first reasonings from section \ref{s6.5}
and then from section \ref{s7}, we prove the solvability and the estimates of
problem \eqref{1.1}, \eqref{1.2} for an arbitrary positive $\alpha$.

Thus, as a result, we obtain the existence and the boundedness of the inverse
operator $\emph{L} ^{\emph{-1}}$ to the operator $\emph{L}$, which completes
the proof of Theorem \ref{T2.1}.

\subsection{The proof of Theorem \ref{T2.2}.}

In the case $\theta\in(0,1)$ this theorem, evidently, coincides with Theorem
\ref{T8.1}. In the case of an arbitrary noninteger $\theta>0$ the proof step
by step follows the proofs of Theorems \ref{T5.1} and \ref{T8.1} with some
minor obvious changes. We will just highlight some of them.

Firstly, the reducing of the problem to zero initial data is made not just by
substraction of the initial function similar to section \ref{s5.1}, but by
substraction of a function $w(x,t)$ from the appropriate space, which
satisfies the initial conditions. Such function was constructed in Theorem
\ref{T9.1} for an arbitrary noninteger $\theta>0$ with the restriction
$\{\theta\}+\theta\alpha>1$ on the exponent $\alpha$, and also in Theorem
\ref{T9.2} for a noninteger $\theta\in(0,2)$ and for an arbitrary $\alpha>0$.

Further, after the extension of the unknown function by zero to the domain
$\{t<0\}$, in view of zero initial data, the derivatives in $t$ up to the
order $[\theta]$ stays continuous, and consequently, the same is valid for the
fractional derivative $D_{\ast}^{\theta}u(x,t)$ itself. Therefore, after
applying, similar to section \ref{s5.5}, the Fourier transform to obtain the
representation of the solution in terms of it's Fourier image we can use
Theorem \ref{T4.1}. At that the representations for the highest derivatives
have the form
\begin{equation}
\widehat{D_{\ast}^{\theta}u(x,t)u_{m,\varepsilon}}(\xi,\xi_{0})=\frac
{(i\xi_{0})^{\theta}}{(i\xi_{0})^{\theta}+{\sum_{k=1}^{r}}|\zeta_{k}%
|^{\sigma_{k}}}\widehat{f_{m,\varepsilon}}(\xi,\xi_{0})\equiv\widehat{m}%
_{0}(\xi,\xi_{0})\widehat{f_{m,\varepsilon}}(\xi,\xi_{0}),\label{10.5}%
\end{equation}%
\begin{equation}
\widehat{(-\Delta_{z_{i}})^{\frac{\sigma_{i}}{2}}u_{m,\varepsilon}}(\xi
,\xi_{0})=\frac{|\zeta_{i}|^{\sigma_{i}}}{(i\xi_{0})^{\theta}+{\sum_{k=1}^{r}%
}|\zeta_{k}|^{\sigma_{k}}}\widehat{f_{m,\varepsilon}}(\xi,\xi_{0}%
)\equiv\widehat{m}_{i}(\xi,\xi_{0})\widehat{f_{m,\varepsilon}}(\xi,\xi
_{0}).\label{10.6}%
\end{equation}
Since $\theta$ is a noninteger, then in the first term of the denominator
$(i\xi_{0})^{\theta}=(\pm i)^{\theta}|\xi_{0}|^{\theta}$ the numerical
coefficient $(\pm i)^{\theta}$ has a non-zero imaginary part. Therefore on the
annulus $B_{\nu}$ from \eqref{4.9} the denominator of the multipliers
$\widehat{m}_{0}(\xi,\xi_{0})$ and $\widehat{m}_{i}(\xi,\xi_{0})$ is strictly
separated from zero. This permits to verify the conditions of Theorem
\ref{T4.1} - completely similar to section \ref{s5.5}.

Highlight, finally, one more very simple alteration in the proof, caused by
the high order of the derivative in $t$. To obtain the corresponding analog of
Proposition \ref{P5.2} on the extension of the solution to the whole time
interval $t\in(0,\infty)$ one should use the Taylor polynomial of degree
$[\theta]$ on the role of the function $\widetilde{u}(x,t)$ from \eqref{5.80}
and, correspondingly, to use Lemma \ref{L5.1}, which is proved for an
arbitrary noninteger $\theta>0$.

Besides the pointed above simplest alterations all the others steps in the
proof of Theorem \ref{T2.2} coincide with the corresponding steps in the
proofs first of Theorem \ref{T5.1} and then of Theorem \ref{T8.1}, which leads
to the assertion of Theorem \ref{T2.2}.

\end{document}